\documentclass{amsart}
\usepackage{amsmath,amsthm,amsfonts,amssymb, color, hyperref, verbatim, appendix}

\usepackage[normalem]{ulem}
\usepackage{cancel}

\setlength{\oddsidemargin}{0cm}
\setlength{\evensidemargin}{0cm}
\setlength{\topmargin}{-0.5cm}
\setlength{\textheight}{22.5cm}
\setlength{\textwidth}{16.5cm}

\numberwithin{equation}{section}
\theoremstyle{plain}

\newcommand{\one}{{{\rm 1\mkern-1.5mu}\!{\rm I}}}


\newtheorem{theorem}{\sc Theorem}[section]
\newtheorem{definition}[theorem]{\sc Definition}
\newtheorem{lemma}[theorem]{\sc Lemma}
\newtheorem{proposition}[theorem]{\sc Proposition}
\newtheorem{corollary}[theorem]{\sc Corollary}

\theoremstyle{remark}

\newtheorem{remark}[theorem]{\sc Remark}
\newtheorem{example}[theorem]{\sc Example}

\newcommand{\be}{\begin{equation}}
\newcommand{\ee}{\end{equation}}
\newcommand{\nn}{\nonumber}
\providecommand{\abs}[1]{\vert#1\vert}
\providecommand{\norm}[1]{\Vert#1\Vert}

\def\empp{R}  
\def\E{\bE}
\def\P{\bP} 
\def\wlev{\omvec} 
\def\Ospace{\mathbf{\Omega}} 
\def\slabv{\mathbf{s}}  
\def\slabvs{\mathbf{\bar s}}  
\def\Smap{S}  
\def\shift{\theta}  
\def\shifta{\tau}  
\def\zvecd{\bar\zvec}  
\def\zproj{\Phi} 
\def\gmap{\gamma} 


\def\cA{\mathcal{A}}
\def\cC{\mathcal{C}}
\def\cD{\mathcal{D}}
\def\cF{\mathcal{F}}

\def\cL{\mathcal{L}}
\def\cM{\mathcal{M}}

\def\cP{\mathcal{P}}

\def\cR{\mathcal{R}}

\def\bE{\mathbb{E}}
\def\bN{\mathbb{N}}
\def\bP{\mathbb{P}}
\def\bQ{\mathbb{Q}}
\def\bR{\mathbb{R}}
\def\bZ{\mathbb{Z}}

\def\kS{\mathfrak{S}}

\def\zvec{\mathbf{z}}
\def\Zvec{\mathbf{Z}}

\def\w{\omega}
\def\omvec{\bar\omega}
\def\wvec{\bar\omega} 

\def\OSP{(\Omega, \kS, \P)} 
\def\llnxi{\xi^*}  

\def\esssup{\mathop{\mathrm{ess\,sup}}}

\def\e{\varepsilon}


\begin{document}

\title[Large deviations and entropy for RWDRE]{Averaged vs. quenched large deviations and entropy for\\ random walk in a dynamic random environment}

\author[F.~Rassoul-Agha]{Firas Rassoul-Agha}
\address{Firas Rassoul-Agha\\ Department of Mathematics\\ University of Utah\\ 155 South 1400 East\\ Salt Lake City, UT 84109\\ USA.}
\email{firas@math.utah.edu}
\urladdr{http://www.math.utah.edu/~firas}
\thanks{F.\ Rassoul-Agha was partially supported by National Science Foundation grant DMS-1407574 and by Simons Foundation grant 306576.}

\author[T.~Sepp\"al\"ainen]{Timo Sepp\"al\"ainen}
\address{Timo Sepp\"al\"ainen\\ Department of Mathematics\\ University of Wisconsin-Madison\\ 480 Lincoln Dr.\\   Madison, WI 53706\\ USA.}
\email{seppalai@math.wisc.edu}
\urladdr{http://www.math.wisc.edu/~seppalai}
\thanks{T.\ Sepp\"al\"ainen was partially supported by  National Science Foundation grants DMS-1306777 and DMS-1602486,  by Simons Foundation grant 338287, and  by the Wisconsin Alumni Research Foundation.} 

\author[A.~Yilmaz]{Atilla Yilmaz}
\address{Atilla Yilmaz\\ Department of Mathematics\\ Ko\c{c} University\\Rumelifeneri Yolu, Sar\i yer, Istanbul 34450, Turkey.}
\email{atillayilmaz@ku.edu.tr}
\urladdr{http://home.ku.edu.tr/~atillayilmaz}
\thanks{A.\ Yilmaz was partially supported by European Union FP7 Marie Curie Career Integration Grant no.\ 322078.}

\date{October 20, 2015.}
\date{\today}

\subjclass[2000]{60K37, 60F10, 82C41, 82C44.} 
\keywords{Random walk, dynamic random environment, large deviations, averaged, quenched, empirical process, Donsker-Varadhan relative entropy, specific relative entropy, Doob $h$-transform, nonstationary process.}

\maketitle

\begin{abstract}

We consider random walk with bounded jumps on a hypercubic lattice of arbitrary dimension in a dynamic  random environment. The environment is temporally independent and spatially translation invariant.   We study the rate functions of the level-3 averaged and quenched large deviation principles from the point of view of the particle.  In the averaged case the rate function is a specific relative entropy,  while in the quenched case it is a Donsker-Varadhan type relative entropy for  Markov processes.   We relate these entropies to each other and seek to identify the minimizers of the level-3 to level-1 contractions in both settings.   Motivation for this  work comes from variational descriptions of the quenched free energy of directed polymer models where the same Markov process  entropy appears.



\end{abstract}

\section{Introduction}

After surveying  the background of the present work, this introductory section describes the   random walk in a dynamic random environment (RWDRE) model and then some general notions such as large deviation principles and the point of view of the particle.  The section concludes with an overview of the rest of the paper.  

\subsection{Background}

This paper   studies an entropy function  for Markov processes that appears in random medium models.  We give here some background motivation.  
A much-studied model   is the random path in a random potential model, also called the 
polymer model.    The random environment  $\w$ comes from a probability space $\OSP$ with an ergodic  group action $\{T_x\}_{x\in\bZ^d}$.    The random path is a random walk $X_k$ on $\bZ^d$.   The   potential $V(\w, z)$ is   a function of  $\w$ and a step $z$ of the random walk.   A key quantity is the limiting {\it quenched  free energy}   \be\label{gV}
g(V)= \lim_{n\to\infty}   n^{-1}\log E_0\bigl[ e^{\sum_{k=0}^{n-1} V(T_{X_k}\w, \,X_{k+1}-X_k)}\bigr] 
\ee 
where $E_0$ is the expectation of the random walk and $\w$ is fixed.  
The limit exists for $\P$-almost every $\w$ under hypotheses on the moments of $V$ and the degree of mixing of $\P$. 

  The limit $g(V)$ can be calculated only in a handful of exactly solvable models that exist only in $1+1$ dimension.  More generally, properties of $g(V)$ have remained an insurmountable problem.  This question is the positive temperature version of the question of understanding limit shapes of stochastic growth models such as first- and last-passage percolation.  The latter question  has also remained insurmountable  since the origins of the subject over  50 years ago, except for a few exactly solvable models in $1+1$ dimension.    For surveys of models of   type \eqref{gV}, see \cite{ComShiYos04, Hol07}.  
 

Our article \cite{RasSepYil13} introduced   two 
  variational formulas for $g(V)$.   Let $p(z)$ be the jump kernel of the underlying random walk.     The first  formula 
 \be\label{gV:cc}
 g(V)=\inf_F\, \P\text{-}\esssup_\w \,\log\sum_z p(z) e^{V(\w,z)+F(\w,z)}  
 \ee 
expresses $g(V)$ as  an infimum over the $L^1(\bP)$ closure of gradients $F(\w,z) = f(T_z\w) - f(\w)$, which we called the space of   {\it cocycles}.   Since this formula is not the topic of the present paper, we  refer to  \cite{geor-rass-sepp-var, RasSepYil_preprint, RasSepYil13} for precise definitions. 
  
 The second formula gives $g(V)$ as the  dual of an entropy adapted to   the point of view of the particle: 
 \be\label{gV:H} 
g(V)=  \sup\big\{ E^\mu[V]- H(\mu): \,\mu_\Omega\ll\P, \,  E^\mu[ V^- ] < \infty\big\}.
 \ee
 The supremum is over probability measures $\mu$ on $ \Omega\times\{\text{steps}\}$ with a natural  invariance property and with a $\P$-absolutely continuous $\Omega$-marginal $\mu_\Omega$.   The entropy is given by 
 \be\label{gV:H2} 
H(\mu) =   \int_{\Omega} \sum_z \,\mu(d\w, z) \log\frac{\mu(z\,\vert\,\w)}{p(z)}  . 
 \ee
 Formula \eqref{gV:H} was proved in \cite{RasSepYil13}, and this formulation is Theorem 7.5 in \cite{geor-rass-sepp-var}. 
 
  Article \cite{geor-rass-sepp-var} extended these formulas from  positive to zero temperature, that is, to last-passage percolation models. 
 The  goal is to shed light  on $g(V)$ and limit shapes through the  variational formulas.    The relationship between formulas \eqref{gV:cc} and \eqref{gV:H} is well understood presently only for directed polymers in weak disorder (Examples 3.7 and 7.7 in \cite{geor-rass-sepp-var}) and  in periodic environments (Section 8 in \cite{geor-rass-sepp-var}).  
  
 
 Here is a brief overview of the current state of the study of these formulas.  
The cocycle variational formula \eqref{gV:cc}  has been studied in several subsequent papers while the entropy formula \eqref{gV:H} has received no serious attention before the present paper.  \cite{RasSepYil_preprint} shows that \eqref{gV:cc}  always has a minimizer and uses the  minimizer(s) to characterize weak and strong disorder  of directed polymers.   \cite{GeoRasSepYil15} proves the existence of Busemann functions for the exactly solvable 1+1 dimensional log-gamma polymer and shows that these provide minimizing cocycles for \eqref{gV:cc}  and also a limiting polymer measure for infinite paths.   \cite{geor-rass-sepp-lppgeo, geor-rass-sepp-lppbuse}  construct the minimizing  cocycles for the 2-dimensional corner growth model with general i.i.d.\ weights  and use these to investigate Busemann functions, geodesics and the competition interface.  These notions have  become  central in the field of random medium models over the last twenty years, beginning with the work of Newman in the early 1990s on the geodesics of first-passage percolation \cite{New95}.  
 
In the current paper we begin the study of the entropy \eqref{gV:H2}.  This  entropy is the level-2 projection of an entropy that appears in the rate function of a level-3 quenched large deviation principle (LDP) for  RWDRE.  
(See \eqref{isimdustu} and Theorem \ref{3qLDP} in Section \ref{sec:prevresults}.)     
We study  the entropy in this large deviations context. In particular, we consider its relation to the entropy that serves as the rate function for a level-3 averaged LDP.  

The {\it point-to-point} version of the quenched  free energy \eqref{gV}  is 
\be\label{gV:xi}
g(V,\xi)= \lim_{n\to\infty}   n^{-1}\log E_0\bigl[ e^{\sum_{k=0}^{n-1} V(T_{X_k}\w, \,X_{k+1}-X_k)}, \, X_n=[n\xi] \bigr] 
\ee 
defined for  $\xi$  in the convex hull of the support of the kernel $p(z)$, and where  $[n\xi]$ is a lattice point that approximates  $n\xi$ and is  reachable  from the origin in $n$ steps.   The entropy variational formula  now takes the form 
 \be\label{gV:Hxi} 
g(V,\xi)=  \sup\big\{ E^\mu[V]- H(\mu): \,\mu_\Omega\ll\P, \,  E^\mu[ V^- ] < \infty, \, E^\mu[Z_1]=\xi\big\}
 \ee
where $Z_1$ is the step variable under distribution $\mu$. 
 Formula \eqref{gV:Hxi} was proved in \cite{rass-sepp-p2p} for a directed walk in an i.i.d.\ environment and a local potential $V\in L^{d+\e}(\P)$ for $\e>0$. This formulation is Theorem 7.6 in \cite{geor-rass-sepp-var}.

 Minimizing entropy under a mean step condition $E^\mu[Z_1]=\xi$ as   in \eqref{gV:Hxi}  is also  done in the level-3 to level-1 contraction in large deviation theory.    For this reason the  main focus of the present paper is to study  these  contractions, both averaged and quenched.   The averaged contraction can be understood completely.  Then we seek to characterize when the averaged and quenched contractions lead to the same level-1 rate function and have  the same minimizers.  
 
The quenched rate function is hard to study.  It begins with an entropy of a familiar type.  But this entropy is corrected in a  singular manner  to account for the  environment distribution $\P$, and then regularized again to be lower semicontinuous.   The opaqueness of the  l.s.c.\ regularization makes it difficult to analyze examples.    
By simplifying the situation so that the environment varies only temporally we can  describe  fully also the quenched contraction.  We discover that the connection between the averaged and quenched rate functions can break down rather spectacularly.     This part of the paper illuminates earlier large deviation work by Comets \cite{come-89} and one of the authors  \cite{baxt-jain-sepp,  sepp-ptrf-93I} that appears in the equilibrium statistical mechanics of disordered Gibbs measures.  

The present  paper studies only random walk in a dynamic random environment while connections to   polymer models  are left for future work.  
Our results in Section \ref{sec:results} begin with  the level-3 averaged LDP from the point of view of the particle and the existence of the relevant limiting specific relative entropy.   After understanding the contraction from the  level-3 to level-1 averaged LDP we turn to study the quenched rate functions.

\subsection{The model}\label{sec:modmod}

Consider the $d$-dimensional hypercubic lattice $\bZ^d$ with an arbitrary $d\in\bN = \{1,2,3,\ldots\}$.
Fix a finite $\cR\subset\bZ^d$ with at least two elements and let
\be\label{pipi}
\cP = \{q:\cR\to [0,1]:\ \sum_{z\in\cR} q(z)=1\}
\ee denote the set of probability measures on $\cR$. Elements of $\Omega=\cP^{\bZ\times\bZ^d}$ are called space-time environments and they are of the form $\w=(\w_{i,x})_{(i,x)\in\bZ\times\bZ^d}$. 
Each $\omega\in\Omega$ defines a time-inhomogeneous discrete-time Markov chain $(X_i)_{i\ge0}$ on $\bZ^d$ for which $X_0 = 0$ and the transition probability from state $x$ to $y$ at time $i$ is
$$\pi_{i,i+1}(x,y\,|\,\omega) = \begin{cases}\omega_{i,x}(y-x)&\text{if $y-x\in\cR$,}\\0&\text{otherwise.}\end{cases}$$
If $\omega$ is randomly sampled from a probability distribution $\bP$ on $(\Omega,\kS)$ rather than being deterministic, then $(X_i)_{i\ge 0}$ is a random walk (RW) in a dynamic (or space-time) random environment, which we abbreviate as RWDRE. Here, $\kS$ is the Borel $\sigma$-algebra with respect to (w.r.t.) the product topology on $\Omega$.

RWDRE (started at the origin) induces a probability measure $P_0(d\w,d\zvec) = \bP(d\w)P_0^\w(d\zvec)$ on the space $\Ospace_\bN = \Omega\times\cR^\bN$ of environments and walks. Here, $\zvec = (z_i)_{i\ge1}\in\cR^\bN$ is a sequence of steps, and $P_0^\w$ is the quenched path measure defined by
$$P_0^\w(z_1,\ldots,z_n) = \prod_{i=0}^{n-1}\pi_{i,i+1}(x_i,x_{i+1}\,|\,\omega),\quad\text{$n\ge1$, $x_0 = 0$ and $x_{i+1} = x_i + z_{i+1}$}.$$
The marginal of $P_0$ on $\cR^\bN$ is called the averaged path measure and also denoted by $P_0$ whenever no confusion occurs. $\bE, E_0$ and $E_0^\w$ stand for expectation under $\bP, P_0$ and $P_0^\w$, respectively. In general, we will write $E^\mu[f]$ or $\langle f,\mu\rangle$ for the integral of a function $f$ against a probability measure $\mu$.

Denote the entire spatial environment at a given time $i\in\bZ$ by $\wlev_i=(\w_{i,x}:x\in\bZ^d)$. Let $(T^s_y)_{y\in\bZ^d}$ be the group of spatial translations, defined by $(T^s_y \wlev_i)_x = \w_{i,x+y}$ for $x,y\in\bZ^d$. Throughout the article, we will make the following underlying assumptions.
\begin{itemize}
\item {\it Temporal independence:} $(\wlev_i)_{i\in\bZ}$ are independent and identically distributed (i.i.d.) under $\bP$ with a common distribution $\bP_s$ on $\cP^{\bZ^d}$, i.e., $\bP = (\bP_s)^{\otimes\bZ}$. (The subscript of $\bP_s$ stands for ``spatial".)
\item {\it Spatial translation invariance:} $\bP_s$ is invariant under $(T^s_y)_{y\in\bZ^d}$.
\end{itemize}
These two conditions are of course satisfied when $(\w_{i,x})_{(i,x)\in\bZ\times\bZ^d}$ are i.i.d. However, restricting to that special case would not change the statements or the proofs in this paper. Moreover, it should be relatively straightforward to adapt our results to various discrete-time continuous-space models (such as RWDRE on $\bR^d$ considered in \cite{BolMinPel09, JosRas11}) where spatial independence is not applicable. 
Note in particular that we do not assume ergodicity under spatial translations.

The only condition we impose on the one-step range $\cR$ of the walk is $2\le |\cR|<\infty$. ($|\cR|$ is the number of elements in the set $\cR$. The case $|\cR| = 1$ is trivial.) We will assume without loss of generality that $\bP(\w_{0,0}(z)>0) > 0$ for every $z\in\cR$. (Otherwise, we can replace $\cR$ by $\{z\in\cR:\, \bP(\w_{0,0}(z)>0) > 0\}$.) Our quenched results will require various ellipticity conditions 
which we will indicate as needed in their statements. See also Remark \ref{remell}.

As the name suggests, RWDRE is a variant of the much-studied random walk in a random environment (RWRE) model (see \cite{Zei04} for a survey). In fact, $(i,X_i)_{i\ge0}$ can be viewed as a directed RWRE on $\bZ^{d+1}$ because its component in the direction of $(1,0,\ldots,0)$ is strictly increasing. This directedness simplifies certain aspects of the analysis of the model. Most notably, RWDRE under the averaged measure $P_0$ is a classical RW on $\bZ^d$ with transition probabilities $\hat q(z) = \bE[\w_{0,0}(z)]>0$. In particular, the strong law of large numbers (LLN) and Donsker's invariance principle (IP) hold for the averaged walk. Since  any $P_0$-almost sure statement holds $P_0^\w$-almost surely for $\bP$-a.e.\ $\w$, there is no need for  a separate strong LLN for the quenched walk. On the other hand, an averaged IP does not a priori imply a quenched one. Nevertheless, for the i.i.d.\ case, there is an IP under $P_0^\w$ for $\bP$-a.e.\ $\w$  \cite{RasSep05}. In stark contrast to these limit theorems, for (undirected) RWRE the validity of even the LLN is an open problem. See \cite{BerDreRam14} for the best sufficient condition in the literature.

\subsection{Large deviation principles, the point of view of the particle, and empirical measures}\label{IntroLDP}

Recall that a sequence $\left(Q_n\right)_{n\geq1}$ of Borel probability measures on a topological space $\mathbb{X}$ is said to satisfy a large deviation principle (LDP) with (exponential scale $n$ and) rate function $I:\mathbb{X}\to[0,\infty]$ if $I$ is lower semicontinuous, and for any measurable set $G$, $$-\inf_{x\in G^o}I(x)\leq\liminf_{n\to\infty}\frac{1}{n}\log Q_n(G)\leq\limsup_{n\to\infty}\frac{1}{n}\log Q_n(G)\leq-\inf_{x\in\overline{G}}I(x).$$   $G^o$ is the topological interior of $G$ and $\overline{G}$ its topological closure. See \cite{DemZei10, Hol00, RasSep15} for general background regarding large deviations.

In the context of RWDRE, the LDP for $(P_0(X_n/n\in\cdot\,))_{n\ge1}$ is nothing but Cram\'er's theorem for classical multidimensional RW  (see, e.g., \cite[Chapter 4]{RasSep15}), with rate function $I_{1,a}:\bR^d\to[0,\infty]$ given by
\be\label{level1a}
I_{1,a}(\xi) = \sup_{\rho\in\bR^d}\left\{\langle\rho,\xi\rangle - \log\phi_a(\rho)\right\} = (\log\phi_a)^*(\xi),
\ee
the convex conjugate of the logarithm of the moment generating function
\be
\phi_a(\rho) = \sum_{z\in\cR}\hat q(z)e^{\langle\rho,z\rangle},\label{MGF}
\ee
where $\langle\cdot,\cdot\rangle$ denotes inner product. This is an averaged LDP, hence the subscript $a$. (The other subscript of $I_{1,a}$ stands for level-1 which is explained two paragraphs below.) Establishing the analogous quenched LDP  for $(P_0^\w(X_n/n\in\cdot\,))_{n\ge1}$ and identifying the   rate function is more arduous. It involves considering certain empirical measures from the point of view (POV) of the particle which we introduce next.

Define space-time translations $(T_{j,y})_{(j,y)\in\bZ\times\bZ^d}$ on $\Omega$ by $(T_{j,y}\w)_{i,x} = \w_{i+j,x+y}$. Then, $(T_{i,X_i}\omega)_{i\ge0}$ is a discrete-time Markov chain taking values in $\Omega$, and its transition probability from state $\w$ to state $\w'$ is   given by
$$\bar\pi(\w'|\,\w) =\!\!\! \sum_{z\in\cR\boldsymbol: \, T_{1,z}\w = \w'}\!\!\!\pi_{0,1}(0,z\,|\,\omega).$$
Every limit theorem about this so-called {\it environment Markov chain} implies a corresponding limit theorem for the walk. This general and robust approach was first introduced in the context of interacting particle systems \cite{KipVar86} and was later  successfully adapted to RWRE (see for example \cite{Ras03, RasSep09, Yil09b}).

In light of the previous paragraph, the large deviation behavior of RWDRE can be analyzed via various statistics of either the walk itself or the environment Markov chain. Among these statistics, the empirical velocity $X_n/n$ is the coarsest one, and hence its large deviation analysis is referred to as level-1. Finer statistics are provided by the occupation measure
$$L_n = \frac1{n}\sum_{i=0}^{n-1}\delta_{T_{i,X_i}\w}$$ which records the environments seen from the POV of the particle. The pair-empirical measure
$$L_n^2 = \frac1{n}\sum_{i=0}^{n-1}\delta_{T_{i,X_i}\w,Z_{i+1}}$$ goes one step further by essentially keeping track of the pairs of consecutive environments that the particle sees. (In the Markov chain literature, the pair-empirical measure typically refers to $\frac1{n}\sum_{i=0}^{n-1}\delta_{T_{i,X_i}\w,T_{i+1,X_{i+1}}\w}$ which is measurable w.r.t.\ our choice of $L_n^2$.) Pairs can be replaced with $\ell$-tuples for any $\ell\ge2$ to define more detailed empirical measures. Large deviations of each of these empirical measures are called level-2. Finally, level-3 involves the so-called empirical process
\be\label{eq:empi}
L_n^\infty = \frac1{n}\sum_{i=0}^{n-1}\delta_{T_{i,X_i}\w,\,\theta^i\Zvec}.
\ee
Here and throughout, $\Zvec=(Z_i)_{i\ge1}$ denotes the sequence of steps $Z_i = X_i - X_{i-1}$ of the random path $(X_i)_{i\ge0}$, and $\theta$ is the forward shift on sequences, i.e., $(\theta\Zvec)_j = Z_{j+1}$ for every $j\in\bN$. Under the topology of weak convergence of measures, the empirical process contains precisely the same information as all of the empirical measures for $\ell$-tuples combined. Level-1,2,3 large deviations for Markov processes were established (under certain conditions) in a series of papers by Donsker and Varadhan \cite{DonVar75, DonVar76, DonVar83}. The level terminology was introduced later in \cite{Ell85}.

\subsection{Further notation for steps, environments and $\sigma$-algebras}\label{ss:furnot}

Throughout the paper, for any bi-infinite sequence $\zvecd = (\ldots,z_{-2}, z_{-1}, z_0, z_1, z_2, \ldots)\in\cR^\bZ$ of steps and any pair of indices $-\infty< i\le j<\infty$, we write
$$z_{i,j} = (z_i, z_{i+1},\ldots,z_{j}),\quad z_{i,\infty} = (z_i,z_{i+1},z_{i+2},\ldots)\quad\mbox{and}\quad z_{-\infty,j} = (\ldots, z_{j-2},z_{j-1},z_{j}).$$
We also use  $\zvec = z_{1,\infty}$ and  $\zvecd = z_{-\infty,\infty}$. Similarly, for any environment $\w = (\wlev_i)_{i\in\bZ}$ and any pair of indices $-\infty< k\le\ell<\infty$,
$$\wlev_{k,\ell} = (\wlev_k,\wlev_{k+1},\ldots,\wlev_{\ell}),\quad\wlev_{k,\infty} = (\wlev_k,\wlev_{k+1},\wlev_{k+2},\ldots)\quad\text{and}\quad\wlev_{-\infty,\ell} = (\ldots,\wlev_{\ell-2},\wlev_{\ell - 1},\wlev_{\ell}).$$
We use this notation to introduce the $\sigma$-algebras 
$$\cA_{k,\ell}^{i,j} = \sigma\{\wvec_{k,\ell-1},Z_{i+1,j}\}\qquad\text{and}\qquad\kS_{k,\ell} = \sigma\{\wlev_{k,\ell-1}\}$$
on appropriate spaces, for $-\infty\le i< j\le\infty$ and $-\infty\le k<\ell\le\infty$.   The reason for the indexing convention  is that the distribution of step $Z_{n+1}$ is part of environment $\wvec_n$.   Note also that  $\cA_{-\infty,\infty}^{0,\infty}$ and $\cA_{-\infty,\infty}^{-\infty,\infty}$ are the Borel $\sigma$-algebras (w.r.t.\ the product topology) on $\Ospace_\bN = \Omega\times\cR^\bN$ and $\Ospace_{\bZ} = \Omega\times\cR^\bZ$, respectively. For any $\sigma$-algebra $\cF$, the space of bounded and $\cF$-measurable functions is denoted by $b\cF$.

\subsection{Content and organization of the article}

Section \ref{sec:prevresults} reviews previous results on large deviations for   RWDRE. The new results are in Section \ref{sec:results}. The paper is organized so that the results of Section $3.n$ are proved in Section $3+n$. Section \ref{sec:results} concludes with remarks and open problems.   The following list summarizes the  results   (with  proofs  in the indicated sections):
\begin{itemize}
\item [(i)]   level-3 averaged LDP for the joint environment-path  Markov chain (Section \ref{sec:3aLDP});
\item [(ii)] analysis of  the averaged contraction from level-3 to level-1 (Section \ref{sec:mini});
\item [(iii)] alternative formula for the level-3 quenched rate function (Section \ref{sec:for_mod});
\item [(iv)]  
relationship of level-3 averaged and quenched rate functions
(Section \ref{sec:connect});
\item [(v)]  characterizations of  the equality of   level-1 averaged and quenched rate functions (Section \ref{sec:charac});
\item [(vi)]  minimizers of quenched contractions from level-3 to level-1 (Section \ref{sec:miniq});
\item [(vii)] spatially constant environments  (Section \ref{sec:constant}).
\end{itemize}

\section{Summary of previous results on large deviations}\label{sec:prevresults}

Recall from the Introduction that the level-1 averaged LDP, i.e., the LDP for $\left(P_0\left(\frac{X_n}{n}\in\cdot\,\right)\right)_{n\geq1}$, is simply the multidimensional Cram\'er theorem with the rate function $I_{1,a}$ given in (\ref{level1a}), whereas the statement and the proof of its quenched counterpart is relatively technical. In fact, it is more convenient to first present the level-3 quenched LDP for the environment Markov chain, and we will proceed in this order.

Let $S$ denote the temporal shift operator from the POV of the particle.  It acts on $\Ospace_\bN = \Omega\times\cR^\bN$  via $S(\w,\zvec)=(T_{1,z_1}\w, \shift\zvec)$, and on $\Ospace_\bZ = \Omega\times\cR^\bZ$ via  $S(\w,\zvecd)=(T_{1,z_1}\w, \shift\zvecd)$.
On  $\Ospace_\bZ$  $S$  is invertible.  We can write $S^k(\w,\zvecd)=(T_{k,x_k}\w, \shift^k\zvecd)$ for all $k\in\bZ$,  
with this convention:  bi-infinite paths $x_\centerdot$  through the origin and sequences    $\zvecd\in\cR^\bZ$ are bijectively associated to each other  by 
\be\label{xz} x_0=0, \quad x_k=-\sum_{i=k+1}^0 z_i\quad\text{and}\quad x_\ell=\sum_{i=1}^\ell z_i\quad\text{for}\quad k<0<\ell. \ee    
 
\begin{remark}\label{aboutSinv}
The empirical process $L_n^\infty$ defined in \eqref{eq:empi} satisfies $$\int f\,dL_n^\infty = \frac1{n}\sum_{i=0}^{n-1}f\circ S^i$$ for every $f\in b\cA_{-\infty,\infty}^{0,\infty}$. In particular, $$\left|\int (f\circ S)\,dL_n^\infty - \int f \,dL_n^\infty\right| \le\frac{2\|f\|_{\infty}}{n}.$$
Thus, $L_n^\infty$ is an asymptotically $S$-invariant element of $\cM_1(\Ospace_\bN)$ for $\bP$-a.e.\ $\w$ and every realization of $\Zvec\in\cR^\bN$.
\end{remark}

For any $S$-invariant $\mu\in\cM_1(\Ospace_\bN)$, let
\begin{itemize}
\item [(i)] $\bar\mu$ be the unique $S$-invariant extension of $\mu$ to $\Ospace_\bZ$,
\item [(ii)] $\bar\mu_-$ the restriction of $\bar\mu$ to $\cA_{-\infty,\infty}^{-\infty,0}$, and
\item [(iii)] $\pi_{0,1}^{\bar\mu}(0,z\,|\,\omega, z_{-\infty,0}) = \bar\mu(Z_1 = z\,|\,\cA_{-\infty,\infty}^{-\infty,0})(\w, z_{-\infty,0})$ for every $z\in\cR$.
\end{itemize}
Define $\bar\mu_-\times\pi$ and $\bar\mu_-\times\pi^{\bar\mu}$ on $\cA_{-\infty,\infty}^{-\infty,1}$ by
\begin{align*}
(\bar\mu_-\times\pi)(d\w, \, dz_{-\infty, 1}) &= \bar\mu_-(d\w, \, dz_{-\infty, 0})\pi_{0,1}(0,z_1\,|\,\w)c_\cR(z_1)\quad\text{and}\\
(\bar\mu_-\times\pi^{\bar\mu})(d\w, \, dz_{-\infty, 1}) &= \bar\mu_-(d\w, \, dz_{-\infty, 0})\pi_{0,1}^{\bar\mu}(0,z_1\,|\,\w,z_{-\infty,0})c_\cR(z_1),
\end{align*}
respectively. Here, $c_\cR = \sum_{z\in\cR}\delta_z$ is the counting measure on $\cR$. Note that $\bar\mu_-\times\pi^{\bar\mu}$ is simply the restriction of $\bar\mu$ to $\cA_{-\infty,\infty}^{-\infty,1}$. Let $H_q(\mu)$ denote the entropy of $\bar\mu_-\times\pi^{\bar\mu}$ relative to $\bar\mu_-\times\pi$ on $\cA_{-\infty,\infty}^{-\infty,1}$, i.e.,
\begin{align}
H_q(\mu)  &=  H_{\cA_{-\infty,\infty}^{-\infty, 1}}(\bar\mu_-\times\pi^{\bar\mu}\,\vert\,\bar\mu_-\times\pi)\nonumber\\
&= \int \bar\mu_-(d\w, \, dz_{-\infty, 0})\sum_{z\in\cR}\pi_{0,1}^{\bar\mu}(0,z\,|\,\omega,z_{-\infty,0})\log\left(\frac{\pi_{0,1}^{\bar\mu}(0,z\,|\,\omega,z_{-\infty,0})}{\pi_{0,1}(0,z\,|\,\omega)}\right).\label{isimdustu}
\end{align}
Projecting this entropy to $\cA_{-\infty,\infty}^{0,1}$ and replacing $\pi_{0,1}(0,z\,|\,\omega)$ with a constant jump kernel $p(z)$ gives the entropy   \eqref{gV:H2} discussed in the Introduction.  

The rate function of the level-3 quenched LDP is obtained via the following modification of $H_q$. For any $\mu\in\cM_1(\Ospace_\bN)$, denote its $\Omega$-marginal by $\mu_\Omega$, and set
\be\label{tanimuymaz}
H_{q,\bP}^{S}(\mu)=\begin{cases}  H_q(\mu) &\text{if $\mu$ is $S$-invariant and $\mu_\Omega\ll\P$,}\\ \infty  &\text{otherwise}.  \end{cases}
\ee
$H_{q,\bP}^{S}$ is convex but not lower semicontinuous, and the double convex conjugate $(H_{q,\bP}^{S})^{**}$ of $H_{q,\bP}^{S}$ gives its lower semicontinuous regularization (see \cite[Theorem 4.17]{RasSep15}).

\begin{theorem}[Level-3 quenched LDP]\label{3qLDP}
Assume 
\be\label{polyell}
\text{$\exists\,p>d+1$ such that $\bE[|\log\w_{0,0}(z)|^p] < \infty$ for every $z\in\cR$.}
\ee
Then, for $\mathbb{P}$-a.e.\ $\omega$, $\left(P_0^\w(L_n^\infty\in\cdot\,)\right)_{n\geq1}$ satisfies an LDP with rate function $I_{3,q}:\cM_1(\Ospace_\bN)\to[0,\infty]$ given by
$$I_{3,q}(\mu) = (H_{q,\bP}^{S})^{**}(\mu).$$
\end{theorem}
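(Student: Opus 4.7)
The plan is to derive the level-3 quenched LDP as a combination of (a) the level-3 averaged LDP for the joint environment--path chain (whose existence is the content of Section \ref{sec:3aLDP}), and (b) a transfer argument that upgrades the averaged bounds to quenched ones using the polynomial moment hypothesis \eqref{polyell}. The overall strategy follows the Donsker--Varadhan template for hypomixing Markov chains, adapted to the random medium via a Borel--Cantelli step controlled by the ellipticity moments.

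For the upper bound I would proceed as follows. Given any $S$-invariant $\mu$ with $\mu_\Omega\ll\bP$ and $H_q(\mu)<\infty$, the restriction of $\bar\mu$ to $\cA_{-\infty,\infty}^{-\infty,1}$ has Radon--Nikodym derivative $\pi_{0,1}^{\bar\mu}/\pi_{0,1}$ with respect to $\bar\mu_-\times\pi$; ergodic averaging of the log-likelihood ratio under $\bar\mu$ gives mean $H_q(\mu)$. Combining the resulting exponential change-of-measure with the exponential Chebyshev inequality and taking supremum over a countable family of continuous bounded test functions yields
$$\limsup_{n\to\infty}\frac{1}{n}\log P_0^\w(L_n^\infty\in C)\le -\inf_{\mu\in C} H_{q,\bP}^S(\mu)\qquad\bP\text{-a.s.}$$
for every closed $C\subseteq\cM_1(\Ospace_\bN)$. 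The passage from the random field of local estimates to a single $\bP$-a.s.\ bound is the point at which \eqref{polyell} with $p>d+1$ is used: the quenched log-partition functions over boxes of side $n$ deviate from their averages by more than $\varepsilon n$ with probability controlled by a $p$-th moment bound, and the number of relevant space--time boxes grows polynomially in $n$ of degree $d+1$, so Borel--Cantelli closes the argument. Lower semicontinuous regularization then replaces $H_{q,\bP}^S$ by $(H_{q,\bP}^S)^{**}$.

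For the matching lower bound, given an open set $G$ and a target $\mu\in G$ with $H_q(\mu)<\infty$, I would tilt $P_0^\w$ by the same Radon--Nikodym derivative, run the tilted (now nonstationary) walk, and invoke the ergodic theorem for $\bar\mu$ to see that $L_n^\infty\Rightarrow\mu$ under the tilt. The usual variational inequality then produces
$$\liminf_{n\to\infty}\frac{1}{n}\log P_0^\w(L_n^\infty\in G)\ge -H_q(\mu)\qquad\bP\text{-a.s.},$$
after handling the non-ergodic case by decomposing $\mu$ into its $S$-ergodic components and exploiting the concavity of the $\liminf$. Once both bounds are in hand, the fact that the limiting rate function is automatically lower semicontinuous means the effective rate function is the lsc envelope of $H_{q,\bP}^S$, and since $H_{q,\bP}^S$ is already convex (as an affine functional of $\bar\mu_-\times\pi^{\bar\mu}$ composed with a convex entropy) this envelope coincides with the double convex conjugate $(H_{q,\bP}^S)^{**}$.

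The main obstacle I anticipate is the interplay between the absolute continuity constraint $\mu_\Omega\ll\bP$ and lower semicontinuity: weak limits of absolutely continuous measures need not be absolutely continuous, so $H_{q,\bP}^S$ genuinely fails to be lsc, and one must verify that no regularization beyond convexification is lost. Concretely, one needs to show that every $\mu$ with $(H_{q,\bP}^S)^{**}(\mu)<\infty$ can be approximated in the weak topology by measures $\mu_k$ with $\mu_{k,\Omega}\ll\bP$ and $H_q(\mu_k)\to(H_{q,\bP}^S)^{**}(\mu)$, so that the lower bound extends from the domain of $H_q$ to the full domain of the putative rate function. This approximation, together with the Borel--Cantelli transfer requiring the full strength of \eqref{polyell}, constitutes the technical core of the argument.
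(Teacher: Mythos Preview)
Your opening plan contains a conceptual error: you propose to derive the quenched LDP by ``transferring'' the averaged LDP of Section~\ref{sec:3aLDP} via a Borel--Cantelli argument, but this cannot work because $I_{3,a}$ and $I_{3,q}$ are genuinely different (with $I_{3,a}\le I_{3,q}$ strictly in general, as the paper explores at length). An averaged upper bound yields the wrong rate, and no concentration estimate repairs this. Your subsequent paragraphs do not actually use the averaged LDP, so the first sentence should simply be dropped.

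More seriously, your upper-bound paragraph is the \emph{lower}-bound argument. You begin ``Given any $S$-invariant $\mu$ with $\mu_\Omega\ll\bP$ and $H_q(\mu)<\infty$'' and tilt by the Radon--Nikodym derivative; that is precisely how one obtains the lower bound (and indeed your later lower-bound paragraph repeats the same idea). The Donsker--Varadhan upper bound goes the other direction: one controls $\limsup_n n^{-1}\log E_0^\w[e^{n\langle f,L_n^\infty\rangle}]$ for local test functions $f$, shows this limit is $\bP$-a.s.\ deterministic, and obtains the rate as its convex conjugate. The moment hypothesis \eqref{polyell} enters here, but not as a transfer from averaged to quenched: it guarantees that ergodic sums $n^{-1}\sum_{j\le\varepsilon n}|\log\omega_{i+j,x+jz}(z')|$ are uniformly small over all starting points $(i,x)$ with $0\le i\le n$, $|x|_1\le cn$---a polynomial-in-$n$ family of points---via a Fuk--Nagaev bound plus Borel--Cantelli. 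This uniform control is what the paper calls the class~$\cL$ condition.

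For comparison, the paper does not give a self-contained proof of Theorem~\ref{3qLDP} at all. It invokes the general level-3 quenched LDP from \cite{RasSepYil13}, and the only work in the present paper is Proposition~\ref{prop:suffcon3q} in Appendix~\ref{app:sufficient}, which verifies the class~$\cL$ hypothesis under \eqref{polyell}. Your lower-bound sketch is essentially correct and matches what \cite{RasSep11} does (reproduced here as Theorem~\ref{altsiniroz}); your final paragraph correctly identifies the genuine difficulty with lower-semicontinuous regularization.
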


This result is a special case of the level-3 quenched LDP we  established in \cite{RasSepYil13} for a class of models including both directed and undirected RWRE with a rather general but technical condition on the environment measure. We show in Proposition \ref{prop:suffcon3q} in Appendix \ref{app:sufficient} that this technical condition holds in our current setting under the ellipticity assumption \eqref{polyell}.

Since the empirical velocity $$\frac{X_n}{n} = E^{L_n^\infty}[Z_1] = \int z_1L_n^\infty(d\w,d\zvec)$$ is a bounded and continuous function of the empirical process, the level-1 quenched LDP follows immediately from Theorem \ref{3qLDP} via the contraction principle (see, e.g., \cite[Chapter 3]{RasSep15}).

\begin{corollary}[Level-1 quenched LDP]\label{1qLDP}
Assume 
\eqref{polyell}. Then, for $\mathbb{P}$-a.e.\ $\omega$, $\left(P_0^\omega\left(\frac{X_n}{n}\in\cdot\,\right)\right)_{n\geq1}$ satisfies an LDP with rate function $I_{1,q}:\bR^d\to[0,\infty]$ given by
\begin{align}
I_{1,q}(\xi) &= \inf\{I_{3,q}(\mu):\,\mu\in\cM_1(\Ospace_\bN), E^\mu[Z_1] = \xi\}.\label{nasilda}
\end{align} 
\end{corollary}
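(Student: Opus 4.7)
The plan is to apply the contraction principle directly to the level-3 quenched LDP in Theorem \ref{3qLDP}. The key identity is already noted in the statement of the corollary:
\[
\frac{X_n}{n} = \frac{1}{n}\sum_{i=0}^{n-1} Z_{i+1} = E^{L_n^\infty}[Z_1] = F(L_n^\infty),
\]
where $F:\cM_1(\Ospace_\bN)\to\bR^d$ is defined by $F(\mu) = E^\mu[Z_1] = \int z_1\,\mu(d\w,d\zvec)$. Thus the quenched law of $X_n/n$ under $P_0^\w$ is the pushforward under $F$ of the quenched law of $L_n^\infty$.

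The only thing to check is that $F$ is continuous with respect to the weak topology on $\cM_1(\Ospace_\bN)$. Since $\cR$ is finite, the coordinate map $(\w,\zvec)\mapsto z_1$ is bounded and continuous on $\Ospace_\bN = \Omega\times\cR^\bN$ equipped with the product topology, so $\mu\mapsto F(\mu)$ is continuous by the very definition of weak convergence of probability measures. (Each component $F_j(\mu) = \int (z_1)_j\,d\mu$ is the integral of a bounded continuous function.)

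Given Theorem \ref{3qLDP} and this continuity, the contraction principle (see, e.g., \cite[Chapter 3]{RasSep15}) immediately yields, for $\bP$-a.e.\ $\w$, the LDP for $(P_0^\w(X_n/n\in\cdot\,))_{n\ge 1}$ with rate function
\[
I_{1,q}(\xi) \;=\; \inf\{I_{3,q}(\mu) : \mu\in\cM_1(\Ospace_\bN),\ F(\mu) = \xi\} \;=\; \inf\{I_{3,q}(\mu) : E^\mu[Z_1] = \xi\},
\]
which is exactly \eqref{nasilda}. Since $I_{3,q}$ has compact level sets (as a good rate function from Theorem \ref{3qLDP}), the infimum defines a good rate function on $\bR^d$, and lower semicontinuity is automatic from the contraction principle.

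There is no real obstacle here: the corollary is essentially a one-line consequence of Theorem \ref{3qLDP}, hinging only on the trivial observation that the step variable $Z_1$ is a bounded continuous functional of $(\w,\zvec)$.
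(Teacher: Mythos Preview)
Your proof is correct and follows exactly the same approach as the paper: the paper notes (in the sentence immediately preceding the corollary) that $X_n/n = E^{L_n^\infty}[Z_1]$ is a bounded and continuous function of the empirical process, and invokes the contraction principle from Theorem \ref{3qLDP}. Your write-up simply spells this out in slightly more detail.
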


After the appearance of \cite{RasSepYil13}, the level-1 quenched LDP was established in \cite{CDRRS13} using an alternative method involving the subadditive ergodic theorem, under the stronger assumption of
\be\label{unifell}
\text{uniform ellipticity: $\exists\,c>0$ such that $\bP(\w_{0,0}(z)\ge c) = 1$ for every $z\in\cR$.}
\ee
Originally developed in \cite{Var03} for undirected RWRE, this second method is less technical and it avoids empirical measures, but it does not provide   any formula for the rate function $I_{1,q}$.

Let $\cD := \text{conv}(\cR)$ denote the convex hull of $\cR$, and $\llnxi := \sum_{z\in\cR}\hat q(z)z$ stand for the LLN velocity of the walk. The following proposition lists some elementary facts regarding the level-1 averaged and quenched rate functions. We provide its proof in Appendix \ref{app:elementary} for the sake of completeness.

\begin{proposition}\label{prop:elem}
Assume 
\eqref{polyell}. Then, the following hold.
\begin{itemize}
\item [(a)] $I_{1,a}$ and $I_{1,q}$ are convex and continuous on $\cD$.
\item [(b)] $I_{1,a}(\xi) \le I_{1,q}(\xi) \le \max\{\bE[|\log\w_{0,0}(z)|]:\,z\in\cR\}<\infty$ for every $\xi\in\cD$.
\item [(c)] $I_{1,a}(\xi) = 0 $ iff $I_{1,q}(\xi) = 0$ iff $\xi = \llnxi$.
\item [(d)] $I_{1,a}(z) < I_{1,q}(z)$ for every $z\in\cR$ that is an extreme point of $\cD$ $\mathrm{(}$unless $\w_{0,0}(z)$ is deterministic$\mathrm{)}$.  
\end{itemize}
\end{proposition}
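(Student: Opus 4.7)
My plan is to prove (a)--(d) in order, with each part using earlier ones where convenient.

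For (a), convexity of $I_{1,a}$ is immediate from its definition \eqref{level1a} as a Legendre transform. Convexity of $I_{1,q}$ follows from the representation \eqref{nasilda}, which contracts the convex function $I_{3,q}=(H_{q,\bP}^{S})^{**}$ through the linear constraint $E^\mu[Z_1]=\xi$. Continuity on the compact polytope $\cD$ then follows from general convex-analytic facts once we know (from (b)) that both functions are finite on $\cD$: a convex, l.s.c., finite-valued function on a polytope is continuous, because convexity provides upper semicontinuity up to the boundary and l.s.c.\ then forces continuity.

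For (b), the inequality $I_{1,a}(\xi)\le I_{1,q}(\xi)$ follows from the standard averaged-vs-quenched LDP comparison: for any open ball $B_r(\xi)$ and any $\e>0$, the quenched LDP lower bound gives $P_0^\w(X_n/n\in B_r(\xi))\ge e^{-n(I_{1,q}(\xi)+\e)}$ eventually for $\bP$-typical $\w$; integrating yields $P_0(X_n/n\in B_r(\xi))\ge\tfrac12 e^{-n(I_{1,q}(\xi)+\e)}$ for all large $n$, and combining with the averaged LDP upper bound on $\bar B_r(\xi)$ gives $\inf_{\bar B_r(\xi)}I_{1,a}\le I_{1,q}(\xi)+\e$; sending $r,\e\to0$ and using continuity of $I_{1,a}$ closes the argument. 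For the upper bound, given $\xi=\sum_{z\in\cR}\alpha(z)z\in\cD$ with $\alpha\in\cP$, consider the trial measure $\mu$ on $\Ospace_\bN$ with $\Omega$-marginal $\bP$ and environment-independent step kernel $\alpha$; this $\mu$ is $S$-invariant with $E^\mu[Z_1]=\xi$, and a direct computation gives
\[
H_q(\mu) = \bE\Bigl[\sum_{z\in\cR}\alpha(z)\log\frac{\alpha(z)}{\w_{0,0}(z)}\Bigr] \le \max_{z\in\cR}\bE[|\log\w_{0,0}(z)|],
\]
which is finite by \eqref{polyell}. Since $I_{3,q}\le H_{q,\bP}^S$, this bounds $I_{1,q}(\xi)$.

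For (c), $I_{1,a}(\xi)=0\iff\xi=\llnxi$ is Cram\'er, and $I_{1,q}(\xi)=0\implies\xi=\llnxi$ follows from (b). For the remaining implication $I_{1,q}(\llnxi)=0$, observe that the averaged path measure $P_0$, viewed as a measure on $\Ospace_\bN$, is $S$-invariant (since under the i.i.d.\ temporal and spatially translation invariant assumptions $\bP$ is invariant for the environment Markov chain $(T_{i,X_i}\w)_{i\ge0}$), has $\Omega$-marginal $\bP$, and satisfies $E^{P_0}[Z_1]=\llnxi$; moreover $H_q(P_0)=0$ since its conditional step law equals $\pi_{0,1}(\cdot\mid\w)=\w_{0,0}(\cdot)$. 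Hence $I_{1,q}(\llnxi)\le I_{3,q}(P_0)\le H_{q,\bP}^S(P_0)=0$.

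For (d), since $z$ is extreme in $\cD$, there is $v\in\bR^d$ with $\langle v,z\rangle>\langle v,y\rangle$ for all $y\in\cR\setminus\{z\}$, forcing $X_n=nz$ iff all $n$ steps equal $z$. Hence $P_0(X_n=nz)=\hat q(z)^n$ gives $I_{1,a}(z)=-\log\hat q(z)$, and $P_0^\w(X_n=nz)=\prod_{i=0}^{n-1}\w_{i,iz}(z)$ whose $n^{-1}\log$ converges to $\bE[\log\w_{0,0}(z)]$ a.s.\ by the strong LLN applied to the i.i.d.\ sequence $(\w_{i,iz}(z))_{i\ge0}$ (i.i.d.\ by temporal independence). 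The LDP upper bound on the closed singleton $\{z\}$ yields $I_{1,q}(z)\le -\bE[\log\w_{0,0}(z)]$. For the matching lower bound, I would apply Varadhan duality: the quenched LDP and finiteness of $I_{1,q}$ on compact $\cD$ give $I_{1,q}(z)=\sup_\rho[\langle\rho,z\rangle-\Lambda^q(\rho)]$ where $\Lambda^q(\rho)=\lim_n n^{-1}\log E_0^\w[e^{\langle\rho,X_n\rangle}]$; taking $\rho=tv$ with $t$ large, the separation provided by $v$ makes the $\xi=z$ summand dominate the exponential sum $\sum_\xi e^{tn\langle v,\xi\rangle}P_0^\w(X_n=n\xi)$ (the other terms are smaller by a factor $e^{-n(t\delta+\bE[\log\w_{0,0}(z)])}$ times a polynomial count of endpoints), giving $\Lambda^q(tv)=t\langle v,z\rangle+\bE[\log\w_{0,0}(z)]$ for $t$ large, hence $I_{1,q}(z)\ge -\bE[\log\w_{0,0}(z)]$. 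Jensen's strict inequality $\bE[\log\w_{0,0}(z)]<\log\bE[\w_{0,0}(z)]=\log\hat q(z)$ (strict unless $\w_{0,0}(z)$ is deterministic) then yields $I_{1,q}(z)>I_{1,a}(z)$. The main delicate step is the combinatorial domination in this Varadhan computation, where the pure-$z$ contribution must exponentially dominate competing paths once $t$ is large relative to $|\bE[\log\w_{0,0}(z)]|/\delta$.
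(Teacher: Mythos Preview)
Your treatment of (a) and (b) is essentially the same as the paper's and is fine.

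There are two problems, one minor in (c) and one genuine in (d).

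\medskip

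\textbf{(c).} Your claim that $P_0$ is $S$-invariant on $\Ospace_\bN=\Omega\times\cR^\bN$ is false; the paper notes this explicitly in Remark~\ref{canmur}. The environment $\Omega$ carries slabs at all times, and after applying $S$ the slab at time $-1$ of $T_{1,Z_1}\w$ is $T^s_{Z_1}\wlev_0$, whose law under $P_0$ is $\bP_s$ reweighted by $f_1(\w)=\sum_{z\in\cR}\w_{0,-z}(z)$, not $\bP_s$. Consequently $H_{q,\bP}^{S}(P_0)=\infty$ and your inequality $I_{1,q}(\llnxi)\le H_{q,\bP}^{S}(P_0)=0$ collapses. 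The fix is easy: use the $S$-invariant measure $\mu^{\llnxi}$ (equivalently $P_0^\infty$), which agrees with $P_0$ on $\cA_{0,\infty}^{0,\infty}$ and has $H_q(\mu^{\llnxi})=0$. The paper avoids the issue altogether by simply invoking the LLN: since $X_n/n\to\llnxi$ almost surely, both $I_{1,a}(\llnxi)$ and $I_{1,q}(\llnxi)$ vanish.

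\medskip

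\textbf{(d).} Your Varadhan argument for the lower bound $I_{1,q}(z)\ge -\bE[\log\w_{0,0}(z)]$ has a genuine gap. You claim that in the sum $\sum_\xi e^{tn\langle v,\xi\rangle}P_0^\w(X_n=n\xi)$ the terms with $\xi\ne z$ are smaller than the pure-$z$ term by a factor $e^{-n(t\delta+\bE[\log\w_{0,0}(z)])}$. But the endpoints $\xi=X_n/n$ take values throughout $\cD$, not just in $\cR$: for a path with a single non-$z$ step one has $\langle v,z-\xi\rangle$ of order $\delta/n$, not $\delta$. So the exponential gap from such a term to the pure-$z$ term is $e^{-t\delta}$, while the pure-$z$ term itself carries the factor $\prod_i\w_{i,iz}(z)\approx e^{n\bE[\log\w_{0,0}(z)]}$, which is \emph{exponentially small} in $n$. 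Hence for any fixed $t$ the pure-$z$ term does \emph{not} dominate as $n\to\infty$, and your asserted formula $\Lambda^q(tv)=t\langle v,z\rangle+\bE[\log\w_{0,0}(z)]$ does not follow from this reasoning. (It is in fact true, but proving it is equivalent to what you are trying to establish.)

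The paper's route is direct and avoids this circularity: apply the quenched LDP \emph{lower} bound to the open ball $B_\e(z)$ to get
\[
-I_{1,q}(z)\le\liminf_{n\to\infty}\frac1n\log P_0^\w\bigl(|X_n/n-z|<\e\bigr),
\]
and then bound the right-hand side from above combinatorially. Since $z$ is extreme, every path with $|X_n/n-z|<\e$ has at least $(1-c\e)n$ steps equal to $z$, taken at environment sites whose transition probabilities $\w_{i,\cdot}(z)$ are i.i.d.\ across $i$ by temporal independence; together with the $e^{nO(\e)}$ path count this gives $-I_{1,q}(z)\le(1-c\e)\bE[\log\w_{0,0}(z)]+O(\e)$. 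Sending $\e\to0$ yields the required lower bound. Combined with your (correct) upper bound $I_{1,a}(z)\le-\log\hat q(z)$ and Jensen, this gives $I_{1,a}(z)<I_{1,q}(z)$.
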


Under additional assumptions, the following further results have been obtained regarding the comparison of the level-1 averaged and quenched rate functions in relation with the spatial dimension.
\begin{theorem}\label{thm:preveze}
Assume that
\begin{align}
&\text{$(\w_{i,x})_{(i,x)\in\bZ\times\bZ^d}$ are i.i.d.,}\nonumber\\
&\text{the environment is uniformly elliptic $\mathrm{(}$see \eqref{unifell}$\mathrm{)}$, and}\label{specass}\\
&\text{the walk is nearest-neighbor, i.e., $\cR = U := \{\pm e_1,\ldots,\pm e_d\}$.}\nonumber
\end{align}
Then, the following hold at the indicated spatial dimensions.
\begin{itemize}
\item [(a)] $(d=1)$ $I_{1,a}(\xi) < I_{1,q}(\xi)$ for every $\xi\in\cD\setminus\{\llnxi\}$, see \cite[Theorem 1.5]{YilZei10}.
\item [(b)] $(d=2)$ $I_{1,a}(\xi) < I_{1,q}(\xi)$  for every $\xi\in\cD$ in a punctured neighborhood of $\llnxi$, see \cite[Theorem 1.6]{YilZei10}.
\item [(c)] $(d\ge3)$ $I_{1,a}(\xi) = I_{1,q}(\xi)$ for every $\xi\in\cD$ in a neighborhood of $\llnxi$, see \cite[Theorem 2]{Yil09a}.
\end{itemize}
\end{theorem}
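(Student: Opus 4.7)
The statement is a compilation of previously established results from the cited papers, so the plan is to invoke those results and record the main ideas underlying each. All three parts exploit the assumptions \eqref{specass}; the dichotomy between $d\leq 2$ and $d\geq 3$ reflects whether the difference of two independent copies of the RWDRE (with the same underlying drift) is recurrent or transient.

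For part (a), the 1D argument of \cite[Theorem 1.5]{YilZei10} exploits the rigid geometry of a nearest-neighbor walk on $\bZ$: the environment seen from the particle has a highly constrained Markovian structure, and forcing the walk to have any non-LLN velocity demands a systematic bias of the environment along the entire visited range, producing a cost strictly larger than the step-by-step Cram\'er cost $I_{1,a}$. For part (b) I would follow \cite[Theorem 1.6]{YilZei10} and do a perturbative expansion of the quenched log-moment generating function around $\rho=0$ in the Legendre-dual variable: the recurrence of the zero-mean difference walk in $d=2$ produces a strictly positive second-order correction relative to the annealed generating function, and Legendre duality pushes $I_{1,q}$ strictly above $I_{1,a}$ in a punctured neighborhood of $\llnxi$.

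For part (c) I would follow \cite[Theorem 2]{Yil09a}: construct regeneration times for the walk tilted by small $\rho$ (which remains ballistic in a neighborhood of $\rho=0$), decompose the path into i.i.d.\ blocks, and use the transience of the difference of two independent tilted copies in $d\geq 3$ to control inter-block environment correlations by a summable intersection count. This shows the quenched and averaged log-moment generating functions agree on a neighborhood of $\rho=0$, and Legendre duality then yields $I_{1,a}=I_{1,q}$ on a neighborhood of $\llnxi$.

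The main obstacle is part (c): extending the equality from the single point $\xi=\llnxi$, where it is already trivial by Proposition \ref{prop:elem}(c), to an open neighborhood requires uniform control of the quenched–annealed gap in the free energy over a range of tilts. This is achieved in \cite{Yil09a} by a second-moment argument built around the summability of intersection counts of independent transient random walks in $d\geq 3$; reproducing it by a different method (for instance starting directly from the empirical-process rate function of Theorem \ref{3qLDP}) would be an interesting but nontrivial alternative.
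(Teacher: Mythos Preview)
Your proposal is correct and matches the paper's treatment: Theorem \ref{thm:preveze} is stated in the paper as a summary of prior results with no proof given beyond the inline citations to \cite{YilZei10} and \cite{Yil09a}, and your plan is likewise to invoke those references. The sketch of ideas you add (recurrence/transience dichotomy for the difference walk, perturbative expansion for $d=2$, regeneration and second-moment control for $d\ge 3$) goes somewhat beyond what the paper records but is consistent with the cited proofs.
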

Examining the proofs given in the references reveals that the last two conditions in \eqref{specass} can be replaced with somewhat weaker versions. However, the spatial independence of the environment is crucial to the proofs and  cannot be relaxed much.

There are other previous results on large deviations for RWDRE such as the ones in \cite{Yil09a} regarding the analysis of the averaged and quenched contractions from level-3 to level-1, but we prefer to mention them in later parts of this paper because they will be either covered by our new results or used in the proofs.

Our temporal independence assumption excludes various concrete models such as RW on particle systems. Level-1,2,3 quenched LDPs for such models (which satisfy uniform ellipticity \eqref{unifell}) are covered in \cite{RasSepYil13}, but averaged LDPs are open in general. See \cite{AveHolRed10} for level-1 averaged and quenched LDPs for RW on one-dimensional shift-invariant attractive spin-flip systems. Finally, for previous results on large deviations for RWRE and closely related models, see \cite[Section 2]{Yil11a}, \cite{Ras04} and \cite[Section 1.3]{RasSepYil13}, and the references therein.

\section{Results}\label{sec:results}

\subsection{Level-3 averaged LDP}

For any $\Smap$-invariant $\mu\in\cM_1(\Ospace_\bN)$, the specific relative entropy 
\be\label{def:hmuPzero}
h(\mu\,\vert\, P_0)
=\lim_{\ell\to\infty}
\frac1{\ell}H_{0,\ell}(\mu\,\vert\, P_0)
=\sup_{0<\ell<\infty} \frac1{\ell}H_{0,\ell}(\mu\,\vert\, 
P_0)
\ee
exists, where
\be\label{relent}
H_{k,\ell}(\mu\,\vert\, P_0) := H_{\cA_{k,\ell}^{k,\ell}}(\mu\,\vert\, P_0) = \sup_{f\in b\cA_{k,\ell}^{k,\ell}} \{E^\mu[f] - \log E_0[e^{f}]\}
\ee
is the entropy of $\mu$ relative to $P_0$ on $\cA_{k,\ell}^{k,\ell}$. The existence of the limit and the identity in \eqref{def:hmuPzero} follow from superadditivity and the independence built into $P_0$, and will be justified in Section \ref{sec:3aLDP}.

Our first result in this paper is the averaged counterpart of Theorem \ref{3qLDP}. Note that it requires only the temporal independence and spatial translation invariance conditions which we assume throughout the paper (see Section \ref{sec:modmod}).
\begin{theorem}[Level-3 averaged LDP]\label{3aLDP}
$(P_0(L_n^\infty\in\cdot\,))_{n\ge1}$ satisfies an LDP with rate function $$I_{3,a}: \cM_1(\Ospace_\bN)\to[0,\infty]$$ given by
\be\label{level3a}
I_{3,a}(\mu)=\begin{cases} h(\mu\,\vert\, P_0) 
&\text{if $\mu$ is $\Smap$-invariant,}\\
 \infty &\text{otherwise.}
\end{cases}
\ee
\end{theorem}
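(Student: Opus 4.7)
The plan exploits a hidden i.i.d.\ structure of $P_0$ under $\Smap$. Two preliminary observations are key: (a) $P_0$ is itself $\Smap$-invariant on $\Ospace_\bN$, which follows from temporal independence $\bP=\bP_s^{\otimes\bZ}$, spatial invariance of $\bP_s$, and the Markov property of the quenched walk; (b) the variables $Y_i:=(T^s_{X_i}\wlev_i,Z_{i+1})$, $i\ge 0$, are i.i.d.\ under $P_0$ with common law $\nu(d\alpha,dz)=\bP_s(d\alpha)\alpha_0(z)$ on $\cP^{\bZ^d}\times\cR$, because the slabs $(\wlev_i)_{i\ge 0}$ are temporally independent and spatial invariance of $\bP_s$ erases the dependence of $T^s_{X_i}\wlev_i$ on $X_i$. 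In particular, every cylinder observable on $\Ospace_\bN$ measurable with respect to $\cA_{0,\ell}^{0,\ell}$ is a measurable function of $Y_0,\ldots,Y_{\ell-1}$ and of the independent past slabs $(\wlev_j)_{j<0}$, once one has applied the spatial shifts $T^s_{-X_i}$.

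Given this structure, the existence of $h(\mu\,|\,P_0)$ in \eqref{def:hmuPzero} for $\Smap$-invariant $\mu$ comes from the superadditivity
\[
H_{0,\ell+m}(\mu\,|\,P_0)\;\ge\;H_{0,\ell}(\mu\,|\,P_0)+H_{0,m}(\mu\,|\,P_0).
\]
By the chain rule for relative entropy this reduces to bounding the conditional entropy on $\cA_{\ell,\ell+m}^{\ell,\ell+m}$ given $\cA_{0,\ell}^{0,\ell}$ from below by $H_{0,m}(\mu\,|\,P_0)$. The change of variables $\wvec_{\ell,\ell+m-1}\mapsto T^s_{-X_\ell}\wvec_{\ell,\ell+m-1}$, powered by temporal independence and spatial invariance of $\bP_s$, converts the conditional $P_0$-distribution on $\cA_{\ell,\ell+m}^{\ell,\ell+m}$ into the unconditional $\cA_{0,m}^{0,m}$-marginal, while $\Smap$-invariance of $\mu$ does the analogous transformation on the $\mu$-side. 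Fekete's lemma then delivers both the limit and the supremum.

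For the LDP itself, the plan is the Dawson--G\"artner projective-limit route. For each $\ell$, project $L_n^\infty$ to the $\ell$-block empirical measure $L_n^\ell$ re-centered by the spatial shifts $T^s_{-X_i}$, taking values in $\cM_1((\cP^{\bZ^d})^\ell\times\cR^\ell)$. The i.i.d.\ representation of $(Y_i)$ lets one view $L_n^\ell$ as an empirical distribution of overlapping $\ell$-blocks of an i.i.d.\ sequence, so the classical level-3 Sanov / Donsker--Varadhan LDP for stationary sequences of i.i.d.\ blocks applies and yields an LDP for $L_n^\ell$ with rate $\frac1\ell H_{0,\ell}(\,\cdot\,|\,P_0)$. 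Taking the projective limit in $\ell$ produces an LDP for $L_n^\infty$ with rate $h(\,\cdot\,|\,P_0)$; the asymptotic $\Smap$-invariance of $L_n^\infty$ noted in Remark \ref{aboutSinv} forces the rate function to be $+\infty$ off the $\Smap$-invariant measures. The matching lower bound is obtained by the standard change-of-measure trick: approximate an $\Smap$-invariant $\mu$ with $h(\mu\,|\,P_0)<\infty$ by the $\ell$-block-i.i.d.\ measure $\mu_\ell$ obtained by drawing $\ell$-blocks independently from the $\ell$-block marginal of $\mu$, tilt $P_0$ to $\mu_\ell$, and use ergodicity of the tilted measure to force $L_n^\infty$ close to $\mu$ with relative-entropy cost per unit time arbitrarily close to $h(\mu\,|\,P_0)$.

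The principal obstacle is the entanglement of the time-shift with the spatial shift by $Z_1$ built into $\Smap$: the cylinder $\sigma$-algebras $\cA_{k,\ell}^{k,\ell}$ are not $\Smap$-equivariant, so each step above---the superadditivity, the Sanov step at finite $\ell$, and the change-of-measure tilting---has to silently invoke spatial invariance of $\bP_s$ through the substitution $\wvec_{k,\ell-1}\mapsto T^s_{-X_k}\wvec_{k,\ell-1}$ in order to line shifted quantities up with unshifted ones. Keeping this bookkeeping consistent throughout is what turns the proof from a bare appeal to classical i.i.d.\ Sanov machinery into a genuinely delicate argument.
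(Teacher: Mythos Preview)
Your claim (a) is false: $P_0$ is \emph{not} $S$-invariant on $\Ospace_\bN$. The paper states this explicitly in Remark~\ref{canmur}, and Remark~\ref{shiftprob} gives a concrete illustration of the obstruction: under $P_0$ the distribution of $(\wvec_i,Z_{i+1})$ genuinely changes with $i$, because after applying $S$ the level $-1$ environment of $T_{1,Z_1}\omega$ is $T^s_{Z_1}\wvec_0$, which is correlated with $Z_1$. Fortunately your argument never actually invokes (a); it is a misconception rather than a fatal error.

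Your observation (b) about the i.i.d.\ slab variables $Y_i=(T^s_{X_i}\wlev_i,Z_{i+1})$ is correct and is indeed the engine of the proof. The paper exploits exactly this structure but packages it differently. Rather than Dawson--G\"artner, it first constructs an $S$-invariant extension $P_0^\infty$ of $P_0$ to the bi-infinite space $\Ospace_\bZ$ by adjoining a backward path (Lemmas~\ref{Pinfty-lm2}--\ref{Pinfty-lm3}); under $P_0^\infty$ the bi-infinite slab sequence $(\slabv_j)_{j\in\bZ}$ is i.i.d. The classical Donsker--Varadhan level-3 LDP for i.i.d.\ sequences (Proposition~\ref{pr:slabldp}) then applies directly, two contractions bring the LDP back to $L_n^\infty$ under $P_0^\infty$, and a final transfer from $P_0^\infty$ to $P_0$ (which agree on $\cA_{0,\infty}^{0,\infty}$) completes the argument.

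Your projective-limit route has a genuine gap at exactly the step you flag as delicate. The projections you describe, valued in $\cM_1((\cP^{\bZ^d})^\ell\times\cR^\ell)$, see only environment levels $0,\dots,\ell-1$. But $L_n^\infty$ lives in $\cM_1(\Ospace_\bN)$ with $\Ospace_\bN=\Omega\times\cR^\bN$ and $\Omega=\cP^{\bZ\times\bZ^d}$ carrying \emph{all} time levels; the weak topology on $\cM_1(\Ospace_\bN)$ is generated by test functions in $b\cA_{-k,\ell}^{0,\ell}$ for all $k\ge 0$, not just $k=0$. Your projective system therefore does not separate points of $\cM_1(\Ospace_\bN)$ and Dawson--G\"artner does not directly apply. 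To repair this you must enlarge the projections to include negative environment levels, and there the one-sided i.i.d.\ structure of $(Y_i)_{i\ge 0}$ no longer suffices: the level $-k$ of $T_{i,X_i}\omega$ is $T^s_{X_i}\wvec_{i-k}$, which for $1\le k\le i$ is correlated with the path through $X_i$. The paper's construction of $P_0^\infty$ with its backward path is precisely what resolves this, turning the slab sequence into a bi-infinite i.i.d.\ sequence so that negative levels are handled symmetrically with positive ones.
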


\begin{remark}\label{canmur}
The appearance of $S$-invariance in \eqref{tanimuymaz} and \eqref{level3a} is natural, as observed in Remark \ref{aboutSinv}. 

Every $S$-invariant $\mu\in\cM_1(\Ospace_\bN)$ arises in the following way. Consider $\Ospace_\bZ$ as the product space $(\cP^{\bZ^d}\!\times\cR)^\bZ$ with generic variable $(\w,\zvecd)=(\bar\w_i, z_{i+1})_{i\in\bZ}$ and temporal shift mapping  $(\tau(\w,\zvecd))_i= (\bar\w_{i+1}, z_{i+2})$.   
 Let $\nu$ be a $\tau$-invariant probability measure on  $\Ospace_\bZ$.  Recalling \eqref{xz},  let $\bar\mu\in\cM_1(\Ospace_\bZ)$ be the distribution of the sequence $(T^s_{-x_i}\bar\w_i, z_{i+1})_{i\in\bZ}$ under $\nu$, and finally  let $\mu$ be the marginal of $\bar\mu$ on $\Ospace_\bN$ obtained by dropping the nonpositive steps $z_{-\infty,0}$.  
 
$P_0$ is not $S$-invariant (on $\cA_{-\infty,\infty}^{0,\infty}$), but there is a unique $S$-invariant probability measure $P^\infty_0$ on $\Ospace_\bZ$ that agrees with $P_0$ on $\cA_{0,\infty}^{0,\infty}$ (see Lemmas \ref{Pinfty-lm2} and \ref{Pinfty-lm3}).
The LDP of Theorem \ref{3aLDP} is valid also for the distributions  $(P^\infty_0(L_n^\infty\in\cdot\,))_{n\ge1}$ and will in fact be proved first for these.  
\end{remark}

Similar to Corollary \ref{1qLDP}, the contraction principle gives the following (infinite-dimensional) variational formula for the level-1 averaged rate function:
\be\label{a_contraction}
I_{1,a}(\xi) = \inf\{I_{3,a}(\mu):\,\mu\in\cM_1(\Ospace_\bN), E^\mu[Z_1] = \xi\}.
\ee
Since (\ref{level1a}) is a much simpler formula than (\ref{a_contraction}), the significance of the latter lies not in providing a numerical value for $I_{1,a}(\xi)$, but in the questions it raises regarding the minimizer(s) of this variational formula, which we pursue next.

\subsection{Minimizer of the averaged contraction}\label{subsecmini}

Recall from (\ref{level1a}) that the level-1 averaged rate function $I_{1,a}$ is the convex conjugate of the logarithm of the moment generating function $\phi_a$ defined in (\ref{MGF}).   We have not assumed that $\cD$ has nonempty interior. Consequently  $I_{1,a}$ is not necessarily differentiable, and instead of its gradient we have to work with the set-valued subdifferential $\partial I_{1,a}(\xi)$.     Facts from convex analysis and some proofs of the claims below are  collected in   Appendix \ref{app:subdifferential}.  

Let  $\xi\in\mathrm{ri}(\cD)$, the relative interior of $\cD$.  By basic convex analysis,     every $\rho\in\partial I_{1,a}(\xi)$  maximizes in (\ref{level1a}), that is, 
$$I_{1,a}(\xi) = \langle\rho,\xi\rangle - \log\phi_a(\rho).$$          $I_{1,a}$ is differentiable at $\xi$ if and only if $\partial I_{1,a}(\xi)$ is a singleton if and only if $\mathrm{dim}(\cD) = d$. 
 In general    $\partial I_{1,a}(\xi)$   is a nonempty affine subset of $\bR^d$ parallel to the orthogonal complement of the affine hull of $\cR$.   
From this last point it follows that  any $\rho\in\partial I_{1,a}(\xi)$ can be used below  to define a measure $\mu^\xi\in\cM_1(\Ospace_\bN)$:   for $-\infty<k\le 0 <\ell<\infty$ and a test function $f\in b\cA_{k,\ell}^{0,\ell}$,
\be\label{sutesti}
\int f(\w,\zvec)\mu^\xi(d\w,d\zvec) := E_0\bigl[e^{\langle\rho, X_{\ell-k}\rangle - (\ell-k)\log\phi_a(\rho)}f\circ S^{-k}(\w,\Zvec)\bigr].
\ee
Proposition \ref{prop:elem2} in Section \ref{sec:mini}    provides basic properties of  $\mu^\xi$, beginning with its well-definedness. 

The  second  result in this paper identifies $\mu^\xi$ as the unique minimizer of the averaged contraction from level-3 to level-1.
\begin{theorem}\label{thm:uniquemin}
For every $\xi\in\mathrm{ri}(\cD)$, $\mu^\xi$ is the unique minimizer of the variational formula (\ref{a_contraction}) of the averaged contraction from level-3 to level-1.
\end{theorem}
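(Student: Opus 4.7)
The plan is to show that $\mu^\xi$ attains the infimum in \eqref{a_contraction} by computing its specific relative entropy explicitly, and then to use the equality case of Gibbs's variational formula together with $S$-invariance to rule out any other minimizer. I will take for granted here the basic properties of $\mu^\xi$ to be established in Proposition~\ref{prop:elem2}: well-definedness (independence of the choice of $\rho\in\partial I_{1,a}(\xi)$), $S$-invariance, and $E^{\mu^\xi}[Z_1]=\xi$. Set
\[
f_\ell := \langle\rho, X_\ell\rangle - \ell\log\phi_a(\rho),
\]
which is $\cA_{0,\ell}^{0,\ell}$-measurable and satisfies $E_0[e^{f_\ell}] = \phi_a(\rho)^\ell\cdot e^{-\ell\log\phi_a(\rho)} = 1$, since under $P_0$ the steps are i.i.d.\ with kernel $\hat q$. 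Setting $k=0$ in \eqref{sutesti} identifies $e^{f_\ell}$ as the Radon--Nikodym derivative of $\mu^\xi$ with respect to $P_0$ on $\cA_{0,\ell}^{0,\ell}$, so $H_{0,\ell}(\mu^\xi\,\vert\, P_0) = E^{\mu^\xi}[f_\ell]$. Using $S$-invariance (which gives $E^{\mu^\xi}[X_\ell] = \ell\xi$) and the identity $I_{1,a}(\xi) = \langle\rho,\xi\rangle - \log\phi_a(\rho)$ (the equality case of \eqref{level1a} for $\rho\in\partial I_{1,a}(\xi)$), this equals $\ell\, I_{1,a}(\xi)$. Dividing by $\ell$ and invoking \eqref{def:hmuPzero} yields $h(\mu^\xi\,\vert\, P_0) = I_{1,a}(\xi)$, so $\mu^\xi$ is a minimizer of \eqref{a_contraction}.

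For uniqueness, let $\mu$ be any $S$-invariant competitor with $E^\mu[Z_1]=\xi$. Plugging $f_\ell$ into the Gibbs variational formula \eqref{relent} and again using $S$-invariance,
\[
H_{0,\ell}(\mu\,\vert\, P_0) \ge E^\mu[f_\ell] - \log E_0[e^{f_\ell}] = \ell\, I_{1,a}(\xi),
\]
whence $h(\mu\,\vert\, P_0) \ge I_{1,a}(\xi)$. If $\mu$ is a minimizer then equality holds throughout, and since $h$ is the supremum of $\ell^{-1} H_{0,\ell}$ by \eqref{def:hmuPzero}, in fact $\ell^{-1}H_{0,\ell}(\mu\,\vert\,P_0) = I_{1,a}(\xi)$ for every $\ell\ge 1$. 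The equality case of Gibbs's variational formula---if $H(\mu\,\vert\,\nu) = E^\mu[f] - \log E^\nu[e^f]$ then necessarily $d\mu/d\nu = e^f/E^\nu[e^f]$---then forces $d\mu/dP_0 = e^{f_\ell}$ on $\cA_{0,\ell}^{0,\ell}$, so $\mu$ and $\mu^\xi$ agree on this sub-$\sigma$-algebra for every $\ell$.

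It remains to promote this agreement to the full Borel $\sigma$-algebra $\cA_{-\infty,\infty}^{0,\infty}$ of $\Ospace_\bN$, which also records the spatial environments $\wvec_i$ for $i<0$. For this I would exploit the $S$-invariance of both $\mu$ and $\mu^\xi$: for any bounded measurable function $g$ of $(\wvec_{-j,\ell-1}, Z_{1,\ell})$, the identity $\mu = \mu\circ S^j$ combined with $S^j(\w,\zvec) = (T_{j,X_j}\w, \theta^j\zvec)$ yields
\[
E^\mu\bigl[g(\wvec_{-j,\ell-1}, Z_{1,\ell})\bigr] = E^\mu\bigl[g\bigl(T^s_{X_j}\wvec_{0,\,j+\ell-1},\, Z_{j+1,\,j+\ell}\bigr)\bigr],
\]
and the integrand on the right is $\cA_{0,\,j+\ell}^{0,\,j+\ell}$-measurable, where the two measures already coincide; the same identity holds for $\mu^\xi$, so the two expectations are equal. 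Letting $j$ and $\ell$ vary identifies $\mu$ with $\mu^\xi$ on all of $\cA_{-\infty,\infty}^{0,\infty}$. The main obstacle is the equality case of Gibbs's variational formula, which pins down the finite-window restrictions; once those are fixed, the $S$-invariance extension is essentially bookkeeping through the interplay between the time-shift $S$ and the spatial-shift structure of the environment process.
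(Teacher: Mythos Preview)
Your argument is correct and takes a genuinely different route from the paper's. Both proofs begin the same way, by invoking Proposition~\ref{prop:elem2}(d,e) (which is exactly your first paragraph) to see that $\mu^\xi$ achieves the infimum. For uniqueness, however, the paper proceeds via Proposition~\ref{averagedconditioning}: it shows, using a tilted-measure Chebyshev/H\"older argument, that for any bounded local $f$ the conditional probability $P_0(|\langle f, L_n^\infty\rangle - \langle f,\mu^\xi\rangle|>\e \mid |X_n/n - \xi|<\delta)$ decays exponentially, and then combines this with the level-3 LDP lower bound and the level-1 LDP upper bound to conclude $I_{3,a}(\nu) > I_{1,a}(\xi)$ for any competitor $\nu\neq\mu^\xi$.

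Your approach is more static and entirely entropy-based: you exploit the key identity $h(\mu\,\vert\,P_0)=\sup_\ell \ell^{-1}H_{0,\ell}(\mu\,\vert\,P_0)$ from \eqref{def:hmuPzero} together with the pointwise bound $H_{0,\ell}(\mu\,\vert\,P_0)\ge \ell I_{1,a}(\xi)$ to force equality at every finite $\ell$, and then invoke the equality case of the Donsker--Varadhan variational formula (equivalently, $H(\mu\,\vert\,\nu_f)=0$ where $d\nu_f/d\nu\propto e^f$) to pin down the restriction of $\mu$ to each $\cA_{0,\ell}^{0,\ell}$. The $S$-invariance bookkeeping in your last paragraph is correct. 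Your proof is shorter and more elementary---it avoids the conditioning result and the LDP machinery entirely---while the paper's route yields the stronger dynamical statement of Proposition~\ref{averagedconditioning} as a byproduct, which is of independent interest (it says the empirical process conditioned on an atypical velocity concentrates around $\mu^\xi$).
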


Measure $\mu^\xi$ was introduced in \cite[Definition 1]{Yil09a}  with different notation and under the stronger assumptions in \eqref{specass}. Theorem \ref{thm:uniquemin} follows from an adaptation of \cite[Theorem 1]{Yil09a} which roughly says that, conditioned on $\{X_n/n \approx \xi\}$, the empirical process $L_n^\infty$ converges to $\mu^\xi$ under $P_0$. See Proposition \ref{averagedconditioning} for the precise statement.

Next we start analyzing the structure of the averaged contraction minimizer $\mu^\xi\in\cM_1(\Ospace_\bN)$. First of all, $\mu^\xi$ is $S$-invariant (see Proposition \ref{prop:elem2}(a)). Using the notation introduced in Section \ref{sec:prevresults}, let $\bar\mu^\xi$ be the unique $S$-invariant extension of $\mu^\xi$ to $\Ospace_{\bZ}$, and 
$$\pi_{0,1}^{\bar\mu^\xi}(0,z\,|\,\omega, z_{-\infty,0}) = \bar\mu^\xi(Z_1 = z\,|\,\cA_{-\infty,\infty}^{-\infty,0})(\w,z_{-\infty,0})$$
for $z\in\cR$.
\begin{proposition}\label{markova}  
For every $\xi\in\mathrm{ri}(\cD)$, $j\ge 0$, 
and $z\in\cR$, 
\be\label{yenker-1}\bar\mu^\xi( Z_{j+1}=z\,\vert\,\cA_{-\infty,\infty}^{-\infty,j})(\w,z_{-\infty,j}) = \mu^\xi(Z_1 = z\,|\,\kS_{0,\infty})(T_{j,x_j}\w).\ee
Hence, the quenched walk under $\bar\mu^\xi$ is Markovian, and its transition kernel
\be\label{yenker}
\pi_{0,1}^\xi(0,z\,|\,\omega) := \mu^\xi(Z_1 = z\,|\,\kS_{0,\infty})(\w) = \pi_{0,1}^{\bar\mu^\xi}(0,z\,|\,\omega, z_{-\infty,0})
\ee
is $\kS_{0,\infty}$-measurable.
\end{proposition}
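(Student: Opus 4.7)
The plan is to reduce to $j = 0$ using the $S$-invariance of $\bar\mu^\xi$ and then to exploit two structural features of $\mu^\xi$: the defining formula \eqref{sutesti} gives a Radon-Nikodym derivative $d\mu^\xi/dP_0 = \prod_{i \ge 1} e^{\langle\rho, Z_i\rangle}/\phi_a(\rho)$ that factors over individual steps and carries no environment dependence, while the temporal independence $\bP = \bP_s^{\otimes \bZ}$ factors the environment into independent time slices.

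For the reduction, since $S^j(\w, \zvecd) = (T_{j, x_j}\w, \shift^j\zvecd)$, the $S^j$-shift maps $\cA_{-\infty, \infty}^{-\infty, 0}$ onto $\cA_{-\infty, \infty}^{-\infty, j}$ and sends $\ind_{Z_1 = z}$ to $\ind_{Z_{j+1} = z}$, so $S$-invariance reduces \eqref{yenker-1} to showing that $\bar\mu^\xi(Z_1 = z \mid \cA_{-\infty, \infty}^{-\infty, 0})$ is a $\kS_{0, \infty}$-measurable function of $\w$ which coincides with $\mu^\xi(Z_1 = z \mid \kS_{0, \infty})$. By the monotone class theorem this can be verified against test functions of the form $g(\wlev_{-M, -1}, Z_{-M+1, 0}) F(\wlev_{0, N-1})$ for large $M, N$. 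I would apply $S$-invariance to push the test function into $\cA_{0, M+N}^{0, M+N}$ (so that $\wlev_i$ at time $i \in [-M, N-1]$ becomes $T^s_{X_M}\wlev_{i+M}$ at time $i+M \in [0, M+N-1]$) and then invoke \eqref{sutesti} to rewrite $E^{\bar\mu^\xi}[g F \ind_{Z_1 = z}]$ as a $P_0$-expectation of the tilt $\prod_{i=1}^{M+N} e^{\langle\rho, Z_i\rangle}/\phi_a(\rho)$ against the shifted function.

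After absorbing the spatial shift $T^s_{X_M}$ through spatial translation invariance of $\bP_s$ on each time slice, the temporal independence of $\bP$ lets the expectation factor along the cut at time $0$ into a \emph{past factor} depending only on $g$, $\wlev_{-M, -1}$ and $Z_{-M+1, 0}$, a middle factor $\omega_{0, 0}(z) e^{\langle\rho, z\rangle}/\phi_a(\rho)$, and a \emph{future factor} depending on $F$, $\wlev_{0, N-1}$ and the quenched partition function
\begin{equation*}
\Psi_N(\w, z) \;=\; \sum_{z_{2, N} \in \cR^{N-1}} \prod_{i=1}^{N-1} \omega_{i,\, z + z_2 + \cdots + z_i}(z_{i+1}) e^{\langle\rho, z_{i+1}\rangle}
\end{equation*}
of the tilted walk that starts at position $z$ at time $1$. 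When one forms the conditional-probability ratio for $Z_1 = z$, the past factor (independent of $z$) cancels, leaving the $\kS_{0, N}$-measurable kernel
\begin{equation*}
\pi_N(0, z \mid \w) \;=\; \frac{\omega_{0, 0}(z) e^{\langle\rho, z\rangle} \Psi_N(\w, z)}{\sum_{z' \in \cR} \omega_{0, 0}(z') e^{\langle\rho, z'\rangle} \Psi_N(\w, z')}.
\end{equation*}

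The identical expression arises when \eqref{sutesti} with $k = 0$, $\ell = N$ is used to compute $\mu^\xi(Z_1 = z \mid \kS_{0, N})$, so the kernels of $\bar\mu^\xi$ and $\mu^\xi$ agree at every finite $N$. Since $\pi_N \in [0, 1]$ is a martingale in $N$ with respect to the filtration $(\kS_{0, N})$, it converges almost surely to the $\kS_{0, \infty}$-measurable limit $\mu^\xi(Z_1 = z \mid \kS_{0, \infty})(\w)$, giving \eqref{yenker-1} for $j = 0$ (hence in general) together with the $\kS_{0, \infty}$-measurability of $\pi_{0, 1}^\xi$. The Markov property of the quenched walk follows immediately from \eqref{yenker-1}, since the right-hand side depends on the quenched past only through $X_j$ and on the environment only through $T_{j, X_j}\w$. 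The main technical obstacle is to carry the bi-infinite bookkeeping cleanly; the crucial cancellation of past factors rests on \emph{both} the temporal independence of $\bP$ and on the tilt being a function of the steps alone, and would fail absent either feature.
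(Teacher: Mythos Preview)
Your proposal is correct and follows essentially the same route as the paper's proof: both compute the finite-horizon conditional expectation $\bar\mu^\xi(Z_1=z\mid\cA_{-M,N}^{-M,0})$, identify it with the $\kS_{0,N}$-measurable quantity $\alpha_N(\w,z)=E_0^\w[e^{\langle\rho,X_N\rangle},Z_1=z]\big/E_0^\w[e^{\langle\rho,X_N\rangle}]$ (your $\pi_N$), observe this coincides with $\mu^\xi(Z_1=z\mid\kS_{0,N})$, and then pass to $M,N\to\infty$ by martingale convergence. The only organizational differences are that the paper works directly at general $j$ using \eqref{sutesti} with $k<0$ (rather than reducing to $j=0$ first and shifting by $S^M$), and phrases the key computation via the quenched Markov property rather than your past/future factorization; these are two views of the same calculation.
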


We denote the $\Omega$-marginal of $\mu^\xi$ by $\mu_\Omega^\xi$.  The proof of Proposition \ref{markova}  in Section \ref{sec:mini}   shows   that,  by martingale convergence, the transition kernel in \eqref{yenker} is given by 
\be\label{yenker-3}
\pi_{0,1}^\xi(0,z\,|\,\omega) =\lim_{n\to\infty} \frac{ E_0^{\w}[e^{\langle\rho,X_{n}\rangle },Z_{1} = z] }{ E_0^{\w}[e^{\langle\rho,X_{n}\rangle }]},\qquad \text{$\mu^\xi_\Omega$-a.s.,}
\ee
for any  $\rho\in\partial I_{1,a}(\xi)$. The following result provides a characterization of the absolute continuity of $\mu_\Omega^\xi$ in terms of a structural representation of $\pi_{0,1}^\xi$ involving a Doob $h$-transform.

\begin{theorem}\label{thm:doobchar}
For every $\xi\in\mathrm{ri}(\cD)$, consider the following statements. 
\begin{itemize}
\item [(i)] There exists a function $u\in L^1(\Omega,\kS_{0,\infty},\bP)$ such that $\bP(u>0) = 1$ and
\be\label{doob}
\pi_{0,1}^\xi(0,z\,|\,\w) = \pi_{0,1}(0,z\,|\,\w)\frac{e^{\langle \rho,z\rangle}}{\phi_a(\rho)}\frac{u(T_{1,z}\w)}{u(\w)}
\ee
for every $\rho\in\partial I_{1,a}(\xi)$. 
\item [(ii)] $\mu_\Omega^\xi\ll\bP$ on $\kS_{0,\infty}$.
\end{itemize}
Then, $(i)\implies(ii)$. Conversely, if
\be\label{newell} 
\text{$\exists\, z'\in\cR$ such that }\bP(\w_{0,0}(z')>0) = 1,
\ee
then $(ii)\implies(i)$. Furthermore, whenever $(i)$ holds, $u$ is equal (up to a multiplicative constant) to $\left.\frac{d\mu_\Omega^\xi}{d\bP}\right|_{\kS_{0,\infty}}$.
\end{theorem}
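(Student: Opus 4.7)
The plan is to reduce Theorem \ref{thm:doobchar} to the analysis of the positive $(\kS_{0,n})$-martingale
\[ M_n(\w) := \phi_a(\rho)^{-n} E_0^\w\bigl[e^{\pp{\rho,X_n}}\bigr]. \]
The martingale property $\bE[M_{n+1}\,\vert\,\kS_{0,n}]=M_n$ is a direct consequence of $\phi_a(\rho) = \bE\bigl[\sum_{z\in\cR}\w_{0,0}(z)e^{\pp{\rho,z}}\bigr]$ and the temporal independence built into $\bP=\bP_s^{\otimes\bZ}$. Since any two $\rho,\rho'\in\partial I_{1,a}(\xi)$ differ by a vector orthogonal to the affine hull of $\cR$, both $M_n$ and the tilt $e^{\pp{\rho,z}}/\phi_a(\rho)$ along $\cR$ are $\rho$-independent; in particular, the Doob formula in (i) needs only be verified for a single $\rho$. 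Moreover, applying \eqref{sutesti} with $k=0$ to any $f\in b\kS_{0,N}$ directly identifies $M_N$ as the $\bP$-density of $\mu_\Omega^\xi$ on $\kS_{0,N}$, so absolute continuity on $\kS_{0,\infty}$ is equivalent to uniform integrability of $(M_n)$.

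For $(ii)\Rightarrow(i)$: if $\mu_\Omega^\xi\ll\bP$ on $\kS_{0,\infty}$ with density $v$, then $M_N=\bE[v\,\vert\,\kS_{0,N}]$, so $(M_n)$ is uniformly integrable and converges $\bP$-a.s.\ and in $L^1(\bP)$ to $u:=v$. Splitting the numerator in \eqref{yenker-3} as
\[ E_0^\w\bigl[e^{\pp{\rho,X_n}},Z_1=z\bigr] = \w_{0,0}(z)e^{\pp{\rho,z}}\phi_a(\rho)^{n-1}M_{n-1}(T_{1,z}\w) \]
and passing to the limit then yields \eqref{doob}, provided $u>0$ $\bP$-a.s. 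Positivity is precisely where \eqref{newell} enters: from the one-step decomposition $M_n(\w)=\sum_{z\in\cR}\w_{0,0}(z)\phi_a(\rho)^{-1}e^{\pp{\rho,z}}M_{n-1}(T_{1,z}\w)$, keeping only the $z=z'$ term and taking $n\to\infty$ gives
\[ u(\w)\ge \w_{0,0}(z')\phi_a(\rho)^{-1}e^{\pp{\rho,z'}}u(T_{1,z'}\w) \quad \bP\text{-a.s.} \]
Hence $\{u=0\}\subset T_{1,z'}^{-1}\{u=0\}$ modulo $\bP$-null sets, and measure-preservation of $T_{1,z'}$ forces equality; a mixing argument approximating by cylinders and using the independence of distinct $\wlev_i$'s under $\bP=\bP_s^{\otimes\bZ}$ shows $T_{1,z'}$ is ergodic, so $\bP(u=0)\in\{0,1\}$, and $\bE[u]=1$ rules out the full-measure case.

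For $(i)\Rightarrow(ii)$: summing \eqref{doob} over $z\in\cR$ against $\sum_z\pi_{0,1}^\xi(0,z\,\vert\,\w)=1$ yields the eigenfunction identity
\[ \sum_{z\in\cR}\w_{0,0}(z)e^{\pp{\rho,z}}u(T_{1,z}\w) = \phi_a(\rho)u(\w), \]
which iterates to $\phi_a(\rho)^n u(\w) = E_0^\w\bigl[e^{\pp{\rho,X_n}}u(T_{n,X_n}\w)\bigr]$. For $f\in b\kS_{0,N}$ and $n\ge N$, the factor $u(T_{n,X_n}\w)$ depends on $\wlev_{n,\infty}$ after spatial translation by $X_n$, so by the temporal independence and the spatial translation invariance of $\bP_s$ one has $\bE[u(T_{n,X_n}\w)\,\vert\,\kS_{0,n},X_n]=\bE[u]$. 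Consequently $\bE[uf] = \bE[u]\,\bE[M_N f]$, and identifying densities on each $\kS_{0,N}$ (then letting $N\to\infty$ via L\'evy's upward theorem) establishes $\mu_\Omega^\xi\ll\bP$ on $\kS_{0,\infty}$ with density $u/\bE[u]$, proving both (ii) and the identification of $u$ up to a constant.

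The main obstacle will be the positivity step in $(ii)\Rightarrow(i)$: establishing the ergodicity of the skew shift $T_{1,z'}$ under only the assumption of temporal i.i.d.\ and spatial translation invariance (without spatial ergodicity) requires invoking the independence of the $\wlev_i$'s together with Kolmogorov's zero--one law to show that any $T_{1,z'}$-invariant event is $\bP$-trivial. A secondary subtlety, already noted in the first paragraph but worth stressing, is the consistency of a single $u$ across all $\rho\in\partial I_{1,a}(\xi)$; this is automatic because the tilt is $\rho$-independent on $\cR$.
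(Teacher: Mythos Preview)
Your proposal is correct and follows essentially the same route as the paper's proof: both directions hinge on the identity $\bE[u\,\vert\,\kS_{0,n}]=\bE[u]\,M_n$ (equivalently $\bE[uf]=\bE[u]\,\bE[M_N f]$), obtained by iterating the eigenfunction relation and using temporal independence plus spatial invariance, and the derivation of \eqref{doob} from \eqref{yenker-3} via $M_n\to u$. The only cosmetic difference is in the positivity step: the paper iterates to $\{u=0\}\subset\bigcap_{n\ge1}T_{n,nz'}^{-1}\{u=0\}$ and invokes the ergodic theorem for the stationary ergodic sequence $(T_{n,nz'}\w)_{n\ge1}$, whereas you use the one-step inclusion $\{u=0\}\subset T_{1,z'}^{-1}\{u=0\}$ together with ergodicity of $T_{1,z'}$ directly --- these are the same argument.
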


The proof that (i) implies (ii) in Theorem \ref{thm:doobchar} is adapted from that of \cite[Lemma 4.1]{RasSepYil_preprint} which is concerned with disorder regimes of directed random walks in random potentials. The other implication follows from \eqref{yenker-3} under the mild ellipticity condition \eqref{newell} which ensures that $\mu_\Omega^\xi$ and $\bP$ are in fact mutually absolutely continuous on $\kS_{0,\infty}$. For closely related results on directed polymers and ballistic (undirected) RWRE, see \cite[Proposition 3.1]{ComYos06} and \cite[Theorem 3.3]{Yil11a}, respectively.

\begin{remark}\label{lazmols}
When we choose $\xi$ to be the LLN velocity $\llnxi= \sum_{z\in\cR}\hat q(z)z$,  we can take $\rho=0$ because  $\nabla\log\phi_a(0)=\llnxi$ which is equivalent to $0\in\partial I_{1,a}(\llnxi)$ (see \eqref{entemizig} in Appendix \ref{app:subdifferential}). 
Then  \eqref{yenker-3}  shows  that $\pi_{0,1}^{\llnxi}(0,z\,\vert\,\w)=\pi_{0,1}(0,z\,\vert\,\w)$, the original kernel, and in \eqref{doob} we can take $u\equiv 1$.
Thus  $\mu^{\llnxi}_\Omega=\P$ on $\kS_{0,\infty}$, which is also evident directly from the definition of $\mu^{\xi^*}$ in \eqref{sutesti}.

When $d\ge3$ and the conditions in \eqref{specass} hold, it was shown by one of the authors \cite[Theorem 4]{Yil09a} that statements (i)  and (ii) in Theorem \ref{thm:doobchar} are true not only at $\xi = \llnxi$ but also for $\xi$ sufficiently close to $\llnxi$, and in this case $u\in L^2(\Omega,\kS_{0,\infty},\bP)$.
\end{remark}

\subsection{Modified variational formulas for the quenched rate functions}

Recall from \eqref{tanimuymaz} that the formula given in Theorem \ref{3qLDP} for the level-3 quenched rate function $I_{3,q}$ involves absolute continuity w.r.t.\ $\bP$ (on $\kS$). This formula is valid for a general class of RWRE models. However, in the case of   RWDRE, as we have seen in Proposition \ref{markova} and Theorem \ref{thm:doobchar}, the relevant $\sigma$-algebra is $\kS_{0,\infty}$. Therefore, we next provide appropriately modified formulas for $I_{3,q}$ and $I_{1,q}$ which will be central to some of our subsequent results. Define  
\be\label{tanimmod}
H_{q,\bP}^{S,+}(\mu) = \begin{cases}  H_q(\mu) &\text{if $\mu$ is $S$-invariant and $\mu_\Omega\ll\P$ on $\kS_{0,\infty}$,}\\ \infty  &\text{otherwise}.  \end{cases}
\ee

\begin{theorem}\label{3qLDPmod}
Assume 
\eqref{polyell}. Then, for every $\mu\in\cM_1(\Ospace_\bN)$,
\be
I_{3,q}(\mu) = (H_{q,\bP}^{S,+})^{**}(\mu). \label{level3qmod}
\ee

\end{theorem}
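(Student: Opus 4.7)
The inequality $(H_{q,\bP}^{S,+})^{**} \le I_{3,q}$ is immediate: by comparing \eqref{tanimuymaz} and \eqref{tanimmod}, the constraint $\mu_\Omega \ll \bP$ on $\kS$ is strictly stronger than $\mu_\Omega \ll \bP$ on $\kS_{0,\infty}$, so $H_{q,\bP}^{S,+} \le H_{q,\bP}^{S}$; taking biconjugates (an order-preserving operation) and applying Theorem \ref{3qLDP} yields the claim.

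For the reverse inequality, $I_{3,q} \le (H_{q,\bP}^{S,+})^{**}$, I would use that $I_{3,q}$ is itself lower semicontinuous and convex (being a biconjugate), so it suffices to establish the pointwise bound $I_{3,q}(\mu) \le H_{q,\bP}^{S,+}(\mu)$. The nontrivial case is $\mu$ that is $S$-invariant with $\mu_\Omega \ll \bP$ on $\kS_{0,\infty}$ and $H_q(\mu) < \infty$, and the goal reduces to $I_{3,q}(\mu) \le H_q(\mu)$. The plan is to approximate: produce $\mu_k$ in the domain of $H_{q,\bP}^{S}$ with $\mu_k \to \mu$ weakly and $\limsup_k H_q(\mu_k) \le H_q(\mu)$, which then gives
\[
I_{3,q}(\mu) \,\le\, \liminf_k I_{3,q}(\mu_k) \,\le\, \liminf_k H_{q,\bP}^{S}(\mu_k) \,=\, \liminf_k H_q(\mu_k) \,\le\, H_q(\mu)
\]
by lower semicontinuity of $I_{3,q}$ and Theorem \ref{3qLDP} applied to each $\mu_k$.

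The construction of $\mu_k$ is where the work lies. Let $\bar\mu$ be the $S$-invariant extension of $\mu$ to $\Ospace_\bZ$ (cf.\ Remark \ref{canmur}). A preliminary observation is that $\bar\mu_\Omega \ll \bP$ on $\kS_{-k,\infty}$ for every $k\ge 0$. This follows from $S^k$-invariance combined with spatial translation invariance of $\bP$: the distribution of $\wvec_{-k,\infty}$ under $\bar\mu$ equals (via the identity $S^k(\w,\zvecd)=(T_{k,x_k}\w,\shift^k\zvecd)$) that of the spatially translated block $(T^s_{x_k}\wvec_i)_{i\ge 0}$, and disintegrating over the distribution of $x_k$ while pushing the $\kS_{0,\infty}$-density of $\bar\mu_\Omega$ through the $\bP$-preserving spatial shifts yields an explicit density on $\kS_{-k,\infty}$. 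With this absolute continuity in hand, I would define $\mu_k$ by retaining $\bar\mu$'s joint on $\cA^{-\infty,\infty}_{-k,\infty}$ and replacing $\wvec_{-\infty,-k-1}$ by an independent iid copy of $\bP_s$, projecting back to $\Ospace_\bN$, and finally symmetrizing under $S$ via a Ces\`aro time-average (passing to a weakly convergent subsequence by tightness) to restore $S$-invariance. The resulting $\mu_k$ has $(\mu_k)_\Omega \ll \bP$ on all of $\kS$, so it belongs to the domain of $H_{q,\bP}^{S}$.

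The hard part will be controlling $H_q(\mu_k)$: one must argue that replacing the remote past by iid $\bP_s$ does not inflate $H_q$ (using that the reference kernel $\pi_{0,1}(0,z|\w)=\w_{0,0}(z)$ in the integrand of \eqref{isimdustu} is $\kS_{0,1}$-measurable, hence insensitive to past manipulations), and that the Ces\`aro time-average is compatible with the convexity of relative entropy in a way that yields $\limsup_k H_q(\mu_k)\le H_q(\mu)$. Weak convergence $\mu_k\to\mu$ is comparatively routine, since any bounded continuous test function on $\Ospace_\bN$ is approximated by one measurable with respect to a finite-time cylinder, on which $\mu_k$ and $\mu$ eventually agree up to the averaging.
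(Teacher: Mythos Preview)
Your first inequality and the reduction to $I_{3,q}(\mu)\le H_q(\mu)$ for $S$-invariant $\mu$ with $\mu_\Omega\ll\bP$ on $\kS_{0,\infty}$ match the paper exactly. For that inequality, though, the paper does \emph{not} approximate from within the smaller domain $\{\mu_\Omega\ll\bP$ on $\kS\}$. It proves directly (Theorem~\ref{altsiniroz}) that for every open neighborhood $G\ni\mu$,
\[
\liminf_{n\to\infty}\tfrac1n\log P_0^\w(L_n^\infty\in G)\ \ge\ -H_q(\mu)\qquad\text{for }\bP\text{-a.e.\ }\w,
\]
via the standard change-of-measure and ergodic-theorem lower bound for Markov chains from \cite{RasSep11}, redone so that only $\kS_{0,\infty}$-absolute continuity is needed: because the walk is directed in time, the Radon--Nikodym density of the auxiliary chain, its ergodicity, and the verification that the empirical measure hits $G$ involve only forward environment levels (Remark~\ref{ikigunebiter}). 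Combining this with the LDP upper bound of Theorem~\ref{3qLDP} over shrinking neighborhoods gives $I_{3,q}(\mu)\le H_q(\mu)$ at once.

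Your approximation route has a genuine gap at the Ces\`aro step. With your $\tilde\mu_k$ (replace $\wvec_{-\infty,-k-1}$ by independent $\bP_s^{\otimes}$, keep the rest of $\bar\mu$), one checks that $\tilde\mu_k\circ S^{-j}=\tilde\mu_{k+j}$: under the pushforward by $S^j$ the modified environment levels recede to $(-\infty,-k-j-1]$ while everything else still agrees with $\bar\mu$ by $S$-invariance. Hence the Ces\`aro averages converge weakly to $\bar\mu$ itself, and after projection you recover $\mu$---with no improvement in absolute continuity on $\kS$. More generally, $\ll\bP$ on $\kS$ is not closed under weak limits; in the Example following Corollary~\ref{1qLDPmod} there is \emph{no} $S$-invariant measure with kernel $\pi'$ whose $\Omega$-marginal is $\ll\bP$ on $\kS$, so any successful $S$-invariant approximant must carry a different transition kernel, at which point the $\kS_{0,1}$-measurability of $\pi_{0,1}$ in \eqref{isimdustu} no longer ties $H_q$ of the approximant to $H_q(\mu)$. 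The paper's direct lower bound sidesteps all of this.
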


\begin{corollary}\label{1qLDPmod}
Assume 
\eqref{polyell}. Then, for every $\xi\in\mathrm{ri}(\cD)$, 
\begin{align}
I_{1,q}(\xi) &= \inf\{(H_{q,\bP}^{S,+})^{**}(\mu):\,\mu\in\cM_1(\Ospace_\bN), E^\mu[Z_1] = \xi\}\label{qvarmod1}\\
&= \inf\{H_q(\mu):\,\mu\in\cM_1(\Ospace_\bN), E^\mu[Z_1] = \xi,\,\text{$\mu$ is $S$-invariant, $\mu_\Omega\ll\bP$ on $\kS_{0,\infty}$}\}.\label{qvarmod2}
\end{align}
\end{corollary}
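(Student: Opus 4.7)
\textbf{Proof plan for Corollary \ref{1qLDPmod}.} The first equality is immediate: Corollary \ref{1qLDP} expresses $I_{1,q}(\xi)$ as the contraction of $I_{3,q}$, and Theorem \ref{3qLDPmod} identifies $I_{3,q}$ with $(H_{q,\bP}^{S,+})^{**}$. For the second equality, denote its right-hand side by $J(\xi)$; equivalently $J(\xi)=\inf\{H_{q,\bP}^{S,+}(\mu):E^\mu[Z_1]=\xi\}$ by definition of $H_{q,\bP}^{S,+}$. Since $(H_{q,\bP}^{S,+})^{**}\le H_{q,\bP}^{S,+}$, the first infimum is at most $J(\xi)$. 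The nontrivial task is $J(\xi)\le I_{1,q}(\xi)$ on $\mathrm{ri}(\cD)$.

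My plan is to show that $J$ and $I_{1,q}$ have the same convex conjugate as functions on $\bR^d$. Define the bounded continuous function $F_\rho(\w,\zvec):=\langle\rho,z_1\rangle$. A straightforward interchange of suprema yields
\begin{align*}
J^*(\rho)&=\sup_\mu\bigl\{\langle\rho,E^\mu[Z_1]\rangle-H_{q,\bP}^{S,+}(\mu)\bigr\}=(H_{q,\bP}^{S,+})^*(F_\rho),\\
I_{1,q}^*(\rho)&=\sup_\mu\bigl\{\langle\rho,E^\mu[Z_1]\rangle-(H_{q,\bP}^{S,+})^{**}(\mu)\bigr\}=(H_{q,\bP}^{S,+})^{***}(F_\rho)=(H_{q,\bP}^{S,+})^*(F_\rho),
\end{align*}
the last step being the Fenchel--Moreau identity $F^{***}=F^*$. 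Hence $J^{**}=I_{1,q}^{**}=I_{1,q}$, the final equality using convexity and lower semicontinuity of $I_{1,q}$ (Proposition \ref{prop:elem}, together with $I_{1,q}\equiv+\infty$ off $\cD$).

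It remains to show $J=J^{**}$ on $\mathrm{ri}(\cD)$. Convexity of $J$ is inherited from convexity of $H_q$ on the admissible class (the same argument that makes $H_{q,\bP}^{S}$ convex, noted after \eqref{tanimuymaz}, also makes $H_{q,\bP}^{S,+}$ convex). A convex function finite on the convex set $\cD$ is continuous on $\mathrm{ri}(\cD)$ and therefore equal to its biconjugate there, so it suffices to verify $J<\infty$ on $\cD$. Given $\xi\in\cD$, pick $\nu\in\cP$ with $\sum_{z\in\cR}\nu(z)z=\xi$ (possible by definition of $\cD$) and set $\mu=\bP\otimes\nu^{\otimes\bN}$. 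Then $\mu_\Omega=\bP$ (so $\mu_\Omega\ll\bP$ on $\kS_{0,\infty}$) and $E^\mu[Z_1]=\xi$; the space-time translation invariance of $\bP$ makes $\mu$ $\Smap$-invariant because, conditionally on $Z_1=z$, $T_{1,z}\w\sim\bP$ independently of $\shift\zvec\sim\nu^{\otimes\bN}$. A direct computation gives
\[H_q(\mu)=\sum_{z\in\cR}\nu(z)\log\nu(z)-\sum_{z\in\cR}\nu(z)\bE[\log\w_{0,0}(z)],\]
which is finite by the moment assumption \eqref{polyell}. Hence $J(\xi)\le H_q(\mu)<\infty$ on all of $\cD$, and the proof is complete.

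The main hurdle I anticipate is a careful treatment of the conjugate duality: one must verify that $H_{q,\bP}^{S,+}$ and its conjugates are handled on a consistent dual pair of spaces (the same one implicit in Theorem \ref{3qLDPmod}), so that the identity $F^{***}=F^*$ and the sup-interchange for $J^*$ apply without subtleties. Once $J^*=I_{1,q}^*$ is established, the remainder is convex-analytic bookkeeping plus the explicit construction of an admissible product measure of finite entropy, both of which are routine.
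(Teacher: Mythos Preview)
Your proof is correct and complete. The route differs slightly from the paper's published argument, though the ingredients are the same: both exploit that $H_{q,\bP}^{S,+}$ is convex, that the contraction $J(\xi)=\inf\{H_{q,\bP}^{S,+}(\mu):E^\mu[Z_1]=\xi\}$ is finite on $\cD$ (via the same product measure $\bP\otimes\nu^{\otimes\bN}$ with mean-$\xi$ step law $\nu$, which the paper also uses at \eqref{muhat7}), and hence that $J$ is continuous on $\mathrm{ri}(\cD)$.

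The difference is in how one passes from $(H_{q,\bP}^{S,+})^{**}$ to $H_{q,\bP}^{S,+}$ in the contraction. The paper argues locally: for $\xi\in\mathrm{ri}(\cD)$ it writes $I_{1,q}(\xi)=\lim_{\delta\to0}\inf\{(H_{q,\bP}^{S,+})^{**}(\mu):|E^\mu[Z_1]-\xi|<\delta\}$, observes that the infimum of a function over an \emph{open} set equals the infimum of its lower semicontinuous regularization over that set, and then uses the continuity of $J$ on $\mathrm{ri}(\cD)$ to collapse the $\delta\to0$ limit. You instead compute the conjugate: $J^*(\rho)=(H_{q,\bP}^{S,+})^*(F_\rho)=(H_{q,\bP}^{S,+})^{***}(F_\rho)=I_{1,q}^*(\rho)$, whence $J^{**}=I_{1,q}$, and then invoke $J=J^{**}$ on $\mathrm{ri}(\mathrm{dom}\,J)=\mathrm{ri}(\cD)$. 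Amusingly, the paper's source contains precisely your conjugate-duality argument in a commented-out block immediately after the proof, so the authors evidently regarded the two as interchangeable. Your concern about the dual pair is legitimate but harmless here: $F_\rho\in b\cA_{-\infty,\infty}^{0,\infty}$ is continuous, so it lies in whatever space of test functions the biconjugate in Theorem \ref{3qLDPmod} is taken against, and the Fenchel--Moreau step $F^{***}=F^*$ is automatic.
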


\begin{example}
The need for Theorem \ref{3qLDPmod} is justified by the fact that $H_{q,\bP}^{S}(\mu) = H_{q,\bP}^{S,+}(\mu)$ does not hold in general.
The following counterexample is adapted from \cite{BolSzn02}. Assume \eqref{specass} and
the following extra condition on the law of the environment: $$\bP\left(\w_{0,0}(z) > \w_{0,0}(z')\ \text{for every $z'\in U\setminus\{z\}$}\right) = \frac1{2d}$$
for every $z\in U$. 
Consider a new transition kernel $\pi'$ defined by
$$\pi'_{0,1}(0,z\,|\,\w) = \begin{cases}1&\text{if}\ \w_{0,0}(z) > \w_{0,0}(z')\ \text{for every}\ z'\in U\setminus\{z\},\\0 & \text{otherwise}.
\end{cases}$$
For $\bP$-a.e.\ $\w$, the quenched walk under this new kernel is deterministic, the law of the environment Markov chain $(T_{i,X_i}\w)_{i\ge0}$ converges weakly to a $\pi'$-invariant probability measure $\bQ$ on $\Omega$ (see \cite[Proposition 1.4]{BolSzn02}), $\bQ = \bP$ on $\kS_{0,\infty}$, but $\bQ\perp\bP$ on $\kS$ (see \cite[Proposition 1.5]{BolSzn02}). Define an $S$-invariant $\mu\in\cM_1(\Ospace_\bN)$ by setting $\pi_{0,1}^{\bar{\mu}}(0,z\,|\,\w,z_{-\infty,0}) = \pi'_{0,1}(0,z\,|\,\w)$ and $\mu_\Omega = \bQ$. Then, $H_{q,\bP}^{S}(\mu) = \infty$, but \eqref{polyell} ensures that
$$H_{q,\bP}^{S,+}(\mu) = H_q(\mu) = \bE\left[\sum_{z\in U}\pi'_{0,1}(0,z\,|\,\w)\log\left(\frac{\pi'_{0,1}(0,z\,|\,\w)}{\pi_{0,1}(0,z\,|\,\w)}\right)\right] < \infty.$$
\end{example}

\subsection{Decomposing the level-3 averaged rate function}

The level-3 averaged  and quenched LDPs hold with rate functions $I_{3,a}$ and $I_{3,q}$ given in \eqref{level3a} and \eqref{level3qmod}, respectively. Note that $I_{3,a}(\mu) \le I_{3,q}(\mu)$ for every $\mu\in\cM_1(\Ospace_\bN)$. This follows from Jensen's inequality applied to the convex conjugates of the rate functions, and is shown in Corollary \ref{cor:zincir} for the sake of completeness.  How are these two rate functions  related beyond this basic inequality?   The following   theorem provides a partial answer.  Additional remarks follow in Section \ref{ss:addrem}.  

\begin{theorem}\label{entropy_connection}
For every $S$-invariant $\mu\in\cM_1(\Ospace_\bN)$,
\be \label{a=kS+q}  h(\mu\,|\,P_0) = h_{\kS_{0,\infty}}(\mu_\Omega\,|\,\mathbb{P}) + H_q(\mu), \ee
where
$$h_{\kS_{0,\infty}}(\mu_\Omega\,|\,\mathbb{P}) = \lim_{n\to\infty}\frac1{n}H_{\kS_{0,n}}(\mu_\Omega\,|\,\bP).$$
\end{theorem}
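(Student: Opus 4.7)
The strategy is to apply the chain rule of relative entropy on the finite window $\cA_{0,\ell}^{0,\ell} = \kS_{0,\ell} \otimes \sigma(Z_{1,\ell})$, divide by $\ell$, and pass to the $\ell \to \infty$ limit. The factorization $P_0(d\w, d\zvec) = \bP(d\w)\,P_0^\w(d\zvec)$ together with the standard chain rule yields
$$H_{0,\ell}(\mu \mid P_0) = H_{\kS_{0,\ell}}(\mu_\Omega \mid \bP) + \int H_{\sigma(Z_{1,\ell})}\bigl(\mu_{\wlev_{0,\ell-1}}\,\big\|\,P_0^{\wlev_{0,\ell-1}}\bigr)\,\mu_\Omega(d\wlev_{0,\ell-1}),$$
where $\mu_{\wlev_{0,\ell-1}}$ denotes the conditional law of $Z_{1,\ell}$ given $\wlev_{0,\ell-1}$ under $\mu$. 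The LHS, divided by $\ell$, tends to $h(\mu \mid P_0)$ by definition, so it remains to identify the $\ell^{-1}$ limits of the two terms on the RHS.

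For the first term, the temporal independence $\bP = \bP_s^{\otimes \bZ}$ combined with the chain rule and Jensen's inequality makes $\ell \mapsto H_{\kS_{0,\ell}}(\mu_\Omega \mid \bP)$ superadditive; Fekete's lemma then gives existence of the $\ell^{-1}$ limit in $[0,\infty]$, equal by definition to $h_{\kS_{0,\infty}}(\mu_\Omega \mid \bP)$.

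For the second term, exploit the Markov structure of $P_0^{\wlev_{0,\ell-1}}$, whose one-step kernel at time $i$ is $\pi_{0,1}(0, \cdot \mid T_{i,x_i}\w) = \w_{i, x_i}(\cdot)$. The chain rule for Markov processes decomposes
$$H_{\sigma(Z_{1,\ell})}\bigl(\mu_{\wlev_{0,\ell-1}}\,\big\|\,P_0^{\wlev_{0,\ell-1}}\bigr) = \sum_{i=0}^{\ell-1} E^{\mu_{\wlev_{0,\ell-1}}}\!\Bigl[\, H\bigl(\mu(Z_{i+1} \in \cdot \mid \wlev_{0,\ell-1}, Z_{1,i}) \,\big\|\, \w_{i, X_i}\bigr) \Bigr].$$
Integrating against $\mu_\Omega$ and invoking the $S$-invariance of $\mu$, which transports the joint law of $(T_{i,X_i}\w, Z_{1,i}, Z_{i+1})$ into that of $(\w, Z_{-i+1, 0}, Z_1)$ under the bi-infinite extension $\bar\mu$, each summand (after integration) is stationary in form. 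By the martingale convergence theorem, refining the conditioning to $\cA_{-\infty,\infty}^{-\infty, 0}$ drives these conditional probabilities to $\pi_{0,1}^{\bar\mu}(0, \cdot \mid \w, z_{-\infty, 0})$. Conditional Jensen supplies monotonicity in the direction of refinement (and an upper bound of $H_q(\mu)$), lower semicontinuity of relative entropy together with Fatou supplies the matching lower bound, and Cesaro averaging then yields the $\ell^{-1}$ limit $H_q(\mu)$. Combining completes the decomposition.

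\textbf{Main obstacle.} The delicate step is identifying the Cesaro limit of the finite-window summands as $H_q(\mu)$, particularly when $H_q(\mu) = +\infty$ or when the support of $\pi_{0,1}^{\bar\mu}$ exceeds that of $\pi_{0,1}$. Because no ellipticity hypothesis is imposed on the environment, there are no uniform $L^\infty$ bounds available on the summands, so convergence must be argued via the two-sided sandwich (conditional Jensen on one side, lsc and Fatou on the other), together with careful handling of the boundary indices $i$ for which either $i$ or $\ell - i$ is small. Verifying that the chain rule equality persists when all three entropies are $+\infty$ likewise requires taking monotone limits through restricted $\sigma$-algebras rather than applying the chain rule to the full measures directly.
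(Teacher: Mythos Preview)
Your decomposition and the treatment of the second (path) term are essentially what the paper does: apply the chain rule to strip the $z$-coordinates off $H_{0,\ell}(\mu\mid P_0)$, use $S$-invariance of $\mu$ to recentre each resulting conditional entropy at time zero, observe that each summand is bounded above by $H_q(\mu)$ and converges to $H_q(\mu)$ when both endpoints of the conditioning window recede, and conclude by Ces\`aro averaging (this is the paper's Lemma~\ref{yanle}).

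There is, however, a gap in your handling of the environment term. You claim that temporal independence of $\bP$ plus the chain rule and Jensen makes $\ell\mapsto H_{\kS_{0,\ell}}(\mu_\Omega\mid\bP)$ superadditive. The chain rule plus convexity indeed gives
\[
H_{\kS_{0,\ell+m}}(\mu_\Omega\mid\bP)\ \ge\ H_{\kS_{0,\ell}}(\mu_\Omega\mid\bP)+H_{\kS_{\ell,\ell+m}}(\mu_\Omega\mid\bP),
\]
but superadditivity requires $H_{\kS_{\ell,\ell+m}}(\mu_\Omega\mid\bP)=H_{\kS_{0,m}}(\mu_\Omega\mid\bP)$, i.e.\ stationarity of $\mu_\Omega$ under the pure temporal shift $T_{1,0}$. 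This fails in general: $S$-invariance of $\mu$ makes $\mu_\Omega$ invariant for the environment-process kernel $\bar\pi$ (which shifts by $T_{1,Z_1}$ with random $Z_1$), not for $T_{1,0}$. The paper flags exactly this kind of shift-incompatibility in Remark~\ref{shiftprob}.

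The paper avoids the issue altogether: existence of $h(\mu\mid P_0)$ is secured independently in Section~\ref{sec:3aLDP} via the i.i.d.\ slab-variable representation (see \eqref{def:hmuPzero} and \eqref{hakkidogar}), and the Ces\`aro limit of the path term equals $H_q(\mu)$ as you argue. Existence of $h_{\kS_{0,\infty}}(\mu_\Omega\mid\bP)$ is then \emph{deduced by subtraction} from the finite-$n$ identity \eqref{ent-4}, not proved directly. Your argument is salvageable by the same device: once the LHS limit and the second RHS limit are in hand, the first RHS limit follows automatically.
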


Theorem \ref{entropy_connection} is an application of the chain rule for relative entropy (see \cite[Lemma 4.4.7]{DeuStr89}). It does not require any ellipticity condition.
$H_{\kS_{0,n}}(\mu_\Omega\,|\,\bP)$ is the entropy of $\mu_\Omega$ relative to $\bP$ on $\kS_{0,n}$, and $h_{\kS_{0,\infty}}(\mu_\Omega\,|\,\mathbb{P})$ is the specific relative entropy 
whose existence is shown in the proof of Theorem \ref{entropy_connection}. 

\begin{corollary}\label{cor:zincir}
Assume 
\eqref{polyell}. Then, for every $S$-invariant $\mu\in\cM_1(\Ospace_\bN)$,
\begin{align*}
H_q(\mu) &\le I_{3,a}(\mu) = h(\mu\,|\,P_0) = h_{\kS_{0,\infty}}(\mu_\Omega\,|\,\mathbb{P}) + H_q(\mu)\\
& \le I_{3,q}(\mu) = (H_{q,\bP}^{S,+})^{**}(\mu) \le H_{q,\bP}^{S,+}(\mu).
\end{align*}
\end{corollary}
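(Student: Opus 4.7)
The plan is to thread together results proved or cited earlier, so that only the middle inequality $I_{3,a}(\mu)\le I_{3,q}(\mu)$ requires a genuine argument.

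First I would dispose of the equalities and the flanking inequalities. For $S$-invariant $\mu$, $I_{3,a}(\mu)=h(\mu\,|\,P_0)$ is Theorem \ref{3aLDP}, and the decomposition $h(\mu\,|\,P_0)=h_{\kS_{0,\infty}}(\mu_\Omega\,|\,\bP)+H_q(\mu)$ is Theorem \ref{entropy_connection}; non-negativity of specific relative entropy immediately yields $H_q(\mu)\le I_{3,a}(\mu)$. Under \eqref{polyell}, Theorem \ref{3qLDPmod} provides $I_{3,q}(\mu)=(H_{q,\bP}^{S,+})^{**}(\mu)$, and the general inequality $g^{**}\le g$ (the biconjugate is the largest convex lower semicontinuous minorant of $g$) gives $(H_{q,\bP}^{S,+})^{**}(\mu)\le H_{q,\bP}^{S,+}(\mu)$.

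The substantive step is $I_{3,a}(\mu)\le I_{3,q}(\mu)$, which I would obtain from Jensen at the level of logarithmic moment generating functions. For any $F\in C_b(\Ospace_\bN)$, the tower property $E_0[\,\cdot\,]=\bE\bigl[E_0^\w[\,\cdot\,]\bigr]$ together with concavity of $\log$ yields
\begin{equation*}
\tfrac{1}{n}\log E_0\bigl[e^{\sum_{i=0}^{n-1}F\circ S^i}\bigr]\ge \bE\Bigl[\tfrac{1}{n}\log E_0^\w\bigl[e^{\sum_{i=0}^{n-1}F\circ S^i}\bigr]\Bigr].
\end{equation*}
Since the right-hand integrand is uniformly bounded by $\|F\|_\infty$, bounded convergence combined with Varadhan's lemma applied to the averaged LDP (Theorem \ref{3aLDP}) on the left and the quenched LDP (Theorem \ref{3qLDPmod}) on the right gives
\begin{equation*}
\sup_{\mu}\Bigl\{\textstyle\int F\,d\mu-I_{3,a}(\mu)\Bigr\}\ge \sup_{\mu}\Bigl\{\textstyle\int F\,d\mu-I_{3,q}(\mu)\Bigr\}
\end{equation*}
for every $F\in C_b(\Ospace_\bN)$. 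Both $I_{3,a}$ (a specific relative entropy, convex on the $S$-invariant measures and $\infty$ elsewhere) and $I_{3,q}$ (a biconjugate) are convex and lower semicontinuous on $\cM_1(\Ospace_\bN)$, so Fenchel--Moreau duality in the $C_b$--signed-measures pairing converts the reverse inequality of the convex conjugates back into $I_{3,a}\le I_{3,q}$.

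The only technical care needed lies in interchanging limit and expectation in the Jensen step, and this is routine because of the uniform bound $\bigl|\tfrac{1}{n}\log E_0^\w[e^{\sum F\circ S^i}]\bigr|\le\|F\|_\infty$. No hypothesis beyond \eqref{polyell} (already in force for the quenched LDP) enters, and $S$-invariance of $\mu$ is used only to make the averaged formula from Theorem \ref{3aLDP} applicable.
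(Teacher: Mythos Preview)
Your proposal is correct and follows essentially the same route as the paper: the flanking equalities and inequalities are read off from Theorems \ref{3aLDP}, \ref{entropy_connection}, \ref{3qLDPmod} and the biconjugate inequality, while the middle inequality $I_{3,a}\le I_{3,q}$ is obtained by comparing the convex conjugates via Jensen's inequality applied to $\log E_0^\w[\,\cdot\,]$ inside $\bE$, together with bounded convergence and Varadhan's lemma. The paper phrases the last step as $\Lambda_{3,q}(f)\le\Lambda_{3,a}(f)$ for bounded continuous $f$ and then passes back through the double conjugate, exactly as you do.
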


\subsection{Equality of the averaged and quenched rate functions}

Proposition \ref{prop:elem} and Theorem \ref{thm:preveze} summarized what is known about the equality of $I_{1,a}(\xi)$ and $I_{1,q}(\xi)$. The following result complements this picture by providing three characterizations of $I_{1,a}(\xi) = I_{1,q}(\xi)$, each of which involve $\mu^\xi\in\cM_1(\Ospace_\bN)$ (defined in \eqref{sutesti} for $\xi\in\mathrm{ri}(\cD)$) or its $\Omega$-marginal $\mu_\Omega^\xi$.

\begin{theorem}\label{thm:charac}
Assume 
\eqref{polyell}. For every $\xi\in\mathrm{ri}(\cD)$, consider the following statements.
\begin{itemize}
\item [(i)] $I_{1,a}(\xi) = I_{1,q}(\xi)$.
\item [(ii)] $I_{1,q}(\xi) = H_q(\mu^\xi)$.
\item [(iii)] $(H_{q,\bP}^{S,+})^{**}(\mu^\xi) = H_q(\mu^\xi)$.
\item [(iv)] $h_{\kS_{0,\infty}}(\mu_\Omega^\xi\,|\,\mathbb{P}) = 0$.
\end{itemize}
Then, $(i)\iff(ii)\iff(iii)\implies(iv)$. Moreover, if
\be\label{expell}
\text{$\exists\,\delta>0$ such that $\bE[\w_{0,0}(z)^{-\delta}] < \infty$ for every $z\in\cR$,}
\ee
then $(iv)\implies(i)$ and hence all four statements are equivalent.
\end{theorem}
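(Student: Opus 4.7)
The plan is to prove the four-way equivalence along the path
\[
(ii)\Longrightarrow(i)\Longrightarrow(iii)\Longrightarrow(ii),\qquad (iii)\Longrightarrow(iv),
\]
and then tackle $(iv)\Longrightarrow(i)$ separately using the moment assumption \eqref{expell}. Two global ingredients drive the whole argument: the uniqueness of $\mu^\xi$ as the averaged level-3 minimizer (Theorem \ref{thm:uniquemin}) and the chain
\[
H_q(\mu^\xi) \le I_{3,a}(\mu^\xi) = h_{\kS_{0,\infty}}(\mu^\xi_\Omega\,|\,\bP) + H_q(\mu^\xi) \le I_{3,q}(\mu^\xi) = (H_{q,\bP}^{S,+})^{**}(\mu^\xi)
\]
from Corollary \ref{cor:zincir}, combined with $I_{1,a}(\xi)=I_{3,a}(\mu^\xi)$ and $I_{1,q}(\xi) \le I_{3,q}(\mu^\xi)$.

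The implications $(ii)\Rightarrow(i)$ and $(iii)\Rightarrow(ii)$ are immediate one-liners squeezing $I_{1,q}(\xi)$ inside this chain. The implication $(iii)\Rightarrow(iv)$ is also immediate: under $(iii)$, all four quantities in the chain collapse to $H_q(\mu^\xi)$, and Theorem \ref{entropy_connection} then forces $h_{\kS_{0,\infty}}(\mu^\xi_\Omega\,|\,\bP)=0$.

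The central implication $(i)\Rightarrow(iii)$ is the most delicate among the first three. The plan is to combine both variational formulas for $I_{1,q}$. From \eqref{qvarmod1}, any infimizing sequence $\mu_n$ with $E^{\mu_n}[Z_1]=\xi$ and $(H_{q,\bP}^{S,+})^{**}(\mu_n)\to I_{1,q}(\xi)$ also satisfies $I_{3,a}(\mu_n)\to I_{1,a}(\xi)$ under $(i)$, so by lower semicontinuity and tightness every weak subsequential limit $\mu^*$ is a minimizer of the averaged contraction; uniqueness forces $\mu^*=\mu^\xi$, and lower semicontinuity of $(H_{q,\bP}^{S,+})^{**}$ gives $(H_{q,\bP}^{S,+})^{**}(\mu^\xi)=I_{1,a}(\xi)$. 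Running the same argument starting from the alternative formula \eqref{qvarmod2} with an admissible infimizing sequence $\nu_n$ and lsc of $H_q$ yields $H_q(\mu^\xi)\le I_{1,a}(\xi)$. The reverse inequality $H_q(\mu^\xi)\ge I_{1,a}(\xi)$, which amounts to placing $\mu^\xi$ in the admissible class $\{\mu:\mu_\Omega\ll\bP\text{ on }\kS_{0,\infty}\}$ of \eqref{qvarmod2}, is the main obstacle; I would close it by exploiting strict convexity of relative entropy to deduce that $\mu^\xi$ itself must be admissible once it is identified as the unique weak limit of the admissible infimizing sequence.

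The hardest part is $(iv)\Rightarrow(i)$ under \eqref{expell}. The plan is to use the moment condition together with the explicit martingale representation \eqref{yenker-3} of the transition kernel $\pi_{0,1}^\xi$ to show that the Radon–Nikodym densities $d\mu_\Omega^\xi/d\bP|_{\kS_{0,n}}$ form a uniformly integrable family; while $(iv)$ alone only provides absolute continuity on each finite cylinder $\kS_{0,n}$, uniform integrability then upgrades this to $\mu^\xi_\Omega\ll\bP$ on the full $\kS_{0,\infty}$, placing $\mu^\xi$ in the admissible class of \eqref{qvarmod2} and moreover activating the Doob $h$-transform representation of Theorem \ref{thm:doobchar}. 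One then reads off $I_{1,q}(\xi)\le H_q(\mu^\xi)$, and Theorem \ref{entropy_connection} combined with $(iv)$ identifies $H_q(\mu^\xi)$ with $h(\mu^\xi\,|\,P_0)=I_{1,a}(\xi)$, completing the argument. The main obstacle is precisely this passage from the filtration-wise statement $(iv)$ to tail-$\sigma$-algebra absolute continuity: specific relative entropy being zero is strictly weaker than $\mu^\xi_\Omega\ll\bP$ on $\kS_{0,\infty}$, and bridging this gap is where the quantitative integrability provided by \eqref{expell} is essential.
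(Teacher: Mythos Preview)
Your scheme has two genuine gaps, and both stem from the same mistake: you try to force $\mu^\xi_\Omega\ll\bP$ on $\kS_{0,\infty}$ in situations where this is \emph{not known} to hold (and is in fact flagged as an open problem in Section~\ref{ss:addrem}).

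\textbf{The implication $(i)\Rightarrow(iii)$.} Your Step~1 (identifying $\mu^\xi$ as the unique minimizer of \eqref{qvarmod1} under $(i)$) is correct and is essentially Lemma~\ref{ondenver}. It yields $(H_{q,\bP}^{S,+})^{**}(\mu^\xi)=I_{1,a}(\xi)=I_{3,a}(\mu^\xi)$. To reach $(iii)$ you still need $H_q(\mu^\xi)=I_{1,a}(\xi)$, and since $I_{3,a}(\mu^\xi)=h_{\kS_{0,\infty}}(\mu^\xi_\Omega\,|\,\bP)+H_q(\mu^\xi)$ by Theorem~\ref{entropy_connection}, this is \emph{exactly} statement~$(iv)$. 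So your path $(i)\Rightarrow(iii)\Rightarrow(iv)$ is circular: the missing piece of $(i)\Rightarrow(iii)$ is $(iv)$ itself. Your proposed fix---deducing that $\mu^\xi$ lies in the admissible class of \eqref{qvarmod2} because it is the weak limit of an admissible infimizing sequence, via ``strict convexity of relative entropy''---does not work: absolute continuity on $\kS_{0,\infty}$ is not closed under weak limits, and strict convexity gives no purchase here. The paper breaks the circularity by proving $(i)\Rightarrow(iv)$ \emph{directly}, via the contrapositive: if $h_{\kS_{0,\infty}}(\mu^\xi_\Omega\,|\,\bP)>0$ then $\sum_x\bE[u_m(\cdot,x)\log u_m(\cdot,x)]>0$ for some $m$, and a fractional-moment bound (Theorem~\ref{thm:cv}, Corollary~\ref{cor:strict}) then forces $\Lambda_{1,q}(\rho)<\log\phi_a(\rho)$, hence $I_{1,a}(\xi)<I_{1,q}(\xi)$. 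This argument needs only \eqref{polyell}.

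\textbf{The implication $(iv)\Rightarrow(i)$ under \eqref{expell}.} You correctly identify the obstacle: $(iv)$ alone does not give $\mu^\xi_\Omega\ll\bP$ on $\kS_{0,\infty}$. But your remedy---using \eqref{expell} to prove uniform integrability of $u_n=\left.\tfrac{d\mu^\xi_\Omega}{d\bP}\right|_{\kS_{0,n}}$---is unsupported. Condition \eqref{expell} controls negative moments of $\omega_{0,0}(z)$; there is no mechanism by which it yields $\sup_n\bE[u_n\one_{\{u_n>K\}}]\to 0$. Worse, if your argument worked it would show $(i)\Leftrightarrow\{\mu^\xi_\Omega\ll\bP\text{ on }\kS_{0,\infty}\}$ under \eqref{expell}, which is precisely the open question of the existence of critical disorder discussed in Section~\ref{ss:addrem}. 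The paper instead proves the contrapositive $\neg(i)\Rightarrow\neg(iv)$: if $I_{1,a}(\xi)<I_{1,q}(\xi)$ then $\bE[\log u_n]\sim n(\Lambda_{1,q}(\rho)-\log\phi_a(\rho))<0$, and a concentration inequality (Theorem~\ref{thm:LiuWat}, this is where \eqref{expell} enters) gives $\bP(u_n\ge\tfrac12)\le 2e^{-cn}$; combining this with $\mu^\xi_\Omega(u_n\ge\tfrac12)\ge\tfrac12$ and the entropy inequality of Lemma~\ref{lem:ent} forces $H_{\kS_{0,n}}(\mu^\xi_\Omega\,|\,\bP)=\bE[u_n\log u_n]$ to grow linearly, i.e., $\neg(iv)$.
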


\begin{remark}\label{remell}
The ellipticity conditions that appear in the statements of our results are related as follows:
$$\eqref{expell}\implies\eqref{polyell}\implies\eqref{newell}.$$
They are all strictly weaker than uniform ellipticity \eqref{unifell}.
\end{remark}

Regarding the equality of the level-3 averaged and quenched rate functions, the following result provides a sufficient condition. 
It is also noteworthy that under the stronger condition of uniform ellipticity,  the entropy $H_{\kS_{0,n}}(\mu_\Omega\,|\,\bP)$ can grow at most sublinearly for the absolutely continuous marginals of  $S$-invariant measures.  

\begin{corollary}\label{cor:ugurmugur}
Assume \eqref{polyell}. Then, for every $S$-invariant $\mu\in\cM_1(\Ospace_\bN)$ such that $\mu_\Omega\ll\bP$ on $\kS_{0,\infty}$,
$$I_{3,a}(\mu) = I_{3,q}(\mu) = H_q(\mu).$$
Furthermore, if we strengthen \eqref{polyell} to uniform ellipticity  \eqref{unifell}, then every $S$-invariant $\mu\in\cM_1(\Ospace_\bN)$ such that $\mu_\Omega\ll\bP$ on $\kS_{0,\infty}$ satisfies $h_{\kS_{0,\infty}}(\mu_\Omega\,|\,\mathbb{P}) = 0.$
\end{corollary}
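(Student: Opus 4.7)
The plan is to read both statements directly off the inequality chain already assembled in Corollary \ref{cor:zincir}, using the absolute continuity hypothesis to collapse it. The only additional input required is a crude $\mu$-independent entropy bound that drops out of uniform ellipticity. I expect no serious obstacle because the substantive work has been done in Theorem \ref{entropy_connection} and Corollary \ref{cor:zincir}; this corollary is essentially bookkeeping.

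For the first equality, I would start by noting that the hypotheses that $\mu$ is $S$-invariant with $\mu_\Omega\ll\bP$ on $\kS_{0,\infty}$ are precisely those placing $\mu$ in the finite branch of the definition \eqref{tanimmod}, so $H_{q,\bP}^{S,+}(\mu)=H_q(\mu)$. Substituting this into the relevant portion of the chain in Corollary \ref{cor:zincir},
$$H_q(\mu)\ \le\ I_{3,a}(\mu)\ \le\ I_{3,q}(\mu)\ \le\ H_{q,\bP}^{S,+}(\mu)\ =\ H_q(\mu),$$
forces all four quantities to coincide, whether finite or infinite, yielding $I_{3,a}(\mu)=I_{3,q}(\mu)=H_q(\mu)$.

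For the second assertion I would combine the decomposition $I_{3,a}(\mu)=h_{\kS_{0,\infty}}(\mu_\Omega\,|\,\bP)+H_q(\mu)$ from Theorem \ref{entropy_connection} with the equality $I_{3,a}(\mu)=H_q(\mu)$ just established to reach $h_{\kS_{0,\infty}}(\mu_\Omega\,|\,\bP)+H_q(\mu)=H_q(\mu)$. Cancelling $H_q(\mu)$ requires its finiteness; this is the one step where uniform ellipticity is used. Given $\pi_{0,1}(0,z\,|\,\w)\ge c>0$ for all $z\in\cR$ and the fact that $\sum_{z}p(z)\log p(z)\le 0$ for any probability vector, formula \eqref{isimdustu} immediately yields the crude, $\mu$-independent bound $H_q(\mu)\le -\log c<\infty$. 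Subtracting then gives $h_{\kS_{0,\infty}}(\mu_\Omega\,|\,\bP)=0$. Under \eqref{polyell} alone $H_q(\mu)$ may well be $\infty$, in which case the decomposition degenerates to $\infty+h_{\kS_{0,\infty}}=\infty$ and is uninformative, which is precisely why the stronger hypothesis is invoked here.
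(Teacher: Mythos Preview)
Your proof is correct and follows essentially the same approach as the paper: collapse the chain from Corollary \ref{cor:zincir} using $H_{q,\bP}^{S,+}(\mu)=H_q(\mu)$, then under uniform ellipticity bound $H_q(\mu)$ and cancel it from the decomposition in Theorem \ref{entropy_connection}. One small point worth making explicit is that the $\bP$-a.s.\ lower bound $\pi_{0,1}(0,z\,|\,\w)\ge c$ transfers to a $\mu_\Omega$-a.s.\ bound precisely because of the hypothesis $\mu_\Omega\ll\bP$ on $\kS_{0,\infty}$ (and hence on $\kS_{0,1}$), which is what makes the entropy estimate go through under $\bar\mu_-$.
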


\subsection{Minimizers of the quenched contractions}\label{subsecminijok}


Recall from Theorem \ref{thm:uniquemin} that, for every $\xi\in\mathrm{ri}(\cD)$, $\mu^\xi$ is the unique minimizer of the averaged contraction \eqref{a_contraction} from level-3 to level-1. Finding the minimizers of the quenched contractions \eqref{qvarmod1} and \eqref{qvarmod2} is more difficult in general. The following result treats the case where the level-1 rate functions are equal.

\begin{theorem}\label{thm:miniq}
Assume \eqref{polyell}. For every $\xi\in\mathrm{ri}(\cD)$:
\begin{itemize}
\item [(a)] if $I_{1,a}(\xi) = I_{1,q}(\xi)$, then
\be\label{kipcak}
I_{1,a}(\xi) = I_{1,q}(\xi) = (H_{q,\bP}^{S,+})^{**}(\mu^\xi) = H_q(\mu^\xi),
\ee
and $\mu^\xi$ is the unique minimizer of the quenched contraction \eqref{qvarmod1};
\item [(b)] if $\mu_\Omega^\xi\ll\bP$ on $\kS_{0,\infty}$, then \eqref{kipcak} holds, 
and $\mu^\xi$ is the unique minimizer of the quenched contractions \eqref{qvarmod1} and \eqref{qvarmod2}.
\end{itemize}
\end{theorem}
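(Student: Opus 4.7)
The plan is to assemble the theorem from three earlier results: the chain of inequalities in Corollary~\ref{cor:zincir}, the characterizations in Theorem~\ref{thm:charac}, and the uniqueness of the averaged contraction minimizer in Theorem~\ref{thm:uniquemin} (plus Corollary~\ref{cor:ugurmugur} for part~(b)). Throughout, the key sandwich will be
\[
I_{1,a}(\xi)\;\le\;I_{3,a}(\nu)\;\le\;I_{3,q}(\nu)\;=\;(H_{q,\bP}^{S,+})^{**}(\nu)\;\le\;H_{q,\bP}^{S,+}(\nu),
\]
valid for any $\nu$ with $E^\nu[Z_1]=\xi$ (the first inequality from \eqref{a_contraction}), which pinches everything once the outer ends collapse.

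For part~(a), I would begin by applying Theorem~\ref{thm:charac}: the hypothesis $I_{1,a}(\xi)=I_{1,q}(\xi)$ gives statements (ii) and (iii), i.e.\ $I_{1,q}(\xi)=H_q(\mu^\xi)=(H_{q,\bP}^{S,+})^{**}(\mu^\xi)$. Combining with Theorem~\ref{thm:uniquemin} and Theorem~\ref{3aLDP}, which together yield $I_{1,a}(\xi)=I_{3,a}(\mu^\xi)=h(\mu^\xi\,|\,P_0)$, produces \eqref{kipcak}. For uniqueness, let $\nu\in\cM_1(\Ospace_\bN)$ with $E^\nu[Z_1]=\xi$ minimize \eqref{qvarmod1}, so that $(H_{q,\bP}^{S,+})^{**}(\nu)=I_{1,q}(\xi)=I_{1,a}(\xi)$. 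Inserting $\nu$ into the sandwich above forces $I_{3,a}(\nu)=I_{1,a}(\xi)$, so $\nu$ also minimizes the averaged contraction \eqref{a_contraction}; Theorem~\ref{thm:uniquemin} then gives $\nu=\mu^\xi$.

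For part~(b), the hypothesis $\mu^\xi_\Omega\ll\bP$ on $\kS_{0,\infty}$ places $\mu^\xi$ inside the class to which Corollary~\ref{cor:ugurmugur} applies, yielding $I_{3,a}(\mu^\xi)=I_{3,q}(\mu^\xi)=H_q(\mu^\xi)$. Using $I_{3,a}(\mu^\xi)=I_{1,a}(\xi)$ and $I_{3,q}(\mu^\xi)\ge I_{1,q}(\xi)\ge I_{1,a}(\xi)$, the chain collapses to give $I_{1,a}(\xi)=I_{1,q}(\xi)$, reducing us to the situation of part~(a); hence $\mu^\xi$ is the unique minimizer of \eqref{qvarmod1}. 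For \eqref{qvarmod2}, Corollary~\ref{1qLDPmod} tells us that the two infima agree, so $H_q(\mu^\xi)=I_{1,q}(\xi)$ makes $\mu^\xi$ a minimizer. For uniqueness, let $\nu$ be any minimizer of \eqref{qvarmod2}: it is $S$-invariant with $\nu_\Omega\ll\bP$ on $\kS_{0,\infty}$, $E^\nu[Z_1]=\xi$, and $H_q(\nu)=I_{1,q}(\xi)=I_{1,a}(\xi)$. Corollary~\ref{cor:ugurmugur} applied to $\nu$ gives $I_{3,a}(\nu)=H_q(\nu)=I_{1,a}(\xi)$, so $\nu$ minimizes the averaged contraction, and Theorem~\ref{thm:uniquemin} again yields $\nu=\mu^\xi$.

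The argument is almost pure bookkeeping once the right tools are lined up; no new estimate is needed. The only subtlety I anticipate is making sure the implication ``minimizer of \eqref{qvarmod1} has $I_{3,a}$-value equal to $I_{1,a}(\xi)$'' is clean — it rests on the lower bound $(H_{q,\bP}^{S,+})^{**}(\mu)\ge h(\mu\,|\,P_0)$ from Corollary~\ref{cor:zincir}, and on recognizing that equality of the level-1 rates is exactly what allows one to transfer a quenched minimizer into an averaged minimizer, which is then rigid by Theorem~\ref{thm:uniquemin}. In part~(b), the mild extra care is to verify that $\mu^\xi$ actually sits in the admissible class of \eqref{qvarmod2} — but this is immediate from the hypothesis together with the $S$-invariance recorded in Proposition~\ref{prop:elem2}(a).
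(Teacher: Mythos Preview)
Your argument is correct. Part~(a) is essentially the paper's own proof: the paper packages the uniqueness argument as Lemma~\ref{ondenver} (proved earlier to avoid circularity with Theorem~\ref{thm:charac}) and then cites it, while you carry out the same sandwich argument inline.

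The genuine difference is in the reduction step of part~(b). The paper shows $I_{1,a}(\xi)=I_{1,q}(\xi)$ by going back to the martingale $u_n$: from $\mu_\Omega^\xi\ll\bP$ on $\kS_{0,\infty}$ it extracts (via the proof of Theorem~\ref{thm:doobchar}, which uses that \eqref{polyell} implies \eqref{newell}) that $u_n\to u$ with $\bP(u>0)=1$, hence $\Lambda_{1,q}(\rho)=\log\phi_a(\rho)$, and then invokes Lemma~\ref{lem:denk}. Your route is shorter and more self-contained: you apply Corollary~\ref{cor:ugurmugur} directly to $\mu^\xi$ to get $I_{3,a}(\mu^\xi)=I_{3,q}(\mu^\xi)$, and then the contraction inequalities collapse. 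This avoids any appeal to martingale convergence or to the $\bP(u>0)=1$ argument. Both routes are valid; yours stays entirely at the level of rate-function inequalities already recorded in the results section, while the paper's route makes the analytic mechanism (positivity of the limiting martingale) explicit.

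One small point worth making explicit in your write-up: when you take a minimizer $\nu$ of \eqref{qvarmod1} and feed it into the sandwich from Corollary~\ref{cor:zincir}, that corollary is stated for $S$-invariant $\mu$. This is harmless because the set of $S$-invariant measures is closed, so $(H_{q,\bP}^{S,+})^{**}$ is still $+\infty$ off that set and any finite-value minimizer is automatically $S$-invariant; but it is worth a one-line justification.
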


\begin{remark}\label{minvarmi}
	
Theorem \ref{thm:miniq}(b) is not vacuous or trivial (see Remark \ref{lazmols}). A similar result (regarding level-2 to level-1 contractions for $\xi$ sufficiently close to $\llnxi$) was previously obtained for certain ballistic (undirected) RWREs on $\bZ^d$ with $d\ge4$ (see \cite[Theorem 3.9]{Yil11a}).

The lower semicontinuity of $(H_{q,\bP}^{S,+})^{**}$ and the compactness of $\{\mu\in\cM_1(\Ospace_\bN):\,E^\mu[Z_1] = \xi\}$ ensure that the quenched contraction \eqref{qvarmod1} always has a minimizer. On the other hand, there is currently no general existence result for minimizers of the quenched contraction \eqref{qvarmod2}. See Section \ref{ss:addrem} for further remarks.
\end{remark}

\subsection{Spatially constant environments}
We illustrate our results in a simplified setting where the spatial variation of the environment is removed.      The quenched process $\Zvec$ is now a process of independent but not identically distributed variables.  LDPs for such  processes were originally established in    \cite{baxt-jain-sepp, come-89, sepp-ptrf-93I}, motivated in part by their application to the equilibrium  statistical mechanics of disordered lattice systems such as the Ising or Curie-Weiss models with random fields or coupling constants.  (Some of these large deviation results   have been reproduced  in Chapter 15 of the textbook \cite{RasSep15}.)   The novelty we provide here is the identification of the averaged and quenched contraction minimizers.     We find that many properties such as equality of averaged and quenched rate functions and minimizers fail.  

Take a Borel probability measure $\lambda$ on $\cP$ (defined in \eqref{pipi}). 
Let $(\bar q_i)_{i\in\bZ}$ be sampled from $\cP^\bZ$ according to $\lambda^{\otimes\bZ}$. 
Define $\w\in\Omega = \cP^{\bZ\times\bZ^d}$ by setting
\be\label{def:constenv}
\text{$\w_{i,x} = \bar q_i$ for every $i\in\bZ$ and $x\in\bZ^d$.}
\ee
This induces a probability measure $\bP$ on $(\Omega,\kS)$. Environments under $\bP$ are temporally i.i.d.\ and spatially constant. Hence, $\bP$ is invariant but not ergodic under the spatial translations $(T^s_y)_{y\in\bZ^d}$. 

For $\rho\in\bR^d$ and $\w\in\Omega$, define $$W(\rho,\w) = E_0^\w[e^{\langle\rho,Z_1\rangle}].$$
Observe that $\bE[W(\rho,\w)] 
= \phi_a(\rho)$. 
For the sake of eliminating trivial cases where the environment is effectively deterministic, we assume that
\be\label{yoksasic}
\text{$\bP(W(\rho,\w) = \phi_a(\rho)) < 1$ unless $\phi_a(\rho) = e^{\langle\rho,\llnxi\rangle}$.} 
\ee
The condition $\phi_a(\rho) = e^{\langle\rho,\llnxi\rangle}$ is the same as $\rho\in\partial I_{1,a}(\llnxi)$ (Proposition \ref{abimdeca} in Appendix \ref{app:subdifferential}). 

We start our study by giving a simple formula for the level-1 quenched rate function and showing that it is not equal to the averaged one at any atypical velocity.

\begin{proposition}\label{prop:scneq}
Assume \eqref{polyell} and \eqref{def:constenv}. Then, for every $\xi\in\cD$, 
\be\label{metgel}
I_{1,q}(\xi) = \sup_{\rho\in\bR^d}\{\langle\rho,\xi\rangle - \bE[\log W(\rho,\w)]\} \ge \sup_{\rho\in\bR^d}\{\langle\rho,\xi\rangle - \log\bE[W(\rho,\w)]\} = I_{1,a}(\xi).
\ee
If $\xi\in\mathrm{ri}(\cD)\setminus\{\llnxi\}$ and \eqref{yoksasic} holds, then the inequality in \eqref{metgel} is strict.
\end{proposition}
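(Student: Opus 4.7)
\emph{Proof plan.} Set $\Lambda(\rho):=\bE[\log W(\rho,\w)]$ and $\Lambda_a(\rho):=\log\phi_a(\rho)$. My plan has three stages: (a) identify $I_{1,q}$ with $\Lambda^*$ by computing the quenched cumulant limit and matching it via Varadhan's lemma; (b) deduce the inequality in \eqref{metgel} from $\Lambda\le\Lambda_a$ (Jensen); (c) upgrade (b) to strict inequality at $\xi\in\mathrm{ri}(\cD)\setminus\{\llnxi\}$ using \eqref{yoksasic} together with the smoothness of $\log\phi_a$.

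For (a) I will exploit that under \eqref{def:constenv} the quenched steps $Z_1,Z_2,\ldots$ are independent under $P_0^\w$ with $Z_{i+1}\sim\bar q_i$, so
\[
E_0^\w[e^{\langle\rho,X_n\rangle}]=\prod_{i=0}^{n-1}W(\rho,T_{i,0}\w).
\]
Under $\bP$ the summands $\log W(\rho,T_{i,0}\w)$ are i.i.d., and \eqref{polyell} yields integrability via the envelope $\log\bar q_0(z_0)+\langle\rho,z_0\rangle\le\log W(\rho,\w)\le\max_z\langle\rho,z\rangle$ for any fixed $z_0\in\cR$. The SLLN then gives $\tfrac{1}{n}\log E_0^\w[e^{\langle\rho,X_n\rangle}]\to\Lambda(\rho)$ $\bP$-a.s.\ for each fixed $\rho$. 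In parallel, Varadhan's lemma applied to the quenched level-1 LDP of Corollary \ref{1qLDP} with the bounded continuous tilt $\xi\mapsto\langle\rho,\xi\rangle$ on the compact set $\cD$ returns the same limit as $I_{1,q}^*(\rho)$. Matching the two identifies $I_{1,q}^*=\Lambda$ pointwise; since $I_{1,q}$ is convex (Proposition \ref{prop:elem}) and lower semicontinuous, Fenchel--Moreau delivers $I_{1,q}=\Lambda^*$, which is the first equality in \eqref{metgel}. Step (b) is then immediate: concavity of $\log$ gives $\Lambda\le\Lambda_a$ pointwise, and taking Legendre transforms flips this into $I_{1,q}\ge I_{1,a}$.

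For (c) I will argue by contradiction. Suppose $I_{1,a}(\xi)=I_{1,q}(\xi)$ with $\xi\in\mathrm{ri}(\cD)\setminus\{\llnxi\}$. Since $\xi$ lies in the relative interior of the domain of $I_{1,a}$, basic convex analysis (as used around \eqref{sutesti}) guarantees $\partial I_{1,a}(\xi)\neq\emptyset$, so I can pick $\rho^*\in\partial I_{1,a}(\xi)$. Chaining
\[
I_{1,a}(\xi)=\langle\rho^*,\xi\rangle-\Lambda_a(\rho^*)\le\langle\rho^*,\xi\rangle-\Lambda(\rho^*)\le I_{1,q}(\xi)=I_{1,a}(\xi)
\]
collapses to equality throughout and forces $\Lambda(\rho^*)=\Lambda_a(\rho^*)$, i.e., equality in Jensen at $\rho^*$. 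Hence $W(\rho^*,\w)=\phi_a(\rho^*)$ for $\bP$-a.e.\ $\w$, and by \eqref{yoksasic} this requires $\phi_a(\rho^*)=e^{\langle\rho^*,\llnxi\rangle}$, which is the Fenchel identity characterizing $\rho^*\in\partial I_{1,a}(\llnxi)$ (cf.\ Remark \ref{lazmols}). Since $\Lambda_a=\log\phi_a$ is smooth, the Fenchel duality relation $\rho\in\partial I_{1,a}(\eta)\iff\eta=\nabla\log\phi_a(\rho)$ is single-valued in $\eta$; applying it simultaneously at $\eta=\xi$ and $\eta=\llnxi$ with the common $\rho^*$ forces $\xi=\nabla\log\phi_a(\rho^*)=\llnxi$, contradicting $\xi\ne\llnxi$. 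The main obstacle is this rigidity step: (a) and (b) are routine once the product form of the quenched MGF is in hand, but (c) needs the two-fold sharpness provided by \eqref{yoksasic} (to pin $\rho^*$ into $\partial I_{1,a}(\llnxi)$) and by the differentiability of $\log\phi_a$ (to transfer that constraint onto $\xi$ itself).
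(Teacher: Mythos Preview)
Your proof is correct and follows essentially the same approach as the paper: both identify $\Lambda_{1,q}(\rho)=\bE[\log W(\rho,\w)]$ via the product form of the quenched MGF and the SLLN, invoke Varadhan's lemma and convexity of $I_{1,q}$ for the first equality, and use Jensen for the inequality. For strictness, the paper argues directly (pick $\rho\in\partial I_{1,a}(\xi)$, note $\rho\notin\partial I_{1,a}(\llnxi)$ via \eqref{entemizig}, apply \eqref{yoksasic} and strict Jensen, then Lemma~\ref{lem:denk}), whereas you run the contrapositive and inline the content of Lemma~\ref{lem:denk} and Proposition~\ref{abimdeca}; the substance is identical.
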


\begin{remark}
In Proposition \ref{prop:scneq}, we assume \eqref{polyell} in order to apply Corollary \ref{1qLDP}. In fact, when the environment is spatially constant, a weaker ellipticity condition 
is sufficient for the level-1 quenched LDP, but we do not pursue such technical improvements here. 
\end{remark}

Next we present the structure of the unique minimizer $\mu^\xi$  (defined in \eqref{sutesti}) of the averaged contraction (\ref{a_contraction})  (see Theorem \ref{thm:uniquemin}). For   $\rho\in\bR^d$ and $\w\in\Omega$ let $$u_1(\rho,\w) = \frac{W(\rho,\w)}{\phi_a(\rho)}.$$

\begin{proposition}\label{prop:scmin}
Assume \eqref{def:constenv}. Then, for every $\xi\in\mathrm{ri}(\cD)$, the pairs $(\wvec_i,Z_{i+1})_{i\ge0}$ are i.i.d.\ under $\mu^\xi$. The $\Omega$-marginal $\mu_\Omega^\xi$ and the Markov transition kernel $\pi_{0,1}^\xi$ of $\mu^\xi$ are given by
$$\left.\frac{d\mu_\Omega^\xi}{d\bP}\right|_{\kS_{0,n}}\!\!\!\!\!\!\!\!(\w) = \prod_{i=0}^{n-1}u_1(\rho,T_{i,0}\w)\qquad\text{and}\qquad\pi_{0,1}^\xi(0,z\,|\,\omega) = \pi_{0,1}(0,z\,|\,\omega)\frac{e^{\langle\rho,z\rangle}}{W(\rho,\w)},$$ 
for any  $\rho\in\partial I_{1,a}(\xi)$. If $\xi\neq\llnxi$ and \eqref{yoksasic} holds, then $\mu_\Omega^\xi\not\ll\bP$ on $\kS_{0,\infty}$, and 
$\pi_{0,1}^\xi$ is not obtained from $\pi_{0,1}$ via a Doob $h$-transform as in \eqref{doob}. 
\end{proposition}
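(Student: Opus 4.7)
\emph{Part 1: i.i.d.\ product structure and explicit formulas.} Fix any $\rho \in \partial I_{1,a}(\xi)$. Under \eqref{def:constenv} every spatial shift acts trivially on $\w$, so $T_{i,x}\w$ depends only on $i$, the map $S^k$ in \eqref{sutesti} is pure temporal translation, and $\pi_{0,1}(0,z|\w) = \w_{0,0}(z)$ is measurable w.r.t.\ the time-$0$ slice of $\w$ alone. Consequently the pairs $(\wvec_i, Z_{i+1})_{i \geq 0}$ are already i.i.d.\ under $P_0$. Plugging any $f \in b\cA_{-m,n}^{0,n}$ into \eqref{sutesti} and writing
$$e^{\langle\rho, X_{m+n}\rangle - (m+n)\log\phi_a(\rho)} = \prod_{i=0}^{m+n-1}\frac{e^{\langle\rho, Z_{i+1}\rangle}}{\phi_a(\rho)}$$
as a product of single-coordinate factors shows that the tilt preserves independence: the pairs remain i.i.d.\ under $\mu^\xi$, now with common joint density $\w_{0,0}(z)e^{\langle\rho,z\rangle}/\phi_a(\rho)$ w.r.t.\ $\bP_s(d\wvec)\otimes c_\cR(dz)$; total mass one follows from $\bE[W(\rho,\w)] = \phi_a(\rho)$. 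Integrating out $z$ gives the $\wvec$-marginal density $u_1(\rho,\w)$, hence the stated product Radon--Nikodym derivative on $\kS_{0,n}$, while conditioning on $\wvec_0$ and invoking Proposition \ref{markova} gives $\pi_{0,1}^\xi(0,z|\w) = \w_{0,0}(z)e^{\langle\rho,z\rangle}/W(\rho,\w)$.

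\emph{Part 2: $\mu_\Omega^\xi \not\ll \bP$ on $\kS_{0,\infty}$.} For $\xi \neq \llnxi$, smoothness of $\log\phi_a$ (analytic because $\cR$ is finite) makes $\nabla\log\phi_a$ single-valued, so $\partial I_{1,a}(\xi)$ and $\partial I_{1,a}(\llnxi)$ are disjoint and in particular $\phi_a(\rho) \neq e^{\langle\rho,\llnxi\rangle}$. Condition \eqref{yoksasic} then yields $\bP(W(\rho,\w) \neq \phi_a(\rho)) > 0$, so the i.i.d.\ factors $U_i := u_1(\rho, T_{i,0}\w)$ satisfy $\bE[U_0] = 1$ but $\bP(U_0 = 1) < 1$. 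By Jensen $\bE[\sqrt{U_0}] < 1$ strictly, so Kakutani's product-martingale theorem forces $M_n := \prod_{i=0}^{n-1}U_i \to 0$ $\bP$-a.s. The martingale characterization of absolute continuity then gives $\mu_\Omega^\xi \perp \bP$ on $\kS_{0,\infty}$.

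\emph{Part 3: no Doob $h$-transform representation.} Suppose some positive $u \in L^1(\Omega,\kS_{0,\infty},\bP)$ realized \eqref{doob}. Because $T_{1,z}\w = T_{1,0}\w$ for every $z \in \cR$ under \eqref{def:constenv}, the ratio $u(T_{1,z}\w)/u(\w)$ is $z$-independent; matching \eqref{doob} with the formula for $\pi_{0,1}^\xi$ from Part 1 forces $u(\w) = u_1(\rho,\w)\,u(T_{1,0}\w)$ $\bP$-a.s., and iterating gives $u(\w) = M_n\,u(T_{n,0}\w)$ for every $n$. Since $M_n \to 0$ $\bP$-a.s.\ and $u > 0$ $\bP$-a.s., this would force $u \circ T_{n,0} \to \infty$ $\bP$-a.s., contradicting tightness of the equidistributed family $(u \circ T_{n,0})_{n \geq 0}$.

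\emph{Main obstacle.} The final contradiction in Part 3 is not reachable via expectations, because independence of $M_n$ and $u \circ T_{n,0}$ under $\bP$ collapses $\bE[u] = \bE[M_n\,u \circ T_{n,0}]$ to the tautology $\bE[u] = \bE[u]$. The real tension has to be extracted pointwise from the mismatch between $\bP$-a.s.\ blow-up of $u \circ T_{n,0}$ and the tightness forced by temporal translation invariance. The only other slightly nontrivial input is the convex-analytic disjointness of subdifferentials at distinct velocities, which is what legitimizes the use of \eqref{yoksasic}.
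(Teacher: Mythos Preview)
Your proof is correct and follows the same overall structure as the paper's: identify the product structure from spatial constancy, compute the Radon--Nikodym derivative and kernel explicitly, show $u_n \to 0$ $\bP$-a.s.\ to get singularity, then derive the recursion $u(\w) = u_n(\rho,\w)\,u(T_{n,0}\w)$ and extract a contradiction. The differences are in two technical steps. For $u_n \to 0$, you invoke Kakutani's dichotomy via $\bE[\sqrt{U_0}] < 1$; the paper instead uses the strong LLN on $\frac{1}{n}\log u_n \to \bE[\log u_1(\rho,\cdot)] < 0$ (strict Jensen), which is slightly more elementary and gives the exponential rate for free. For ruling out the Doob $h$-transform, you argue pointwise that $u\circ T_{n,0} \to \infty$ $\bP$-a.s.\ contradicts the tightness of the identically distributed family $(u\circ T_{n,0})_n$; the paper instead takes $\bE[\,\cdot\,|\,\kS_{0,n}]$ of the recursion, uses independence of $u\circ T_{n,0}$ from $\kS_{0,n}$ to get $\bE[u\,|\,\kS_{0,n}] = u_n$ (after normalizing $\bE[u]=1$), and then lets martingale convergence collide $u_n \to u > 0$ with $u_n \to 0$. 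Both routes are clean; the paper's martingale argument is perhaps more direct, while your tightness argument avoids the normalization and the conditional-expectation computation.
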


The simultaneous lack of absolute continuity and Doob $h$-transform are consistent with Theorem \ref{thm:doobchar}.
Now that we have  simple formulas for $\mu_\Omega^\xi$ and $\pi_{0,1}^\xi$, we can  compute $h_{\kS_{0,\infty}}(\mu_\Omega^\xi\,|\,\bP)$ and $H_q(\mu^\xi)$. 

\begin{proposition}\label{prop:scent}
Assume \eqref{def:constenv}. Then, for every $\xi\in\mathrm{ri}(\cD)$ and $\rho\in\partial I_{1,a}(\xi)$,
\begin{align*}
h_{\kS_{0,\infty}}(\mu_\Omega^\xi\,|\,\bP) &= \bE[u_1(\rho,\w)\log u_1(\rho,\w)]\quad \text{and}\\
H_q(\mu^\xi) &= \langle\rho,\xi\rangle - \log\phi_a(\rho) - \bE[u_1(\rho,\w)\log u_1(\rho,\w)].
\end{align*}
If $\xi\neq\llnxi$ and \eqref{yoksasic} holds, then $h_{\kS_{0,\infty}}(\mu_\Omega^\xi\,|\,\bP) > 0$.
\end{proposition}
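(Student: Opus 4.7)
The plan is to carry out three quick computations, each leveraging the strong product structure inherited from \eqref{def:constenv} together with the explicit formulas in Proposition \ref{prop:scmin}.

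\emph{Step 1: specific relative entropy of the $\Omega$-marginal.} Fix $\rho\in\partial I_{1,a}(\xi)$. By Proposition \ref{prop:scmin}, the Radon-Nikodym density $\prod_{i=0}^{n-1}u_1(\rho,T_{i,0}\w)$ factorizes over the independent coordinates $\bar q_0,\dots,\bar q_{n-1}$, and each factor $u_1(\rho,T_{i,0}\w)$ depends only on $\bar q_i$. Hence the restriction of $\mu_\Omega^\xi$ to $\kS_{0,n}$ is itself a product of $n$ identical one-coordinate changes of measure, so
\[
H_{\kS_{0,n}}(\mu_\Omega^\xi\,|\,\bP)=E^{\mu_\Omega^\xi}\!\Biggl[\sum_{i=0}^{n-1}\log u_1(\rho,T_{i,0}\w)\Biggr]=n\,\bE[u_1(\rho,\w)\log u_1(\rho,\w)].
\]
Since $W(\rho,\w)$ is bounded, $u_1$ is bounded and this expectation is finite. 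Dividing by $n$ yields the first displayed identity.

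\emph{Step 2: the formula for $H_q(\mu^\xi)$.} By Theorem \ref{thm:uniquemin}, $\mu^\xi$ attains the infimum in the averaged contraction \eqref{a_contraction}, so Theorem \ref{3aLDP} together with \eqref{level1a} gives $h(\mu^\xi\,|\,P_0)=I_{3,a}(\mu^\xi)=I_{1,a}(\xi)=\langle\rho,\xi\rangle-\log\phi_a(\rho)$. Feeding this together with Step 1 into the chain-rule decomposition of Theorem \ref{entropy_connection} yields the second displayed identity.

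\emph{Step 3: strict positivity when $\xi\neq\llnxi$.} The function $u_1(\rho,\w)$ is a strictly positive probability density on $(\Omega,\kS_{0,1},\bP)$, since $\bE[u_1(\rho,\w)]=\bE[W(\rho,\w)]/\phi_a(\rho)=1$. Jensen's inequality forces $\bE[u_1\log u_1]\ge 0$, with equality iff $u_1\equiv 1$ $\bP$-a.s., i.e.\ $W(\rho,\w)=\phi_a(\rho)$ $\bP$-a.s. By \eqref{yoksasic} this equality can only occur when $\phi_a(\rho)=e^{\langle\rho,\llnxi\rangle}$, which by the convex-analysis characterization in Appendix \ref{app:subdifferential} (Proposition \ref{abimdeca}) is equivalent to $\rho\in\partial I_{1,a}(\llnxi)$. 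It would then follow that $\rho\in\partial I_{1,a}(\xi)\cap\partial I_{1,a}(\llnxi)$, and the standard fact that $\log\phi_a$ is strictly convex in directions parallel to $\mathrm{aff}(\cR)$ implies that the two (perpendicular-to-$\mathrm{aff}(\cR)$) affine subdifferentials at distinct points $\xi\neq\llnxi$ of $\mathrm{ri}(\cD)$ are disjoint, a contradiction.

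The only non-bookkeeping point is the disjointness of subdifferentials at the end of Step 3, but this is a routine consequence of the convex-analytic setup already assembled in Appendix \ref{app:subdifferential}; the rest is direct computation, with the product structure of $\bP$ carrying most of the load in Step 1.
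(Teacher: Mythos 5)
Your proof is correct, and Steps 1 and 3 essentially coincide with the paper's argument: the first identity is read off from the product form of the density in \eqref{gavarmis}, and strict positivity follows from strict convexity of $u\mapsto u\log u$ together with the observation that $\rho\in\partial I_{1,a}(\xi)$ forces $\rho\notin\partial I_{1,a}(\llnxi)$ (which, via Proposition \ref{abimdeca} and \eqref{yoksasic}, rules out $u_1\equiv 1$). Your disjointness argument at the end of Step 3 can be streamlined: by \eqref{entemizig}, $\rho\in\partial I_{1,a}(\xi)$ iff $\xi=\nabla\log\phi_a(\rho)$, so a single $\rho$ cannot lie in the subdifferentials at two distinct velocities; no separate strict-convexity discussion is needed. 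The genuine divergence is Step 2. The paper computes $H_q(\mu^\xi)$ directly from the definition \eqref{isimdustu}, substituting the explicit kernel $\pi_{0,1}^\xi(0,z\,|\,\w)=\pi_{0,1}(0,z\,|\,\w)e^{\langle\rho,z\rangle}/W(\rho,\w)$ from Proposition \ref{prop:scmin} and using the independence of the slab variables under $\mu^\xi$; you instead obtain it by subtracting Step 1 from $h(\mu^\xi\,|\,P_0)=I_{1,a}(\xi)$ via the decomposition of Theorem \ref{entropy_connection}. Your route is logically sound (Theorem \ref{entropy_connection} and Proposition \ref{prop:elem2}(e) are established independently of this proposition) and shorter, but it sacrifices something the paper wants: the two quantities are computed independently precisely so that their sum can serve as a concrete verification of Theorem \ref{entropy_connection} in the spatially constant setting, as the paper remarks immediately after the proposition. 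With your derivation that consistency check becomes a tautology.
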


Proposition \ref{prop:scent} implies that $$h_{\kS_{0,\infty}}(\mu_\Omega^\xi\,|\,\bP) + H_q(\mu^\xi) = \langle\rho,\xi\rangle - \log\phi_a(\rho) = I_{1,a}(\xi) = h(\mu^\xi\,|\,P_0)$$ for every $\xi\in\mathrm{ri}(\cD)$ and $\rho\in\partial I_{1,a}(\xi)$, which is consistent with Theorem \ref{entropy_connection}.

Since the environments are spatially constant, under the quenched conditioning on $\w$   the $\Omega$-marginal of the empirical process $L^\infty_n$ of \eqref{eq:empi}  is a deterministic measure that converges to $\P$.  Consequently the quenched rate must blow up at measures with the ``wrong'' $\Omega$-marginal.  This was observed in 
\cite[Theorem III.1]{come-89}  and \cite[Theorem 3.4]{sepp-ptrf-93I}. 

\begin{proposition}\label{prop:scq}
Assume \eqref{polyell} and \eqref{def:constenv}. For every $\mu\in\cM_1(\Ospace_\bN)$, if $\mu_\Omega \ne \bP$, then $I_{3,q}(\mu) = \infty$. Consequently, if \eqref{yoksasic} holds, then
$$(H_{q,\bP}^{S,+})^{**}(\mu^\xi) = H_{q,\bP}^{S,+}(\mu^\xi) = \infty$$
for every $\xi\in\mathrm{ri}(\cD)\setminus\{\llnxi\}$.
\end{proposition}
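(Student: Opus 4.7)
The plan is to exploit the fact that, under the spatial constancy assumption \eqref{def:constenv}, the $\Omega$-marginal of the empirical process $L_n^\infty$ under the quenched measure $P_0^\w$ is completely deterministic given $\w$ and converges to $\bP$, so the level-3 quenched LDP forces any $\mu$ with $\mu_\Omega\ne\bP$ to have infinite rate.

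First I would observe that because $\w_{i,x}=\bar q_i$ does not depend on $x$, we have $T_{i,X_i}\w=T_{i,0}\w$ regardless of the walk, so the $\Omega$-marginal of $L_n^\infty$ equals the deterministic measure
$$\eta_n(\w):=\frac1n\sum_{i=0}^{n-1}\delta_{T_{i,0}\w}.$$
Since $\bP=\lambda^{\otimes\bZ}$ makes the temporal shift $\w\mapsto T_{1,0}\w$ an i.i.d.\ (hence ergodic) Bernoulli shift, Birkhoff's ergodic theorem applied to a countable convergence-determining family of bounded continuous test functions yields $\eta_n(\w)\Rightarrow\bP$ weakly for $\bP$-a.e.\ $\w$.

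Now fix $\mu\in\cM_1(\Ospace_\bN)$ with $\mu_\Omega\ne\bP$. Since $\Omega$ is Polish, the weak topology on $\cM_1(\Omega)$ is metrizable Hausdorff, so I can pick an open neighborhood $U$ of $\mu_\Omega$ whose closure avoids $\bP$. Then $G:=\{\nu\in\cM_1(\Ospace_\bN):\nu_\Omega\in U\}$ is an open neighborhood of $\mu$, and the weak convergence $\eta_n(\w)\to\bP$ implies $L_n^\infty\notin\overline{G}$ eventually, on a set of $\w$ of full $\bP$-measure. Intersecting with the full-measure set on which Theorem \ref{3qLDP} holds, the upper bound gives
$$-\inf_{\nu\in\overline{G}}I_{3,q}(\nu)\ge\limsup_{n\to\infty}\frac1n\log P_0^\w(L_n^\infty\in G)=-\infty,$$
so $I_{3,q}(\nu)=\infty$ for every $\nu\in\overline{G}$, and in particular $I_{3,q}(\mu)=\infty$.

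For the consequence, Proposition \ref{prop:scmin} tells us that $\mu_\Omega^\xi\not\ll\bP$ on $\kS_{0,\infty}$ whenever $\xi\in\mathrm{ri}(\cD)\setminus\{\llnxi\}$ and \eqref{yoksasic} holds; since $\kS_{0,\infty}\subset\kS$, this implies $\mu_\Omega^\xi\ne\bP$, and applying the first part to $\mu^\xi$ gives $I_{3,q}(\mu^\xi)=\infty$. By Theorem \ref{3qLDPmod}, $I_{3,q}(\mu^\xi)=(H_{q,\bP}^{S,+})^{**}(\mu^\xi)$, and the general inequality $(H_{q,\bP}^{S,+})^{**}\le H_{q,\bP}^{S,+}$ forces $H_{q,\bP}^{S,+}(\mu^\xi)=\infty$ as well. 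I do not anticipate a serious obstacle; the only mild subtlety is separating $\mu_\Omega$ from $\bP$ by an open set whose closure still avoids $\bP$, and this is immediate from metrizability of $\cM_1(\Omega)$.
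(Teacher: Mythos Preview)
Your approach is exactly the paper's: under \eqref{def:constenv} the $\Omega$-marginal of $L_n^\infty$ is the deterministic measure $\eta_n(\w)=n^{-1}\sum_{i=0}^{n-1}\delta_{T_{i,0}\w}$, which converges weakly to $\bP$ by ergodicity of the temporal shift, and this forces the quenched rate to be infinite whenever $\mu_\Omega\ne\bP$. The deduction of the ``consequently'' part is also fine (the paper orders it slightly differently, reading $H_{q,\bP}^{S,+}(\mu^\xi)=\infty$ directly from the definition \eqref{tanimmod} and then getting $(H_{q,\bP}^{S,+})^{**}(\mu^\xi)=\infty$ from the first part; you go the other way around, which works just as well).

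There is, however, a slip in your displayed inequality. You invoke the LDP \emph{upper} bound, which reads
\[
\limsup_{n\to\infty}\frac1n\log P_0^\w(L_n^\infty\in G)\;\le\; -\inf_{\nu\in\overline G}I_{3,q}(\nu),
\]
and rewrite it as $-\inf_{\overline G}I_{3,q}\ge\limsup(\cdots)=-\infty$. That inequality is true but vacuous --- it only says $\inf_{\overline G}I_{3,q}\le\infty$ --- and certainly does not give $I_{3,q}(\nu)=\infty$ for all $\nu\in\overline G$. What you need is the LDP \emph{lower} bound: since $G$ is open and $\mu\in G$,
\[
-I_{3,q}(\mu)\;\le\; -\inf_{\nu\in G}I_{3,q}(\nu)\;\le\;\liminf_{n\to\infty}\frac1n\log P_0^\w(L_n^\infty\in G)\;=\;-\infty,
\]
the last equality because $P_0^\w(L_n^\infty\in G)=\one\{\eta_n(\w)\in U\}=0$ for all large $n$. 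With this correction your argument is complete and matches the paper's.
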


If \eqref{polyell}, \eqref{def:constenv} and \eqref{yoksasic} hold, then it follows from Propositions \ref{prop:scneq}, \ref{prop:scent} and \ref{prop:scq} that all four statements in Theorem \ref{thm:charac} are false for every $\xi\in\mathrm{ri}(\cD)\setminus\{\llnxi\}$, which is consistent with their equivalence.

Proposition \ref{prop:scq} shows in a striking way how the alteration  of the entropy $H_q$ can  completely remove the averaged minimizers  $\mu^\xi$ from the effective domain of the quenched rate function.    In particular, 
   $\mu^\xi$   cannot be a minimizer of the quenched contractions \eqref{qvarmod1} or \eqref{qvarmod2}.   Our final result identifies the minimizer(s) of these quenched contractions. 

For   $\xi\in\mathrm{ri}(\cD)$ define $\nu^\xi\in\cM_1(\Ospace_\bN)$ by setting
\be\label{sutestiqqq}
\nu^\xi(d\w,dz_{1,n}) = \P(d\w)\prod_{i=0}^{n-1} \Bigl\{ \pi_{0,1}(0,z_{i+1}\,|\,T_{i,0}\omega)\frac{e^{\langle\rho,z_{i+1}\rangle}}{W(\rho,T_{i,0}\w)}\Bigr\}
\ee
for every $n\in\bN$, where $\rho\in\partial I_{1,q}(\xi)$. 

\begin{proposition}\label{prop:scminq}
Assume \eqref{polyell} and \eqref{def:constenv}. Then, for every $\xi\in\mathrm{ri}(\cD)$:
\begin{itemize}
\item [(a)] $\nu^\xi$ is well-defined and $S$-invariant;
\item [(b)] $E^{\nu^\xi}[Z_1] = \xi$;
\item [(c)] $(H_{q,\bP}^{S,+})^{**}(\nu^\xi) = H_q(\nu^\xi) = I_{1,q}(\xi)$; and
\item [(d)] $\nu^\xi$ is the unique minimizer of the variational formulas \eqref{qvarmod1} and \eqref{qvarmod2} of the quenched contractions from level-3 to level-1.
\end{itemize}
\end{proposition}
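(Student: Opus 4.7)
The plan is to verify (a)--(d) in turn, exploiting the fact that in the spatially constant setup the formula \eqref{sutestiqqq} describes an i.i.d.\ sequence of pairs $(\bar q_i,Z_{i+1})_{i\ge0}$.

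For (a), I first observe that $\sum_{z\in\cR}\pi_{0,1}(0,z\,|\,T_{i,0}\w)e^{\langle\rho,z\rangle}/W(\rho,T_{i,0}\w)=1$ by definition of $W$, so the cylinder densities in \eqref{sutestiqqq} are consistent and Kolmogorov extension yields $\nu^\xi\in\cM_1(\Ospace_\bN)$ with $\nu^\xi_\Omega=\bP$. Under $\bP$ the sequence $(\bar q_i)_{i\in\bZ}$ is i.i.d.\ in time, and conditional on $\w$ the steps $Z_{i+1}$ are independent with law depending only on $\bar q_i$; hence the pairs $(\bar q_i,Z_{i+1})_{i\ge0}$ are i.i.d.\ under $\nu^\xi$. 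Because environments are spatially constant, $T_{1,z_1}\w=T_{1,0}\w$, so $S$ acts on this sequence as the pure time-shift and $S$-invariance follows. For (b), I would check that $\rho\mapsto\bE[\log W(\rho,\w)]$ is everywhere differentiable by dominated convergence (since $\nabla_\rho\log W(\rho,\w)$ is a convex combination of $z\in\cR$ and is therefore uniformly bounded); the first-order condition for $\rho\in\partial I_{1,q}(\xi)$ to maximize the formula in Proposition \ref{prop:scneq} then yields $\xi=\bE[\nabla_\rho\log W(\rho,\w)]=E^{\nu^\xi}[Z_1]$.

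For (c), I would compute $H_q(\nu^\xi)$ directly from the Markov kernel $\pi_{0,1}^{\bar\nu^\xi}(0,z\,|\,\w,z_{-\infty,0})=\bar q_0(z)e^{\langle\rho,z\rangle}/W(\rho,\w)$ and reference kernel $\bar q_0$; the algebra collapses to $H_q(\nu^\xi)=\langle\rho,\bE[\nabla_\rho\log W(\rho,\w)]\rangle-\bE[\log W(\rho,\w)]=\langle\rho,\xi\rangle-\bE[\log W(\rho,\w)]$, which equals $I_{1,q}(\xi)$ by (b) and Proposition \ref{prop:scneq}. Since $\nu^\xi$ is $S$-invariant with $\nu^\xi_\Omega=\bP$, Corollary \ref{cor:ugurmugur} immediately yields $(H_{q,\bP}^{S,+})^{**}(\nu^\xi)=H_q(\nu^\xi)=I_{1,q}(\xi)$.

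For (d), the strategy is to reduce both variational problems to a common uniqueness argument via Corollary \ref{cor:ugurmugur} and Proposition \ref{prop:scq}. Any minimizer $\mu$ of \eqref{qvarmod1} has $I_{3,q}(\mu)<\infty$; the asymptotic $S$-invariance of $L_n^\infty$ (Remark \ref{aboutSinv}) forces $I_{3,q}$ to be $+\infty$ off the closed convex set of $S$-invariant measures, hence $\mu$ is $S$-invariant, and Proposition \ref{prop:scq} forces $\mu_\Omega=\bP$; Corollary \ref{cor:ugurmugur} then identifies $(H_{q,\bP}^{S,+})^{**}(\mu)=H_q(\mu)$, so $\mu$ is also a minimizer of \eqref{qvarmod2}. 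For any minimizer $\mu$ of \eqref{qvarmod2}, Corollary \ref{cor:ugurmugur} gives $I_{3,q}(\mu)=H_q(\mu)=I_{1,q}(\xi)<\infty$, and then Proposition \ref{prop:scq} upgrades the absolute continuity on $\kS_{0,\infty}$ to the equality $\mu_\Omega=\bP$ on $\kS$. The main obstacle is the final uniqueness step: the elementary Legendre bound $H(\pi'\,|\,\bar q_0)\ge\sum_z\langle\rho,z\rangle\pi'(z)-\log W(\rho,\w)$, applied pointwise to $\pi'=\pi_{0,1}^{\bar\mu}(0,\cdot\,|\,\w,z_{-\infty,0})$ and integrated against $\bar\mu_-$, gives $H_q(\mu)\ge\langle\rho,\xi\rangle-\bE[\log W(\rho,\w)]=I_{1,q}(\xi)$, with equality only if $\pi_{0,1}^{\bar\mu}(0,z\,|\,\w,z_{-\infty,0})=\bar q_0(z)e^{\langle\rho,z\rangle}/W(\rho,\w)$ for $\bar\mu_-$-a.e.\ $(\w,z_{-\infty,0})$; this identifies the quenched transition kernel of $\mu$ with that of $\nu^\xi$ and shows it depends only on $\bar q_0$, so combined with $\mu_\Omega=\bP$ and $S$-invariance this pins down $\mu=\nu^\xi$.
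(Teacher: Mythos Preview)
Your proof is correct and follows essentially the same route as the paper: you verify the i.i.d.\ slab structure and $S$-invariance for (a), use the first-order condition $\xi=\nabla\Lambda_{1,q}(\rho)=\bE[\nabla\log W(\rho,\w)]$ for (b), compute $H_q(\nu^\xi)=\langle\rho,\xi\rangle-\bE[\log W(\rho,\w)]$ and invoke Corollary~\ref{cor:ugurmugur} for (c), and for (d) reduce both contractions via Proposition~\ref{prop:scq} and Corollary~\ref{cor:ugurmugur} to a pointwise Gibbs variational identity forcing the kernel to equal that of $\nu^\xi$; the paper phrases the last step as the vanishing of the relative entropy $H_{\cA_{-\infty,\infty}^{-\infty,1}}(\bar\mu_-\times\pi^{\bar\mu}\,|\,\bar\mu_-\times\pi^{\bar\nu^\xi})$, which is the same argument.

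One small omission: in (a) you check Kolmogorov consistency but not that the definition \eqref{sutestiqqq} is independent of the choice of $\rho\in\partial I_{1,q}(\xi)$ (which may fail to be a singleton when $\dim(\cD)<d$); the paper handles this via Theorem~\ref{epencer}(b), which shows $\langle\rho,z\rangle-\log W(\rho,\w)$ is constant over $\rho\in\partial I_{1,q}(\xi)$.
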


The $\Omega$-marginal of $\nu^\xi$ is $\nu_\Omega^\xi = \bP$, which is consistent with Proposition \ref{prop:scq}. The transition kernels of $\mu^\xi$ (see Proposition \ref{prop:scmin}) and $\nu^\xi$ are both of the form
$\pi_{0,1}(0,z\,|\,\omega)\frac{e^{\langle\rho,z\rangle}}{W(\rho,\w)}$, but defined using $\rho\in\partial I_{1,a}(\xi)$ and $\rho\in\partial I_{1,q}(\xi)$, respectively.

\subsection{Additional remarks and open problems}\label{ss:addrem}

\subsubsection{Minimizers of the contractions}

For every $\xi\in\mathrm{ri}(\cD)$, Theorem \ref{thm:uniquemin} identifies $\mu^\xi$ as the unique minimizer of the averaged contraction \eqref{a_contraction} from level-3 to level-1. When $I_{1,a}(\xi) = I_{1,q}(\xi)$, Theorem \ref{thm:miniq} says that $\mu^\xi$ is also the unique minimizer of the quenched contraction \eqref{qvarmod1}. Moreover, if $\mu_\Omega^\xi\ll\bP$ on $\kS_{0,\infty}$ (see Remark \ref{lazmols} for examples), then $\mu^\xi$ is the unique minimizer of the quenched contraction \eqref{qvarmod2}, too. In the latter case, Theorem \ref{thm:doobchar} gives a representation for the Markov transition kernel $\pi_{0,1}^\xi$ of the quenched walk under $\mu^\xi$ via a Doob $h$-transform.

When $I_{1,a}(\xi) < I_{1,q}(\xi)$, identifying the minimizers (if any) of the quenched contractions \eqref{qvarmod1} and \eqref{qvarmod2} or saying anything about their structure is an open problem in general. Note that \eqref{qvarmod1} always has a minimizer (see Remark \ref{minvarmi}). In contrast, we expect that \eqref{qvarmod2} has no minimizers when the environment $(\w_{i,x})_{(i,x)\in\bZ\times\bZ^d}$ is i.i.d., but this is yet to be shown. On the other hand, in the case of spatially constant environments, Proposition \ref{prop:scminq} provides the unique minimizer of both of these quenched contractions.

In a recent article \cite{RasSepYil_preprint}, we obtained results on the existence and identification of minimizers of   variational formula \eqref{gV:cc}  and its counterpart for the annealed  free energy.  
This covers the logarithmic moment generating functions $$\log\phi_a(\rho) = \log E_0[e^{\langle\rho,Z_1\rangle}]\qquad\text{and}\qquad\Lambda_{1,q}(\rho) = \lim_{n\to\infty}\frac1{n}\log E_0^\omega[e^{\langle\rho,X_n\rangle}]$$ for RWDRE. These functions are the convex conjugates of $I_{1,a}$ and $I_{1,q}$, respectively, by Varadhan's lemma. In future work, we hope to combine these previous results with the current ones and thereby deepen our understanding of the large deviation behavior of RWDRE.

\subsubsection{Connecting the rate functions}

How the averaged and quenched rate functions are related to each other is an important question in the study of processes in random environments. For example, at level-1, obtaining an expression for $I_{1,a}$ in terms of $I_{1,q}$ would provide us with valuable information regarding how the path and the environment conspire towards the realization of atypical velocities. This question is answered  with variational formulas in \cite{ComGanZei00} for one-dimensional nearest-neighbor classical RWRE under the i.i.d.\ environment assumption and in \cite{emra-janj-15}  for the exactly solvable corner growth model with random parameters.   It is an open problem for example for RWRE  in higher dimensions or under more general conditions.

In the context of RWDRE, Theorem \ref{entropy_connection} provides a partial answer to the aforementioned question at level-3 since it connects $I_{3,a}(\mu) = h(\mu\,|\,P_0)$ with $I_{3,q}(\mu) = (H_{q,\bP}^{S,+})^{**}(\mu)$ only indirectly via $H_q(\mu)$. This reduces the original question to understanding the variational expression $(H_{q,\bP}^{S,+})^{**}(\mu) - H_q(\mu)$, which is one of our goals for future work. So far, we know that this difference is nonnegative (see Corollary \ref{cor:zincir}), and equal to zero at $\mu^\xi$ if and only if $I_{1,a}(\xi) = I_{1,q}(\xi)$ (see Theorem \ref{thm:charac}).

\subsubsection{Equality of the rate functions}

When $I_{1,a}(\xi) = I_{1,q}(\xi)$ at an atypical velocity $\xi\in\mathrm{ri}(\cD)\setminus\{\llnxi\}$, the walk is solely responsible (in the exponential scale) for the occurrence of the rare event $\{X_n/n\approx\xi\}$ under the joint measure $P_0$. Theorem \ref{thm:charac} makes this precise by the statement $h_{\kS_{0,\infty}}(\mu_\Omega^\xi\,|\,\mathbb{P}) = 0$. 

Theorem \ref{thm:preveze} lists the previous results regarding the equality of the level-1 rate functions. The decisive statement for $d=1$ is believed to be true also for $d=2$. In contrast, recalling Proposition \ref{prop:elem} (a,d), both $\cC = \{\xi\in\cD:\,I_{1,a}(\xi) = I_{1,q}(\xi)\}$ and $\cD\setminus\cC$ have nonempty interiors when $d\ge3$. Hence, there is a phase transition at the boundary of $\cC$, and we would like to analyze the structure of $\mu^\xi$ when $\xi\in\partial\cC$. The characterizations in Theorem \ref{thm:charac} can potentially shed light on this problem.

Theorem \ref{thm:miniq}(b) provides a sufficient condition for $I_{1,a}(\xi) = I_{1,q}(\xi)$, namely $\mu_\Omega^\xi\ll\bP$ on $\kS_{0,\infty}$. Whether this condition is also necessary for $I_{1,a}(\xi) = I_{1,q}(\xi)$ is an important open problem which is related to the existence of the critical (i.e., strong but not very strong) disorder regime for directed polymers. See \cite[Section 1.3]{RasSepYil_preprint} for details.   In the RWDRE setting and notation, the environment is said to manifest
\begin{itemize}
\item [(i)] weak disorder if $\mu_\Omega^\xi\ll\bP$ on $\kS_{0,\infty}$ (see Remark \ref{lazmols} for examples),
\item [(ii)] strong disorder if $\mu_\Omega^\xi\not\ll\bP$ on $\kS_{0,\infty}$, and
\item [(iii)] very strong disorder if $\left.\frac{d\mu_\Omega^\xi}{d\bP}\right|_{\kS_{0,n}}\!\!\!$ decays exponentially to zero as $n\to\infty$ (equivalently $I_{1,a}(\xi) < I_{1,q}(\xi)$).
\end{itemize}
Here, $\xi$ 
is a multidimensional analog of inverse temperature, with the LLN velocity $\llnxi$ corresponding to infinite temperature. It is tempting to connect this problem of critical disorder with the previous one regarding the structure of $\mu^\xi$ at the boundary of $\cC$, but we refrain from proposing any conjectures.

%
%

\section{Level-3 averaged LDP from the point of view of the particle}\label{sec:3aLDP}

We start this section with an important point regarding the relative entropies $H_{k,\ell}(\mu\,\vert\, P_0)$ defined in (\ref{relent}). 
We will refer to this point below in the proof of Theorem \ref{3aLDP}.

\begin{remark}\label{shiftprob}
It is not necessarily the case that $H_{k,\ell}(\mu\,\vert\, P_0) = H_{0,\ell - k}(\mu\,\vert\, P_0)$ for $S$-invariant $\mu\in\cM_1(\Ospace_\bN)$ and $0<k < \ell$. This is because the distribution of $(\wvec_i,Z_{i+1})$ under $P_0$ changes with $i$. Here is an example: The simplest  $\Smap$-invariant probability measure on $\Ospace_\bN$ is of the product type
$$\mu(d\w,d\zvec)=\bigotimes_{i\in\bZ} \nu(d\wvec_i)\otimes\bigotimes_{j\in\bN}\alpha(z_j).$$
Take $\nu(d\wvec_i)=\P_s(d\wvec_i)$ and $\alpha=\delta_z$ for some fixed $z\in\cR$. Then,
$$H_{i,i+1}(\mu\,\vert\,P_0) = -\E[ \log\sum_x P_0(X_i = x) \pi_{i,i+1}(x,x+z\,|\,\w)].$$
\end{remark}

\begin{proof}[Proof of Theorem \ref{3aLDP}]
We will transform the problem into a level-3 LDP for an i.i.d.\ sequence on the space $\Ospace_{\bZ} = \Omega\times\cR^{\bZ}$.  
First, we define the measure on $\Ospace_\bZ$ that will give the desired i.i.d.\ sequence.

Recall the definition  \eqref{xz}  $x_i=-\sum_{j=i+1}^0 z_j$ of the backward path $(x_i)_{i\le0}$.  
For $n\in\bN$, let
\begin{align*}
\varphi^\w_{-n}(z_{-n+1,0}) &= \prod_{i=-n}^{-1}\pi_{i,i+1}(x_i,x_{i+1}\,|\,\w)
= \prod_{i=-n}^{-1}\pi_{0,1}(0,z_{i+1}\,|\,T_{i,x_i}\w). 
\end{align*}
  Note that $\varphi^\w_{-n}(z_{-n+1,0})$ is not a probability distribution on vectors $z_{-n+1,0}$ because it does not sum up to one. Set 
\be\label{f_n}  f_n(\w) = \sum_{z_{-n+1,0}\in\cR^n}\varphi^\w_{-n}(z_{-n+1,0}) = \sum_{x\in\bZ^d} P_0^{T_{-n,-x}\w}(X_n=x). \ee
On the $\sigma$-algebra $\cA_{-n,\infty}^{-n,\infty}$, we define a measure $\tilde P^{(-n)}$ by setting
$$\tilde P^{(-n)}(d\wlev_{-n,\infty}, dz_{-n+1,m})  = \P(d\wlev_{-n,\infty})\, \varphi^\w_{-n}(z_{-n+1,0})P^\w_0(z_{1,m})\!\!\!\prod_{i = -n+1}^m\!\!\!c_\cR(z_i)$$
for every $m\in\bN$. Here, $c_\cR$ denotes the counting measure on $\cR$.
\begin{lemma}\label{Pinfty-lm2}
$(\tilde P^{(-n)})_{n\in\bN}$ are consistent probability measures and hence they induce a probability measure $P_0^\infty$ on $\Ospace_\bZ$.
\end{lemma}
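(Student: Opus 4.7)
The plan is to verify that $(\tilde P^{(-n)})_{n\in\bN}$ forms a consistent family of probability measures on the $\sigma$-algebras $(\cA_{-n,\infty}^{-n,\infty})_{n\in\bN}$, after which Kolmogorov's extension theorem immediately produces $P_0^\infty$ on $\cA_{-\infty,\infty}^{-\infty,\infty} = \Ospace_\bZ$. Two reductions are needed, and both are driven by the two standing assumptions — temporal independence and spatial translation invariance of $\bP$ — together with the fact that $P_0^\w$ is itself a probability measure on $\cR^\bN$ for each fixed $\w$.

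For the total-mass claim, the latter fact lets me integrate out $z_{1,m}$ and reduces the problem to showing $\bE[f_n] = 1$. Using the definition of $f_n$ in \eqref{f_n}, temporal independence of $(\wlev_i)_{i\in\bZ}$, and spatial translation invariance of $\bP_s$, I would compute
$$\bE[f_n] = \sum_{z_{-n+1,0}\in\cR^n}\bE\Bigl[\prod_{i=-n}^{-1} \w_{i,x_i}(z_{i+1})\Bigr] = \sum_{z_{-n+1,0}\in\cR^n}\prod_{i=-n}^{-1} \hat q(z_{i+1}) = 1,$$
where the key point is that, although each $x_i$ depends on the summed step variables, spatial translation invariance of $\bP_s$ forces $\bE[\w_{i,x_i}(z_{i+1})] = \hat q(z_{i+1})$ independently of the value of $x_i$.

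For consistency, with $n_1 < n_2$, I would integrate out $\wlev_{-n_2,-n_1-1}$ and sum out $z_{-n_2+1,-n_1}$ from $\tilde P^{(-n_2)}$. The crucial identity is the factorization
$$\varphi^\w_{-n_2}(z_{-n_2+1,0}) = \Bigl[\prod_{i=-n_2}^{-n_1-1}\pi_{0,1}(0,z_{i+1}\,|\,T_{i,x_i}\w)\Bigr]\cdot\varphi^\w_{-n_1}(z_{-n_1+1,0}),$$
which is immediate from the definition. The first bracket depends on $\wlev$ only through the slice $\wlev_{-n_2,-n_1-1}$, while the second factor, $\varphi^\w_{-n_1}$, together with $P_0^\w(z_{1,m})$, depends on $\wlev$ only through $\wlev_{-n_1,\infty}$. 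By temporal independence $\P$ factors across this partition of times, so I can pull the $\wlev_{-n_2,-n_1-1}$-integral and the $z_{-n_2+1,-n_1}$-sum past everything else. The same translation-invariance argument as in the total-mass step then gives $\sum_{z_{-n_2+1,-n_1}\in\cR^{n_2-n_1}}\prod_{i=-n_2}^{-n_1-1}\hat q(z_{i+1}) = 1$, and what remains is precisely $\tilde P^{(-n_1)}(d\wlev_{-n_1,\infty},dz_{-n_1+1,m})$.

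The main obstacle is purely bookkeeping: because $x_i = -\sum_{j=i+1}^0 z_j$ couples environment positions to the step sequence, one must be careful that, for each fixed realization of the steps, the random variables $\w_{i,x_i}$ with $i\le -n_1-1$ are evaluated at distinct time slices and are thus independent with the correct marginal $\bP_s$. Once this is unwound, neither the total-mass nor the consistency step presents any analytic difficulty, and Kolmogorov extension finishes the argument.
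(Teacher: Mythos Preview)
Your proposal is correct and follows essentially the same route as the paper: both arguments hinge on temporal independence to decouple the extra environment slices and on spatial translation invariance of $\bP_s$ to neutralize the dependence of the backward positions $x_i$ on the step variables. The only cosmetic differences are that the paper carries out the consistency check one step at a time ($n\to n+1$) and phrases the translation-invariance step as shifting the argument of $\pi_{0,1}$ so that the sum over the extra step collapses to $1$, whereas you jump directly from $n_1$ to $n_2$ and compute the marginal as $\hat q(z_{i+1})$; both amount to the same thing.
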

\begin{proof}
For every $m,n\in\bN$ and every test function $f\in b\cA_{-n,\infty}^{-n,m}$,
\begin{align}
&\int \tilde P^{(-n-1)}(d\wlev_{-n-1,\infty}, dz_{-n,m})f(\wlev_{-n,\infty}, z_{-n+1,m})\nonumber\\
&\qquad = \int \P(d\wlev_{-n-1,\infty})\sum_{z_{-n,m}}\varphi^\w_{-n-1}(z_{-n,0})P^\w_0(z_{1,m})f(\wlev_{-n,\infty}, z_{-n+1,m})\nonumber\\
&\qquad = \int \P(d\wlev_{-n,\infty})\sum_{z_{-n+1,m}}\varphi^\w_{-n}(z_{-n+1,0})P^\w_0(z_{1,m})f(\wlev_{-n,\infty}, z_{-n+1,m})\nonumber\\
&\hspace{44mm}\times\int \P_s(d\wlev_{-n-1})\sum_{z_{-n}}\pi_{0,1}(0,z_{-n}\,|\,T_{-n-1, x_{-n}-z_{-n}}\w)\label{indepooo}\\
&\qquad = \int \P(d\wlev_{-n,\infty})\sum_{z_{-n+1,m}}\varphi^\w_{-n}(z_{-n+1,0})P^\w_0(z_{1,m})f(\wlev_{-n,\infty}, z_{-n+1,m})\nonumber\\
&\hspace{44mm}\times\int \P_s(d\wlev_{-n-1})\sum_{z_{-n}}\pi_{0,1}(0,z_{-n}\,|\,T_{-n-1,x_{-n}}\w)\label{invarooo}\\
&\qquad = \int \tilde P^{(-n)}(d\wlev_{-n,\infty}, dz_{-n+1,m})f(\wlev_{-n,\infty}, z_{-n+1,m}).\nonumber
\end{align}
Here, \eqref{indepooo} uses the temporal independence of the environment, whereas \eqref{invarooo} follows from exchanging the order of the last integral and sum, 
recalling the spatial translation invariance assumption, and restoring the order of the last integral and sum.
\end{proof}

Recall that $(T^s_y)_{y\in\bZ^d}$ are spatial translations defined by $(T^s_y \wlev_j)_x = \w_{j,x+y}$ for $j\in\bZ$ and $x,y\in\bZ^d$. We use these translations to introduce the so-called slab variables 
\be\label{slabvaroglu}
\slabv_j=(T^s_{x_j}\wlev_j,z_{j+1})\,,\quad j\in\bZ.
\ee
This choice of terminology comes from viewing RWDRE in $\bZ^d$ as a directed RWRE in $\bZ^{d+1}$. Note that $\slabv_j$ is centered at the point $x_j$ on the path. In this sense, the slab variables are adapted to the POV of the particle. Equivalently, they satisfy $\slabv_j=\slabv_0\circ \Smap^{j}$ for $j\in\bZ$. For any pair of indices $-\infty<k\le\ell<\infty$, we write $\slabv_{k,\ell} = (\slabv_k,\slabv_{k+1},\ldots,\slabv_\ell)$.
\begin{lemma}\label{Pinfty-lm3}
$P_0^\infty$ is $\Smap$-invariant and the slab variables $(\slabv_j)_{j\in\bZ}$ are i.i.d.\ under $P_0^\infty$.
\end{lemma}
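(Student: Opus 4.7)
The plan is to change variables from the pair $(\wlev_\cdot,z_\cdot)$ to the slab variables $\slabv_\cdot$ and show that, after this change, the density under $\tilde P^{(-n)}$ factors into a product. Fix $n\in\bN$ and $m\ge 0$. Integrating out only the forward environments past time $m+1$, the density of $(\wlev_{-n,m},z_{-n+1,m+1})$ under $\tilde P^{(-n)}$ equals
\[
\Bigl(\prod_{i=-n}^{m}\bP_s(d\wlev_i)\Bigr)\cdot\prod_{i=-n}^{m}\pi_{0,1}(0,z_{i+1}\,|\,T_{i,x_i}\w)
=\prod_{i=-n}^{m}\bP_s(d\wlev_i)\,\wlev_{i,x_i}(z_{i+1}),
\]
where $x_i$ is determined from $x_0=0$ and $z_\cdot$ via \eqref{xz}. (The forward part collapses to the backward formula because $\varphi^\w_{-n}$ and $P_0^\w$ share the same ``environment-at-the-current-position'' structure.)

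Next I would apply the bijection $(\wlev_i,z_{i+1})\longleftrightarrow\slabv_i=(\bar\w_i,\tilde z_i)$ given by $\bar\w_i=T^s_{x_i}\wlev_i$ and $\tilde z_i=z_{i+1}$ (with the inverse $\wlev_i=T^s_{-x_i}\bar\w_i$ and $z_{i+1}=\tilde z_i$, the path being reconstructed from the $\tilde z_\cdot$'s). By the spatial translation invariance of $\bP_s$, the pushforward of $\bP_s$ under $T^s_{x_i}$ is again $\bP_s$, so $\bP_s(d\wlev_i)=\bP_s(d\bar\w_i)$ as measures in the new variable. Moreover $\wlev_{i,x_i}(z_{i+1})=(T^s_{x_i}\wlev_i)_{0}(z_{i+1})=\bar\w_{i,0}(\tilde z_i)$. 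Hence the density above becomes
\[
\prod_{i=-n}^{m}\bigl[\bP_s(d\bar\w_i)\,\bar\w_{i,0}(\tilde z_i)\bigr]
=\prod_{i=-n}^{m}\lambda(d\bar\w_i,d\tilde z_i),
\]
where $\lambda(d\bar\w,d\tilde z):=\bP_s(d\bar\w)\,\bar\w_{0}(\tilde z)\,c_\cR(d\tilde z)$ is a probability measure on $\cP^{\bZ^d}\times\cR$ (checking it has total mass one uses only that $\bar\w_0$ is a probability on $\cR$). Since this holds for every $n$ and $m$, the finite-dimensional distributions of the slab sequence $(\slabv_j)_{j\in\bZ}$ under $P_0^\infty$ are product measures with marginal $\lambda$, so the $\slabv_j$ are i.i.d.

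For $S$-invariance, I would use the identity $\slabv_j=\slabv_0\circ S^{j}$ stated right before the lemma: the bijection above intertwines the shift $\shifta$ on the slab sequence with $\Smap$ on $\Ospace_\bZ$. Since i.i.d.\ sequences are stationary, the law of $(\slabv_j)_{j\in\bZ}$ under $P_0^\infty$ is $\shifta$-invariant, and transporting this back through the bijection gives $P_0^\infty\circ\Smap^{-1}=P_0^\infty$.

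The only delicate point is the first step — recognizing that the backward weight $\varphi^\w_{-n}$ and the forward kernel $P_0^\w$ combine into a single product over $i\in\{-n,\ldots,m\}$ with integrand $\wlev_{i,x_i}(z_{i+1})$ — but this is immediate from the definition of $\pi_{0,1}$ and $x_i$. Everything else is bookkeeping built on spatial translation invariance of $\bP_s$ and temporal independence of $\bP$.
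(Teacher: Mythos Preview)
Your proof is correct and arrives at the same conclusion as the paper's, but via a somewhat different execution. The paper proceeds by an inductive peeling argument: it shows directly that
\[
E_0^\infty[f(\slabv_{-n,m-1})g(\slabv_m)] = E_0^\infty[f(\slabv_{-n,m-1})]\,E_0[g(\wlev_0,Z_1)]
\]
and the analogous factorization at the left end, each time invoking temporal independence to split off one environment level and spatial translation invariance to recenter. Your argument instead writes down the full finite-dimensional density of $(\wlev_{-n,m},z_{-n+1,m+1})$ under $\tilde P^{(-n)}$ at once, performs the change of variables $(\wlev_i,z_{i+1})\mapsto(T^s_{x_i}\wlev_i,z_{i+1})$ globally, and reads off the product structure $\prod_i\lambda(d\slabv_i)$ directly. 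Both routes hinge on exactly the same two ingredients (temporal independence for the product over $i$, spatial $T^s$-invariance of $\bP_s$ for recentering at $x_i$), but your global change-of-variables makes the i.i.d.\ structure visible in a single step rather than one factor at a time. The paper's inductive version has the minor advantage that it never needs to name the bijection or argue its measurability explicitly; your version has the advantage of being shorter and of identifying the common one-slab law $\lambda(d\bar\w,d\tilde z)=\bP_s(d\bar\w)\,\bar\w_0(\tilde z)\,c_\cR(d\tilde z)$ explicitly. Your deduction of $S$-invariance from $\tau$-invariance of the i.i.d.\ law via the intertwining $\slabvs\circ S=\tau\circ\slabvs$ is also correct and matches what the paper later formalizes with the map $\gamma$.
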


\begin{proof}
This is an immediate consequence of the following induction steps.  Let $E_0^\infty$ stand for expectation under $P_0^\infty$. For $-n<0<m$,
\begin{align*}
E_0^\infty [  f(\slabv_{-n,m-1}) g(\slabv_{m})] &= \sum_{z_{-n+1,m+1}} \E \Bigl[   \varphi^\w_{-n}(z_{-n+1,0}) P_0^\w(z_{1,m}) 
f((T^s_{x_j}\wlev_j,z_{j+1})_{-n\le j\le m-1})  \\
&\,\qquad\qquad\qquad\qquad\qquad\times 
\pi_{0,1}(0,z_{m+1}\,|\,T_{m, x_{m}}\w) 
g(T^s_{x_{m}}\wlev_{m},z_{m+1})  \Bigr] \\
&= \sum_{z_{-n+1,m+1}} \E \Bigl[   \varphi^\w_{-n}(z_{-n+1,0}) P_0^\w(z_{1,m}) 
f((T^s_{x_j}\wlev_j,z_{j+1})_{-n\le j\le m-1}) \Bigr]  \\
&\,\quad\qquad\qquad\times 
\E \Bigl[ \pi_{0,1}(0,z_{m+1}\,|\,T_{m, x_{m}}\w) 
g(T^s_{x_{m}}\wlev_{m},z_{m+1})  \Bigr] \\
&=    E_0^\infty [f(\slabv_{-n,m-1})]  E_0[g(\wlev_{0},Z_1)]
\end{align*}
by temporal independence and spatial translation invariance, where $f$ and $g$ are test functions on appropriate spaces. Similarly,
\begin{align*}
E_0^\infty [ g(\slabv_{-n}) f(\slabv_{-n+1,m})] &= \sum_{z_{-n+1,m+1}} \E \Bigl[  \pi_{0,1}(0,z_{-n+1}\,|\,T_{-n, x_{-n}}\w) 
g(T^s_{x_{-n}}\wlev_{-n},z_{-n+1})  \\
&\ \qquad\qquad\qquad\times \varphi^\w_{-n+1}(z_{-n+2,0}) P_0^\w(z_{1,m+1}) 
f((T^s_{x_j}\wlev_j,z_{j+1})_{-n+1\le j\le m}) \Bigr] \\
&= \sum_{z_{-n+1,m+1}} \E \Bigl[  \pi_{0,1}(0,z_{-n+1}\,|\,T_{-n, x_{-n}}\w) 
g(T^s_{x_{-n}}\wlev_{-n},z_{-n+1}) \Bigr]  \\
&\,\quad\qquad\qquad\times 
 \E \Bigl[  \varphi^\w_{-n+1}(z_{-n+2,0}) P_0^\w(z_{1,m+1}) 
f((T^s_{x_j}\wlev_j,z_{j+1})_{-n+1\le j\le m}) \Bigr] \\
&=  \sum_{z} \E \bigl[  \pi_{0,1}(0,z\,|\, \w) 
g(\wlev_{0},z) \bigr]  \\
&\ \quad\times\sum_{z_{-n+2,m+1}} \E \Bigl[  \varphi^\w_{-n+1}(z_{-n+2,0}) P_0^\w(z_{1,m+1}) 
f((T^s_{x_j}\wlev_j,z_{j+1})_{-n+1\le j\le m}) \Bigr]  \\
&=    E_0[g(\wlev_{0},Z_1) ]  E_0^\infty [f(\slabv_{-n+1,m})].
\qedhere\end{align*}
\end{proof}


Denote the full sequence of slab variables by $\slabvs=(\slabv_j)_{j\in\bZ}$ and let $\tau$ be the temporal shift on these sequences, defined by $(\shifta\slabvs)_j=\slabv_{j+1}$. With this notation, the empirical process induced by the slab variables is
$$L^{slab}_n=\frac1n\sum_{i=0}^{n-1}\delta_{\shifta^i\slabvs}.$$
The LDP for $L^{slab}_n$ under $P_0^\infty$ is an instance of the well-known Donsker-Varadhan level-3 LDP for sequences of i.i.d.\ random variables taking values in Polish spaces, see, e.g., \cite[Chapter 6]{RasSep15}. We state this result as Proposition \ref{pr:slabldp} below, after some preparation for providing a formula for the corresponding rate function.

We can glue together the environment components of the slab variables to form an $\w'\in\Omega$ with $\wvec'_j = T^s_{x_j}\wvec_j$ for $j\in\bZ$, and thereby identify the space of slab sequences with $\Ospace_\bZ$. (For the sake of convenience, we will write $\w$ instead of $\w'$.) This identification already factors in the POV of the particle, and the shift $\shifta$ acts on (environment, path) pairs simply by  
$$\bigl(\shifta(\wvec_{\boldsymbol\cdot},z_{\boldsymbol\cdot+1})\bigr)_j = (\wvec_{j+1},z_{j+2}).$$
In other words, the sequence $\slabvs$ can be thought of as a bijective map on $\Ospace_\bZ$. It induces a $\tau$-invariant distribution $P^\infty_0\circ\slabvs^{-1}$ on this space.
Since the $\sigma$-algebras $\cA_{k,\ell}^{k,\ell}$ are now regarded as being generated by the i.i.d.\ slab variables $\slabv_j$,  the problem with shifting relative entropy (cf.\ Remark \ref{shiftprob}) disappears.  
For $\shifta$-invariant probability measures $Q$ on $\Ospace_\bZ$, the specific relative entropy
\be\label{def:hlevel}
h(Q\,\vert\, P_0^\infty\circ\slabvs^{-1}) = \lim_{\ell\to\infty}\frac{H_{0,\ell}(Q\,\vert\, P^\infty_0\circ\slabvs^{-1})}{\ell} = \sup_{-\infty<k<\ell<\infty} \frac{H_{k,\ell}(Q\,\vert\,P^\infty_0\circ\slabvs^{-1})}{\ell-k} 
\ee
exists by a standard superadditivity argument (see \cite[Theorem 6.7]{RasSep15}).

\begin{proposition}\label{pr:slabldp}
$(P_0^\infty(L_n^{slab}\in\cdot\,))_{n\ge1}$ satisfies an LDP with rate function $I^{slab}: \cM_1(\Ospace_\bZ)\to[0,\infty]$ given by
$$I^{slab}(Q)=\begin{cases} h(Q\,\vert\, P^\infty_0\circ\slabvs^{-1}) 
&\text{if $Q$ is $\shifta$-invariant,}\\
 \infty &\text{otherwise.}
\end{cases}$$
\end{proposition}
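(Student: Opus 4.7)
The plan is to recognize this as a direct instance of the Donsker--Varadhan level-3 LDP for i.i.d.\ sequences on a Polish space, as stated in \cite[Chapter 6]{RasSep15}. Lemma \ref{Pinfty-lm3} has already done the essential structural work: the slab variables $(\slabv_j)_{j\in\bZ}$ are i.i.d.\ under $P_0^\infty$ with common distribution equal to the law of $\slabv_0 = (\wvec_0, Z_1)$ under $P_0$. Their state space $\cP^{\bZ^d}\times\cR$ is Polish (as a countable product of Polish factors times a finite set), hence so is the sequence space $\Ospace_\bZ$ under the product topology, and $\cM_1(\Ospace_\bZ)$ equipped with the weak topology is itself Polish.

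First I would set up the identification of the empirical process of the slab sequence with the empirical field of an i.i.d.\ sequence on a Polish space, so that the abstract theorem applies verbatim. In particular, by Lemma \ref{Pinfty-lm3}, $P_0^\infty \circ \slabvs^{-1}$ is the product measure of the marginal distribution of $\slabv_0$ on $\Ospace_\bZ$ viewed as a bi-infinite i.i.d.\ sequence. The Donsker--Varadhan theorem then yields the LDP for the empirical field $L_n^{slab}$ with rate function equal to the specific relative entropy with respect to the reference product measure, restricted to $\tau$-invariant laws and $+\infty$ elsewhere. The existence of the limit in \eqref{def:hlevel} is the standard superadditivity/subadditivity argument for relative entropy against an i.i.d.\ reference, which is exactly \cite[Theorem 6.7]{RasSep15}.

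Since this is an invocation of an off-the-shelf result, there is no substantial obstacle: the only points requiring care are (i) verifying that the state space of the slab variables is Polish, which is immediate, and (ii) ensuring that the topology on $\cM_1(\Ospace_\bZ)$ for which the LDP is stated matches the weak topology used in the abstract theorem, which is again routine. Thus the proof reduces to a citation of \cite[Chapter 6]{RasSep15} after the slab-variable reformulation.
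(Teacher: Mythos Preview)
Your proposal is correct and matches the paper's treatment exactly: the paper does not give a separate proof of this proposition but simply states it as an instance of the Donsker--Varadhan level-3 LDP for i.i.d.\ sequences on a Polish space, citing \cite[Chapter 6]{RasSep15}, after Lemma \ref{Pinfty-lm3} has established that the slab variables are i.i.d.\ under $P_0^\infty$. Your verification of the Polish-space hypothesis and the identification of $P_0^\infty\circ\slabvs^{-1}$ as the product reference measure are precisely the routine checks the paper leaves implicit.
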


Next, we transform this LDP into one for the empirical measures
$$L_n^\bZ=\frac1n\sum_{i=0}^{n-1}\delta_{T_{i,X_i}\w\,,\, \shift^i\bar\Zvec} = \frac1n\sum_{i=0}^{n-1}\delta_{\Smap^i(\w\,,\, \bar\Zvec)}.$$
The inverse of the map $(\w,\zvecd)\mapsto \slabvs$ is $\gmap:\Ospace_\bZ\to\Ospace_\bZ$ that acts on slabs via
$$(\gmap(\wlev_{\boldsymbol\cdot},z_{\boldsymbol\cdot+1}))_j =  (T^s_{-x_j}\wlev_j, z_{j+1}),\quad j\in\bZ.$$
Note that
$$\Smap\circ\gmap = \gmap\circ\shifta$$
and $L_n^\bZ=L_n^{slab}\circ\gmap^{-1}$.  
A probability measure $Q$ on $\Ospace_\bZ$ is $\Smap$-invariant iff $Q' = Q\circ\gmap$ is $\shifta$-invariant. Apply the contraction principle to the LDP in Proposition \ref{pr:slabldp} with the map $Q' = Q\circ\gmap\mapsto Q$ on $\cM_1(\Ospace_\bZ)$. This gives an LDP for $(P_0^\infty(L_n^\bZ\in\cdot\,))_{n\ge1}$ with rate function 
$$I^\bZ(Q)=\begin{cases} h(Q\circ\gmap \,\vert\, P^\infty_0\circ\gmap) &\text{if $Q$ is $\Smap$-invariant,}\\
\infty &\text{otherwise.}
\end{cases}$$
Since $\gmap$ acts bijectively on any collection of adjacent slabs that includes the zeroth slab and hence preserves $\cA_{k,\ell}^{k,\ell}$-measurability for $k\le 0<\ell$, 
$$H_{k,\ell}(Q\circ\gmap\,\vert\, P^\infty_0\circ\gmap) = H_{k,\ell}(Q\,\vert\, P^\infty_0).$$
Thus, using \eqref{def:hlevel}, we can define a specific relative entropy for $\Smap$-invariant $Q$ by restricting the intervals $[k,\ell)$ to include $0$: 
\be\label{hakkidogar}
h(Q\,\vert\, P^\infty_0) = \lim_{\ell\to\infty}\frac{H_{0,\ell}(Q\,\vert\, P^\infty_0)}{\ell} = \sup_{-\infty<k\le 0<\ell<\infty} \frac{H_{k,\ell}(Q\,\vert\,P^\infty_0)}{\ell-k} = h(Q\circ\gmap \,\vert\, P^\infty_0\circ\gmap).
\ee
The statement of the LDP we have established is simplified as follows.

\begin{proposition}  
$(P_0^\infty(L_n^\bZ\in\cdot\,))_{n\ge1}$ satisfies an LDP with rate function $I^\bZ: \cM_1(\Ospace_\bZ)\to[0,\infty]$ given by
$$I^\bZ(Q)=\begin{cases} h(Q\,\vert\, P^\infty_0)&\text{if $Q$ is $\Smap$-invariant,}\\
 \infty &\text{otherwise.}
\end{cases}$$
\label{thm:stZsystem}
\end{proposition}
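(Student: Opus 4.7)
The plan is to deduce the proposition directly from Proposition \ref{pr:slabldp} via the contraction principle, using the bijective map $\gmap:\Ospace_\bZ\to\Ospace_\bZ$ already introduced in the excerpt. Since $\gmap$ acts on each slab $(\wlev_j,z_{j+1})$ by spatially translating the environment component by $-x_j$, where $x_j$ is a continuous (indeed locally finite) function of the step sequence, $\gmap$ is a homeomorphism of $\Ospace_\bZ$ with the product topology. Its pushforward $Q'\mapsto Q'\circ\gmap^{-1}$ is therefore continuous in the weak topology on $\cM_1(\Ospace_\bZ)$. Combined with the identity $L_n^\bZ=L_n^{slab}\circ\gmap^{-1}$, the contraction principle immediately upgrades the LDP of Proposition \ref{pr:slabldp} to an LDP for $(P_0^\infty(L_n^\bZ\in\cdot\,))_{n\ge1}$ with rate function $I^\bZ(Q)=I^{slab}(Q\circ\gmap)$, since $\gmap$ is a bijection and the infimum in the contraction is attained at the unique preimage.

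Next I would verify that $Q$ is $\Smap$-invariant if and only if $Q\circ\gmap$ is $\shifta$-invariant, using the intertwining identity $\Smap\circ\gmap=\gmap\circ\shifta$ recorded above. This transports the invariance condition from the slab side to the POV-of-the-particle side, ensuring $I^\bZ(Q)=\infty$ off the $\Smap$-invariant measures.

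The one substantive step is to match the two specific relative entropies. For $k\le 0<\ell$, the map $\gmap$ restricted to the block of slabs indexed by $\{k,\ldots,\ell-1\}$ is a bijection of that block onto itself: each shift amount $-x_j$ with $k\le j\le\ell-1$ is determined by the steps $z_{k+1,\ell}$ via the formula \eqref{xz}, so the relevant translations are $\cA_{k,\ell}^{k,\ell}$-measurable and invertible. Hence pushing forward along $\gmap$ is a measurable automorphism of the sub-$\sigma$-algebra $\cA_{k,\ell}^{k,\ell}$, which forces
\[
H_{k,\ell}(Q\circ\gmap\,\vert\,P_0^\infty\circ\gmap)=H_{k,\ell}(Q\,\vert\,P_0^\infty).
\]
Dividing by $\ell-k$ and passing to the limit (restricting to intervals that contain $0$ so the argument applies) yields the chain of equalities in \eqref{hakkidogar}, and therefore $I^\bZ(Q)=h(Q\,\vert\,P_0^\infty)$ for $\Smap$-invariant $Q$, as claimed.

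The only subtle point, and the one I would be most careful about, is the restriction $k\le 0<\ell$ in the specific-entropy limit: $\gmap$ is \emph{not} a shift, so a block of slabs that misses the origin involves translations $-x_j$ which are not measurable with respect to the steps in that block alone, and the identity $H_{k,\ell}(Q\circ\gmap\,\vert\,P_0^\infty\circ\gmap)=H_{k,\ell}(Q\,\vert\,P_0^\infty)$ would fail without this constraint. The superadditivity argument for the specific entropy of $\Smap$-invariant measures along intervals containing $0$ is standard and parallels the one used for $\tau$-invariant measures in \cite[Theorem 6.7]{RasSep15}.
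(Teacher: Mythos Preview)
Your proposal is correct and follows essentially the same route as the paper: apply the contraction principle to Proposition \ref{pr:slabldp} via the homeomorphism $\gmap$, transfer $\shifta$-invariance to $\Smap$-invariance through $\Smap\circ\gmap=\gmap\circ\shifta$, and identify the specific entropies by noting that $\gmap$ preserves $\cA_{k,\ell}^{k,\ell}$-measurability for $k\le 0<\ell$. Your explicit remark about why the restriction $k\le 0<\ell$ is needed (the translation amounts $-x_j$ are measurable only with respect to steps in a block containing the origin) is a point the paper makes more tersely; otherwise the arguments coincide.
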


As the last step, we transform this LDP into the one we want. Denote the natural $\Ospace_\bZ\to\Ospace_\bN$  projection by $\zproj(\w,\zvecd)=(\w,\zvec)$. We can think of the empirical process $L_n^\infty$ (introduced in
\eqref{eq:empi}) as a function from $\Ospace_\bZ$ into $\cM_1(\Ospace_\bN)$ by replacing $L_n^\infty$ with $L_n^\infty\circ\zproj$.  We drop the projection from the notation since the coordinates of $\shift^i\zvec=z_{i+1,\infty}$ are defined on $\Ospace_\bZ$ as well as on $\Ospace_\bN$. The contraction principle gives an LDP for $(P_0^\infty(L_n^\infty\in\cdot\,))_{n\ge1}$ with rate function $I_{3,a}^\infty:\cM_1(\Ospace_\bN)\to[0,\infty]$ defined by
$$I_{3,a}^\infty(\mu)=\begin{cases} h(\bar\mu\,\vert\, P^\infty_0) &\text{if $\mu$ is $\Smap$-invariant,}\\ \infty &\text{otherwise.} \end{cases}$$
Recall that, for any $S$-invariant $\mu$ on $\Ospace_\bN$, $\bar\mu$ denotes the unique $S$-invariant extension to $\Ospace_{\bZ}$. 
By the $\Smap$-invariance of both $\bar\mu$ and $P^\infty_0$, the entropies can be shifted to nonnegative levels so that 
$$H_{k,\ell}(\bar\mu\,\vert\,P^\infty_0) = H_{0,\ell-k}(\bar\mu\,\vert\,P^\infty_0) = H_{0,\ell-k}(\mu\,\vert\,P_0)\quad\text{for $k\le 0<\ell$.}$$
The first equality above follows from the observation that $f\mapsto f\circ S^{-k}$ and $g\mapsto g\circ S^{k}$ are bijections between $b\cA_{k,\ell}^{k,\ell}$ and $b\cA_{0,\ell-k}^{0,\ell-k}$ for $k\le 0<\ell$. (However, this is not the case when $0<k<\ell$, cf.\ Remark \ref{shiftprob}.) The second equality is valid because $\bar\mu$ and $\mu$ (resp.\ $P^\infty_0$ and $P_0$) agree at nonnegative times. Comparing \eqref{def:hmuPzero} and \eqref{hakkidogar}, we conclude that $I_{3,a}^\infty$ is equal to the level-3 averaged rate function $I_{3,a}$ defined in \eqref{level3a}. 

It remains to transfer the LDP from $(P_0^\infty(L_n^\infty\in\cdot\,))_{n\ge1}$ to $(P_0(L_n^\infty\in\cdot\,))_{n\ge1}$ separately for lower and upper bounds. This works easily because (i) weak topology on $\cM_1(\Ospace_\bN)$ is determined by finite-dimensional distributions, (ii) the dependence of $L_n^\infty$ on environments at negative times vanishes as $n\to\infty$, and (iii) the measures $P_0$ and $P_0^\infty$ agree on environments and steps at nonnegative times. We leave the routine details to the reader. This completes the proof of Theorem \ref{3aLDP}. 
\end{proof}

\section{Minimizer of the averaged contraction}\label{sec:mini}

We start by listing some properties of the measure $\mu^\xi$ which was introduced in \eqref{sutesti} for $\xi\in\mathrm{ri}(\cD)$. 

\begin{proposition}\label{prop:elem2}
For every $\xi\in\mathrm{ri}(\cD)$:
\begin{itemize}
\item [(a)] $\mu^\xi$ is well-defined and $S$-invariant;
\item [(b)] the slab variables $(\slabv_\ell)_{\ell\ge0}$ are i.i.d.\ under $\mu^\xi$;
\item [(c)] $E^{\mu^\xi}[Z_1] = \xi$;
\item [(d)] $H_{0,\ell}(\mu^\xi\,|\,P_0) = \ell I_{1,a}(\xi)$ for every $\ell\in\bN$; and
\item [(e)] $I_{3,a}(\mu^\xi) = h(\mu^\xi\,|\,P_0) = I_{1,a}(\xi)$.
\end{itemize}
\end{proposition}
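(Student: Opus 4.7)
The plan is to first construct $\bar\mu^\xi$ on $\Ospace_\bZ$ via the slab picture from Section \ref{sec:3aLDP} and then take $\mu^\xi$ to be its $\Ospace_\bN$-marginal. Under $P_0^\infty$ the slab variables $(\slabv_j)_{j\in\bZ}$ are i.i.d.\ with common distribution $\P_s(d\wvec_0)\, \w_{0,0}(z_1)\, c_\cR(dz_1)$. I would declare the slab variables under $\bar\mu^\xi$ to be i.i.d.\ with the tilted common distribution
\[
\nu^\rho(d\wvec_0, dz_1) = \frac{e^{\langle \rho, z_1\rangle}}{\phi_a(\rho)}\, \P_s(d\wvec_0)\, \w_{0,0}(z_1)\, c_\cR(dz_1),
\]
which is a probability measure because $\sum_{z\in\cR} \hat q(z) e^{\langle \rho, z\rangle} = \phi_a(\rho)$. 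This immediately yields part (b). For any two $\rho, \rho' \in \partial I_{1,a}(\xi)$, the difference $\rho - \rho'$ is orthogonal to the affine hull of $\cR$ (Appendix \ref{app:subdifferential}), so $\langle \rho - \rho', z\rangle$ is constant in $z \in \cR$ and the ratio $e^{\langle \rho, z\rangle}/\phi_a(\rho)$ does not depend on the choice of $\rho$. Hence $\nu^\rho$ depends only on $\xi$, and the i.i.d.\ product gives a $\shifta$-invariant measure on the slab coordinates; via $\Smap \circ \gmap = \gmap \circ \shifta$, the resulting $\bar\mu^\xi \in \cM_1(\Ospace_\bZ)$ is $S$-invariant, and its $\Ospace_\bN$-marginal $\mu^\xi$ inherits $S$-invariance.

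For part (a) it remains to verify that $\mu^\xi$ satisfies \eqref{sutesti}. From the slab construction, the density on $\cA_{0,n}^{0,n}$ is
\[
\frac{d\mu^\xi}{dP_0}\bigg|_{\cA_{0,n}^{0,n}} = \prod_{i=1}^{n} \frac{e^{\langle \rho, Z_i\rangle}}{\phi_a(\rho)} = e^{\langle \rho, X_n\rangle - n \log\phi_a(\rho)},
\]
which matches \eqref{sutesti} at $k=0$. For general $k \le 0 < \ell$ and $f \in b\cA_{k,\ell}^{0,\ell}$, the composition $f \circ S^{-k}$ lies in $b\cA_{0,\ell-k}^{0,\ell-k}$, so the $k=0$ density formula applies to it; combined with the $S$-invariance identity $\int f\, d\mu^\xi = \int f \circ S^{-k}\, d\mu^\xi$ (valid since $-k \ge 0$), this reproduces \eqref{sutesti}.

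Parts (c)--(e) are direct consequences. For (c), the i.i.d.\ slab structure gives
\[
E^{\mu^\xi}[Z_1] = \sum_{z\in\cR} z\, \hat q(z)\, \frac{e^{\langle \rho,z\rangle}}{\phi_a(\rho)} = \nabla \log \phi_a(\rho),
\]
and the Fenchel equality $I_{1,a}(\xi) = \langle \rho, \xi\rangle - \log \phi_a(\rho)$ (available for $\rho \in \partial I_{1,a}(\xi)$) together with the smoothness of $\log \phi_a$ forces $\nabla \log \phi_a(\rho) = \xi$; see Appendix \ref{app:subdifferential}. For (d), the density formula gives
\[
H_{0,\ell}(\mu^\xi\,\vert\,P_0) = E^{\mu^\xi}[\langle \rho, X_\ell\rangle] - \ell \log\phi_a(\rho) = \ell (\langle \rho, \xi\rangle - \log \phi_a(\rho)) = \ell I_{1,a}(\xi),
\]
using $S$-invariance of $\mu^\xi$ together with (c). Dividing by $\ell$ and invoking \eqref{def:hmuPzero} proves (e).

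The main obstacle is the well-definedness in part (a), specifically reconciling the slab-based construction with the formula \eqref{sutesti} and verifying independence of the choice of $\rho \in \partial I_{1,a}(\xi)$. Once the slab picture is in place, parts (b)--(e) fall out from the i.i.d.\ product structure and elementary convex duality.
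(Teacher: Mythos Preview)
Your proof is correct and takes a genuinely different route from the paper's. The paper works directly with the defining formula \eqref{sutesti}: it first cites Theorem~\ref{epenlaz}(b) to show the tilt $e^{\langle\rho,z\rangle}/\phi_a(\rho)$ is independent of the choice of $\rho\in\partial I_{1,a}(\xi)$, then verifies consistency of \eqref{sutesti} by explicitly checking that the right-hand side is unchanged when $k$ is replaced by $k-1$ or $\ell$ by $\ell+1$ (two separate computations using temporal independence and spatial translation invariance), deduces $S$-invariance from the second of these, and finally proves part (b) by yet another direct computation with test functions on finitely many slabs. You instead invert the order: you \emph{define} $\bar\mu^\xi$ as the i.i.d.\ product of the tilted slab law $\nu^\rho$, so that (b) and $S$-invariance are immediate from the construction via $\Smap\circ\gmap=\gmap\circ\shifta$, and then check that the resulting marginal on $\Ospace_\bN$ satisfies \eqref{sutesti} by reading off the density on $\cA_{0,n}^{0,n}$ and shifting via $S$-invariance for general $k\le 0$. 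Your approach is more conceptual and reuses the slab machinery of Section~\ref{sec:3aLDP}; the paper's approach is more self-contained and does not depend on $P_0^\infty$ or the map $\gmap$. Parts (c)--(e) are handled identically in substance, with only cosmetic differences in how the i.i.d.\ structure is invoked in (d).
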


\begin{remark}
At the LLN velocity $\llnxi$, the RHS of \eqref{sutesti} gets simplified since $0\in\partial I_{1,a}(\llnxi)$, and we deduce that $\mu^{\llnxi} = P_0$ on $\cA_{0,\infty}^{0,\infty}$. (However, they are not equal on $\cA_{-\infty,\infty}^{0,\infty}$, cf.\ Remark \ref{canmur}.) Moreover, the $S$-invariant extension $\bar\mu^{\llnxi}\in\cM_1(\Ospace_\bZ)$ of $\mu^{\llnxi}$ is equal to the measure $P_0^\infty$ which was defined in Lemma \ref{Pinfty-lm2}. Therefore, Proposition \ref{prop:elem2}(a,b) generalize Lemma \ref{Pinfty-lm3}.
\end{remark}

\begin{proof}[Proof of Proposition \ref{prop:elem2}]
Fix an arbitrary $\xi\in\mathrm{ri}(\cD)$.
\begin{itemize}

\item [(a)] We prove in Theorem \ref{epenlaz}(b) from Appendix \ref{app:subdifferential} that $\langle \rho,z\rangle - \log\phi_a(\rho) = \langle \rho',z\rangle - \log\phi_a(\rho')$ for every $\rho,\rho'\in\partial I_{1,a}(\xi)$ and $z\in\cR$. Therefore, the RHS of \eqref{sutesti} is well-defined. In order to conclude that $\mu^\xi$ is well-defined, it remains to show (for $-\infty<k\le 0 <\ell<\infty$ and $f\in b\cA_{k,\ell}^{0,\ell}$) that the RHS of \eqref{sutesti} does not change if we replace (i) $k$ by $k-1$ or (ii) $\ell$ by $\ell +1$.
\begin{itemize}
\item [(i)] Since $f\circ S^{-k+1}\in b\cA_{1,\ell - k+1}^{-k+1,\ell - k+1}$, 
\begin{align*}
&E_0[e^{\langle\rho, X_{\ell-k+1}\rangle - (\ell-k + 1)\log\phi_a(\rho)}f\circ S^{-k+1}]\\
&\quad = \sum_{z\in\cR}\bE\left[\pi_{0,1}(0,z\,|\,\w)e^{\langle\rho,z\rangle - \log\phi_a(\rho)}\,E_0^{T_{1,z}\w}[e^{\langle\rho,X_{\ell-k}\rangle - (\ell-k)\log\phi_a(\rho)} f\circ S^{-k}]\right]\\
&\quad = \sum_{z\in\cR}\hat q(z)e^{\langle\rho,z\rangle - \log\phi_a(\rho)}E_0[e^{\langle\rho,X_{\ell-k}\rangle - (\ell-k)\log\phi_a(\rho)} f\circ S^{-k}]\\
&\quad = E_0[e^{\langle\rho,X_{\ell-k}\rangle - (\ell-k)\log\phi_a(\rho)} f\circ S^{-k}]
\end{align*}
by temporal independence and spatial translation invariance.
\item [(ii)] Similarly, since $f\circ S^{-k}\in b\cA_{0,\ell - k}^{-k,\ell - k}$, 
\begin{align*}
&E_0[e^{\langle\rho, X_{\ell-k+1}\rangle - (\ell-k + 1)\log\phi_a(\rho)}f\circ S^{-k}]\\
&\quad = \sum_{x}\bE\left[E_0^\w[e^{\langle\rho, X_{\ell-k}\rangle - (\ell-k)\log\phi_a(\rho)}f\circ S^{-k},\,X_{\ell-k} = x]\sum_{z\in\cR}\pi_{0,1}(0,z\,|\,T_{\ell-k,x}\w)e^{\langle\rho,z\rangle - \log\phi_a(\rho)}\right]\\
&\quad = \sum_{x}E_0[e^{\langle\rho, X_{\ell-k}\rangle - (\ell-k)\log\phi_a(\rho)}f\circ S^{-k},\,X_{\ell-k} = x]\sum_{z\in\cR}\hat q(z)e^{\langle\rho,z\rangle - \log\phi_a(\rho)}\\
&\quad = E_0[e^{\langle\rho,X_{\ell-k}\rangle - (\ell-k)\log\phi_a(\rho)} f\circ S^{-k}].
\end{align*}
\end{itemize}
Finally, the $S$-invariance of $\mu^\xi$ follows from (ii). Indeed, $f\circ S\in b\cA_{k+1,\ell+1}^{1,\ell+1}\subset b\cA_{k,\ell+1}^{0,\ell+1}$ and
\begin{align*}
\int f\circ Sd\mu^\xi &= E_0[e^{\langle\rho, X_{\ell-k+1}\rangle - (\ell-k + 1)\log\phi_a(\rho)}f\circ S^{-k}]\\
&= E_0[e^{\langle\rho,X_{\ell-k}\rangle - (\ell-k)\log\phi_a(\rho)} f\circ S^{-k}] = \int fd\mu^\xi.
\end{align*}

\item [(b)] For every $\ell\in\bN$, 
\begin{align*}
&E^{\mu^\xi}[f(\slabv_{0,\ell-1})g(\slabv_\ell)] = E_0[e^{\langle\rho, X_{\ell+1}\rangle - (\ell + 1)\log\phi_a(\rho)}f(\slabv_{0,\ell-1})g(\slabv_\ell)]\\
&\quad = \sum_{x}\bE\left[E_0^\w[e^{\langle\rho, X_{\ell}\rangle - \ell\log\phi_a(\rho)}f(\slabv_{0,\ell-1}),\,X_{\ell} = x]\sum_{z\in\cR}\pi_{0,1}(0,z\,|\,T_{\ell,x}\w)e^{\langle\rho,z\rangle - \log\phi_a(\rho)}g(T_x^s\wlev_\ell,z)\right]\\
&\quad = \sum_{x}E_0[e^{\langle\rho, X_{\ell}\rangle - \ell\log\phi_a(\rho)}f(\slabv_{0,\ell-1}),\,X_{\ell} = x]\,\bE\left[\sum_{z\in\cR}\pi_{0,1}(0,z\,|\,T_{\ell,x}\w)e^{\langle\rho,z\rangle - \log\phi_a(\rho)}g(T_x^s\wlev_\ell,z)\right]\\
&\quad = \sum_{x}E_0[e^{\langle\rho, X_{\ell}\rangle - \ell\log\phi_a(\rho)}f(\slabv_{0,\ell-1}),\,X_{\ell} = x]\,\bE\left[\sum_{z\in\cR}\pi_{0,1}(0,z\,|\,\w)e^{\langle\rho,z\rangle - \log\phi_a(\rho)}g(\wlev_0,z)\right]\\
&\quad = E^{\mu^\xi}[f(\slabv_{0,\ell-1})]\,E^{\mu^\xi}[g(\slabv_0)],
\end{align*}
where $f$ (resp.\ $g$) is a test function on $\ell$ (resp.\ $1$) slab variable(s).

\item [(c)] $E^{\mu^\xi}[Z_1] = E_0[e^{\langle\rho,Z_1\rangle - \log\phi_a(\rho)}Z_1] = \nabla\log\phi_a(\rho) = \xi$ (see \eqref{entemizig} in Appendix \ref{app:subdifferential} for the last equality.)

\item [(d)] For every $\ell\in\bN$,
\begin{align*}
H_{0,\ell}(\mu^\xi\,|\,P_0) &= \ell H_{0,1}(\mu^\xi\,|\,P_0) = \ell E_0[e^{\langle\rho,Z_1\rangle - \log\phi_a(\rho)}(\langle\rho,Z_1\rangle - \log\phi_a(\rho))]\\
&= \ell(\langle\rho,\xi\rangle - \log\phi_a(\rho)) = \ell I_{1,a}(\xi),
\end{align*}
where the first and third equalities use (b) and (c), respectively. (See \eqref{entemizig} in Appendix \ref{app:subdifferential} for the last equality.)

\item [(e)] This is immediate from (a) and (d).\qedhere

\end{itemize}
\end{proof}

The main ingredient in the proof of Theorem \ref{thm:uniquemin} is the following result.

\begin{proposition}\label{averagedconditioning}
For every $\xi\in\mathrm{ri}(\cD)$, $\e>0$, $\ell\in\bN$ and $f\in b\cA_{-\ell,\ell}^{0,\ell}$,
$$\limsup_{\delta\to0}\limsup_{n\rightarrow\infty}\frac{1}{n}\log P_0\left(\ |\int\!\! f\,dL_n^\infty-\int\!\! fd\mu^\xi|>\e\ \left|\ |\frac{X_n}{n}-\xi|<\delta\right.\right)<0.$$
\end{proposition}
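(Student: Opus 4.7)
The plan is to adapt the standard Cram\'er-type argument via exponential tilting. Fix $\rho\in\partial I_{1,a}(\xi)$ (nonempty since $\xi\in\mathrm{ri}(\cD)$) and set $\Lambda:=I_{1,a}(\xi)=\langle\rho,\xi\rangle-\log\phi_a(\rho)$. Write $A=\{|\int f\,dL_n^\infty-\int f\,d\mu^\xi|>\e\}$ and $B_\delta=\{|X_n/n-\xi|<\delta\}$. Define the probability measure $Q_n:=M_n\cdot P_0$ with $M_n:=e^{\langle\rho,X_n\rangle-n\log\phi_a(\rho)}$; this is normalized because under $P_0$ the steps $Z_1,\dots,Z_n$ are i.i.d.\ with moment generating function $\phi_a$. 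On $B_\delta$ we have $M_n\ge e^{n\Lambda-n|\rho|\delta}$, hence
\[
P_0(A\cap B_\delta)=E^{Q_n}\!\bigl[M_n^{-1}\one_{A\cap B_\delta}\bigr]\le e^{-n\Lambda+n|\rho|\delta}\,Q_n(A).
\]
Cram\'er's theorem applied to the averaged walk, together with the continuity of $I_{1,a}$ on $\mathrm{ri}(\cD)$ (Proposition \ref{prop:elem}(a)), yields $\liminf_{n\to\infty}\tfrac1n\log P_0(B_\delta)\ge-\Lambda-o_\delta(1)$, so
\[
\tfrac1n\log\frac{P_0(A\cap B_\delta)}{P_0(B_\delta)}\le\tfrac1n\log Q_n(A)+|\rho|\delta+o_\delta(1).
\]
It therefore suffices to find $c_\e>0$ depending only on $f$ and $\e$ with $\limsup_{n\to\infty}\tfrac1n\log Q_n(A)\le -c_\e$; one then picks $\delta<c_\e/|\rho|$.

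The key structural observation is that the tilt lives on the i.i.d.\ slab variables. By Lemma \ref{Pinfty-lm3} combined with the identity $P_0=P^\infty_0$ on $\cA_{0,\infty}^{0,\infty}$, the slabs $(\slabv_i)_{i\ge0}$ of \eqref{slabvaroglu} are i.i.d.\ under $P_0$ with some marginal $\alpha$ on $\cP^{\bZ^d}\times\cR$. Since $X_n=\sum_{i=0}^{n-1}Z_{i+1}$ and $Z_{i+1}$ is a coordinate of $\slabv_i$, the tilt factors as $M_n=\prod_{i=0}^{n-1}g(\slabv_i)$ with $g((\bar\w,z))=e^{\langle\rho,z\rangle}/\phi_a(\rho)$. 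Consequently, under $Q_n$ the slabs $\slabv_0,\ldots,\slabv_{n-1}$ are i.i.d.\ with marginal $\alpha_\rho:=g\cdot\alpha$. Reading off \eqref{sutesti} at $k=0,\ell=1$ shows that $\alpha_\rho$ is exactly the one-slab marginal of $\mu^\xi$, and by Proposition \ref{prop:elem2}(b) the slabs are also i.i.d.\ under $\mu^\xi$. Thus the joint laws of $(\slabv_0,\ldots,\slabv_{n-1})$ under $Q_n$ and under $\mu^\xi$ coincide.

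Now $\int f\,dL_n^\infty=\tfrac1n\sum_{i=0}^{n-1}f\circ S^i$. For $i\in[\ell,n-\ell]$ the bounded quantity $f\circ S^i$ is a function of the $2\ell$ consecutive slabs $\slabv_{i-\ell},\dots,\slabv_{i+\ell-1}$, and the at most $2\ell$ boundary indices contribute $O(\ell\|f\|_\infty/n)$ to the empirical average. Under the i.i.d.\ product law $\alpha_\rho^{\otimes n}$ the interior sum is a sum of $2\ell$-block functions of an i.i.d.\ sequence, so Cram\'er's theorem (applied to sub-sums over indices separated by $\ge 2\ell$, the remaining overlap terms being absorbed into the $o(1)$ error) gives exponential concentration of $\tfrac1{n-2\ell+1}\sum_{i=\ell}^{n-\ell}f\circ S^i$ at its common expectation $E^{\mu^\xi}[f]$. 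Hence $Q_n(A)\le e^{-nc_\e+o(n)}$ for some $c_\e=c_\e(f,\e)>0$.

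Substituting this into the displayed estimate, sending $n\to\infty$ and then $\delta\to0$ delivers the required negative upper bound. The only genuinely nontrivial input is the slab-level tilting identity that identifies the $Q_n$-marginal of any finite block of slabs with the corresponding $\mu^\xi$-marginal; once that is in hand, everything else reduces to a routine Cram\'er/Chebyshev concentration bound and boundary bookkeeping for the indices $i\notin[\ell,n-\ell]$.
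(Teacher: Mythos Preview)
Your proof is correct and follows essentially the same approach as the paper's: both perform the exponential tilt by $e^{\langle\rho,X_n\rangle-n\log\phi_a(\rho)}$, use the level-1 averaged LDP for the denominator, and then establish exponential concentration of $\langle f,L_n^\infty\rangle$ under the tilted measure by exploiting the independence of $f\circ S^i$ over indices separated by $2\ell$. The paper carries this out by H\"older's inequality to decouple the $2\ell$ residue classes and a direct computation showing $\zeta(s)=o(s)$, while you phrase the same step as Cram\'er's theorem for $2\ell$-block functions of the i.i.d.\ slab sequence under $Q_n$; your explicit identification of the $Q_n$-slab law with the $\mu^\xi$-slab law (via \eqref{sutesti} and Proposition \ref{prop:elem2}(b)) makes transparent what the paper uses implicitly.
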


This result is essentially \cite[Theorem 1]{Yil09a} which is stated there under the assumptions in \eqref{specass} which are more stringent than our current assumptions. For the sake of completeness and convenience, we provide below  a  streamlined adaptation of the proof to our setting. 

\begin{proof}[Proof of Proposition \ref{averagedconditioning}]
Let $g = f - \int fd\mu^\xi$. Then, $\int f\,dL_n^\infty - \int fd\mu^\xi = \int g\,dL_n^\infty =: \langle g,L_n^\infty\rangle$. By a standard change-of-measure argument and the level-1 averaged LDP, we see that for any $s>0$,
\begin{align}
&\limsup_{n\rightarrow\infty}\frac{1}{n}\log P_0(\langle g,L_n^\infty\rangle>\e\,|\, |\frac{X_n}{n}-\xi|<\delta)\nonumber\\
\leq&\limsup_{n\rightarrow\infty}\frac{1}{n}\log P_0(\langle g,L_n^\infty\rangle>\e\,, |\frac{X_n}{n}-\xi|<\delta)-\liminf_{n\rightarrow\infty}\frac{1}{n}\log P_0(|\frac{X_n}{n}-\xi|<\delta)\nonumber\\
\leq&\limsup_{n\rightarrow\infty}\frac{1}{n}\log E_0[e^{\langle\rho,X_n\rangle},\langle g,L_n^\infty\rangle>\e\,, |\frac{X_n}{n}-\xi|<\delta] - \langle\rho,\xi\rangle + I_{1,a}(\xi) + |\rho|\delta\nonumber\\
\leq&\limsup_{n\rightarrow\infty}\frac{1}{n}\log E_0[e^{\langle\rho,X_n\rangle-n\log\phi_a(\rho)},\langle g,L_n^\infty\rangle>\e] + |\rho|\delta\nonumber\\
\leq&\limsup_{n\rightarrow\infty}\frac{1}{n}\log E_0[e^{\langle\rho,X_n\rangle-n\log\phi_a(\rho) + ns\langle g,L_n^\infty\rangle}] - s\e + |\rho|\delta,\label{cebi}
\end{align}
where the last line follows from the exponential Chebyshev inequality. Let $G_j = g\circ S^j$ for $0\le j\le n-1$ and note that
\begin{align}
E_0[e^{\langle\rho,X_n\rangle-n\log\phi_a(\rho) + ns\langle g,L_n^\infty\rangle}] = &E_0[e^{\langle\rho,X_n\rangle-n\log\phi_a(\rho) + s\sum_{j=0}^{n-1}G_j}]\nonumber\\
\leq&\prod_{i=0}^{2\ell-1}E_0[e^{\langle\rho,X_n\rangle-n\log\phi_a(\rho) + 2\ell s(G_i+G_{2\ell+i}+G_{4\ell+i}+\cdots)}]^{1/{2\ell}}\label{carpim}
\end{align}
holds by H\"{o}lder's inequality under $e^{\langle\rho,X_n\rangle-n\log\phi_a(\rho)}dP_0$.

For $0\le i\le 2\ell-1$, let $c=c(i)$ be the largest integer such that $2c\ell+i\le n-1$. Since $g\in b\cA_{-\ell,\ell}^{0,\ell}$, its shifted versions $G_i, G_{2\ell+i},\ldots,G_{2(c-1)\ell+i}, G_{2c\ell+i}$ are independent under $e^{\langle\rho,X_n\rangle-n\log\phi_a(\rho)}dP_0$. For $n\geq4\ell$,
\begin{align*}
&E_0[e^{\langle\rho,X_n\rangle-n\log\phi_a(\rho) + 2\ell s(G_i+G_{2\ell+i}+\cdots+G_{2(c-1)\ell+i}+G_{2c\ell+i})}]\\
&\quad = E_0[e^{\langle\rho,X_{\ell+i}\rangle-(\ell+i)\log\phi_a(\rho) + 2\ell sG_i}]\left(E_0[e^{\langle\rho,X_{2\ell}\rangle-2\ell\log\phi_a(\rho) + 2\ell sG_\ell}]\right)^{c-1}\\ &\qquad\times E_0[e^{\langle\rho,X_{n-(2c-1)\ell-i}\rangle-(n-(2c-1)\ell-i)\log\phi_a(\rho) + 2\ell sG_\ell}].
\end{align*}
The boundedness of $f$ (and, hence, of $g$) allows us to control the first and last expectations. Therefore,
$$\lim_{n\rightarrow\infty}\frac{1}{n}\log E_0[e^{\langle\rho,X_n\rangle-n\log\phi_a(\rho) + 2\ell s(G_i+G_{2\ell+i}+\cdots+G_{2(c-1)\ell+i}+G_{2c\ell+i})}] = \frac{1}{2\ell}\log E_0[e^{\langle\rho,X_{2\ell}\rangle-2\ell\log\phi_a(\rho) + 2\ell sG_\ell}].$$
Recalling (\ref{carpim}), we deduce the following inequality:
$$\limsup_{n\rightarrow\infty}\frac{1}{n}\log E_0[e^{\langle\rho,X_n\rangle-n\log\phi_a(\rho) + ns\langle g,L_n^\infty\rangle}] \leq \frac{1}{2\ell}\log E_0[e^{\langle\rho,X_{2\ell}\rangle-2\ell\log\phi_a(\rho) + 2\ell sG_\ell}] =: \zeta(s).$$
Note that $\zeta(0)=0$ and
$$\zeta'(0) = E_0[e^{\langle\rho,X_{2\ell}\rangle-2\ell\log\phi_a(\rho)}G_\ell] = \int gd\mu^\xi = \int fd\mu^\xi - \int fd\mu^\xi = 0.$$
Therefore, $\zeta(s)=o(s)$ as $s\to0$, and it follows from (\ref{cebi}) that
$$\limsup_{\delta\to0}\limsup_{n\rightarrow\infty}\frac{1}{n}\log P_0(\langle g,L_n^\infty\rangle>\e\,|\, |\frac{X_n}{n}-\xi|<\delta) < 0.$$
Combining this inequality with the analogous one for $-f$ (and, hence, $-g$), we obtain the desired result.
\end{proof}

\begin{proof}[Proof of Theorem \ref{thm:uniquemin}]   We checked  in Proposition \ref{prop:elem2}(e)  that $\mu^\xi$ is a minimizer of (\ref{a_contraction}).  It remains to rule out other  minimizers.   
For every $\nu\in\cM_1(\Ospace_\bN)$ such that $\nu\neq\mu^\xi$ and $E^\nu[Z_1] = \xi$, there exist $\e>0$, $\ell\in\bN$ and $f\in b\cA_{-\ell,\ell}^{0,\ell}$ such that
$$\nu \in \{\mu\in\cM_1(\Ospace_\bN):\,|\int fd\mu - \int fd\mu^\xi| > \e, |E^\mu[Z_1] - \xi| < \delta\}$$
for every $\delta>0$, which is an open set. Therefore,
\begin{align*}
-I_{3,a}(\nu) &\le \liminf_{n\rightarrow\infty}\frac{1}{n}\log P_0\Bigl(|\int f\,dL_n^\infty-\int fd\mu^\xi|>\e\,,|\frac{X_n}{n}-\xi|<\delta\Bigr)\\
&\le \limsup_{n\rightarrow\infty}\frac{1}{n}\log P_0\Bigl(|\int f\,dL_n^\infty-\int fd\mu^\xi|>\e\,\Big\vert\,|\frac{X_n}{n}-\xi|<\delta\Bigr) + \limsup_{n\rightarrow\infty}\frac{1}{n}\log P_0(|\frac{X_n}{n}-\xi|\le\delta)\\
&\le \limsup_{n\rightarrow\infty}\frac{1}{n}\log P_0\Bigl(|\int f\,dL_n^\infty-\int fd\mu^\xi|>\e\,\Big\vert\,|\frac{X_n}{n}-\xi|<\delta\Bigr) - \inf\{I_{1,a}(\xi'):\,|\xi' - \xi| \le\delta\}
\end{align*}
by the level-3 and level-1 averaged LDPs. Limit superior as $\delta\to0$ gives 
$-I_{3,a}(\nu) < -I_{1,a}(\xi)$ by Proposition \ref{averagedconditioning}. 
Thus  $\nu$ cannot be a minimizer.  
\end{proof}

\begin{proof}[Proof of Proposition \ref{markova}]   Fix $\xi\in\mathrm{ri}(\cD)$ and $\rho\in\partial I_{1,a}(\xi)$.   Let 
\[  
\alpha_n(\w,z)=\frac{ E_0^{\w}[e^{\langle\rho,X_{n}\rangle },Z_{1} = z] }{ E_0^{\w}[e^{\langle\rho,X_{n}\rangle }]} .  \]
Let $m,n\ge 1$ and $j\ge 0$. The calculation below shows that 
\be\label{a9}   \alpha_n(T_{j,x_j}\w,z)  = \bar\mu^\xi( Z_{j+1}=z\,\vert\,\cA_{-m,j+n}^{-m,j})(\w,z_{-m+1,j}). \ee 
Take a test function   $f\in b\cA_{-m,j+n}^{-m,j}$.   Then, by $S$-invariance of $\bar\mu^\xi$, the definition \eqref{sutesti} of $\mu^\xi$,  and two uses of the Markov property of the quenched walk, 
\begin{align*}
&\int f(\w, Z_{-m+1,j})\,\alpha_n(T_{j,X_j}\w,z) \,d\bar\mu^\xi
= 
\int f(T_{m,X_m} \w, Z_{1,m+j})\,\alpha_n(T_{m+j,X_{m+j}}\w,z) \,d\mu^\xi \\[3pt]
&=E_0\bigl[ f(T_{m,X_m} \w, Z_{1,m+j})\,\alpha_n(T_{m+j,X_{m+j}}\w,z) \, 
e^{\langle\rho, X_{m+j+n}\rangle - (m+j+n)\log\phi_a(\rho)} \bigr] \\[3pt]
&=E_0\Bigl[ f(T_{m,X_m} \w, Z_{1,m+j})\, 
e^{\langle\rho, X_{m+j}\rangle - (m+j)\log\phi_a(\rho)}\,
\alpha_n(T_{m+j,X_{m+j}}\w,z) \,E^{T_{m+j,X_m+j} \w}_0\bigl[ e^{\langle\rho, X_{n}\rangle - n\log\phi_a(\rho)}\bigr]   \Bigr] \\[3pt]
&=E_0\Bigl[ f(T_{m,X_m} \w, Z_{1,m+j})\, 
e^{\langle\rho, X_{m+j}\rangle - (m+j)\log\phi_a(\rho)}\,
E^{T_{m+j,X_{m+j}} \w}_0\bigl[ e^{\langle\rho, X_{n}\rangle - n\log\phi_a(\rho)},Z_{1} = z\bigr]   \Bigr] \\[3pt]
&=E_0\bigl[ f(T_{m,X_m} \w, Z_{1,m+j})\,\one\{Z_{m+j+1}=z\}  \, 
e^{\langle\rho, X_{m+j+n}\rangle - (m+j+n)\log\phi_a(\rho)} \bigr] \\[3pt]
&=\int f(T_{m,X_m} \w, Z_{1,m+j})\,\one\{Z_{m+j+1}=z\}  \,d\mu^\xi 
=\int f(\w, Z_{-m+1,j})\,\one\{Z_{j+1}=z\}  \,d\bar\mu^\xi. 
\end{align*}  
This verifies \eqref{a9}.  
Let $m\to\infty$ in \eqref{a9}.  Martingale convergence yields  
\be\label{a10} \nn\begin{aligned}
  \alpha_n(T_{j,x_j}\w,z)  &= \bar\mu^\xi( Z_{j+1}=z\,\vert\,\cA_{-\infty,j+n}^{-\infty,j})(\w,z_{-\infty,j}). 
  \end{aligned}  \ee 
For the case $j=0$, by the $\kS_{0,n}$-measurability of $\alpha_n(\cdot, z)$, 
\be\label{a12}  \nn\begin{aligned}
  \alpha_n(\w,z)  &= \bar\mu^\xi( Z_{1}=z\,\vert\,\cA_{-\infty,j+n}^{-\infty,0})(\w,z_{-\infty,0})\\[3pt]
&= \mu^\xi( Z_{1}=z\,\vert\,\kS_{0,n})(\w).  \end{aligned}  \ee 
In the last expression above  $\bar\mu^\xi$ can be replaced with $\mu^\xi$ since the statement does not involve the backward path.  
 Combining the last two displays gives, for $j\ge 0$ and $n\ge 1$,  
  \be\label{a14} \begin{aligned}
  \bar\mu^\xi( Z_{j+1}=z\,\vert\,\cA_{-\infty,j+n}^{-\infty,j})(\w,z_{-\infty,j})  
 = \mu^\xi( Z_{1}=z\,\vert\,\kS_{0,n})(T_{j,x_j}\w).  
  \end{aligned}  \ee  
 As $n\to\infty$ martingale convergence yields  \eqref{yenker-1}.  The remainder  of Proposition \ref{markova} follows from this.  
\end{proof}

For every $\xi\in\mathrm{ri}(\cD)$,
\begin{equation}\label{def:un}
\left.\frac{d\mu_\Omega^\xi}{d\bP}\right|_{\kS_{0,n}}\!\!\!\!\!\!\!\!(\w) = E_0^\w[e^{\langle\rho,X_n\rangle - n\log\phi_a(\rho)}] =: u_n(\rho,\w)
\end{equation}
is a positive martingale on $(\Omega,\kS_{0,\infty},\bP)$. Throughout the paper, we will sometimes suppress $\rho$ and simply write $u_n$ or $u_n(\w)$ whenever it does not lead to any confusion.

\begin{proof}[Proof of Theorem \ref{thm:doobchar}]

\underline{$(i)\implies(ii)$:} Summing both sides of \eqref{doob} over $z\in\cR$, we see that $u\in L^1(\Omega,\kS_{0,\infty},\bP)$ satisfies
$$u(\w) = \sum_{z\in\cR}\pi_{0,1}(0,z\,|\,\w)\frac{e^{\langle \rho,z\rangle}}{\phi_a(\rho)}u(T_{1,z}\w) = \sum_{z\in\cR}E_0^\w[e^{\langle\rho,Z_1\rangle - \log\phi_a(\rho)},Z_1 = z]u(T_{1,z}\w).$$
Iterating this identity $n\ge1$ times, we deduce that
\be\label{ciftaraf}
u(\w) = \sum_{x}u_n(\rho,\w,x)u(T_{n,x}\w),
\ee
where
\be\label{lazimolc}
u_n(\rho,\w,x) = E_0^\w[e^{\langle\rho,X_n\rangle - n\log\phi_a(\rho)},X_n = x].
\ee
Taking the conditional expectation of both sides of \eqref{ciftaraf}, we get
$$\bE[u\,|\,\kS_{0,n}](\w) = \sum_{x}u_n(\rho,\w,x)\bE[u\circ T_{n,x}] = \bE[u]u_n(\rho,\w)$$
by temporal independence and spatial translation invariance. Since $\bP(u>0)=1$, we have $\bE[u]>0$. Therefore, as $n\to\infty$,
$$u_n = \frac{\bE[u\,|\,\kS_{0,n}]}{\bE[u]} \to \frac{u}{\bE[u]}$$
$\bP$-a.s.\ and in $L^1(\Omega,\kS_{0,\infty},\bP)$ (see \cite[Theorem 5.5.6]{Dur10}). We conclude that $\mu_\Omega^\xi\ll\bP$ on $\kS_{0,\infty}$, and $$\left.\frac{d\mu_\Omega^\xi}{d\bP}\right|_{\kS_{0,\infty}}\!\!\! = \frac{u}{\bE[u]}.$$

\vspace{2mm}

\noindent\underline{$(ii)\implies(i)$:} Let $u = \left.\frac{d\mu_\Omega^\xi}{d\bP}\right|_{\kS_{0,\infty}}\!\!\!\!\!\!$. Note that $\bE[u] = 1$ and hence $\bP(u=0)<1$. We will first show that \eqref{newell} implies $\bP(u=0) = 0$.

By martingale convergence, $u_n\to u$ $\bP$-a.s. 
It follows immediately from the Markov property and the definition in \eqref{lazimolc} that
$$u_{m+n}(\rho,\w) = \sum_{x}u_n(\rho,\w,x)u_m(\rho,T_{n,x}\w)$$ for every $m,n\in\bN$. Sending $m\to\infty$, we deduce \eqref{ciftaraf}. In particular,
$$u(\w) \ge u_n(\rho,\w,nz)u(T_{n,nz}\w)$$
for every $z\in\cR$. If \eqref{newell} holds, then $\exists\,z'\in\cR$ such that $u_n(\rho,\w,nz')>0$ for every $n\in\bN$. Therefore,
\be\label{nullinter}
\{\w: u(\w) = 0\}\subset\bigcap_{n=1}^\infty\{\w: u(T_{n,nz'}\w) = 0\}.
\ee
By our temporal independence and spatial translation invariance assumptions, $(T_{n,nz'}\w)_{n\ge1}$ is an $\Omega$-valued stationary and ergodic process under $\bP$. Since $\bP(u=0)<1$, we apply the ergodic theorem and deduce that, for $\bP$-a.e.\ $\w$, there exists an $n\in\bN$ such that $u(T_{n,nz'}\w) > 0$, i.e., the RHS of \eqref{nullinter} is a $\bP$-null set. Consequently, $\bP(u=0) = 0$.

Finally, we derive \eqref{doob}: For $\bP$-a.e.\ $\w$ and every $z\in\cR$, 
\begin{align}
\pi_{0,1}^\xi(0,z\,|\,\w) &= \mu^\xi( Z_1=z\,\vert\, \kS_{0,\infty})(\w) = \lim_{n\to\infty}\mu^\xi( Z_1=z\,\vert\, \kS_{0,n})(\w)\label{measdikkat}\\
& = \lim_{n\to\infty}\frac{E_0^\w[e^{\langle\rho,X_n\rangle - n\log\phi_a(\rho)},Z_1 = z]}{E_0^\w[e^{\langle\rho,X_n\rangle - n\log\phi_a(\rho)}]}\nonumber\\
& = \lim_{n\to\infty}\pi_{0,1}(0,z\,|\,\w)\frac{e^{\langle \rho,z\rangle}}{\phi_a(\rho)}\frac{E_0^{T_{1,z}\w}[e^{\langle\rho,X_{n-1}\rangle - (n-1)\log\phi_a(\rho)}]}{E_0^\w[e^{\langle\rho,X_n\rangle - n\log\phi_a(\rho)}]}\nonumber\\
& = \lim_{n\to\infty}\pi_{0,1}(0,z\,|\,\w)\frac{e^{\langle \rho,z\rangle}}{\phi_a(\rho)}\frac{u_{n-1}(T_{1,z}\w)}{u_n(\w)}\label{cohilg}\\
& = \pi_{0,1}(0,z\,|\,\w)\frac{e^{\langle \rho,z\rangle}}{\phi_a(\rho)}\frac{u(T_{1,z}\w)}{u(\w)}.\nonumber
\end{align}
Note that the second equality in \eqref{measdikkat} follows from martingale convergence under $\mu_\Omega^\xi$ which is mutually absolutely continuous with $\bP$ since $\bP(u>0) = 1$.
\end{proof}

%
%

\section{Modified variational formulas for the quenched rate functions}\label{sec:for_mod}

Fix a sequence $(f_j)_{j\in\bN}$ of test functions $f_j\in b\cA_{-j,\infty}^{0,j}$ that separate $\cM_1(\Ospace_\bN)$ and satisfy $\|f_j\|_\infty = 1$. For every $\mu\in\cM_1(\Ospace_\bN)$ and $\ell\in\bN$, the set
$$G_{\mu,\ell} = \left\{\nu\in\cM_1(\Ospace_\bN):\,|\langle f_j,\mu\rangle - \langle f_j,\nu\rangle| < \ell^{-1}\ \text{for $1\le j\le\ell$}\right\}$$
is a weakly open neighborhood of $\mu$. Note that $\cap_{\ell\in\bN}\,G_{\mu,\ell} = \cap_{\ell\in\bN}\,\overline G_{\mu,\ell} = \{\mu\}$. The following result gives the lower bound in Theorem \ref{3qLDPmod}.
\begin{theorem}\label{altsiniroz}
Assume that
\be\label{azlazimol}
\text{$\bE[|\log\w_{0,0}(z)|] < \infty$ for every $z\in\cR$.}
\ee
Then, for every $\ell\in\bN$ and every $S$-invariant $\mu\in\cM_1(\Ospace_\bN)$ such that $\mu_\Omega\ll\bP$ on $\kS_{0,\infty}$, 
\be\label{uyuiyi}
\liminf_{n\to\infty}\frac1{n}\log P_0^\w(L_n^\infty\in G_{\mu,\ell}) \ge  - H_q(\mu).
\ee
\end{theorem}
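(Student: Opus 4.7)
The plan is to employ the classical change-of-measure (Gibbs variational) strategy for quenched LDP lower bounds. We may assume $H_q(\mu)<\infty$, else \eqref{uyuiyi} is trivial. I would introduce a tilted quenched measure $Q^\w$ on $\cR^\bN$ that produces paths whose empirical process $L_n^\infty$ concentrates near $\mu$, and then compare $P_0^\w$ to $Q^\w$ via Jensen's inequality. By a standard $S$-ergodic decomposition argument applied to $\mu$, using affinity of the relative entropy rate $H_q$ in $\mu$, I would first reduce to the case that $\mu$ is $S$-ergodic.

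The tilted measure is built from the conditional Markov kernel $\pi_{0,1}^{\bar\mu}(0,z\vert\w,z_{-\infty,0})$ arising from $\bar\mu$. Concretely, I would define $Q^\w$ on $\cR^\bZ$ by first sampling an initial history $z_{-\infty,0}$ from the $\bar\mu$-conditional distribution given the forward environment (the hypothesis $\mu_\Omega\ll\bP$ on $\kS_{0,\infty}$ lets this be realized as a $\kS_{0,\infty}$-measurable kernel defined for $\bP$-a.e.\ $\w$, with density proportional to $d\mu_\Omega/d\bP\vert_{\kS_{0,\infty}}$), and then sequentially generating $z_{1},z_{2},\ldots$ via the kernel $\pi^{\bar\mu}$. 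The push-forward $\int Q^\w\,\bP(d\w)$ agrees with $\bar\mu$ on the relevant $S$-invariant $\sigma$-algebras, so $S$-ergodicity of $\mu$ transfers to this joint system.

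Two key ergodic limits then follow for $\bP$-a.e.\ $\w$:
\begin{align*}
&\text{(i) } Q^\w(L_n^\infty\in G_{\mu,\ell})\to 1 \quad\text{as } n\to\infty,\\
&\text{(ii) } \tfrac{1}{n}\log\tfrac{dQ^\w}{dP_0^\w}\big\vert_{\cA_{0,n}^{0,n}}\to H_q(\mu) \quad \text{in $Q^\w$-probability.}
\end{align*}
Statement (i) is Birkhoff's theorem applied to the $S$-ergodic $\bar\mu$, since $L_n^\infty\Rightarrow\mu$ weakly $\bar\mu$-a.s. Statement (ii) is again the ergodic theorem, applied to the one-step log-ratio $\log(\pi^{\bar\mu}/\pi)$ whose $\bar\mu$-expectation equals $H_q(\mu)$ by \eqref{isimdustu}, with integrability secured by \eqref{azlazimol}. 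Combining (i), (ii) with the Jensen-type inequality
\[
\log P_0^\w(A_n)\;\ge\;\log Q^\w(A_n)\;-\;\frac{1}{Q^\w(A_n)}\,E^{Q^\w}\!\left[\mathbf{1}_{A_n}\log\tfrac{dQ^\w}{dP_0^\w}\right]
\]
applied to $A_n=\{L_n^\infty\in G_{\mu,\ell}\}$, dividing by $n$, and taking $\liminf$ yields \eqref{uyuiyi}.

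The hardest point will be the history-dependence of $\pi^{\bar\mu}$, which obstructs a clean Markovian RN computation and complicates the passage from the $\kS_{0,\infty}$-absolute-continuity hypothesis to a $\bP$-a.s.\ well-defined $Q^\w$. I would handle this by a truncation scheme: replace $\pi^{\bar\mu}(z\vert\w,z_{-\infty,0})$ by its $\cA_{-N,\infty}^{-N,0}$-measurable martingale approximation for finite $N$, establish the Jensen bound at level $N$ with a finite-past entropy $H_q^{(N)}(\mu)\le H_q(\mu)$, and send $N\to\infty$ using martingale convergence together with the variational representation of $H_q$ as a supremum over $\sigma$-algebras. A secondary subtlety is that the log-ratio in (ii) may not be uniformly integrable, which is why \eqref{azlazimol} is invoked to control $\bE[|\log\w_{0,0}(z)|]$ and justify the ergodic-theorem limit.
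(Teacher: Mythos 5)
Your overall strategy (change of measure, Jensen's inequality, ergodic theorem) is the same one the paper follows, adapting \cite{RasSep11}. But the proposal glosses over precisely the two technical points that make the quenched lower bound nontrivial, and at the crucial step makes a claim that is false.

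The central gap is the transfer of your ergodic limits (i) and (ii) from $\bar\mu$-a.s.\ to $\bP$-a.e.\ $\w$. You assert that $\int Q^\w\,\bP(d\w)$ agrees with $\bar\mu$ on the relevant $S$-invariant $\sigma$-algebras and then invoke Birkhoff's theorem for $\bar\mu$. This is incorrect: the change of measure tilts only the walk, never the environment, so the $\Omega$-marginal of $\int Q^\w\,\bP(d\w)$ is $\bP$ while that of $\bar\mu$ is $\mu_\Omega$, and in general $\mu_\Omega\ne\bP$ (in the spatially constant setting of Section 3.7 one even has $\mu_\Omega^\xi\perp\bP$ on $\kS$ for $\xi\ne\llnxi$). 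Moreover $\int Q^\w\,\bP(d\w)$ is typically not even $S$-invariant, since $S$ shifts the environment origin by the step $Z_1$, which is correlated with $\w$ under $Q^\w$. Birkhoff for $\bar\mu$ gives (i) and (ii) only for $\mu_\Omega$-a.e.\ $\w$; to upgrade to $\bP$-a.e.\ $\w$ one must show $\mu_\Omega$ and $\bP$ are \emph{mutually} absolutely continuous on the relevant $\sigma$-algebra. That fact is true here — it follows from ergodicity of $\bP$ under the shifts $(T_{1,z})_{z\in\cR}$ — but you never establish it, and it is exactly the content of Step 1 of the paper's proof together with the accompanying remark adapting \cite[Lemma 4.1]{RasSep11}. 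The same mutual absolute continuity is also why the finite-memory environment chain $\bar\eta_i$ has $\mu_{\Ospace_\ell}$ as an \emph{ergodic} invariant measure automatically, which is how the paper avoids your proposed ergodic decomposition of $\mu$ altogether. A second gap: your Jensen bound presupposes $dQ^\w/dP_0^\w$ is a.s.\ positive and finite, i.e.\ $\pi^{\bar\mu}(z\,|\,\cdot)>0$ wherever $\pi_{0,1}(0,z\,|\,\cdot)>0$. Nothing in the hypotheses ensures this; if $\pi^{\bar\mu}$ has zeros the log-likelihood ratio can be $-\infty$ on a set of positive quenched probability. The paper's Step 4 handles this by replacing $\mu$ with $\mu_\epsilon=(1-\epsilon)\mu+\epsilon\hat\mu$ for a reference $\hat\mu$ with strictly positive, environment-independent transition kernel and finite $H_q(\hat\mu)$, then using convexity of $H_q$ and letting $\epsilon\to0$; this is also the only place where \eqref{azlazimol} is used in an essential way. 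Your proposal does not address this degeneracy at all. Your proposed truncation to $\cA_{-N,\infty}^{-N,0}$-measurable kernels is in the same spirit as the paper's finite-$\ell$ chain, but without the mutual absolute continuity and the degeneracy fix it does not close the argument.
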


\begin{proof}
The proof uses a strategy involving a change-of-measure, Jensen's inequality, and the ergodic theorem, which is standard for obtaining LDP lower bounds for Markov chains, and has been successfully carried out in the context of (undirected) RWRE (see \cite{Ros06, Yil09b, RasSep11} for the level-1,2,3 quenched LDPs). In fact, keeping future applications in mind, the level-3 quenched LDP lower bound was derived in \cite[Section 4]{RasSep11} in full detail and without using the assumption that the walk is undirected. In particular, the lower bound of the LDP in Theorem \ref{3qLDP} is covered by \cite[Section 4]{RasSep11}, which readily implies that \eqref{uyuiyi} holds for every $S$-invariant $\mu\in\cM_1(\Ospace_\bN)$ such that $\mu_\Omega\ll\bP$ (on $\kS$). Therefore, to prove Theorem \ref{altsiniroz}, we need to replace $\kS$ with $\kS_{0,\infty}$. Since the walk is directed in time, this modification requires only two minor changes in the proofs in \cite[Section 4]{RasSep11}. Below we go over the whole argument for the sake of completeness, point out the two differences, and provide references for further details.

\vspace{2mm}

\textit{Step 1.} For every $\ell\in\bN$, denote the marginal of $P_0$ on $\Ospace_\ell = \Omega\times\cR^\ell$ by $P_0^{(\ell)}$. Let $\bar\eta_i = (T_{i,X_i}\w,Z_{i+1,i+\ell})$, $i\ge0$. Then, under $P_0^\w(\,\cdot\,|\,Z_{1,\ell} = z_{1,\ell})$, $(\bar\eta_i)_{i\ge0}$ is a Markov chain with state space $\Ospace_\ell$ and transition kernel
$$\pi^{(\ell)}(S_z^+\eta\,|\,\eta) = \pi_{0,1}(0,z\,|\,T_{\ell,x_\ell}\w).$$
Here and throughout, $\eta = (\w,z_{1,\ell})\in\Ospace_\ell$ 
and $S_z^+:\Ospace_\ell\to\Ospace_\ell:(\w,z_{1,\ell})\mapsto(T_{1,z_1}\w,z_{2,\ell},z)$ for $z\in\cR$.

For every $S$-invariant $\mu\in\cM_1(\Ospace_\bN)$, let $\mu_\Omega$ and $\mu_{\Ospace_\ell}$ denote the marginals of $\mu$ on $\Omega$ and $\Ospace_\ell$, respectively, and define
$$\pi_\mu^{(\ell)}(S_z^+\eta\,|\,\eta) = \mu(Z_{\ell+1} = z\,|\,\bar\eta_0 = \eta)$$
which can be viewed as the transition kernel of a Markov chain with state space $\Ospace_\ell$. Since $\mu$ is $S$-invariant, $\mu_{\Ospace_\ell}$ is an invariant measure for $\pi_\mu^{(\ell)}$. Moreover, if $\mu_\Omega\ll\bP$ on $\kS_{0,\infty}$ and
\be\label{simdiakli}
\text{$\pi_\mu^{(\ell)}(S_z^+\eta\,|\,\eta) > 0$ for $\mu_{\Ospace_\ell}$-a.e.\ $\eta$ and every $z\in\cR$,}
\ee
then $\mu_{\Ospace_\ell}$ is an ergodic invariant measure for $\pi_\mu^{(\ell)}$ and mutually absolutely continuous with $P_0^{(\ell)}$ on $\cA_{0,\infty}^{0,\ell}$. This follows from a minor modification of \cite[Lemma 4.1]{RasSep11} (see Remark \ref{ikigunebiter} below).

\vspace{2mm}

\textit{Step 2.} Let $P_\eta$ (resp.\ $P_\eta^\mu$) stand for the law of the Markov chain $(\bar\eta_i)_{i\ge0}$ with initial state $\bar\eta_0 = \eta$ and transition kernel $\pi^{(\ell)}$ (resp.\ $\pi_\mu^{(\ell)}$). Observe that
\begin{align}
P_0^\w(L_n^\infty\in G_{\mu,\ell}) &= \sum_{z_{1,\ell}\in\cR^\ell}P_0^\w(Z_{1,\ell} = z_{1,\ell})P_0^\w(L_n^\infty\in G_{\mu,\ell}\,|\,Z_{1,\ell} = z_{1,\ell})\nonumber\\
&= \sum_{z_{1,\ell}\in\cR^\ell}P_0^\w(Z_{1,\ell} = z_{1,\ell})P_\eta(L_n^\ell\in G_{\mu,\ell}^{(\ell)})\nonumber\\
&\ge \sum_{z_{1,\ell}\in\cR^\ell}P_0^\w(Z_{1,\ell} = z_{1,\ell})P_\eta(\tilde L_n^\ell\in \tilde G_{\mu,\ell}^{(\ell)})\label{farkibu}
\end{align}
for $n\ge 4\ell^2$, where
\begin{align*}
L_n^\ell &= \frac1{n}\sum_{i=0}^{n-1}\delta_{T_{i,X_i}\w,Z_{i+1,i+\ell}} =  \frac1{n}\sum_{i=0}^{n-1}\delta_{\bar\eta_i}\in\cM_1(\Ospace_\ell),\qquad \tilde L_n^\ell = \frac1{n-\ell}\sum_{i=\ell}^{n-1}\delta_{\bar\eta_i}\in\cM_1(\Ospace_\ell),\\
G_{\mu,\ell}^{(\ell)} &= \left\{\nu\in\cM_1(\Ospace_\ell):\,|\langle f_j,\mu_{\Ospace_\ell}\rangle - \langle f_j,\nu\rangle| < \ell^{-1}\ \text{for $1\le j\le\ell$}\right\}\quad\text{and}\\
\tilde G_{\mu,\ell}^{(\ell)} &= \left\{\nu\in\cM_1(\Ospace_\ell):\,|\langle f_j,\mu_{\Ospace_\ell}\rangle - \langle f_j,\nu\rangle| < (2\ell)^{-1}\ \text{for $1\le j\le\ell$}\right\}.
\end{align*}
The inequality in \eqref{farkibu} is needed in our case because the function $(\w,z_{1,\ell}) = \eta\mapsto P_\eta(\tilde L_n^\ell\in \tilde G_{\mu,\ell}^{(\ell)})$ is $\cA_{0,\infty}^{0,\ell}$-measurable whereas $\eta\mapsto P_\eta(L_n^\ell\in G_{\mu,\ell}^{(\ell)})$ is not.

\vspace{2mm}

\textit{Step 3.} For $P_0^{(\ell)}$-a.e.\ $\eta$, we change the measure from $P_\eta$ to $P_\eta^\mu$, apply Jensen's inequality (with the logarithm function), send $n\to\infty$, use the ergodicity of $\mu_{\Ospace_\ell}$ for $\pi_\mu^{(\ell)}$, and thereby deduce that
\begin{align*}
\liminf_{n\to\infty}\frac1{n}\log P_\eta(\tilde L_n^\ell\in\tilde G_{\mu,\ell}^{(\ell)}) &\ge  \lim_{n\to\infty}\frac1{n}\log P_\eta^\mu(\tilde L_n^\ell\in\tilde G_{\mu,\ell}^{(\ell)}) - H(\mu_{\Ospace_\ell}\times\pi_\mu^{(\ell)}\,|\,\mu_{\Ospace_\ell}\times\pi^{(\ell)})\\
&= - H_{\cA_{-\infty,\infty}^{-\ell, 1}}(\bar\mu_-\times\pi^{\bar\mu}\,\vert\,\bar\mu_-\times\pi) \ge - H_{\cA_{-\infty,\infty}^{-\infty, 1}}(\bar\mu_-\times\pi^{\bar\mu}\,\vert\,\bar\mu_-\times\pi) = - H_q(\mu).
\end{align*}
For further details regarding this step, see \cite[Lemma 4.2]{RasSep11}. The desired bound \eqref{uyuiyi} now follows from \eqref{farkibu}. 

\vspace{2mm}

\textit{Step 4.} If $\mu_\Omega\ll\bP$ on $\kS_{0,\infty}$ but \eqref{simdiakli} fails to hold, we introduce a $\hat\mu\in\cM_1(\Ospace_\bN)$ of the form $\hat\mu(d\w,d\zvec) = \bP(d\w)\otimes p^{\otimes\bN}(d\zvec)$ for some (deterministic) $p\in\cP$ such that $p(z)>0$ for every $z\in\cR$. Note that 
\be\label{muhat7}  H_q(\hat\mu) = \bE\left[\sum_{z\in\cR}p(z)\log\left(\frac{p(z)}{\w_{0,0}(z)}\right)\right]<\infty\ee
 by \eqref{azlazimol}. (In fact, this is the only point in the proof where \eqref{azlazimol} is fully used.) We replace $\mu$ with $\mu_\epsilon  = (1-\epsilon)\mu + \epsilon\hat\mu$ which is an element of the open set $G_{\mu,\ell}$ for sufficiently small $\epsilon>0$. 
Since $\hat\mu$ is $S$-invariant, its marginal $\hat\mu_{\Ospace_\ell}$ on $\Ospace_\ell$ is an invariant measure for the transition kernel
$$\pi_{\hat\mu}^{(\ell)}(S_z^+\eta\,|\,\eta) = \hat\mu(Z_{\ell+1} = z\,|\,\bar\eta_0 = \eta) = p(z) > 0.$$
The $\Ospace_\ell$-marginal $\mu_{\Ospace_\ell}^\epsilon$ of $\mu_\epsilon$ is an invariant measure for a transition kernel $\pi_{\mu_\epsilon}^{(\ell)}$ (suitably defined as a combination of $\pi_{\mu}^{(\ell)}$ and $\pi_{\hat\mu}^{(\ell)}$) which satisfies the analog of \eqref{simdiakli}. For further details regarding this step, see \cite[Proof of the lower bound in Theorem 3.1, page 224]{RasSep11}. Therefore, $\mu_{\Ospace_\ell}^\epsilon$ and $P_0^{(\ell)}$ are mutually absolutely continuous on $\cA_{0,\infty}^{0,\ell}$, and for $P_0^{(\ell)}$-a.e.\ $\eta$,
$$\lim_{n\to\infty}\frac1{n}\log P_\eta(\tilde L_n^\ell\in\tilde G_{\mu,\ell}^{(\ell)}) \ge - H_q(\mu_\epsilon) \ge - (1-\epsilon) H_q(\mu) - \epsilon H_q(\hat\mu).$$
The last inequality follows from the convexity of the relative entropy $H_q$. Finally, we send $\epsilon$ to $0$, and recall \eqref{farkibu} to deduce \eqref{uyuiyi} as in Step 3. 
\end{proof}

\begin{remark}\label{ikigunebiter}
In Step 1 of the proof of Theorem \ref{altsiniroz}, we cited \cite[Lemma 4.1]{RasSep11} which assumes that $\mu_\Omega\ll\bP$ (on $\kS$), which is equivalent to $\mu_{\Ospace_\ell}\ll P_0^{(\ell)}$ on $\cA_{-\infty,\infty}^{0,\ell}$ since $\bP(\w_{0,0}(z)>0) = 1$ for every $z\in\cR$ by \eqref{azlazimol}. In that paper, the mutual absolute continuity of $\mu_{\Ospace_\ell}$ and $P_0^{(\ell)}$
is established by showing that $f = \frac{d\mu_{\Ospace_\ell}}{dP_0^{(\ell)}}$ satisfies
$$\one_{\{f(\w,z_{1,\ell}) > 0\}} \le \one_{\{f(T_{\ell+1,x_\ell +z}\w,\tilde z_{1,\ell}) > 0\}}$$
for $\bP$-a.e.\ $\w$, every $z_{1,\ell},\tilde z_{1,\ell}\in\cR^\ell$ and $z\in\cR$, and then using the ergodicity of $\bP$ under $(T_{1,z})_{z\in\cR}$ to argue that $P_0^{(\ell)}(f>0) = 1$. Using this, the ergodicity of $\mu_{\Ospace_\ell}$ for $\pi_\mu^{(\ell)}$ follows from a similar argument.

If $\mu_\Omega\ll\bP$ on $\kS_{0,\infty}$, then we replace $f = \frac{d\mu_{\Ospace_\ell}}{dP_0^{(\ell)}}$ with $g = \left.\frac{d\mu_{\Ospace_\ell}}{dP_0^{(\ell)}}\right|_{\cA_{0,\infty}^{0,\ell}}\!\!\!$ in the proof of \cite[Lemma 4.1]{RasSep11}. This modification causes no complications since the negative environment levels $\wvec_{-\infty,-1}$ do not play any role. For example, the function $(\w,z_{1,\ell})\mapsto g(T_{\ell+1,x_\ell +z}\w,\tilde z_{1,\ell})$ is measurable w.r.t.\ $\cA_{\ell+1,\infty}^{0,\ell}\subset\cA_{0,\infty}^{0,\ell}$ for every $\tilde z_{1,\ell}\in\cR^\ell$ and $z\in\cR$.
\end{remark}

We are now ready to verify the modified variational formula for the level-3 quenched rate function.

\begin{proof}[Proof of Theorem \ref{3qLDPmod}]

It follows immediately from the definitions in \eqref{tanimuymaz} and \eqref{tanimmod} that $H_{q,\bP}^{S,+}(\mu) \le H_{q,\bP}^{S}(\mu)$ for every $\mu\in\cM_1(\Ospace_\bN)$. Therefore,
\be\label{birbdd}
(H_{q,\bP}^{S,+})^{**}(\mu) \le (H_{q,\bP}^{S})^{**}(\mu) = I_{3,q}(\mu).
\ee
On the other hand, Theorem \ref{altsiniroz} and the upper bound in the level-3 quenched LDP (Theorem \ref{3qLDP}) give
$$- H_{q,\bP}^{S,+}(\mu) \le \liminf_{n\to\infty}\frac1{n}\log P_0^\w(L_n^\infty\in G_{\mu,\ell}) \le \limsup_{n\to\infty}\frac1{n}\log P_0^\w(L_n^\infty\in G_{\mu,\ell}) \le -\inf_{\nu\in \overline G_{\mu,\ell}}I_{3,q}(\nu)$$
for every $\ell\in\bN$. Sending $\ell\to\infty$, we get $$I_{3,q}(\mu) \le H_{q,\bP}^{S,+}(\mu)$$
since $I_{3,q}$ is lower semicontinuous and $\cap_{\ell\in\bN}\overline G_{\mu,\ell} = \{\mu\}$, and then deduce that
\be\label{ikibdd}
I_{3,q}(\mu)  = (I_{3,q})^{**}(\mu) \le (H_{q,\bP}^{S,+})^{**}(\mu).
\ee
Finally, we put \eqref{birbdd} and \eqref{ikibdd} together to obtain the desired equality \eqref{level3qmod}.
\end{proof}

\begin{proof}[Proof of Corollary \ref{1qLDPmod}]
For every $\xi\in\cD$, the variational formula
$$I_{1,q}(\xi) = \inf\{(H_{q,\bP}^{S,+})^{**}(\mu):\,\mu\in\cM_1(\Ospace_\bN), E^\mu[Z_1] = \xi\}$$
follows immediately from Theorem \ref{3qLDPmod} by the contraction principle. Define
$$\tilde I_{1,q}(\xi) = \inf\{H_{q,\bP}^{S,+}(\mu):\,\mu\in\cM_1(\Ospace_\bN), E^\mu[Z_1] = \xi\}$$
which is equal to the RHS of \eqref{qvarmod2}.  $\tilde I_{1,q}(\xi)<\infty$ because we  can choose $p$ to have mean $\xi$  in the measure $\hat\mu$ in \eqref{muhat7}.   Since $H_{q,\bP}^{S,+}$ is convex (which readily follows from the convexity of $H_q$), $\tilde I_{1,q}$ is convex on $\cD$ and hence continuous on $\mathrm{ri}(\cD)$. For every $\xi\in\mathrm{ri}(\cD)$,
\begin{align*}
I_{1,q}(\xi) &= \lim_{\delta\to0}\inf\{I_{1,q}(\xi'):\,\xi'\in\cD, |\xi' - \xi| < \delta\}\\
&= \lim_{\delta\to0}\inf\{(H_{q,\bP}^{S,+})^{**}(\mu):\,\mu\in\cM_1(\Ospace_\bN), |E^\mu[Z_1] - \xi| < \delta\}\\
&= \lim_{\delta\to0}\inf\{H_{q,\bP}^{S,+}(\mu):\,\mu\in\cM_1(\Ospace_\bN), |E^\mu[Z_1] - \xi| < \delta\}\\
&= \lim_{\delta\to0}\inf\{\tilde I_{1,q}(\xi'):\,\xi'\in\cD, |\xi' - \xi| < \delta\} = \tilde I_{1,q}(\xi)
\end{align*}
by the fact that $(H_{q,\bP}^{S,+})^{**}$ is the lower semicontinuous regularization of $H_{q,\bP}^{S,+}$ (see \cite[Theorem 4.17]{RasSep15}) and $\{\mu\in\cM_1(\Ospace_\bN):\, |E^\mu[Z_1] - \xi| < \delta\}$ is an open set.
\end{proof}

%
%

\section{Decomposing the level-$3$ averaged rate function}\label{sec:connect}

\begin{proof}[Proof of Theorem \ref{entropy_connection}]
Observe that, for $1\le k\le n$,
$$P_0( Z_k=z\,\vert\,  \cA_{0,n}^{0,k-1}\,)(\wlev_{0,n-1}, z_{1,k-1})= \pi_{k-1,k}(x_{k-1},x_{k-1}+z\,|\,\w) = \pi_{0,1}(0,z\,|\,T_{k-1, x_{k-1}}\w).$$
If $\mu\in\cM_1(\Ospace_\bN)$ is $S$-invariant, then
\begin{align*}
H_{0,n}(\mu\,\vert\,P_0) &= H_{\cA_{0,n}^{0,n}}(\mu\,\vert\,P_0)\\
&= H_{\cA_{0,n}^{0,n-1}}(\mu\,\vert\,P_0) + \int  H\bigl( \mu(Z_n=\cdot \,\vert\, \cA_{0,n}^{0,n-1}\,)\,\big\vert \,\pi_{0,1}(0,\cdot\,|\,T_{n-1, x_{n-1}}\w)\bigr) \, \mu(d\wlev_{0,n-1},\,dz_{1,n-1})\\
&= H_{\cA_{0,n}^{0,n-1}}(\mu\,\vert\,P_0) + \int  H\bigl( \bar\mu(Z_1=\cdot \,\vert\, \cA_{-n+1,1}^{-n+1,0}\,)\,\big\vert \,\pi_{0,1}(0,\cdot\,|\,\w)\bigr) \, \bar\mu(d\wlev_{-n+1,0},\,dz_{-n+2,0})\\
&= H_{\cA_{0,n}^{0,n-1}}(\mu\,\vert\,P_0) + H_{\cA_{-n+1,1}^{-n+1,1}}(\bar\mu_-\times\pi^{\bar\mu}\,\vert\,\bar\mu_-\times\pi)
\end{align*}
by the chain rule for relative entropy.  
We can apply the chain rule repeatedly and thereby successively remove all the $z$-coordinates from the first relative entropy on the RHS. The general step is, for $1\le k\le n-1$, 
\begin{align*}
H_{0,n}(\mu\,\vert\,P_0) &= H_{\cA_{0,n}^{0,k}}(\mu\,\vert\,P_0) + \sum_{j=k+1}^{n} H_{\cA_{-j+1,n-j+1}^{-j+1,1}}(\bar\mu_-\times\pi^{\bar\mu}\,\vert\,\bar\mu_-\times\pi)\\
&=  H_{\cA_{0,n}^{0,k-1}}(\mu\,\vert\,P_0) + \int  H\bigl( \mu(Z_k=\cdot \,\vert\, \cA_{0,n}^{0,k-1}\,)\,\big\vert \,\pi_{0,1}(0,\cdot\,|\,T_{k-1, x_{k-1}}\w)\bigr) \, \mu(d\wlev_{0,n-1},\,dz_{1,k-1})\\
& \qquad  \qquad  \qquad  \qquad\! + \sum_{j=k+1}^{n} H_{\cA_{-j+1,n-j+1}^{-j+1,1}}(\bar\mu_-\times\pi^{\bar\mu}\,\vert\,\bar\mu_-\times\pi)\\
&=  H_{\cA_{0,n}^{0,k-1}}(\mu\,\vert\,P_0) + \int  H\bigl( \bar\mu(Z_1=\cdot \,\vert\, \cA_{-k+1,n-k+1}^{-k+1,0}\,)\,\big\vert \,\pi_{0,1}(0,\cdot\,|\,\w) \bigr) \, \bar\mu(d\wlev_{-k+1,n-k},\,dz_{-k+2,0})\\
& \qquad  \qquad  \qquad  \qquad\! + \sum_{j=k+1}^{n} H_{\cA_{-j+1,n-j+1}^{-j+1,1}}(\bar\mu_-\times\pi^{\bar\mu}\,\vert\,\bar\mu_-\times\pi)\\
&=  H_{\cA_{0,n}^{0,k-1}}(\mu\,\vert\,P_0) + \sum_{j=k}^{n} H_{\cA_{-j+1,n-j+1}^{-j+1,1}}(\bar\mu_-\times\pi^{\bar\mu}\,\vert\,\bar\mu_-\times\pi). 
\end{align*}
When all $z$-coordinates have been removed, we end up with this identity: 
\be\label{ent-4}
H_{0,n}(\mu\,\vert\,P_0) =  H_{\kS_{0,n}}(\mu_\Omega\,\vert\,\P) + \sum_{j=1}^{n} H_{\cA_{-j+1,n-j+1}^{-j+1,1}}(\bar\mu_-\times\pi^{\bar\mu}\,\vert\,\bar\mu_-\times\pi).
\ee

\begin{lemma}\label{yanle}
If $\mu\in\cM_1(\Ospace_\bN)$ is $S$-invariant, then
\be\label{ent-lim3}
\lim_{n\to\infty}\frac1n \sum_{j=1}^{n} H_{\cA_{-j+1,n-j+1}^{-j+1,1}}(\bar\mu_-\times\pi^{\bar\mu}\,\vert\,\bar\mu_-\times\pi) = H_{\cA_{-\infty,\infty}^{-\infty, 1}}(\bar\mu_-\times\pi^{\bar\mu}\,\vert\,\bar\mu_-\times\pi) = H_q(\mu). 
\ee
\end{lemma}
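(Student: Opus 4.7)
\medskip

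\noindent\textbf{Proof plan for Lemma \ref{yanle}.}
The strategy is to recognize the sliding $\sigma$-algebras $\cA_{-j+1,n-j+1}^{-j+1,1}$ as elements of a naturally doubly-indexed \emph{increasing} family of sub-$\sigma$-algebras of $\cA_{-\infty,\infty}^{-\infty,1}$, then combine the monotonicity of relative entropy in the $\sigma$-algebra (for the upper bound) with the monotone convergence of relative entropy along an exhausting increasing family (for the lower bound). The second equality in the statement is just the definition of $H_q(\mu)$ in \eqref{isimdustu}, so only the first equality requires work.

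For $a,b\ge 0$ introduce $\cF_{a,b}:=\sigma\{\wvec_{-a,b},Z_{-a+1,1}\}$. Then $\cA_{-j+1,n-j+1}^{-j+1,1}=\cF_{j-1,\,n-j}$, the family satisfies $\cF_{a,b}\subset\cF_{a',b'}$ whenever $a\le a'$ and $b\le b'$, and the diagonal $\cG_m:=\cF_{m,m}$ is monotone increasing with $\bigcup_m\cG_m$ generating $\cA_{-\infty,\infty}^{-\infty,1}$. Write $a_j^n:=H_{\cF_{j-1,n-j}}(\bar\mu_-\times\pi^{\bar\mu}\,\vert\,\bar\mu_-\times\pi)$ and $h_m:=H_{\cG_m}(\bar\mu_-\times\pi^{\bar\mu}\,\vert\,\bar\mu_-\times\pi)$.

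For the upper bound, the variational formula for relative entropy immediately gives monotonicity in the $\sigma$-algebra, so $a_j^n\le H_q(\mu)$ for every $1\le j\le n$, whence $\limsup_n \frac1n\sum_{j=1}^n a_j^n\le H_q(\mu)$. For the lower bound, I invoke the monotone convergence theorem for relative entropy along an increasing family (proved, e.g., by applying martingale convergence to the log-Radon-Nikodym derivatives when the entropy is finite, and via the variational formula in general), which yields $h_m\uparrow H_q(\mu)$ as $m\to\infty$. Fix $m$; for $m<j\le n-m$ we have $j-1\ge m$ and $n-j\ge m$, hence $\cF_{j-1,n-j}\supset\cG_m$ and therefore $a_j^n\ge h_m$. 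This gives
$$\frac1n\sum_{j=1}^n a_j^n \ge \frac{n-2m}{n}\,h_m,$$
so $\liminf_n \frac1n\sum_{j=1}^n a_j^n \ge h_m$. Letting $m\to\infty$ (handling $H_q(\mu)=\infty$ by replacing $h_m$ with an arbitrary constant $M$) completes the proof.

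The only nontrivial ingredient is the monotone convergence of relative entropy along an increasing family of $\sigma$-algebras; this is standard and can be cited on the same footing as the chain rule already used in the main proof. Apart from that, the argument is a clean sandwich, made possible by the fact that, for $j$ in the bulk of $[1,n]$, the sliding window $\cF_{j-1,n-j}$ already contains the fixed diagonal $\cG_m$, while the boundary terms (of cardinality $2m$ out of $n$) are negligible in the Cesàro average.
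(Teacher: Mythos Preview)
Your proof is correct and follows essentially the same approach as the paper's. The paper's argument is a terse sketch---upper bound by monotonicity of relative entropy, lower bound by observing that the terms with both $j\to\infty$ and $n-j\to\infty$ converge to $H_q(\mu)$---while your version makes the Ces\`aro sandwich explicit via the diagonal family $\cG_m$; this is exactly what is implicit in the paper's sentence ``which implies the desired result.''
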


\begin{proof}
The relative entropy on the RHS of \eqref{ent-lim3} is an upper bound on each term in the sum on the LHS. On the other hand,  if simultaneously  $j\nearrow\infty$ and $n-j\nearrow\infty$, then 
$$H_{\cA_{-j+1,n-j+1}^{-j+1,1}}(\bar\mu_-\times\pi^{\bar\mu}\,\vert\,\bar\mu_-\times\pi) \to H_{\cA_{-\infty,\infty}^{-\infty, 1}}(\bar\mu_-\times\pi^{\bar\mu}\,\vert\,\bar\mu_-\times\pi),$$
which implies the desired result.
\end{proof} 

Continuing with the proof of Theorem \ref{entropy_connection}, we have seen in Section \ref{sec:3aLDP} that the specific relative entropy $$h(\mu\,\vert\,P_0) = \lim_{n\to\infty}  \frac1n H_{0,n}(\mu\,\vert\,P_0)$$ exists. In combination with \eqref{ent-4} and Lemma \ref{yanle}, this implies that the limit
$$h_{\kS_{0,\infty}}(\mu_\Omega\,\vert\,\P) = \lim_{n\to\infty}  \frac1n H_{\kS_{0,n}}(\mu_\Omega\,\vert\,\P)$$
exists, and satisfies
$$h(\mu\,\vert\,P_0) = h_{\kS_{0,\infty}}(\mu_\Omega\,\vert\,\P) +H_q(\mu).\qedhere$$
\end{proof}

\begin{proof}[Proof of Corollary \ref{cor:zincir}]
For every $S$-invariant $\mu\in\cM_1(\Ospace_\bN)$, 
$$I_{3,a}(\mu) = h(\mu\,|\,P_0) = h_{\kS_{0,\infty}}(\mu_\Omega\,|\,\mathbb{P}) + H_q(\mu) \ge H_q(\mu)$$
by Theorems \ref{3aLDP} and \ref{entropy_connection}. Moreover, if \eqref{polyell} holds, then from  Theorem  \ref{3qLDPmod} and a basic property of the double convex conjugate   (see  \cite[Proposition 4.10]{RasSep15}), 
$$I_{3,q}(\mu) = (H_{q,\bP}^{S,+})^{**}(\mu) \le H_{q,\bP}^{S,+}(\mu).$$

It remains to show that $I_{3,a}(\mu) \le I_{3,q}(\mu)$. Define
\be\label{3log}
\Lambda_{3,a}(f) = \lim_{n\to\infty}\frac1{n}\log E_0[e^{\langle f,L_n^\infty\rangle}]\quad\text{and}\quad\Lambda_{3,q}(f) = \lim_{n\to\infty}\frac1{n}\log E_0^\w[e^{\langle f,L_n^\infty\rangle}]
\ee
for every 
continuous $f\in b\cA_{-\infty,\infty}^{0,\infty}$ and $\bP$-a.e.\ $\w$.   
By Varadhan's lemma (see, e.g., \cite[Section 3.2]{RasSep15}) these limits exist and are convex conjugates of the rate functions:  $$\Lambda_{3,a}(f) = (I_{3,a})^*(f)\quad\text{and}\quad\Lambda_{3,q}(f) = (I_{3,q})^*(f).$$    Then $I_{3,a}(\mu) \le I_{3,q}(\mu)$ follows from 
$$\Lambda_{3,q}(f) = \bE\left[\lim_{n\to\infty}\frac1{n}\log E_0^\w[e^{\langle f,L_n^\infty\rangle}]\right] = \lim_{n\to\infty}\bE\left[\frac1{n}\log E_0^\w[e^{\langle f,L_n^\infty\rangle}]\right] \le \lim_{n\to\infty}\frac1{n}\log E_0[e^{\langle f,L_n^\infty\rangle}] = \Lambda_{3,a}(f).$$
\end{proof}

\section{Equality of the averaged and quenched rate functions}\label{sec:charac}

Throughout this section, we assume \eqref{polyell} which ensures that the quenched LDPs hold.   Again by Varadhan's lemma the limit   
\be\label{bitsedegitsek}
\Lambda_{1,q}(\rho) = \lim_{n\to\infty}\frac1{n}\log E_0^\omega[e^{\langle\rho,X_n\rangle}] 
\ee
exists  for every $\rho\in\bR^d$ and $\bP$-a.e.\ $\w$, and satisfies
\be\label{lvas}
\Lambda_{1,q}(\rho) = (I_{1,q})^*(\rho) = \sup_{\xi\in\cD}\{\langle\rho,\xi\rangle - I_{1,q}(\xi)\}. 
\ee
  We have seen in Proposition \ref{prop:elem}(b) (and its proof in Appendix \ref{app:elementary}) that $I_{1,a}(\xi) \le I_{1,q}(\xi)$ and $\Lambda_{1,q}(\rho) \le \log\phi_a(\rho)$ for every $\xi\in\cD$ and $\rho\in\bR^d$.

\begin{lemma}\label{lem:denk}
For every $\xi\in\mathrm{ri}(\cD)$ and $\rho \in\partial I_{1,a}(\xi)$,
$$I_{1,a}(\xi) < I_{1,q}(\xi)\quad\text{if and only if}\quad\Lambda_{1,q}(\rho) < \log\phi_a(\rho).$$
\end{lemma}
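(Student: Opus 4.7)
The plan is to treat the two implications separately, with the reverse direction being nearly immediate and the forward one requiring a short convex-analytic argument.

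\textbf{Reverse direction ($\Leftarrow$).} Assume $\Lambda_{1,q}(\rho)<\log\phi_a(\rho)$. Since $\rho\in\partial I_{1,a}(\xi)$, I have the saturation identity $I_{1,a}(\xi)=\langle\rho,\xi\rangle-\log\phi_a(\rho)$ from the basic convex analysis recalled before Theorem \ref{thm:uniquemin}. Since $I_{1,q}$ is convex (Proposition \ref{prop:elem}(a)) and lower semicontinuous (being a rate function) and equals $+\infty$ off $\cD$, Fenchel--Moreau combined with \eqref{lvas} gives the dual representation $I_{1,q}(\xi)=\sup_{\rho'}\{\langle\rho',\xi\rangle-\Lambda_{1,q}(\rho')\}$. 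Plugging in $\rho'=\rho$ and using the strict inequality produces
\[
I_{1,q}(\xi)\ \ge\ \langle\rho,\xi\rangle-\Lambda_{1,q}(\rho)\ >\ \langle\rho,\xi\rangle-\log\phi_a(\rho)\ =\ I_{1,a}(\xi).
\]

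\textbf{Forward direction ($\Rightarrow$), via contrapositive.} The cleanest route is to show that if $\Lambda_{1,q}(\rho)=\log\phi_a(\rho)$ then $I_{1,a}(\xi)=I_{1,q}(\xi)$. Since $I_{1,a}(\xi)\le I_{1,q}(\xi)$ always (Proposition \ref{prop:elem}(b)), it suffices to produce the opposite inequality, and for this I will establish that $\rho$ is a maximizer in the dual representation of $I_{1,q}(\xi)$, i.e.\ $\xi\in\partial\Lambda_{1,q}(\rho)$. Once this is proved, Fenchel's equality gives $I_{1,q}(\xi)=\langle\rho,\xi\rangle-\Lambda_{1,q}(\rho)=\langle\rho,\xi\rangle-\log\phi_a(\rho)=I_{1,a}(\xi)$, which is what I want.

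\textbf{The key convex-analytic step: $\xi\in\partial\Lambda_{1,q}(\rho)$.} The function $\log\phi_a$ is $C^\infty$ on $\bR^d$ (log of a finite sum of positive exponentials), so $\partial\log\phi_a(\rho)=\{\nabla\log\phi_a(\rho)\}$; the duality $\rho\in\partial I_{1,a}(\xi)\iff\xi\in\partial\log\phi_a(\rho)$ (Appendix \ref{app:subdifferential}) then forces $\xi=\nabla\log\phi_a(\rho)$. Now apply the following standard fact: if $f,g\colon\bR^d\to\bR$ are convex with $f\le g$, $f(\rho)=g(\rho)$, and $g$ differentiable at $\rho$, then $f$ is differentiable at $\rho$ with the same gradient. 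The proof is a one-sided directional derivative computation: for any $v\in\bR^d$ and $t>0$,
\[
\frac{\Lambda_{1,q}(\rho+tv)-\Lambda_{1,q}(\rho)}{t}\ \le\ \frac{\log\phi_a(\rho+tv)-\log\phi_a(\rho)}{t},
\]
so letting $t\downarrow 0$ yields $\Lambda_{1,q}'(\rho;v)\le\langle\xi,v\rangle$; applying the same to $-v$ and using $\Lambda_{1,q}'(\rho;v)\ge-\Lambda_{1,q}'(\rho;-v)$ (convexity of $\Lambda_{1,q}$) gives the matching lower bound. Thus $\Lambda_{1,q}$ is differentiable at $\rho$ with gradient $\xi$, which is exactly $\xi\in\partial\Lambda_{1,q}(\rho)$.

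\textbf{Expected obstacle.} None of the individual steps is hard, so the only point that requires care is making sure the dual representation $I_{1,q}=\Lambda_{1,q}^*$ and the Fenchel equivalence $\xi\in\partial\Lambda_{1,q}(\rho)\iff I_{1,q}(\xi)+\Lambda_{1,q}(\rho)=\langle\rho,\xi\rangle$ are justified at the level of rigor needed here; both come from the convexity and lower semicontinuity of $I_{1,q}$ together with \eqref{lvas}. The rest is elementary convex analysis.
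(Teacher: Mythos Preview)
Your proof is correct. The reverse direction is identical to the paper's. For the forward direction, however, you take a genuinely different route.

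The paper argues directly: assuming $I_{1,a}(\xi)<I_{1,q}(\xi)$, it uses the continuity of $I_{1,q}$ on the compact set $\cD$ to pick a maximizer $\xi''$ in $\Lambda_{1,q}(\rho)=\sup_{\xi'}\{\langle\rho,\xi'\rangle-I_{1,q}(\xi')\}$, and then case-splits on whether $\xi''=\xi$ or not, obtaining a strict inequality in the chain $\Lambda_{1,q}(\rho)=\langle\rho,\xi''\rangle-I_{1,q}(\xi'')\le\langle\rho,\xi''\rangle-I_{1,a}(\xi'')\le\langle\rho,\xi\rangle-I_{1,a}(\xi)=\log\phi_a(\rho)$ in either case. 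Your contrapositive instead exploits the smoothness of $\log\phi_a$: from $\Lambda_{1,q}\le\log\phi_a$ with equality at $\rho$ you transfer the gradient $\xi=\nabla\log\phi_a(\rho)$ to $\Lambda_{1,q}$ via the touching argument, which immediately yields $\xi\in\partial\Lambda_{1,q}(\rho)$ and hence Fenchel equality. Your approach avoids the case split and the appeal to compactness of $\cD$, and in fact proves the slightly stronger byproduct that $\Lambda_{1,q}$ is differentiable at $\rho$ whenever it touches $\log\phi_a$ there; the paper's approach stays on the primal side and is perhaps more transparent about where the strict inequality enters.
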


\begin{proof}
For every $\xi\in\mathrm{ri}(\cD)$ and $\rho \in\partial I_{1,a}(\xi)$, if $\Lambda_{1,q}(\rho) < \log\phi_a(\rho)$, then
$$I_{1,a}(\xi) = \langle\rho,\xi\rangle - \log\phi_a(\rho) <  \langle\rho,\xi\rangle - \Lambda_{1,q}(\rho) \le \sup_{\rho'\in\bR^d}\{\langle\rho',\xi\rangle - \Lambda_{1,q}(\rho')\} = I_{1,q}(\xi).$$
Here, the first equality is shown in \eqref{entemizig} from Appendix \ref{app:subdifferential}, and the last equality follows from the convexity of $I_{1,q}$ (see Proposition \ref{prop:elem}(a)).

Conversely, if $I_{1,a}(\xi) < I_{1,q}(\xi)$, then the continuity of $I_{1,q}$ on $\cD$ (see Proposition \ref{prop:elem}(a)) implies that
\begin{align}
\Lambda_{1,q}(\rho) &= \sup_{\xi'\in\cD}\{\langle\rho,\xi'\rangle - I_{1,q}(\xi')\} = \langle\rho,\xi''\rangle - I_{1,q}(\xi'')\nonumber\\
&\le \langle\rho,\xi''\rangle - I_{1,a}(\xi'') \le \langle\rho,\xi\rangle - I_{1,a}(\xi) = \log\phi_a(\rho)\label{isimversene}
\end{align}
for some $\xi''\in\cD$. If $\xi'' = \xi$, then the first inequality in \eqref{isimversene} is strict; if $\xi'' \neq \xi$, then the second inequality in \eqref{isimversene} is strict by \eqref{entemizig}. 
\end{proof}

Recall $u_n(\rho,\w,x)$ from definition \eqref{lazimolc}. When $\rho$ is understood we can drop it from the notation.  The next theorem 
is adapted from \cite[Theorem 3.3]{ComVar06} which is concerned with upper bounds for the free energy of directed polymers in random environments.
 

\begin{theorem}\label{thm:cv}
For every $\rho\in\bR^d$,
$$\Lambda_{1,q}(\rho) - \log\phi_a(\rho) \le \inf\left\{\frac1{tm}\log \bE\left[\sum_x u_m(\rho,\cdot,x)^t\right]:\, t\in(0,1),\,m\in\bN\right\}.$$
\end{theorem}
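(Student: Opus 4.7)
The plan is to exploit the fact that $\Lambda_{1,q}(\rho) - \log\phi_a(\rho) = \lim_{n\to\infty} \tfrac{1}{n}\log u_n(\rho,\w)$ and to bound the right-hand side by fractional moments via a Markov-plus-subadditivity argument. Fix $t\in(0,1)$ and $m\in\bN$, and let $C_m(t):=\bE\bigl[\sum_x u_m(\rho,\w,x)^t\bigr]$. Write $n=km$ and iterate the Markov property to express
\[
u_{km}(\rho,\w) = \sum_{y_1,\ldots,y_k\in\bZ^d} \prod_{j=1}^k u_m\bigl(\rho,\,T_{(j-1)m,\, y_1+\cdots+y_{j-1}}\w,\,y_j\bigr).
\]

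Since $t\in(0,1)$, the map $x\mapsto x^t$ is subadditive on $[0,\infty)$, so the nonnegativity of the summands gives
\[
u_{km}(\rho,\w)^t \le \sum_{y_1,\ldots,y_k} \prod_{j=1}^k u_m\bigl(\rho,\,T_{(j-1)m,\, y_1+\cdots+y_{j-1}}\w,\,y_j\bigr)^t.
\]
Take $\bE$. The $j$-th factor on the right depends only on the temporal slab $\wvec_{(j-1)m,jm-1}$ (at spatially translated sites), so by temporal independence these factors are independent, and by spatial translation invariance each has the same distribution as $u_m(\rho,\w,y_j)^t$. Hence
\[
\bE[u_{km}(\rho,\w)^t] \le \sum_{y_1,\ldots,y_k}\prod_{j=1}^k \bE[u_m(\rho,\w,y_j)^t] = C_m(t)^k.
\]

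By Markov's inequality, $\bP\bigl(u_{km}(\rho,\w)^t > k^2\, C_m(t)^k\bigr) \le k^{-2}$, so Borel--Cantelli yields, for $\bP$-a.e.\ $\w$,
\[
\limsup_{k\to\infty} \frac{t}{km}\log u_{km}(\rho,\w) \le \frac{1}{m}\log C_m(t).
\]
The left-hand side equals $t\bigl(\Lambda_{1,q}(\rho)-\log\phi_a(\rho)\bigr)$ by \eqref{bitsedegitsek}, so dividing by $t$ and taking the infimum over $t\in(0,1)$ and $m\in\bN$ gives the claim.

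The only delicate point is justifying the product structure after taking $\bE$; it is important that the Markov decomposition is done in blocks of equal length $m$ so that each factor is measurable with respect to a disjoint temporal slab of the environment, whence temporal independence applies. Spatial translation invariance of $\bP_s$ then erases the shifts $y_1+\cdots+y_{j-1}$, letting the product collapse to the single quantity $C_m(t)^k$. No additional ellipticity is needed beyond what guarantees the existence of $\Lambda_{1,q}$.
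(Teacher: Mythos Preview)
Your proof is correct and follows essentially the same route as the paper: Markov decomposition of $u_{km}$ into $m$-block factors, subadditivity of $x\mapsto x^t$ on $[0,\infty)$ for $t\in(0,1)$, and then factorization of the expectation via temporal independence and spatial translation invariance to obtain $\bE[u_{km}^t]\le C_m(t)^k$. The only difference is in the last step: the paper bounds $\bE\bigl[\tfrac{1}{tn}\log(u_{nm})^t\bigr]$ by Jensen's inequality and then invokes bounded convergence (using $|X_n|\le cn$) to interchange limit and expectation, whereas you bound $\bE[u_{km}^t]$ directly and use Markov's inequality plus Borel--Cantelli to get the almost sure bound. Both are standard and equally short; your version has the slight advantage of not needing to observe that $\tfrac{1}{n}\log u_n$ is uniformly bounded.
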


\begin{proof}
 It follows from the definition of $u_n(\w) = u_n(\rho,\w)$ in \eqref{def:un} and the Markov property of the quenched walk that
$$u_n(\w) = \sum_{x\in\bZ^d}u_n(\w,x)\qquad\text{and}\qquad u_{n_1+n_2}(\w) = \sum_{x_1,x_2}u_{n_1}(\w,x_1)u_{n_2}(T_{n_1,x_1}\w,x_2-x_1).$$
For every $\rho\in\bR^d$, $t\in(0,1)$, and $m,n\ge1$,
\begin{align*}
&\bE\left[\frac1{n}\log u_{nm}\right] = \bE\left[\frac1{tn}\log (u_{nm})^t\right]\\
&\quad = \bE\left[\frac1{tn}\log\left(\sum_{x_1,\ldots,x_n}u_m(\cdot,x_1)u_{m}(T_{m,x_1}\cdot,x_2-x_1)\cdots u_{m}(T_{(n-1)m,x_{n-1}}\cdot,x_n-x_{n-1}) \right)^t\right]\\
&\quad \le \bE\left[\frac1{tn}\log\left(\sum_{x_1,\ldots,x_n}u_m(\cdot,x_1)^tu_{m}(T_{m,x_1}\cdot,x_2-x_1)^t\cdots u_{m}(T_{(n-1)m,x_{n-1}}\cdot,x_n-x_{n-1})^t\right)\right]\\
&\quad \le \frac1{tn}\log\bE\left[\sum_{x_1,\ldots,x_n}u_m(\cdot,x_1)^tu_{m}(T_{m,x_1}\cdot,x_2-x_1)^t\cdots u_{m}(T_{(n-1)m,x_n-1}\cdot,x_n-x_{n-1})^t\right]\\
&\quad = \frac1{tn}\log\left(\bE\left[\sum_{x}u_m(\cdot,x)^t\right]\right)^n = \frac1{t}\log\bE\left[\sum_{x}u_m(\cdot,x)^t\right]
\end{align*}
by the temporal independence and spatial translation invariance assumptions. Sending $n\to\infty$ and using the bounded convergence theorem, we get
$$m(\Lambda_{1,q}(\rho) - \log\phi_a(\rho)) = \lim_{n\to\infty}\frac1{n}\log u_{nm} =  \lim_{n\to\infty}\bE\left[\frac1{n}\log u_{nm}\right] \le \frac1{t}\log\bE\left[\sum_{x}u_m(\cdot,x)^t\right].\qedhere$$
\end{proof}

\begin{corollary}\label{cor:strict}
For every $\xi\in\mathrm{ri}(\cD)$ and $\rho \in\partial I_{1,a}(\xi)$, if there exist $t\in(0,1)$ and $m\in\bN$ such that $$f(t) := \bE\left[\sum_{x} u_m(\rho,\cdot,x)^t\right]<1,$$
then $I_{1,a}(\xi) < I_{1,q}(\xi)$.
\end{corollary}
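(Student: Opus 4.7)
The plan is to chain together the two results that immediately precede this corollary. The hypothesis $f(t)<1$ for some $t\in(0,1)$ and $m\in\bN$ will be converted via Theorem \ref{thm:cv} into a strict inequality between the logarithmic moment generating functions $\Lambda_{1,q}(\rho)$ and $\log\phi_a(\rho)$, and then Lemma \ref{lem:denk} will transfer this strict inequality from the level of $\rho$ to the level of $\xi$. No new computation or tail estimate is required.

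More concretely, first I would apply Theorem \ref{thm:cv} at the specified value of $\rho\in\partial I_{1,a}(\xi)$, together with the specific pair $(t,m)$ provided by the hypothesis. This yields
\begin{equation*}
\Lambda_{1,q}(\rho) - \log\phi_a(\rho) \;\le\; \frac{1}{tm}\log f(t).
\end{equation*}
Since $t>0$, $m\ge1$, and $f(t)<1$ by hypothesis, the right-hand side is strictly negative. Hence $\Lambda_{1,q}(\rho) < \log\phi_a(\rho)$.

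Second, I would invoke Lemma \ref{lem:denk}, which asserts that for $\xi\in\mathrm{ri}(\cD)$ and $\rho\in\partial I_{1,a}(\xi)$ the strict inequality $\Lambda_{1,q}(\rho) < \log\phi_a(\rho)$ is equivalent to $I_{1,a}(\xi) < I_{1,q}(\xi)$. Combining these two steps gives the conclusion.

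The only possible subtlety—essentially a bookkeeping point rather than a genuine obstacle—is simply to verify that Theorem \ref{thm:cv} and Lemma \ref{lem:denk} apply under our standing assumption \eqref{polyell} for the quenched LDP, which is the hypothesis in effect throughout Section \ref{sec:charac}; beyond this nothing further is needed.
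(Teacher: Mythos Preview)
Your proposal is correct and follows exactly the paper's approach: the paper's proof is the single sentence ``This follows immediately from Lemma \ref{lem:denk} and Theorem \ref{thm:cv},'' and you have simply spelled out how the two results chain together.
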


\begin{proof}
This follows immediately from Lemma \ref{lem:denk} and Theorem \ref{thm:cv}.
\end{proof}

We need two additional lemmas before giving the proof of Theorem \ref{thm:charac}. In fact, the second one is part of Theorem \ref{thm:miniq}, but we state and prove it separately here to avoid circular reasoning (we will later use Theorem \ref{thm:charac} in the proof of Theorem \ref{thm:miniq}).

\begin{lemma}\label{lem:ent}
Let $\mu$ and $\lambda$ be probability measures with finite relative entropy given by 
\[H(\mu\,\vert\,\lambda)=\sup_g\{ E^\mu[g] - \log E^\lambda[e^g]\}\]
with supremum over bounded measurable functions $g$.  Then for any event $A$, 
\be
\lambda(A) \ge \exp\bigl\{ -\mu(A)^{-1} \bigl( H(\mu\,\vert\,\lambda)+\log 2 \bigr)\bigr\}. \label{entr-ineq}
\ee
\end{lemma}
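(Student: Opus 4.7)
The plan is to exploit the variational characterization by plugging in the test function $g = c\,\mathbf{1}_A$ for a well-chosen constant $c>0$, and then to bound the log-moment generating factor by a simple maximum. If $\mu(A)=0$ or $\lambda(A)=0$ the claim is trivial (in the second case $\mu\ll\lambda$ gives $\mu(A)=0$ since $H(\mu\,|\,\lambda)<\infty$), so from now on I assume $\mu(A),\lambda(A)\in(0,1]$.

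First, from the variational formula applied to $g=c\,\mathbf{1}_A$ with $c\ge 0$,
\[
H(\mu\,|\,\lambda)\;\ge\; c\,\mu(A)-\log\!\bigl(\lambda(A^c)+e^c\lambda(A)\bigr)\;\ge\; c\,\mu(A)-\log\!\bigl(1+e^c\lambda(A)\bigr).
\]
The key elementary estimate is $1+e^c\lambda(A)\le 2\max\{1,\,e^c\lambda(A)\}$, which turns the right-hand side into
\[
H(\mu\,|\,\lambda)\;\ge\; c\,\mu(A)-\log 2-\max\{0,\,c+\log\lambda(A)\}.
\]

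Second, I optimize by choosing $c=-\log\lambda(A)\ge 0$, which makes $e^c\lambda(A)=1$ and thus collapses the max term to $0$. Substituting yields
\[
H(\mu\,|\,\lambda)\;\ge\; -\mu(A)\log\lambda(A)-\log 2,
\]
and rearranging gives $\log\lambda(A)\ge -\mu(A)^{-1}\bigl(H(\mu\,|\,\lambda)+\log 2\bigr)$, which is the desired inequality \eqref{entr-ineq}. There is no substantive obstacle: the only subtleties are the degenerate cases $\mu(A)=0$ or $\lambda(A)=0$, both of which are handled at the outset, and the choice of $c$ which is forced by optimization of the bound.
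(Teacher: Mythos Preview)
Your proof is correct and takes essentially the same approach as the paper: both plug the test function $g=(-\log\lambda(A))\,\mathbf{1}_A$ into the variational formula and bound $E^\lambda[e^g]\le 2$. The paper simply writes down this choice of $g$ directly and computes $E^\lambda[e^g]=2-\lambda(A)\le 2$, whereas you arrive at the same $c=-\log\lambda(A)$ after an optimization step and a slightly looser intermediate bound $\lambda(A^c)+e^c\lambda(A)\le 1+e^c\lambda(A)\le 2\max\{1,e^c\lambda(A)\}$; the substance is identical.
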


\begin{proof}
Assume $\mu(A)>0$ for otherwise the inequality is trivially true. Then also $\lambda(A)>0$ because finite entropy implies $\mu\ll\lambda$. Take $g=(-\log\lambda(A))\cdot\one_A$ in the variational formula.  
\end{proof}

\begin{lemma}\label{ondenver}
Assume \eqref{polyell}. If $I_{1,a}(\xi) = I_{1,q}(\xi)$, then
$$I_{1,q}(\xi) = (H_{q,\bP}^{S,+})^{**}(\mu^\xi),$$
and $\mu^\xi$ is the unique minimizer of the quenched contraction \eqref{qvarmod1}.
\end{lemma}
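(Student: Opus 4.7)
The plan is to leverage the chain of inequalities
\[
I_{1,a}(\xi) \;\le\; I_{3,a}(\mu) \;\le\; I_{3,q}(\mu) \;=\; (H_{q,\bP}^{S,+})^{**}(\mu)
\]
(valid for every $\mu$ with $E^\mu[Z_1]=\xi$, using the averaged contraction \eqref{a_contraction}, Corollary \ref{cor:zincir}, and Theorem \ref{3qLDPmod}) together with the uniqueness of the averaged minimizer $\mu^\xi$ from Theorem \ref{thm:uniquemin}.

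First I would invoke Remark \ref{minvarmi}: the sublevel sets of $(H_{q,\bP}^{S,+})^{**}$ are closed and $\{\mu\in\cM_1(\Ospace_\bN):E^\mu[Z_1]=\xi\}$ is weakly compact, so the infimum in the quenched contraction \eqref{qvarmod1} is attained at some $\nu\in\cM_1(\Ospace_\bN)$ with $E^\nu[Z_1]=\xi$ and $(H_{q,\bP}^{S,+})^{**}(\nu)=I_{1,q}(\xi)$. Under the hypothesis $I_{1,a}(\xi)=I_{1,q}(\xi)$, this yields $I_{3,q}(\nu)=I_{1,a}(\xi)$, and then the inequality $I_{3,a}(\nu)\le I_{3,q}(\nu)=I_{1,a}(\xi)$ combined with the averaged contraction lower bound $I_{3,a}(\nu)\ge I_{1,a}(\xi)$ forces $I_{3,a}(\nu)=I_{1,a}(\xi)$. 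In other words, $\nu$ attains the infimum in \eqref{a_contraction}. By the uniqueness assertion of Theorem \ref{thm:uniquemin}, $\nu=\mu^\xi$.

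This already gives both conclusions: the quenched minimizer exists, is unique, and equals $\mu^\xi$; and its value is $(H_{q,\bP}^{S,+})^{**}(\mu^\xi)=I_{1,q}(\xi)$, which is the identity the lemma claims. To double-check consistency, observe that Proposition \ref{prop:elem2}(e) gives $I_{3,a}(\mu^\xi)=I_{1,a}(\xi)$, and the chain above applied to $\mu^\xi$ itself reads
\[
I_{1,a}(\xi) = I_{3,a}(\mu^\xi) \le I_{3,q}(\mu^\xi) = (H_{q,\bP}^{S,+})^{**}(\mu^\xi),
\]
so that the equality $I_{1,a}(\xi)=I_{1,q}(\xi)$ collapses the chain.

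I do not anticipate any genuine obstacle here: the lemma is essentially a bookkeeping consequence of the three inputs (i) Theorem \ref{thm:uniquemin}, (ii) the Jensen-type bound $I_{3,a}\le I_{3,q}$ from Corollary \ref{cor:zincir}, and (iii) existence of a quenched minimizer from Remark \ref{minvarmi}. The only subtlety is to avoid circular use of Theorem \ref{thm:charac}, which is why the argument should be phrased directly in terms of the two contractions rather than invoking the equivalence (i)$\iff$(iii) of that theorem.
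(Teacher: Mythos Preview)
Your proof is correct and takes essentially the same approach as the paper: both arguments combine the existence of a quenched minimizer (compactness of $\{E^\mu[Z_1]=\xi\}$ plus lower semicontinuity of $(H_{q,\bP}^{S,+})^{**}$), the inequality $I_{3,a}\le I_{3,q}$ from Corollary~\ref{cor:zincir}, and the uniqueness of the averaged minimizer from Theorem~\ref{thm:uniquemin}. The only difference is cosmetic ordering---the paper first shows every $\nu\ne\mu^\xi$ fails to minimize and then invokes existence, whereas you first pick a minimizer and then identify it as $\mu^\xi$---and your remark about avoiding circular reliance on Theorem~\ref{thm:charac} is exactly the reason the paper isolates this lemma.
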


\begin{proof}
If $I_{1,a}(\xi) = I_{1,q}(\xi)$, then for every $\nu\in\cM_1(\Ospace_\bN)$ such that $\nu\neq\mu^\xi$ and $E^\nu[Z_1] = \xi$,
$$I_{1,q}(\xi) = I_{1,a}(\xi) < I_{3,a}(\nu) \le  I_{3,q}(\nu) = (H_{q,\bP}^{S,+})^{**}(\nu)$$
by Theorem \ref{thm:uniquemin} and Corollary \ref{cor:zincir}. Hence, $\nu$ is not a minimizer of the quenched contraction \eqref{qvarmod1}. However, the compactness of $\{\mu\in\cM_1(\Ospace_\bN):\,E^\mu[Z_1] = \xi\}$ and the lower semicontinuity of  $(H_{q,\bP}^{S,+})^{**}$ guarantee that there is a minimizer. This implies the desired result.
\end{proof}

\begin{proof}[Proof of Theorem \ref{thm:charac}]
\underline{$\neg(iv)\implies\neg(i):$} Observe that
\be
\begin{aligned}  H_{\kS_{0,n}}(\mu_\Omega\,\vert\,\P) = \bE[u_n\log u_n] &= \sum_x \bE[u_n(\cdot,x)\log u_n(\cdot,x)] - \bE\biggl[u_n \sum_x\frac{u_n(\cdot,x)}{u_n} \log \frac{u_n(\cdot,x)}{u_n}\biggr]\\
&\le\sum_x \bE[u_n(\cdot,x)\log u_n(\cdot,x)]  -  \bE[u_n]\log\left(\frac1{(cn)^d}\right)\\
&=\sum_x \bE[u_n(\cdot,x)\log u_n(\cdot,x)]  +  d\log(cn). 
\end{aligned}\label{str-10}
\ee
Here, we used the following facts: the entropy $-\sum p_i\log p_i$ of a discrete probability distribution with a finite support is dominated by that of the uniform distribution (with the same support); and $(cn)^d$ is a crude upper bound for the number of distinct endpoints of paths of length $n$ started at the origin (with steps in $\cR$).

If $h_{\kS_{0,\infty}}(\mu_\Omega^\xi\,|\,\mathbb{P}) > 0$, then $H_{\kS_{0,n}}(\mu_\Omega^\xi\,\vert\,\P)$ grows linearly in $n$. From \eqref{str-10} we take the very weak consequence that there exists an $m\ge1$ such that $\sum_x \bE\bigl[u_m(\cdot,x)\log u_m(\cdot,x)\bigr] >0$. The desired result $I_{1,a}(\xi) < I_{1,q}(\xi)$ follows from Corollary \ref{cor:strict} which is applicable since $f(1)=1$ and  \[  f'(1)= \sum_x \bE\bigl[u_m(\cdot,x)\log u_m(\cdot,x)\bigr]>0.\]

\noindent\underline{$(i)\iff(ii):$} If (i) is true, then so is (iv) by the previous part. Therefore,
$$I_{1,q}(\xi) = I_{1,a}(\xi) = I_{3,a}(\mu^\xi) = h_{\kS_{0,\infty}}(\mu_\Omega^\xi\,|\,\mathbb{P}) +H_q(\mu^\xi) = H_q(\mu^\xi)$$
by Proposition \ref{prop:elem2}(e) and Corollary \ref{cor:zincir}, and hence (ii) is true.
Conversely, if (ii) is true, then
$$I_{1,q}(\xi) = H_q(\mu^\xi) \le I_{3,a}(\mu^\xi) = I_{1,a}(\xi) \le I_{1,q}(\xi)$$
by  Corollary \ref{cor:zincir}, Proposition \ref{prop:elem2}(e) and Proposition \ref{prop:elem}(b), and hence (i) is true.

\vspace{5mm}

\noindent\underline{$(i)\iff(iii):$} If (i) is true, then so is (iv) by the first part. Therefore,
$$(H_{q,\bP}^{S,+})^{**}(\mu^\xi) = I_{1,q}(\xi)  = I_{1,a}(\xi) = I_{3,a}(\mu^\xi) = H_q(\mu^\xi) \le (H_{q,\bP}^{S,+})^{**}(\mu^\xi)$$
by Lemma \ref{ondenver}, Proposition \ref{prop:elem2}(e) and Corollary \ref{cor:zincir}, and hence (iii) is true. Conversely, if (iii) is true, then
$$I_{3,q}(\mu^\xi) = I_{3,a}(\mu^\xi) = I_{1,a}(\xi) \le  I_{1,q}(\xi) \le I_{3,q}(\mu^\xi)$$
by Corollary \ref{cor:zincir}, Proposition \ref{prop:elem2}(c,e), Proposition \ref{prop:elem}(b) and \eqref{nasilda}, 
and hence (i) is true.

\vspace{5mm}

\noindent\underline{$\neg(i)\implies\neg(iv):$} Assume \eqref{expell}. Theorem \ref{thm:LiuWat} in Appendix \ref{app:concentration} gives the concentration inequality
$$\bP\bigl( \,\abs{\log u_n -  \bE[\log u_n]}\ge n\e\bigr) \le 2\exp(-cn)$$
with a constant $c=c(\e)>0$. If $I_{1,a}(\xi) < I_{1,q}(\xi)$, then
$$\lim_{n\to\infty}\frac1{n}\bE[\log u_n] = \Lambda_{1,q}(\rho) - \log\phi_a(\rho) < 0$$
by Lemma \ref{lem:denk}, where $\rho \in\partial I_{1,a}(\xi)$. Therefore, there is a $\delta>0$ such that for large enough $n$,
\be
\bP(u_n\ge \tfrac12) = \bP(\log u_n\ge -\log 2) \le \bP\bigl(\log u_n  \ge -n\delta\bigr) \le 2\exp(-cn).\label{str-7}
\ee
On the other hand, 
$$\mu_\Omega^\xi(u_n\ge \tfrac12)  = 1 - \mu_\Omega^\xi(u_n < \tfrac12) = 1- \bE\bigl[u_n\one_{\{u_n< \tfrac12\}}\,\bigr] \ge \tfrac12.$$
Applying Lemma \ref{lem:ent} with $\mu = \mu_\Omega^\xi$, $\lambda = \bP$ and $A = \{u_n\ge\tfrac12\}$ on $\kS_{0,n}$, we see that \eqref{entr-ineq} becomes
\be
\bP(u_n\ge \tfrac12) \ge \exp\bigl\{ -2\bigl(  \bE[u_n\log u_n]  +\log 2 \bigr)\bigr\}.  \label{str-8}
\ee
Combining \eqref{str-7}--\eqref{str-8} shows that $\bE[u_n\log u_n]$ grows linearly in $n$, contradicting $h_{\kS_{0,\infty}}(\mu_\Omega^\xi\,|\,\mathbb{P}) = 0$.
\end{proof}

\begin{proof}[Proof of Corollary \ref{cor:ugurmugur}]
If $\mu\in\cM_1(\Ospace_\bN)$ is $S$-invariant and $\mu_\Omega\ll\bP$ on $\kS_{0,\infty}$, then
$H_{q,\bP}^{S,+}(\mu) = H_q(\mu)$ by definition \eqref{tanimmod}. Therefore,
$I_{3,a}(\mu) = I_{3,q}(\mu) = H_q(\mu)$ by Corollary \ref{cor:zincir}. In fact, the second equality follows directly from the lower semicontinuity of $H_q$:
$$H_{q,\bP}^{S,+}(\mu) = H_q(\mu) \le (H_{q,\bP}^{S,+})^{**}(\mu) \le H_{q,\bP}^{S,+}(\mu).$$

Under uniform ellipticity \eqref{unifell}, if $\mu$ is $S$-invariant and $\mu_\Omega\ll\bP$ on $\kS_{0,\infty}$, then
$$H_q(\mu) \le \sum_{z\in\cR}E^{\mu_\Omega}[ | \log\w_{0,0}(z) | ] \le |\cR||\log c| < \infty.$$
Therefore, $h(\mu\,\vert\,P_0) = I_{3,a}(\mu) = H_q(\mu)$ can be canceled from \eqref{a=kS+q} to give $h_{\kS_{0,\infty}}(\mu_\Omega\,|\,\mathbb{P})=0$.
\end{proof}

\section{Minimizers of the quenched contractions}\label{sec:miniq}

\begin{proof}[Proof of Theorem \ref{thm:miniq}]
Fix an arbitrary $\xi\in\mathrm{ri}(\cD)$.
If $I_{1,a}(\xi) = I_{1,q}(\xi)$, then \eqref{kipcak} follows immediately from Theorem \ref{thm:charac}, and we have already shown in Lemma \ref{ondenver} that $\mu^\xi$ is the unique minimizer of the quenched contraction \eqref{qvarmod1}. This concludes the proof of part (a).

If $\mu_\Omega^\xi\ll\bP$ on $\kS_{0,\infty}$, then recall from the proof of Theorem \ref{thm:doobchar} that $u_n(\w) = E_0^\w[e^{\langle\rho,X_n\rangle - n\log\phi_a(\rho)}]$ converges  to $u(\w) = \left.\frac{d\mu_\Omega^\xi}{d\bP}\right|_{\kS_{0,\infty}}\!\!\!\!\!\!\!\!(\w)$ for $\bP$-a.e.\ $\w$, and $\bP(u>0) = 1$. Therefore,
$$\Lambda_{1,q}(\rho) - \log\phi_a(\rho) = \lim_{n\to\infty}\frac1{n}\log u_n(\w) = \lim_{n\to\infty}\frac1{n}\log u(\w) = 0.$$
We deduce from Lemma \ref{lem:denk} that $I_{1,a}(\xi) = I_{1,q}(\xi)$, and part (a) is applicable. Since $\mu^\xi$ is $S$-invariant and $E^{\mu^\xi}[Z_1] = \xi$ (see Proposition \ref{prop:elem2}(a,c)), it is a minimizer of the quenched contraction \eqref{qvarmod2}.

It remains to show that \eqref{qvarmod2} has no minimizers other than $\mu^\xi$. To this end, consider any $S$-invariant $\nu\in\cM_1(\Ospace_\bN)$ such that $\nu\neq\mu^\xi$, $E^\nu[Z_1] = \xi$, and $\nu_\Omega\ll\bP$ on $\kS_{0,\infty}$. Observe that
$$I_{1,q}(\xi) < (H_{q,\bP}^{S,+})^{**}(\nu) = H_q(\nu)$$
by part (a) and Corollary \ref{cor:ugurmugur}. This concludes the proof of part (b).
\end{proof}

\section{Spatially constant environments}\label{sec:constant}

\begin{proof}[Proof of Proposition \ref{prop:scneq}]
The quenched walk $X_n$ is now a sum of independent steps $Z_i\sim\bar q_{i-1}$, and so,  by the   strong LLN,  for 
$\bP$-a.e.\ $\w$,
$$\Lambda_{1,q}(\rho) = \lim_{n\to\infty}\frac1{n}\log E_0^\w[e^{\langle\rho,X_n\rangle}] = \lim_{n\to\infty}\frac1{n}\sum_{i=0}^{n-1}\log W(\rho,T_{i,0}\w) = \bE[\log W(\rho,\w)].$$
Therefore, the first equality in \eqref{metgel} follows from \eqref{lvas} and the convexity of $I_{1,q}$ (see Proposition \ref{prop:elem}(a)), and the rest from Jensen's inequality and \eqref{level1a}.

Assume \eqref{yoksasic}.  Let $\xi\in\mathrm{ri}(\cD)\setminus\{\llnxi\}$ and  $\rho\in\partial I_{1,a}(\xi)$.  Then   $\rho\notin\partial I_{1,a}(\llnxi)$ by \eqref{entemizig} from Appendix \ref{app:subdifferential}. Consequently by Proposition \ref{abimdeca}  the inequality in  \eqref{yoksasic} holds and gives  
\be\label{jenverdi}
\Lambda_{1,q}(\rho) = \bE[\log W(\rho,\w)] < \log\bE[W(\rho,\w)] = \log\phi_a(\rho) 
\ee
  by Jensen's inequality.  This  implies $I_{1,a}(\xi) < I_{1,q}(\xi)$ by Lemma \ref{lem:denk}.
\end{proof}

\begin{proof}[Proof of Proposition \ref{prop:scmin}]
Recall from Proposition \ref{prop:elem2}(b) that the slab variables $(\slabv_i)_{i\ge0}$ (defined in \eqref{slabvaroglu}) are i.i.d.\ under $\mu^\xi$ for every $\xi\in\mathrm{ri}(\cD)$. Since the environments are spatially constant, the slab variables are simply $(\wvec_i,Z_{i+1})_{i\ge0}$.

By definition  \eqref{def:un}, for every $\rho\in\partial I_{1,a}(\xi)$,   
\be\label{gavarmis}
\frac{d\mu_\Omega^\xi}{d\bP}\bigg\vert_{\kS_{0,n}}\!\!\!\!\!\!\!\!(\w) = u_n(\rho,\w) = E_0^\w[e^{\langle\rho,X_n\rangle - n\log\phi_a(\rho)}] = \prod_{i=0}^{n-1}\frac{W(\rho,T_{i,0}\w)}{\phi_a(\rho)} = \prod_{i=0}^{n-1}u_1(\rho,T_{i,0}\w).
\ee
 Therefore,  by the limit in \eqref{cohilg}, 
\be\label{pi-xi-99}\begin{aligned}\pi_{0,1}^\xi(0,z\,|\,\w) &= \lim_{n\to\infty}\pi_{0,1}(0,z\,|\,\w)\frac{e^{\langle \rho,z\rangle}}{\phi_a(\rho)}\frac{u_{n-1}(\rho,T_{1,0}\w)}{u_n(\rho,\w)} = \pi_{0,1}(0,z\,|\,\w)\frac{e^{\langle \rho,z\rangle}}{\phi_a(\rho)}\frac1{u_1(\rho,\w)}\\
& = \pi_{0,1}(0,z\,|\,\omega)\frac{e^{\langle\rho,z\rangle}}{W(\rho,\w)}.
\end{aligned}\ee 

If $\xi\neq\llnxi$ and \eqref{yoksasic} holds, then for $\bP$-a.e.\ $\w$,
$$\lim_{n\to\infty}\frac1{n}\log u_n(\rho,\w) = \lim_{n\to\infty}\frac1{n}\sum_{i=0}^{n-1}\log u_1(\rho,T_{i,0}\w) = \bE[\log u_1(\rho,\w)] < \log\bE[u_1(\rho,\w)] = 0$$
by the strong LLN and Jensen's inequality. In particular, $u_n(\rho,\w)\to0$ as $n\to\infty$, and $\mu_\Omega^\xi\not\ll\bP$ on $\kS_{0,\infty}$. 

Under the same assumptions, if $\pi_{0,1}^\xi$  satisfied  \eqref{doob} for some $u\in L^1(\Omega,\kS_{0,\infty},\bP)$ such that $\bE[u] = 1$ and $\bP(u>0) = 1$, then comparison with \eqref{pi-xi-99} gives 
$$u(\w) = u_1(\rho,\w)u(T_{1,0}\w).$$   Iterating this identity, we get
$$u(\w) = \prod_{i=0}^{n-1}u_1(\rho,T_{i,0}\w)u(T_{n,0}\w) = u_n(\rho,\w)u(T_{n,0}\w).$$
Therefore, for $\bP$-a.e.\ $\w$, $\bE[u\,|\,\kS_{0,n}](\w) = u_n(\rho,\w) \to u(\w) > 0$ as $n\to\infty$, which is a contradiction. 
\end{proof}

\begin{proof}[Proof of Proposition \ref{prop:scent}]
Equality $h_{\kS_{0,\infty}}(\mu_\Omega^\xi\,|\,\bP) = \bE[u_1(\rho,\w)\log u_1(\rho,\w)]$  comes  from \eqref{gavarmis}.  



Substitute the second-last formula of  \eqref{pi-xi-99}    into  \eqref{isimdustu} and use  the independence of $(\wvec_i, Z_{i+1})_{i\ge0}$ under $\mu^\xi$: 
\begin{align*}
H_q(\mu^\xi) &= E^{\mu^\xi}\left[\log\left(\frac{\pi_{0,1}^\xi(0,Z_1\,|\,\omega)}{\pi_{0,1}(0,Z_1\,|\,\omega)}\right)\right] = E^{\mu^\xi}\left[\log\left(\frac{e^{\langle\rho,Z_1\rangle - \log\phi_a(\rho)}}{u_1(\rho,\w)}\right)\right]\\
&= E^{\mu^\xi}[\langle\rho,Z_1\rangle] - \log\phi_a(\rho) - E^{\mu^\xi}[\log u_1(\rho,\w)] = \langle\rho,\xi\rangle - \log\phi_a(\rho) - \bE[u_1(\rho,\w)\log u_1(\rho,\w)]. 
\end{align*}
The last equality used Proposition \ref{prop:elem2}(c).

If $\xi\neq\llnxi$ and \eqref{yoksasic} holds, then $\bP(u_1(\rho,\w) = 1) < 1$ for every $\rho\in\partial I_{1,a}(\xi)$ while  $\bE[u_1(\rho,\w)]=1$.   Strict convexity of $u\mapsto u\log u$ gives 
$h_{\kS_{0,\infty}}(\mu_\Omega^\xi\,|\,\bP) = \bE[u_1(\rho,\w)\log u_1(\rho,\w)]>0$.  
\end{proof}

\begin{proof}[Proof of Proposition \ref{prop:scq}]
Under the quenched measure $P^\w_0$,  the $\Omega$-marginal of $L^\infty_n$ is now a deterministic measure $ n^{-1}\sum_{i=0}^{n-1}\delta_{T_{i,X_i}\w} =  n^{-1}\sum_{i=0}^{n-1}\delta_{T_{i,0}\w}$ that converges weakly to $\bP$, for $\bP$-a.e.~$\w$.    Hence the rate  $I_{3,q}(\mu)$ must be infinite if $\mu_\Omega\ne\bP$.  

By   Proposition \ref{prop:scmin},  if $\xi\in\mathrm{ri}(\cD)\setminus\{\llnxi\}$ and \eqref{yoksasic} holds, then $\mu_\Omega^\xi\not\ll\bP$ on $\kS_{0,\infty}$. Therefore, $H_{q,\bP}^{S,+}(\mu^\xi) = \infty$ by definition (see \eqref{tanimmod}), and  
$I_{3,q}(\mu^\xi) = (H_{q,\bP}^{S,+})^{**}(\mu^\xi) = \infty$
by \eqref{level3qmod} and the paragraph above.
\end{proof}

\begin{proof}[Proof of Proposition \ref{prop:scminq}]
Fix an arbitrary $\xi\in\mathrm{ri}(\cD)$.
\begin{itemize}
\item [(a)] We prove in Theorem \ref{epencer}(b) in Appendix \ref{app:subdifferential} that $\langle \rho,z\rangle - \log W(\rho,\w) = \langle \rho',z\rangle - \log W(\rho',\w)$ for every $\rho,\rho'\in\partial I_{1,q}(\xi)$, $z\in\cR$ and $\bP$-a.e.\ $\w$. Therefore, the RHS of \eqref{sutestiqqq} is well-defined, and so is $\nu^\xi$ by consistency. Taking the POV of the particle, $\nu^\xi$ induces a Markov chain on $\Omega$ with transition kernel $\bar\pi^{\bar\nu^\xi}(\w'|\,\w) = \one_{\{T_{1,0}\w\}}(\w')$, for which the $\Omega$-marginal $\nu_\Omega^\xi = \bP$ of $\nu^\xi$ is an invariant measure. Therefore, $\nu^\xi$ is $S$-invariant.

\item[(b)] Recall \eqref{entemizih} and \eqref{tekmisk} from Appendix \ref{app:subdifferential} and observe that
$$E^{\nu^\xi}[Z_1] = \bE\left[\sum_{z\in\cR}\bar q_0(z)\frac{e^{\langle\rho,z\rangle}z}{W(\rho,\w)}\right] = \bE\left[\frac{E_0^\w[e^{\langle\rho,Z_1\rangle}Z_1]}{E_0^\w[e^{\langle\rho,Z_1\rangle}]}\right] = \bE[\nabla\log W(\rho,\w)] = \nabla\Lambda_{1,q}(\rho) = \xi.$$

\item [(c)] $(H_{q,\bP}^{S,+})^{**}(\nu^\xi) = H_q(\nu^\xi)$ by Corollary \ref{cor:ugurmugur} since $\nu^\xi$ is $S$-invariant and $\nu_\Omega^\xi = \bP$.  
Similar to the proof of Proposition \ref{prop:scent}, 
\begin{align*}
H_q(\nu^\xi) &=  E^{\nu^\xi}\left[\log\left(\frac{\pi_{0,1}^{\bar\nu^\xi}(0,Z_1\,|\,\omega)}{\pi_{0,1}(0,Z_1\,|\,\omega)}\right)\right] = E^{\nu^\xi}\left[\log\left(\frac{e^{\langle\rho,Z_1\rangle}}{W(\rho,\w)}\right)\right]\\
&= E^{\nu^\xi}[\langle\rho,Z_1\rangle] - E^{\nu_\Omega^\xi}[\log W(\rho,\w)] = \langle\rho,\xi\rangle - \bE[\log W(\rho,\w)] = I_{1,q}(\xi),
\end{align*}
where $\rho\in\partial I_{1,q}(\xi)$, and the fourth equality uses part (b). See \eqref{entemizih} 
in Appendix \ref{app:subdifferential} for the last equality.

\item [(d)] We know from part (c) that $\nu^\xi$ is a minimizer of \eqref{qvarmod1} and \eqref{qvarmod2}. 
Take any $S$-invariant $\mu\in\cM_1(\Ospace_\bN)$ such that $E^\mu[Z_1] = \xi$.

(i) If $\mu$ is a minimizer of \eqref{qvarmod1}, then $\mu_\Omega = \bP$ by Proposition \ref{prop:scq}, and hence $(H_{q,\bP}^{S,+})^{**}(\mu) = H_q(\mu)$ by Corollary \ref{cor:ugurmugur}.
\begin{align*}
I_{1,q}(\xi) = H_q(\mu) &= H_{\cA_{-\infty,\infty}^{-\infty, 1}}(\bar\mu_-\times\pi^{\bar\mu}\,\vert\,\bar\mu_-\times\pi)\\
&= \int \bar\mu_-(d\w, \, dz_{-\infty, 0})\sum_{z\in\cR}\pi_{0,1}^{\bar\mu}(0,z\,|\,\omega,z_{-\infty,0})\log\left(\frac{\pi_{0,1}^{\bar\mu}(0,z\,|\,\omega,z_{-\infty,0})}{\pi_{0,1}(0,z\,|\,\omega)}\right)\\
&= \int \bar\mu_-(d\w, \, dz_{-\infty, 0})\sum_{z\in\cR}\pi_{0,1}^{\bar\mu}(0,z\,|\,\omega,z_{-\infty,0})\log\left(\frac{\pi_{0,1}^{\bar\mu}(0,z\,|\,\omega,z_{-\infty,0})}{\pi_{0,1}^{\bar\nu^\xi}(0,z\,|\,\omega)}\right)\\
&\quad + E^\mu[\langle\rho,Z_1\rangle] - E^{\mu_\Omega}[\log W(\rho,\w)]\\
&=  H_{\cA_{-\infty,\infty}^{-\infty, 1}}(\bar\mu_-\times\pi^{\bar\mu}\,\vert\,\bar\mu_-\times\pi^{\bar\nu^\xi}) + \langle\rho,\xi\rangle - \bE[\log W(\rho,\w)]\\
&=  H_{\cA_{-\infty,\infty}^{-\infty, 1}}(\bar\mu_-\times\pi^{\bar\mu}\,\vert\,\bar\mu_-\times\pi^{\bar\nu^\xi}) + I_{1,q}(\xi).
\end{align*}
Therefore, $\pi_{0,1}^{\bar\mu}(0,z\,|\,\omega,z_{-\infty,0}) = \pi_{0,1}^{\bar\nu^\xi}(0,z\,|\,\omega)$ for $\bar\mu_-$-a.e.\ $(\w,z_{-\infty, 0})$ and $z\in\cR$. Since $\mu_\Omega = \nu_\Omega^\xi = \bP$, we conclude that $\mu = \nu^\xi$.

(ii) If $\mu$ is a minimizer of \eqref{qvarmod2}, then $\mu_\Omega \ll \bP$ on $\kS_{0,\infty}$, therefore $(H_{q,\bP}^{S,+})^{**}(\mu) = H_q(\mu)$ by Corollary \ref{cor:ugurmugur}, which implies that $\mu$ is a minimizer of \eqref{qvarmod1}, and the previous part is applicable.
\end{itemize}
\end{proof}

%
%



%
%

\section*{Appendices}

\appendices

\section{Sufficient condition for the level-3 quenched LDP}\label{app:sufficient}


The following definition is adapted from \cite[Section 2]{RasSepYil13} to our specific space-time setting and notation. 
Let $c = \max\{|z|_1:z\in\cR\}$. Here and below, $|\cdot|_1$ denotes the $\ell_1$-norm.
\begin{definition}
A function $g:\Omega\to\bR$ is said to be in class $\cL$ if $g\in L^1(\Omega,\kS,\bP)$ and
\be\label{tektek}
\limsup_{\e\to0}\limsup_{n\to\infty}\max\left\{\frac1{n}\sum_{0\le j\le \e n}|g\circ T_{i+j,x+jz}|:\,(i,x)\in\bZ\times\bZ^d, 0\le i\le n, |x|_1\le ci\right\} = 0\quad\bP\text{-a.s.}
\ee
for every $z\in\cR$. 
\end{definition}

The level-3 quenched LDP we have established in \cite[Sections 3\&4]{RasSepYil13} covers RWDRE subject to the following conditions: (i) $\bP$ is stationary and ergodic under the family of shifts $(T_{1,z})_{z\in\cR}$; and (ii) the function $$\omega\mapsto\log\pi_{0,1}(0,z\,|\,\omega) = \log\omega_{0,0}(z)$$
is in class $\cL$ for every $z\in\cR$. The first condition is satisfied thanks to the temporal independence of the environment. (In fact, $\bP$ is stationary and ergodic under $T_{1,z}$ for each $z\in\cR$.) Therefore, to prove Theorem \ref{3qLDP} (under the ellipticity assumption \eqref{polyell}), it suffices to show the following result.

\begin{proposition}\label{prop:suffcon3q}
If a Borel measurable function $g_0:\cP^{\bZ^d}\to\bR$ satisfies
\be\label{bandana}
\int |g_0(\wlev_0)|^pd\bP_s(\wlev_0) < \infty
\ee 
for some $p>d+1$, then $g:\Omega\to\bR$ defined by
$\w\mapsto g(\w) := g_0(\wlev_0)$ is in class $\cL$.
\end{proposition}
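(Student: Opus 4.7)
The plan is to reduce to the case $g_0 \ge 0$ (by replacing $g_0$ with $|g_0|$) and then exploit two structural features of the setup: since $g(\w) = g_0(\wlev_0)$ depends only on the time-$0$ spatial slice, $g \circ T_{i+j,x+jz}(\w) = g_0(T^s_{x+jz}\wlev_{i+j})$; and for each fixed $(i,x)$, as $j$ varies these are \emph{independent} random variables (they involve the i.i.d.\ temporal slices $\wlev_{i+j}$), each distributed as $g_0(\wlev_0)$ under $\bP_s$ by spatial translation invariance. I would then truncate at level $K > 0$, writing $g_0 = (g_0 \wedge K) + h_K$ with $h_K := (g_0 - K)^+ \ge 0$. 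The bounded part contributes at most $(\lfloor \e n \rfloor + 1)K/n$, whose $\limsup_n$ is $\e K$ uniformly in the starting point, and which vanishes as $\e \to 0$ for any fixed $K$.

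To handle the tail $h_K$ uniformly in the starting point, I would decompose each line of slope $z$ into consecutive blocks of length $\lfloor \e n \rfloor$. Any window of that length is contained in the union of two adjacent blocks, so
\[
\max_{(i,x):\,0\le i\le n,\,|x|_1\le ci}\,\sum_{j=0}^{\lfloor \e n\rfloor} h_K(T^s_{x+jz}\wlev_{i+j}) \;\le\; 2\max_{a,x_0} B_{a,x_0},
\]
where $B_{a,x_0}$ is the block sum on the line with spatial intercept $x_0 = x - iz$, and the maximum on the right ranges over $O(n^d)$ lines and $O(1/\e)$ blocks per line, i.e., $O(n^d/\e)$ block-line pairs in total. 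Each $B_{a,x_0}$ is an i.i.d.\ sum of $\lfloor \e n\rfloor$ copies of $h_K$ under $\bP_s$.

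The key estimate is a Fuk--Nagaev-type inequality:
\[
\bP\bigl(B_{a,x_0} - \lfloor \e n\rfloor\,\bE h_K > n\delta\bigr) \le c_1 \exp\bigl(-c_2 n\delta^2/(\e\,\bE h_K^2)\bigr) + c_3\, \e n\, \bP(h_K > n\delta/c_4).
\]
Using $\bP(h_K > t) \le \bE h_K^p/t^p$, the polynomial ``large-jumps'' term becomes $O(\e\,\bE h_K^p/(n^{p-1}\delta^p))$; the union bound over the $O(n^d/\e)$ pairs gives $O(n^{d+1-p}\bE h_K^p/\delta^p)$, which is summable along the dyadic subsequence $n = 2^k$ precisely when $p > d+1$, while the sub-Gaussian term is summable for free. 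Borel--Cantelli then yields, a.s.\ along dyadic $n$, $\max_{a,x_0} B_{a,x_0}/n \le \e\,\bE h_K + \delta$; interpolation from $n=2^k$ to general $n$ (using nestedness of the starting sets and positivity of $h_K$) loses only a bounded factor. Letting $\delta\to 0$, then $\e \to 0$ for fixed $K$, and finally $K \to \infty$ (so that $\bE h_K \to 0$ by dominated convergence, since $\bE g_0^p < \infty$), the desired $\limsup_{\e\to 0}\limsup_{n\to\infty}\max = 0$ follows.

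The main obstacle is matching the summability threshold exactly to $p > d+1$. The naive Rosenthal bound $\bE|B_{a,x_0} - \bE B_{a,x_0}|^p \le C(\e n\,\bE h_K^p + (\e n)^{p/2}\sigma_K^p)$ would introduce an extra Gaussian-type contribution of order $n^{d-p/2}$ in the union bound, requiring the unnecessarily strong condition $p > 2d$. Separating the large-jump contribution (which controls the sharp exponent matching the $\sim n^{d+1}$ number of starting points) from the sub-Gaussian fluctuations via a truncated exponential bound is what is needed to recover the hypothesis $p > d+1$ as stated.
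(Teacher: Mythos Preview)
Your proof is correct and follows essentially the same route as the paper: exploit the i.i.d.\ structure of the temporal slices, apply the Fuk--Nagaev inequality to get a tail bound of order $n^{1-p}$ for each block sum, take a union bound over the $O(n^d/\e)$ starting configurations, and conclude via Borel--Cantelli along a subsequence followed by interpolation. The paper's version is slightly leaner in two respects: it skips the truncation at level $K$ (your $K\to\infty$ step is in fact redundant, since the bound $2\e\,\bE h_K$ already vanishes as $\e\to 0$ for any fixed $K$), and it reduces the number of starting points by thinning the grid and doubling the window length rather than by your block decomposition; it also uses the polynomial subsequence $n_m=m^\gamma$ with $\gamma>(p-d-1)^{-1}$, which makes $n_{m+1}/n_m\to 1$ and renders the interpolation step immediate.
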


\begin{proof}
Since constant functions are in class $\cL$, we can assume without loss of generality that $\bE[g] = 0$. It suffices to show a modified version of \eqref{tektek}, namely,
\be\label{ciftli}
\limsup_{n\to\infty}\max_{(i,x)\in A_n^\e}\frac1{n}\sum_{0\le j\le 2\e n}|g\circ T_{i+j,x+jz}| = 0\qquad\bP\text{-a.s.}
\ee
for every $\e>0$, where $A_n^\e$ is a thinned out subset of $\{(i,x)\in\bZ\times\bZ^d:\,0\le i\le n, |x|_1\le ci\}$ of size $|A_n^\e| \le C_1n^d\e^{-1}$ with some constant $C_1 = C_1(c,d)$. 

For each $(i,x)\in A_n^\e$, the summands in $\sum_{0\le j\le 2\e n}|g\circ T_{i+j,x+jz}|$ are i.i.d. Therefore, for every $\delta>0$,
$$\bP\left(\sum_{0\le j\le 2\e n}|g\circ T_{i+j,x+jz}|\ge n\delta\right) \le C_2(n\delta)^{-p}\e n$$
by \eqref{bandana} and the Fuk-Nagaev inequality (see \cite[Corollary 1.8]{Nag79}), where $C_2 = C_2(p)$ is some constant and $n$ is sufficiently large (depending on $p,\delta,\e$). Hence,
$$\bP\left(\max_{(i,x)\in A_n^\e}\sum_{0\le j\le 2\e n}|g\circ T_{i+j,x+jz}|\ge n\delta\right) \le C_1n^d\e^{-1}C_2(n\delta)^{-p}\e n = C_1C_2\delta^{-p}n^{d +1-p}$$
by a union bound.

Consider the subsequence $n_m = m^\gamma$ with some $\gamma > (p - d - 1)^{-1}$. Then, $\sum_{m=1}^\infty C_1C_2\delta^{-p}(n_m)^{d +1-p} < \infty$ and
$$\limsup_{m\to\infty}\max_{(i,x)\in A_{n_m}^\e}\frac1{n_m}\sum_{0\le j\le 2\e n_m}|g\circ T_{i+j,x+jz}| \le \delta\qquad\bP\text{-a.s.}$$
by the Borel-Cantelli lemma. This bound generalizes to the full sequence, too, since $\lim_{m\to\infty}\frac{n_{m+1}}{n_m} = 1$. Finally, sending $\delta\to 0$ implies \eqref{ciftli}. 
\end{proof}

\section{Elementary facts regarding the level-1 rate functions}\label{app:elementary}

\begin{proof}[Proof of Proposition \ref{prop:elem}]
(a) $I_{1,a} = (\log\phi_a)^*$ and $I_{3,q} = (H_{q,\bP}^{S})^{**}$ are convex conjugates and hence convex. $I_{1,q}$ is defined in \eqref{nasilda} via contraction, and therefore it is convex, too. Since the rate functions $I_{1,a}$ and $I_{1,q}$ are lower semicontinuous on their domain $\cD$, they are in fact continuous on $\cD$, see \cite[Theorem 10.2]{Roc70}.

(b) Recall from \eqref{bitsedegitsek} that 
$$\Lambda_{1,q}(\rho) := \lim_{n\to\infty}\frac1{n}\log E_0^\omega[e^{\langle\rho,X_n\rangle}]$$
for every $\rho\in\bR^d$. Varadhan's lemma gives $\Lambda_{1,q}(\rho) = (I_{1,q})^*(\rho)$. Observe that
$$\Lambda_{1,q}(\rho) = \bE\left[\lim_{n\to\infty}\frac1{n}\log E_0^\w[e^{\langle\rho,X_n\rangle}]\right] \le \lim_{n\to\infty}\frac1{n}\log E_0[e^{\langle\rho,X_n\rangle}] = \log\phi_a(\rho)$$
by the bounded convergence theorem and Jensen's inequality. Therefore, for every $\xi\in\cD$,
$$I_{1,a}(\xi) = (\log\phi_a)^*(\xi) = \sup_{\rho\in\bR^d}\{\langle\rho,\xi\rangle - \log\phi_a(\rho)\} \le  \sup_{\rho\in\bR^d}\{\langle\rho,\xi\rangle - \Lambda_{1,q}(\rho)\} = (\Lambda_{1,q})^*(\xi) = I_{1,q}(\xi)$$
by \eqref{level1a} and the convexity of $I_{1,q}$.

For every $z\in\cR$, the level-1 quenched LDP upper bound gives
\begin{align*}
-I_{1,q}(z) &\ge \limsup_{n\to\infty}\frac1{n}\log P_0^\w(X_n = nz) \ge \limsup_{n\to\infty}\frac1{n}\log\prod_{i=0}^{n-1}\pi_{i,i+1}(iz,(i+1)z\,|\,\w)\\
&= \lim_{n\to\infty}\frac1{n}\sum_{i=0}^{n-1}\log\pi_{0,1}(0,z\,|\,T_{i,iz}\w) = \bE[\log\w_{0,0}(z)].
\end{align*}
The desired bound follows from \eqref{polyell} and the convexity of $I_{1,q}$.

(c) $I_{1,a}(\llnxi) = I_{1,q}(\llnxi) = 0$ by the LLN. $\log\phi_a$ is analytic on $\bR^d$ and hence $I_{1,a} = (\log\phi_a)^*$ is strictly convex on $\mathrm{ri}(\cD)$. 
Therefore, $0 < I_{1,a}(\xi) \le I_{1,q}(\xi)$ for every $\xi\neq\llnxi$ by part (b), which proves the desired implications.

(d) For every $z\in\cR$, the level-1 averaged LDP upper bound gives
$$-I_{1,a}(z) \ge \limsup_{n\to\infty}\frac1{n}\log P_0(X_n = nz) \ge \lim_{n\to\infty}\frac1{n}\log(\hat q(z))^n = \log\hat q(z) = \log\bE[\w_{0,0}(z)].$$
If $z$ is an extremal point of $\cD$, then for every $\e>0$,
$$-I_{1,q}(z) \le \liminf_{n\to\infty}\frac1{n}\log P_0^\w\left(|\frac{X_n}{n} - z| < \e\right) \le (1 - c\e)\bE[\log\w_{0,0}(z)] + O(\e).$$
Here, the first inequality is an instance of the level-1 quenched LDP lower bound. The second inequality follows from three observations: (i) the event $\left\{|\frac{X_n}{n} - z| < \e\right\}$ consists of $e^{nO(\e)}$ paths, (ii) each path contains at least $(1 - c\e)n$ many $z$-steps for some constant $c = c(\cR)$, and (iii) the probabilities of these z-steps are i.i.d.\ by assumption. Sending $\e\to0$, we deduce that
$$-I_{1,q}(z) \le \bE[\log\w_{0,0}(z)] < \log\bE[\w_{0,0}(z)] \le -I_{1,a}(z)$$
by Jensen's inequality (unless $\w_{0,0}(z)$ is deterministic).
\end{proof}

\section{Subdifferentials of the level-1 rate functions}\label{app:subdifferential}

The convex hull and the affine hull of the finite set $\cR\subset\bZ^d$ are defined as
\begin{align*}
\cD = \mathrm{conv}(\cR) &= \left\{\sum_{z\in\cR}\lambda(z)z:\ \lambda(z)\in[0,1]\ \text{for every $z\in\cR$,}\ \sum_{z\in\cR}\lambda(z) = 1\right\}\ \text{and}\\
M = \mathrm{aff}(\cR) &= \left\{\sum_{z\in\cR}\lambda(z)z:\ \lambda(z)\in\bR\ \text{for every $z\in\cR$,}\ \sum_{z\in\cR}\lambda(z) = 1\right\},
\end{align*}
respectively. The relative interior $\mathrm{ri}(\cD)$    is the  interior of $\cD$  in the relative topology of $M$.

Recall from Appendix \ref{app:elementary} that the functions $I_{1,a}$ and $\log\phi_a$ (resp.\  $I_{1,q}$ and $\Lambda_{1,q}$) are convex conjugates of each other. The subdifferential $\partial I_{1,a}(\xi)$ of $I_{1,a}$ at $\xi\in\cD$ is defined as
\be\label{dd-def} \partial I_{1,a}(\xi) = \{\rho\in\bR^d:\, I_{1,a}(\xi') \ge I_{1,a}(\xi) + \langle\rho,\xi' - \xi\rangle\ \text{for every $\xi'\in\cD$}\}.\ee
$\partial I_{1,q}(\xi)$, $\partial\Lambda_{1,q}(\rho)$ and $\partial\log\phi_a(\rho)$ are defined similarly. Note that $\log\phi_a$ is a smooth function, therefore
$$\partial\log\phi_a(\rho) = \{\nabla\log\phi_a(\rho)\}$$
at every $\rho\in\bR^d$ (see \cite[Theorem 25.1]{Roc70}).

\begin{theorem}\label{epenkur}
If $\xi\in\mathrm{ri}(\cD)$, then $\partial I_{1,a}(\xi)$ and $\partial I_{1,q}(\xi)$ are nonempty and convex. For every $\rho\in\bR^d$,
\be\label{entemizig}
\rho\in\partial I_{1,a}(\xi)\quad\iff\quad I_{1,a}(\xi) + \log\phi_a(\rho) = \langle\rho,\xi\rangle\quad\iff\quad\xi = \nabla\log \phi_a(\rho)
\ee
and
\be\label{entemizih}
\rho\in\partial I_{1,q}(\xi)\quad\iff\quad I_{1,q}(\xi) + \Lambda_{1,q}(\rho) = \langle\rho,\xi\rangle\quad\iff\quad\xi \in\partial\Lambda_{1,q}(\rho).
\ee
\end{theorem}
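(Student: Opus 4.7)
The plan is to reduce everything to standard Fenchel--Young duality, since both pairs $(I_{1,a},\log\phi_a)$ and $(I_{1,q},\Lambda_{1,q})$ are convex conjugates on $\bR^d$. The preliminaries are the following: by construction $I_{1,a}=(\log\phi_a)^*$, while Varadhan's lemma gives $\Lambda_{1,q}=(I_{1,q})^*$. Both $I_{1,a}$ and $I_{1,q}$ are proper (finite on the nonempty set $\cD$ by Proposition~\ref{prop:elem}(b) and $+\infty$ elsewhere), convex (Proposition~\ref{prop:elem}(a)), and lower semicontinuous (they are rate functions). By Fenchel--Moreau, $\log\phi_a=I_{1,a}^*$ and $I_{1,q}=\Lambda_{1,q}^*$, so each pair is mutually conjugate. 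Moreover $\log\phi_a$ is real-analytic on $\bR^d$, so $\partial\log\phi_a(\rho)=\{\nabla\log\phi_a(\rho)\}$, whereas for $\Lambda_{1,q}$ we retain the subdifferential.

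The convexity of $\partial I_{1,a}(\xi)$ and $\partial I_{1,q}(\xi)$ is immediate from the defining inequality \eqref{dd-def}, which is preserved under convex combinations in $\rho$. For nonemptiness at $\xi\in\mathrm{ri}(\cD)$, I will invoke the classical result (see, e.g., \cite[Theorem 23.4]{Roc70}) that a proper convex function on $\bR^d$ has nonempty subdifferential at every point of the relative interior of its effective domain. Since $\mathrm{dom}(I_{1,a})=\mathrm{dom}(I_{1,q})=\cD$, this applies at every $\xi\in\mathrm{ri}(\cD)$. (The definition \eqref{dd-def} restricts $\xi'$ to $\cD$, but this coincides with the standard subdifferential because $I_{1,a}\equiv+\infty$ off $\cD$ makes the inequality automatic there, and similarly for $I_{1,q}$.)

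For the equivalences, I will apply the Fenchel--Young identity: if $f$ and $g=f^*$ are proper lsc convex conjugates, then
\[
\rho\in\partial f(x)\iff f(x)+g(\rho)=\langle\rho,x\rangle\iff x\in\partial g(\rho).
\]
The first equivalence is the definition of $\rho\in\partial f(x)$ rewritten via the variational formula $g(\rho)=\sup_y\{\langle\rho,y\rangle-f(y)\}$ (so the defining inequality for $\partial f(x)$ exactly says that $x$ attains this supremum), and the second is the symmetric restatement using $f=g^*$. Applied to $(I_{1,a},\log\phi_a)$ this yields \eqref{entemizig}, with the last equivalence following from $\partial\log\phi_a(\rho)=\{\nabla\log\phi_a(\rho)\}$. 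Applied to $(I_{1,q},\Lambda_{1,q})$ this yields \eqref{entemizih}.

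I do not anticipate any real obstacle: everything reduces to classical convex duality once one verifies properness/lower semicontinuity of $I_{1,q}$ and checks that the paper's restricted definition of subdifferential \eqref{dd-def} agrees with the standard one. The only mild point to mention explicitly is that the differentiability of $\log\phi_a$ (as an lmgf of a bounded-support distribution) is what collapses ``$\xi\in\partial\log\phi_a(\rho)$'' to ``$\xi=\nabla\log\phi_a(\rho)$'' in the averaged statement, whereas no such simplification is available for $\Lambda_{1,q}$ in the quenched statement.
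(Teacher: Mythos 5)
Your proposal is correct and follows essentially the same route as the paper, which proves the theorem by citing Rockafellar's Theorems 23.4 and 23.5 directly; you have simply spelled out the content of those theorems (nonemptiness of subdifferentials on the relative interior of the effective domain, the Fenchel--Young equivalences, and the collapse to a gradient when the conjugate is differentiable). Your remark that the paper's restricted definition \eqref{dd-def} agrees with the standard subdifferential because $I_{1,a}\equiv+\infty$ off $\cD$ is a correct and worthwhile clarification that the paper leaves implicit.
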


\begin{proof}
These statements are special instances of \cite[Theorems 23.4 and 23.5]{Roc70}. (Convexity is clear from the definition of subdifferentials.)
\end{proof}

There is a unique linear subspace $L$ of $\bR^d$, given by $L := M - M = \{\xi - \xi':\,\xi,\xi'\in M\}$, that is parallel to $M$, i.e., $M = \xi + L$ for every $\xi\in M$ (see \cite[Theorem 1.2]{Roc70}). Set $\mathrm{dim}(\cD) = \mathrm{dim}(L)$, where $\mathrm{dim}$ denotes dimension. Let $L^\perp$ be the orthogonal complement of $L$ in $\bR^d$.
\begin{theorem}\label{epenlaz}
For every $\xi\in\mathrm{ri}(\cD)$:
\begin{itemize}
\item [(a)] $\partial I_{1,a}(\xi)$ is an affine set that is parallel to $L^\perp$, i.e., $\partial I_{1,a}(\xi) = \rho + L^\perp$ for every $\rho\in\partial I_{1,a}(\xi)$.
\item [(b)] $\langle \rho,z\rangle - \log\phi_a(\rho) = \langle \rho',z\rangle - \log\phi_a(\rho')$ for every $\rho,\rho'\in\partial I_{1,a}(\xi)$ and $z\in\cR$.
\item [(c)]  $\mathrm{dim}(\cD) + \text{dim}(\partial I_{1,a}(\xi)) = d$.
\item [(d)]$I_{1,a}$ is differentiable at $\xi$ if and only if $\mathrm{dim}(\cD) = d$.
\end{itemize}
\end{theorem}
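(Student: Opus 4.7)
\medskip

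\noindent\textbf{Proof plan.} The main idea is to exploit the fact that $\log\phi_a$ is the cumulant generating function of a probability distribution supported on $\cR$, and that $\hat q(z)>0$ for every $z\in\cR$ (built into our standing assumptions). This will let me transfer all four statements to statements about the covariance structure of an exponential family on $\cR$.

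First I would record the key observation underlying (a) and (b): if $v\in L^\perp$, then $\langle v,z-z'\rangle=0$ for all $z,z'\in\cR$, so $\langle v,z\rangle$ equals a common value $c_v$ for every $z\in\cR$. This immediately gives $\log\phi_a(\rho+v)=\log\phi_a(\rho)+c_v$ and $\nabla\log\phi_a(\rho+v)=\nabla\log\phi_a(\rho)$. Combined with the equivalence $\rho\in\partial I_{1,a}(\xi)\iff\xi=\nabla\log\phi_a(\rho)$ from \eqref{entemizig}, this yields $\rho+L^\perp\subseteq\partial I_{1,a}(\xi)$ for any $\rho\in\partial I_{1,a}(\xi)$, giving one inclusion in (a). Part (b) then follows by a one-line computation since $c_v$ cancels between $\langle\rho',z\rangle$ and $\log\phi_a(\rho')$.

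The main obstacle is the reverse inclusion in (a), where I need full faithfulness of the exponential family. Here I would invoke the Hessian $\nabla^2\log\phi_a(\rho)=\mathrm{Cov}_\rho(Z)$, the covariance of the tilted law $\hat q(z)e^{\langle\rho,z\rangle}/\phi_a(\rho)$ on $\cR$. For $v\in\bR^d$, $v^T\nabla^2\log\phi_a(\rho)v=\mathrm{Var}_\rho(\langle v,Z\rangle)=0$ iff $\langle v,z\rangle$ is constant in $z\in\cR$ (using $\hat q(z)>0$ for all $z\in\cR$), i.e.\ iff $v\in L^\perp$. Thus $\nabla^2\log\phi_a(\rho)$ is positive semidefinite with kernel exactly $L^\perp$ at every $\rho\in\bR^d$. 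Given $\rho,\rho'\in\partial I_{1,a}(\xi)$, the fundamental theorem of calculus applied to $t\mapsto\nabla\log\phi_a(\rho+t(\rho'-\rho))$ yields
\begin{equation*}
0=\nabla\log\phi_a(\rho')-\nabla\log\phi_a(\rho)=\Bigl(\int_0^1\nabla^2\log\phi_a(\rho+t(\rho'-\rho))\,dt\Bigr)(\rho'-\rho).
\end{equation*}
Since the integrated Hessian is PSD with the same kernel $L^\perp$ at every $t$, I conclude $\rho'-\rho\in L^\perp$, completing (a).

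For (c), parts (a) and a standard computation give $\mathrm{dim}(\partial I_{1,a}(\xi))=\mathrm{dim}(L^\perp)=d-\mathrm{dim}(L)=d-\mathrm{dim}(\cD)$. For (d), the standard result \cite[Theorem 25.1]{Roc70} says a convex function is differentiable at an interior point of its effective domain iff its subdifferential there is a singleton. If $\mathrm{dim}(\cD)=d$, then $\mathrm{ri}(\cD)=\mathrm{int}(\cD)$ and (c) gives $\mathrm{dim}(\partial I_{1,a}(\xi))=0$, so differentiability holds. Conversely, if $\mathrm{dim}(\cD)<d$, then $\xi$ lies on the boundary of the effective domain of $I_{1,a}$ (which is $+\infty$ off $\cD$), so $I_{1,a}$ cannot be differentiable at $\xi$; alternatively, (c) forces $\mathrm{dim}(\partial I_{1,a}(\xi))\ge 1$, so the subdifferential is not a singleton.
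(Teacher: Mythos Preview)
Your proof is correct and follows essentially the same approach as the paper: both hinge on the equivalence $\rho\in\partial I_{1,a}(\xi)\iff\nabla\log\phi_a(\rho)=\xi$ from \eqref{entemizig} and on identifying the kernel of the Hessian $\nabla^2\log\phi_a(\rho)=\mathrm{Cov}_\rho(Z)$ as exactly $L^\perp$. The only minor difference is in the reverse inclusion of (a): the paper shows that if $\rho'\notin L^\perp$ then the Hessian is strictly positive in direction $\rho'$, so $\rho+\epsilon\rho'\notin\partial I_{1,a}(\xi)$ for small $\epsilon$, and then invokes convexity of the subdifferential to conclude $\rho+\rho'\notin\partial I_{1,a}(\xi)$; you instead integrate the Hessian along the segment from $\rho$ to $\rho'$ and use that the integrated matrix retains kernel $L^\perp$. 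Parts (b)--(d) are handled identically.
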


\begin{proof} $ $ 
\begin{itemize}
\item [(a)] 
That $\rho + L^\perp \subset\partial I_{1,a}(\xi)$ for any $\rho\in\partial I_{1,a}(\xi)$ follows immediately from   definition \eqref{dd-def}  because $\langle\rho',\xi' - \xi\rangle=0$ for all $\xi,\xi'\in\cD$ and $\rho'\in L^\perp$.  


Conversely, suppose  $\rho'\notin L^\perp$. Then $\langle\rho',z\rangle$ is not constant over $z\in\cR$, and
$$\langle\rho',\mathbf{J}(\nabla\log \phi_a)(\rho)\rho'\rangle =  E_0[e^{\langle\rho,Z_1\rangle - \log\phi_a(\rho)}\langle\rho',Z_1\rangle^2] - E_0[e^{\langle\rho,Z_1\rangle - \log\phi_a(\rho)}\langle\rho',Z_1\rangle]^2 > 0$$ 
by Jensen's inequality. Here, $\mathbf{J}$ denotes the Jacobian and $\mathbf{J}(\nabla\log \phi_a)(\rho)$ is the Hessian matrix of $\log \phi_a$ at $\rho$. Therefore, $\nabla\log\phi_a(\rho + \epsilon\rho')\ne\xi$ for sufficiently small $\epsilon>0$, and hence $\rho + \epsilon\rho'\not\in\partial I_{1,a}(\xi)$ by \eqref{entemizig}. Since $\partial I_{1,a}(\xi)$ is convex, we deduce that $\rho + \rho'\not\in\partial I_{1,a}(\xi)$. 

\item [(b)] If $\rho,\rho'\in\partial I_{1,a}(\xi)$, then $\rho - \rho'\in L^\perp$ by part (a), 
and so  $\langle\rho-\rho',z'\rangle$ is constant over $z'\in\cR$. Consequently, for any particular   $z\in\cR$, 
\be\label{yerdesti} \begin{aligned}
\langle\rho ,z\rangle - \log\phi_a(\rho) &= \langle\rho- \rho'+\rho',z\rangle - \log\sum_{z'\in\cR}\hat q(z')e^{\langle\rho- \rho' + \rho',z'\rangle}\\
&= \langle\rho',z\rangle - \log\sum_{z'\in\cR}\hat q(z')e^{\langle\rho',z'\rangle} = \langle\rho',z\rangle - \log\phi_a(\rho'). 
\end{aligned}\ee


\item [(c)] $\mathrm{dim}(\cD) + \text{dim}(\partial I_{1,a}(\xi)) = \text{dim}(L) + \mathrm{dim}(L^\perp) = d$ by part (a).

\item [(d)] This follows from part (c) and \cite[Theorem 25.1]{Roc70}.\qedhere

\end{itemize}
\end{proof}

  The next proposition states  some properties of    $\partial I_{1,a}(\llnxi)$ where $\llnxi = \sum_{z\in\cR}\hat q(z)z$ is the LLN velocity.  It is used in conjunction with assumption  \eqref{yoksasic}   for   results on spatially constant environments. 

\begin{proposition}\label{abimdeca}
For every $\rho\in\bR^d$, the following are equivalent:
\begin{itemize}
\item [(i)] $\phi_a(\rho) = e^{\langle\rho,\llnxi\rangle}$;
\item [(ii)] $\rho\in\partial I_{1,a}(\llnxi)$;
\item [(iii)] $\langle\rho,z\rangle = \langle\rho,\llnxi\rangle\ \text{for every}\ z\in\cR$.
\end{itemize}
\end{proposition}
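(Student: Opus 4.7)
The plan is to verify a short cycle of implications, leveraging two facts already in hand: $I_{1,a}(\llnxi)=0$ from Proposition \ref{prop:elem}(c), and $\hat q(z)>0$ for every $z\in\cR$ by our standing assumption (so that $\llnxi=\sum_{z\in\cR}\hat q(z)z$ is a strict convex combination involving every $z\in\cR$, in particular $\llnxi\in\mathrm{ri}(\cD)$).

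First I would dispose of (i)$\iff$(ii) by applying \eqref{entemizig} at $\xi=\llnxi$. The middle characterization there reads $I_{1,a}(\llnxi)+\log\phi_a(\rho)=\langle\rho,\llnxi\rangle$, and since $I_{1,a}(\llnxi)=0$ this becomes $\log\phi_a(\rho)=\langle\rho,\llnxi\rangle$, i.e.\ exactly (i).

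Next, (iii)$\Rightarrow$(i) is a one-line computation: if $\langle\rho,z\rangle=\langle\rho,\llnxi\rangle$ for every $z\in\cR$, then
$$\phi_a(\rho)=\sum_{z\in\cR}\hat q(z)e^{\langle\rho,z\rangle}=e^{\langle\rho,\llnxi\rangle}\sum_{z\in\cR}\hat q(z)=e^{\langle\rho,\llnxi\rangle}.$$
Finally, (i)$\Rightarrow$(iii) will come from strict convexity of the exponential. Writing $\langle\rho,\llnxi\rangle=\sum_{z\in\cR}\hat q(z)\langle\rho,z\rangle$, Jensen's inequality gives
$$e^{\langle\rho,\llnxi\rangle}\le\sum_{z\in\cR}\hat q(z)e^{\langle\rho,z\rangle}=\phi_a(\rho),$$
with equality iff $z\mapsto\langle\rho,z\rangle$ is constant on the support of $\hat q$, which is all of $\cR$. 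Statement (i) forces equality, so $\langle\rho,z\rangle$ is a constant in $z\in\cR$; averaging that constant against $\hat q$ identifies it as $\langle\rho,\llnxi\rangle$, producing (iii).

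No step here poses any real obstacle; the argument is essentially bookkeeping with \eqref{entemizig} and Jensen. (One could alternatively derive (ii)$\Rightarrow$(iii) from Theorem \ref{epenlaz}(a): since $0\in\partial I_{1,a}(\llnxi)$, part (a) gives $\partial I_{1,a}(\llnxi)=L^\perp$, so any $\rho$ in it is orthogonal to $L=M-M$ and hence satisfies $\langle\rho,z-z'\rangle=0$ for all $z,z'\in\cR$, which together with $\llnxi\in M$ yields (iii).)
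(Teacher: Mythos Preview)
Your proof is correct and essentially matches the paper's argument. The paper proves (i)$\iff$(iii) via the same Jensen inequality and handles (ii) via Theorem \ref{epenlaz}(a) (the $\partial I_{1,a}(\llnxi)=L^\perp$ route you mention as an alternative), whereas your main line connects (ii) to (i) directly through the Fenchel--Young equality \eqref{entemizig} and $I_{1,a}(\llnxi)=0$; this is a minor reorganization rather than a different idea.
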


\begin{proof}
For every $\rho\in\bR^d$,
$$\log\phi_a(\rho) = \log\sum_{z\in\cR}\hat q(z)e^{\langle\rho,z\rangle} \ge \sum_{z\in\cR}\hat q(z)\langle\rho,z\rangle = \langle\rho,\llnxi\rangle$$
by Jensen's inequality, and equality holds if and only if $\langle\rho,z\rangle$ is constant over $z\in\cR$. This proves the equivalence of (i) and (iii).

Observe that $\nabla\log\phi_a(0) = \sum_{z\in\cR}\hat q(z)z = \llnxi$. Therefore, $0\in\partial I_{1,a}(\llnxi)$ by \eqref{entemizig}, and $\partial I_{1,a}(\llnxi) = L^\perp$ by Theorem \ref{epenlaz}.
The equivalence of (ii) and (iii) now follows since $\{z-\llnxi:\,z\in\cR\}$ spans $L$.
\end{proof}

When the environment is spatially constant, recall from \eqref{jenverdi} that $\Lambda_{1,q}(\rho) = \bE[\log W(\rho,\w)]$. In particular, it is a smooth function and
\be\label{tekmisk}
\partial\Lambda_{1,q}(\rho) = \{\nabla\Lambda_{1,q}(\rho)\}
\ee at every $\rho\in\bR^d$. 
In this case, the following quenched version of Theorem \ref{epenlaz} holds, with the same proof. 

\begin{theorem}\label{epencer}
Assume \eqref{polyell} and \eqref{def:constenv}. Then, for every $\xi\in\mathrm{ri}(\cD)$:
\begin{itemize}
\item [(a)] $\partial I_{1,q}(\xi)$ is an affine set that is parallel to $L^\perp$, i.e., $\partial I_{1,q}(\xi) = \rho + L^\perp$ for every $\rho\in\partial I_{1,q}(\xi)$.
\item [(b)] $\langle \rho,z\rangle - \log W(\rho,\w) = \langle \rho',z\rangle - \log W(\rho',\w)$ for every $\rho,\rho'\in\partial I_{1,q}(\xi)$, $z\in\cR$ and $\bP$-a.e.\ $\w$.
\item [(c)]  $\mathrm{dim}(\cD) + \text{dim}(\partial I_{1,q}(\xi)) = d$.
\item [(d)]$I_{1,q}$ is differentiable at $\xi$ if and only if $\mathrm{dim}(\cD) = d$.
\end{itemize}
\end{theorem}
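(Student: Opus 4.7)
The plan is to mimic the proof of Theorem \ref{epenlaz} verbatim, using that under \eqref{def:constenv} the limit
$\Lambda_{1,q}(\rho)=\bE[\log W(\rho,\w)]$
from \eqref{jenverdi} is a smooth function of $\rho$ whose role here is exactly parallel to that of $\log\phi_a$ in the averaged case, combined with the duality
$\rho\in\partial I_{1,q}(\xi)\iff\xi=\nabla\Lambda_{1,q}(\rho)$
given in \eqref{entemizih}--\eqref{tekmisk}.

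For part (a), I would first check the easy inclusion $\rho+L^\perp\subset\partial I_{1,q}(\xi)$: for any $\rho'\in L^\perp$, $\langle\rho',\xi'-\xi\rangle=0$ for all $\xi,\xi'\in\cD\subset M$, so the defining inequality of the subdifferential is automatic. For the reverse, I would fix $\rho'\notin L^\perp$, so that $z\mapsto\langle\rho',z\rangle$ is nonconstant on $\cR$. The Hessian of $\Lambda_{1,q}$ at $\rho$ in direction $\rho'$ is
$$\langle\rho',\mathbf{J}(\nabla\Lambda_{1,q})(\rho)\rho'\rangle=\bE\!\left[\frac{E_0^\w[e^{\langle\rho,Z_1\rangle}\langle\rho',Z_1\rangle^2]}{W(\rho,\w)}-\left(\frac{E_0^\w[e^{\langle\rho,Z_1\rangle}\langle\rho',Z_1\rangle]}{W(\rho,\w)}\right)^{\!2}\right]\!.$$
The ellipticity \eqref{polyell} forces $\w_{0,0}(z)>0$ $\bP$-a.s.\ for every $z\in\cR$, so by Jensen's inequality the integrand is strictly positive almost surely, whence the Hessian is positive. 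Consequently $\nabla\Lambda_{1,q}(\rho+\e\rho')\ne\xi$ for small $\e>0$, so $\rho+\e\rho'\notin\partial I_{1,q}(\xi)$ by \eqref{entemizih}--\eqref{tekmisk}, and convexity of $\partial I_{1,q}(\xi)$ then rules out $\rho+\rho'$.

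For part (b), I would use (a): if $\rho,\rho'\in\partial I_{1,q}(\xi)$ then $\rho-\rho'\in L^\perp$, so $\langle\rho-\rho',z'\rangle$ is some common constant $c$ for all $z'\in\cR$. Plugging this into the definition $W(\rho,\w)=\sum_{z'\in\cR}\bar q_0(z')e^{\langle\rho,z'\rangle}$ (which is well-defined for $\bP$-a.e.\ $\w$ under \eqref{polyell}) gives $W(\rho,\w)=e^c W(\rho',\w)$, and the same constant $c$ cancels against $\langle\rho-\rho',z\rangle$ in $\langle\rho,z\rangle-\log W(\rho,\w)$, yielding the claimed equality exactly as in \eqref{yerdesti}.

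Parts (c) and (d) are then formal consequences: from (a), $\dim\partial I_{1,q}(\xi)=\dim L^\perp=d-\dim L=d-\dim\cD$, which is (c); and (d) follows from (c) together with the standard convex analysis fact \cite[Theorem 25.1]{Roc70} that $I_{1,q}$ is differentiable at $\xi\in\mathrm{ri}(\cD)$ iff $\partial I_{1,q}(\xi)$ is a singleton, iff $\dim\cD=d$. The only genuinely nontrivial ingredient is the strict positivity of the Hessian in part (a), which is where \eqref{polyell} enters; everything else is a transcription of the averaged proof.
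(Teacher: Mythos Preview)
Your proposal is correct and follows exactly the approach the paper intends: the paper's proof of Theorem \ref{epencer} consists of the single remark that it ``holds, with the same proof'' as Theorem \ref{epenlaz}, and you have faithfully carried out that transcription, replacing $\log\phi_a$ by $\Lambda_{1,q}(\rho)=\bE[\log W(\rho,\w)]$ and invoking \eqref{polyell} to guarantee $\w_{0,0}(z)>0$ a.s.\ so that the Hessian computation in part (a) yields strict positivity.
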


\section{A concentration inequality}\label{app:concentration}

Consider a random walk on $\bZ^d$ starting at the origin whose steps are independent and uniformly distributed on $\cR$. Denote the corresponding path measure (resp.\ expectation) by $\hat P_0$ (resp.\ $\hat E_0$). For any $\rho\in\bR^d$, define a function $\eta:\Omega\times\cR\to\bR$ by $$\eta(\w,z) = \langle\rho,z\rangle + \log(|\cR|\pi_{0,1}(0,z\,|\,\w)).$$
With this notation, $$u_n(\w) = E_0^\w[e^{\langle\rho,X_n\rangle - n\log\phi_a(\rho)}] = \hat E_0[e^{\sum_{i=0}^{n-1}(\eta(T_{i,X_i}\w,Z_{i+1}) - \log\phi_a(\rho))}].$$
This representation enables one to study RWDRE via techniques developed in the context of directed polymers (see \cite{ComShiYos04} for a survey). For instance, the following result is an adaptation of a concentration inequality by Liu and Watbled for the quenched free energy of directed polymers (see \cite[Section 6]{LiuWat09}).
\begin{theorem}\label{thm:LiuWat}
Assume \eqref{expell}. Then, for every $\rho\in\bR^d$ and $\e>0$, $\exists\,c = c(\rho,\e)>0$ such that
$$\bP\bigl( \,\abs{\log u_n -  \bE[\log u_n]}\ge n\e\bigr) \le 2\exp(-cn).$$
\end{theorem}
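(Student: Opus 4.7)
The plan is to exploit the temporal independence of $\bP$ to get a martingale (or McDiarmid) decomposition of $\log u_n$ into exactly $n$ terms, each bounded by a deterministic constant under uniform ellipticity, and then apply a standard bounded-differences concentration inequality. The linear (rather than polynomial) scaling of the exponent will come from the fact that $u_n$ depends on only $n$ independent coordinates of the environment, namely $(\bar\omega_0,\ldots,\bar\omega_{n-1})$, not the $O(n^{d+1})$ individual entries $\omega_{i,x}$.

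First I will use the representation
$$u_n(\w)=\hat E_0\Bigl[\exp\Bigl(\sum_{i=0}^{n-1}\eta(T_{i,X_i}\w,Z_{i+1})-n\log\phi_a(\rho)\Bigr)\Bigr]$$
and note that, because $\eta(\w,z)=\langle\rho,z\rangle+\log(|\cR|\w_{0,0}(z))$, uniform ellipticity \eqref{unifell} gives a deterministic bound $\|\eta\|_\infty\le K:=|\rho|\max_{z\in\cR}|z|+|\log(|\cR|c)|$. The crucial observation is that the summand $\eta(T_{i,X_i}\w,Z_{i+1})$ depends on $\w$ only through $\bar\omega_i$ (specifically through $\omega_{i,X_i}(Z_{i+1})$), so replacing $\bar\omega_i$ by an independent copy $\bar\omega_i'$ shifts the exponent uniformly over paths by at most $2K$. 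Taking logs of the resulting weighted averages then yields the pointwise bounded-differences estimate
$$|\log u_n(\ldots,\bar\omega_i,\ldots)-\log u_n(\ldots,\bar\omega_i',\ldots)|\le 2K.$$

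Next I view $\w\mapsto\log u_n(\w)$ as a function of the $n$ independent random variables $\bar\omega_0,\ldots,\bar\omega_{n-1}$ and apply the McDiarmid bounded-differences inequality (equivalently, the Azuma--Hoeffding bound applied to the martingale $M_i=\bE[\log u_n\,|\,\sigma(\bar\omega_0,\ldots,\bar\omega_{i-1})]$) with each coordinate-wise oscillation bounded by $2K$. With $t=n\e$ this gives
$$\bP\bigl(|\log u_n-\bE[\log u_n]|\ge n\e\bigr)\le 2\exp\!\Bigl(-\frac{2(n\e)^2}{n(2K)^2}\Bigr)=2\exp(-cn),$$
with $c=c(\rho,\e)=\e^2/(2K^2)>0$, which is the claimed inequality.

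There is no real obstacle here; the only point requiring a moment's care is the verification of the bounded-differences constant $2K$ for $\log u_n$, which follows from the elementary fact that if two nonnegative weight functions $\phi,\phi'$ on paths satisfy $|\phi-\phi'|\le 2K$ pointwise then $|\log\hat E_0[e^\phi]-\log\hat E_0[e^{\phi'}]|\le 2K$. This is the whole content of Liu--Watbled's argument specialized to our setting, where the temporal i.i.d.\ structure makes the one-coordinate-per-time-slice decomposition available directly, bypassing any need for a spatial martingale.
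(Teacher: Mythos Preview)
Your argument assumes uniform ellipticity \eqref{unifell} to obtain the pointwise bound $\|\eta\|_\infty\le K<\infty$, but the theorem is stated under the strictly weaker hypothesis \eqref{expell}, which only says that $\bE[\w_{0,0}(z)^{-\delta}]<\infty$ for some $\delta>0$. Under \eqref{expell} alone, $\w_{0,0}(z)$ may be arbitrarily close to $0$ with positive probability, so $\eta(\w,z)=\langle\rho,z\rangle+\log(|\cR|\w_{0,0}(z))$ is unbounded below and the bounded-differences constant $2K$ is infinite. The McDiarmid/Azuma step therefore fails as written.

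The paper's proof uses the same martingale decomposition with respect to the filtration $(\kS_{0,i})_{i}$, but replaces your uniform bound on the increments $V_{n,i}$ by an \emph{exponential moment} bound $\bE_i[e^{tV_{n,i}}]\le K(t)$ (their Lemma~\ref{lem:expbd}), derived via a Jensen-type argument that compares $u_n$ to the partition function $u_{n,i}$ with the $i$th layer removed. Assumption \eqref{expell} is exactly what makes $K(\delta)<\infty$ for small $\delta$. One then concludes with the Liu--Watbled generalization of Azuma--Hoeffding for martingales whose increments have finite exponential moments rather than bounded range. If you additionally assume \eqref{unifell}, your simpler argument is fine and recovers the result, but it does not prove the theorem as stated.
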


\begin{proof}
We can write $\log u_n - \bE[\log u_n]$ as a sum of $(\kS_{0,i+1})_{0\le i\le n-1}$ martingale differences:
$$\log u_n - \bE[\log u_n] = \sum_{i=0}^{n-1}V_{n,i},\qquad\text{with}\qquad V_{n,i} = \bE_{i+1}[\log u_n] - \bE_{i}[\log u_n],$$
where $\bE_i[\,\cdots]$ is shorthand for $\bE[\,\cdots\,\vert\,\kS_{0,i}]$. 

\begin{lemma}\label{lem:expbd}
For every $0\le i\le n-1$ and $t\in\bR$,
$$\bE_i[\exp(tV_{n,i})]\le K(t) := \begin{cases} \left(\sup_{z\in\cR}\bE[e^{-|t|\eta(0,0,z)}]\right)\left(\sup_{z\in\cR}\bE[e^{\eta(0,0,z)}]^{|t|}\right)\qquad\text{if $|t|<1$};\\ \left(\sup_{z\in\cR}\bE[e^{-|t|\eta(0,0,z)}]\right)\left(\sup_{z\in\cR}\bE[e^{|t|\eta(0,0,z)}]\right)\qquad\text{if $|t|\ge1$}.\end{cases}$$
\end{lemma}

\begin{proof}
Set $$e_{n,i} = \exp\left(\sum_{0\le j\le n-1,\,j\ne i}(\eta(T_{j,X_j}\w,Z_{j+1}) - \log\phi_a(\rho))\right),\qquad u_{n,i} = \hat E_0[e_{n,i}].$$
Since $\bE_{i+1}[\log u_{n,i}] =\bE_{i}[\log u_{n,i}]$, we have
\be
V_{n,i} = \bE_{i+1}\left[\log\frac{u_n}{u_{n,i}}\right] - \bE_{i}\left[\log\frac{u_n}{u_{n,i}}\right].\label{lazimm}
\ee
For every $x\in\bZ^d$ and $z\in\cR$, define
$$\bar\eta(i,x,z) = \exp(\eta(T_{i,x}\w,z) - \log\phi_a(\rho)),\qquad \alpha(i,x,z) = \frac{\hat E_0[e_{n,i}\one_{\{X_i = x, Z_{i+1} = z\}}]}{u_{n,i}}.$$
Then,
$$\sum_{x\in\bZ^d}\sum_{z\in\cR}\alpha(i,x,z) = 1\qquad\text{and}\qquad\frac{u_n}{u_{n,i}} = \sum_{x\in\bZ^d}\sum_{z\in\cR}\alpha(i,x,z)\bar\eta(i,x,z).$$
By \eqref{lazimm}, Jensen's inequality and the fact that $\kS_{0,i}\subset\kS_{0,i+1}$, we get
\begin{align}
\bE_{i}[\exp(tV_{n,i})] &= \exp\left(-t\bE_{i}\left[\log\frac{u_n}{u_{n,i}}\right]\right)\bE_{i}\left[\exp\left(t\bE_{i+1}\left[\log\frac{u_n}{u_{n,i}}\right]\right)\right]\nonumber\\
&\le \bE_{i}\left[\left(\frac{u_n}{u_{n,i}}\right)^{-t}\right]\bE_{i}\left[\left(\frac{u_n}{u_{n,i}}\right)^t\right].\label{fully}
\end{align}
If $t<0$ or $t\ge1$, then the function $u\to u^t$ is convex; therefore Jensen's inequality gives
$$\left(\frac{u_n}{u_{n,i}}\right)^t = \left(\sum_{x\in\bZ^d}\sum_{z\in\cR}\alpha(i,x,z)\bar\eta(i,x,z)\right)^t \le \sum_{x\in\bZ^d}\sum_{z\in\cR}\alpha(i,x,z)(\bar\eta(i,x,z))^t.$$
For every $x\in\bZ^d$ and $z\in\cR$, the random variables $\bar\eta(i,x,z)$ and $\alpha(i,x,z)$ are measurable w.r.t.\ $\sigma\{\w_i\}$ and $\sigma\{\w_j:\,0\le j\le n-1, j\ne i\}$, respectively. Since these two $\sigma$-algebras are independent and the latter one contains $\kS_{0,i}$, we get
$$\bE_{i}[\alpha(i,x,z)(\bar\eta(i,x,z))^t] = \bE_{i}[\alpha(i,x,z)]\bE[(\bar\eta(i,x,z))^t] = \bE_{i}[\alpha(i,x,z)]\frac{\bE[e^{t\eta(0,0,z)}]}{(\phi_a(\rho))^t}.$$
Hence, for $t<0$ or $t\ge1$,
$$\bE_{i}\left[\left(\frac{u_n}{u_{n,i}}\right)^t\right] \le \sup_{z\in\cR}\frac{\bE[e^{t\eta(0,0,z)}]}{(\phi_a(\rho))^t}.$$
If $t\in(0,1)$, then the function $u\to u^t$ is concave; therefore Jensen's inequality gives
$$\bE_{i}\left[\left(\frac{u_n}{u_{n,i}}\right)^t\right] \le \left(\bE_{i}\left[\frac{u_n}{u_{n,i}}\right]\right)^t \le \sup_{z\in\cR}\frac{\left(\bE[e^{\eta(0,0,z)}]\right)^t}{(\phi_a(\rho))^t}.$$
The desired result follows from plugging these bounds in \eqref{fully}.
\end{proof}

Continuing with the proof of Theorem \ref{thm:LiuWat}, recall the ellipticity assumption \eqref{expell}. Lemma \ref{lem:expbd} implies
$$\bE_i[\exp(\delta|V_{n,i}|)]\le \bE_i[\exp(\delta V_{n,i})] + \bE_i[\exp(- \delta V_{n,i})] \le 2K(\delta).$$ Since $\bE[e^{-\delta\eta(\cdot,z)}] = \left(|\cR|e^{\langle\rho,z\rangle}\right)^{-\delta}\bE[\w_{0,0}(z)^{-\delta}]$, we deduce that $K(\delta)<\infty$.
A suitable generalization of the Azuma-Hoeffding inequality (see \cite[Theorem 2.1]{LiuWat09}) gives
$$\bE\left[e^{\delta t(\log u_n - \bE[\log u_n])}\right] \le \exp\left(\frac{2nK(\delta)t^2}{1-t}\right)$$
for every $t\in(0,1)$.
Therefore, 
$$\bP\bigl( \,\abs{\log u_n -  \bE[\log u_n]}\ge n\e\bigr) \le \exp\left(-n\e \delta t + \frac{2nK(\delta)t^2}{1-t}\right)$$
by the exponential Chebyshev inequality. The desired result is obtained by optimizing over $t\in(0,1)$.
\end{proof}

\bibliographystyle{abbrv}
\bibliography{LD_entropy_RWDRE_references}

\end{document}